\newtheorem{theorem}[equation]{Theorem}
\newtheorem{lemma}[equation]{Lemma}
\newtheorem{proposition}[equation]{Proposition}
\newtheorem{corollary}[equation]{Corollary}
\newtheorem*{theorem:derhamisomorphism}{Theorem~\ref{T:derhamisomorphism}}
\newtheorem*{theorem:characterization}{Theorem~\ref{T:characterization}}
\theoremstyle{definition}
\newtheorem{definition}[equation]{Definition}
\newtheorem{example}[equation]{Example}
\newtheorem{remark}[equation]{Remark}
\numberwithin{equation}{subsection}
\newcommand{\ZZ}{\mathbb{Z}}
\newcommand{\CC}{\mathbb{C}}
\newcommand{\QM}{\mathcal{Q}\mathcal{M}}
\DeclareMathOperator{\lcm}{lcm}
\DeclareMathOperator{\GL}{GL}
\DeclareMathOperator{\SL}{SL}
\DeclareMathOperator{\val}{val}
\DeclareMathOperator{\Tr}{Tr}
\DeclareMathOperator{\deriv}{der}
\DeclareMathOperator{\der}{d}
\DeclareMathOperator{\Id}{Id}
\DeclareMathOperator{\trdeg}{tr.deg}
\newcommand{\oK}{\mkern2.5mu\overline{\mkern-2.5mu K}}
\newcommand{\dnorm}[1]{\lVert #1 \rVert}
\newcommand{\inorm}[1]{{\lvert #1 \rvert}}
\begin{document}

\title[Nearly holomorphic Drinfeld modular forms]{Nearly holomorphic Drinfeld modular forms and their special values at CM points}

%    Information for first author
\author{Yen-Tsung Chen}
   % Address of record for the research reported here
\address{Department of Mathematics, Pennsylvania State University, University Park, PA 16802, U.S.A.}

\email{ytchen.math@gmail.com}

  %  Information for first author
	\author{O\u{g}uz Gezm\.{i}\c{s}}
\address{University of Heidelberg, IWR, Im Neuenheimer Feld 205, 69120, Heidelberg, Germany}
\email{oguz.gezmis@iwr.uni-heidelberg.de}

%   General info
%\thanks{This project was partially supported by NSF Grant DMS-1501362}

%\subjclass[2010]{Primary 11G09; Secondary 11R58, 12H10, 12J27, 14F40}

\date{September 4, 2023}

\keywords{Nearly holomorhic modular forms,  Drinfeld modular forms, Drinfeld modules}

\subjclass[2010]{Primary 11F52, 11R37; Secondary 11F03, 11G09}

\begin{abstract} In the present paper, we introduce the notion of nearly holomorphic Drinfeld modular forms and study an analogue of Maass-Shimura operators in this context. Furthermore, for a given nearly holomorphic Drinfeld modular form, we show that its special values at CM points are algebraically independent whenever the associated endomorphism algebras are distinct. As an application of our results on nearly holomorphic Drinfeld modular forms, we study Drinfeld quasi-modular forms for arbitrary congruence subgroups and investigate the structure of the vector spaces and the algebras generated by them.
\end{abstract}

\maketitle
\section{Introduction}
\subsection{Classical setting and motivation}
Let $\mathbb{H}$ be the upper half plane. \textit{A nearly holomorphic modular form of weight $k\in \mathbb{Z}_{\geq 0}$ and depth $r\in \mathbb{Z}_{\geq 0}$ for a congruence subgroup $\Gamma$ of $\SL_2(\mathbb{Z})$} is a smooth function $\mathfrak{f}:\mathbb{H}\to \mathbb{C}$ satisfying the following properties:
\begin{itemize}
	\item[(i)] For any $\gamma=\begin{pmatrix}
	a_{\gamma}&b_{\gamma}\\c_{\gamma}&d_{\gamma}
	\end{pmatrix}\in \Gamma$, we have
	\[
	\mathfrak{f}(\gamma \cdot z):=\mathfrak{f}\left(\frac{a_{\gamma}z+b_{\gamma}}{c_{\gamma}z+d_{\gamma}}\right)=(c_{\gamma}z+d_{\gamma})^k\mathfrak{f}(z).
	\]
	\item[(ii)] There exist holomorphic functions $\mathfrak{f}_0,\dots,\mathfrak{f}_r$ on $\mathbb{H}$ having a certain growth condition and uniquely defined by $\mathfrak{f}$, such that 
	\[
	\mathfrak{f}(z)=\sum_{j=0}^r\frac{\mathfrak{f}_j(z)}{(2\pi i (z-\bar{z}))^j}
	\]
	where $\bar{z}$ is the complex conjugate of $z$. 
	\item[(iii)] For any $\gamma\in \SL_2(\mathbb{Z})$, there exists a positive integer $n_{\gamma}$ such that for any $z\in \mathbb{H}$, we have 
	\[
	(\mathfrak{f}|_{k,m}\gamma)(z)=\sum_{i=0}^r\frac{1}{(2\pi i (z-\bar{z}))^i}\sum_{\ell= 0}^{\infty}a_{\gamma,i,\ell}e^{2\pi i \ell z/n_{\gamma}}, \ \ a_{\gamma,i,\ell}\in \mathbb{C}.
	\]
\end{itemize}
These interesting objects as well as the $\mathbb{C}$-vector spaces generated by them have been studied by Kaneko and Zagier \cite{KZ95} and Shimura \cite[\S 8, 12]{Shi07}, \cite{Shi87}, \cite[Chap. III]{Shi00} independently. As an explicit example, one can consider the non-holomorphic Eisenstein series $G_2$ of weight 2 and depth one, whose construction dates back to the work of Hecke and revisited later by Shimura \cite{Shi75a,Shi75b,Shi77}, given by 
\begin{equation}\label{E:Maass}
G_2(z):=-\frac{1}{4\pi i(z-\bar{z})}-\frac{1}{24}+\sum_{j=1}^{\infty}\big(\sum_{\substack{d>0\\ d|j}}d\big) e^{2\pi ijz}.
\end{equation}
Using the transcendence of the imaginary part function over the field of meromorphic functions on $\mathbb{H}$, it can be shown that any nearly holomorphic modular form may be uniquely expressed as a polynomial in $G_2$ with coefficients being (elliptic) modular forms for $\Gamma$ of certain weights. Moreover, nearly holomorphic modular forms are highly related to quasi-modular forms introduced by Kaneko and Zagier in \cite{KZ95} (see also \cite[Chap I. \S5.3]{BGHZ08}).

By generalizing the work of Maass, Shimura obtained an operator $D_k^r$, nowadays called \textit{a Maass-Shimura operator}, so that for any holomorphic function $g$ on $\mathbb{H}$, it is given by 
\[
D_k^rg(z):=\sum_{n=0}^r\binom{r}{n}\frac{\Gamma(k+r)}{\Gamma(k+n)}\frac{1}{(2\pi i(z-\bar{z}))^{r-n}}g^{(n)}(z)
\]
where $g^{(n)}$ is the $n$-th complex derivative of $g$. If $g$ is a  modular form of weight $k$ for $\Gamma$ then $D_k^rg$ is a nearly holomorphic modular form of weight $k+2r$ and depth $r$ for $\Gamma$. In addition to its arithmetic nature, when the Fourier expansion coefficients of $g$ lie in a cyclotomic field, special values of $D_k^rg$ at CM points have also remarkable properties described as follows:
\begin{enumerate}
		\item If  $h$ is another modular form of weight $k+2r$ which does not vanish at $z_0$ and has Fourier expansion coefficients lying in a cyclotomic field, then the ratio $\frac{D_k^rg(z_0)}{h(z_0)}$ generates an abelian extension of $L$ (see \cite[Main Thm. III]{Shi75a}).
	\item For a given CM point $z_0$ lying in an imaginary quadratic field $L$, there exists a period $\omega_{z_0}$ of a CM elliptic curve such that $D_k^rg(z_0)/\omega_{z_0}^{k+2r}\in\overline{\mathbb{Q}}$ (see \cite[Thm. 1.27]{Hid13}).
\end{enumerate}
For more details on these results as well as their generalization to Hilbert modular forms, the reader can consult the aforementioned references. 

The main motivation of the present paper is to investigate an analogy in the function field setting for the results of Shimura on nearly holomorphic modular forms given in (1) and (2) above as well as for their relation with quasi-modular forms.
\subsection{Nearly holomorphic Drinfeld modular forms} We start with setting up some notation. Let $\mathbb{F}_q$ be the finite field with $q$ elements where $q$ is a positive power of a prime $p$. We set $A$ to be the polynomial ring $\mathbb{F}_q[\theta]$ where $\theta$ is an indeterminate over $\mathbb{F}_q$, and $K$ to be the fraction field of $A$. We let $\inorm{\cdot}$ be the non-archimedean norm corresponding to the place at infinity normalized so that $\inorm{\theta}=q$. We denote by $K_{\infty}$ the completion of $K$ with respect to $\inorm{\cdot}$ which can be identified with the field of formal Laurent series $\mathbb{F}_q((1/\theta))$. We also set $\mathbb{C}_{\infty}$ to be the completion of a fixed algebraic closure of $K_{\infty}$ in $\mathbb{C}_{\infty}$ and $\overline{K}$ to be a fixed algebraic closure of $K$ in $\mathbb{C}_{\infty}$.

In the classical case, since $\mathbb{C}$ is of degree two over $\mathbb{R}$, one can decompose any element of $\mathbb{C}$ into its real and the imaginary part so that the latter induces a smooth but non-meromorphic function on $\mathbb{H}$ satisfying a certain transformation law under the action of $\SL_2(\mathbb{Z})$ on $\mathbb{H}$. However, since $\mathbb{C}_{\infty}$ is of infinite degree over $K_{\infty}$, one does not have an immediate analogy. Motivated by the work of Franc \cite{Fra11} who sets up a theory of nearly holomorphic modular forms over the $p$-adic numbers, we overcome this problem as follows: Consider a fixed algebraic closure $\overline{\mathbb{F}}_q$ of $\mathbb{F}_q$ and let $\widehat{K_{\infty}^{\text{nr}}}$ be the completion of the maximal unramified extension of $K_{\infty}$ which can be identified with $\overline{\mathbb{F}}_q((1/\theta))$. We define the Frobenius map $\sigma:\widehat{K_{\infty}^{\text{nr}}} \to \widehat{K_{\infty}^{\text{nr}}}$ by
\begin{equation}\label{E:sigmamap}
\sigma\Big(\sum_{i\geq i_0}a_i\theta^{-i}\Big):=\sum_{i\geq i_0}a_i^q\theta^{-i}, \ \ a_i\in \overline{\mathbb{F}}_q.
\end{equation}
Note that $\sigma$ is in fact a continuous field automorphism on $\widehat{K_{\infty}^{\text{nr}}}$.

Let $M$ be an extension of $\widehat{K_{\infty}^{\text{nr}}}$ and $\psi$ be an element in the set of continuous automorphisms of $\mathbb{C}_{\infty}$ fixing $K_{\infty}$. We call $\psi$ \textit{an extension of $\sigma$} if $\psi|_{\widehat{K_{\infty}^{\text{nr}}}}=\sigma$. We further let $M^{\psi}:=\{z\in M \ \ | \ \ \psi(z)=z\}$ and set $\Omega^{\psi}(M):=M\setminus M^{\psi}$. Using the methods developed in \cite[\S 3, 4]{Fra11}, we will see, in Theorem \ref{T:RA}, that $\psi$ is indeed a non-meromorphic function on $\Omega^{\psi}(M)$ (see \S2.2 for further details on non-meromorphic functions). Moreover, in Proposition \ref{Cl:QMF}, we will obtain \textit{an identity theorem for $\Omega^{\psi}(M)$} which allows us to determine the holomorphic functions on the rigid analytic space $\Omega:=\mathbb{C}_{\infty}\setminus K_{\infty}\supset \Omega^{\psi}(M)$ uniquely by its values on $\Omega^{\psi}(M)$.

In what follows, we provide the necessary background which will be used to determine the behavior of our functions around the cusps. Let $\exp_{C}$ be the exponential function of the Carlitz module and $\tilde{\pi}\in \mathbb{C}_{\infty}^{\times}$ be the Carlitz period which serves as an analogue of $2\pi i$ in our setting (see \S2.1 for details). For any non-zero element $N\in A$ and $z\in \Omega$, let $u_{N}(z):=\exp_{C}\left(\frac{\tilde{\pi} z}{N}\right)^{-1}\in \mathbb{C}_{\infty}^{\times}$. Set $\Gamma(1):=\GL_2(A)$ and let $\Gamma$ be a congruence subgroup of $\Gamma(1)$. There exists an ideal $\mathcal{I}$ of $A$ which is maximal among the ideals $\mathcal{J}$ satisfying 
\[
\left\{ \begin{pmatrix} 1& a\\
0 & 1
\end{pmatrix}\ \ | \ \ a\in \mathcal{J}           \right\}\subseteq \Gamma.
\]
We let $m_{\Gamma}\in A$ be the monic generator of $\mathcal{I}$. Furthermore, for any $z\in \Omega$, we set 
\[
\inorm{z}_i:=\inf\{ \inorm{z-a} \ \ : a\in K_{\infty}\}.
\]
A holomorphic function $g:\Omega\to \mathbb{C}_{\infty}$ is called \textit{bounded on vertical lines} if there exists real numbers $R, N>0$ such that for any $z\in \Omega$ satisfying $|z|_i>R$, we have $|f(z)|<N$. Assume that $f:\Omega\to \mathbb{C}_{\infty}$ is a holomorphic function and $f(z+a)=f(z)$ for all $a\in \mathcal{I}=(m_{\Gamma})$. Then, by \cite[Prop. 5.16]{BBP21}, whenever $\inorm{u_{m_{\Gamma}}(z)}$ is sufficiently small, there exists a unique infinite series expansion, called \textit{the $u_{m_{\Gamma}}$-expansion of $f$},
\[
f(z)=\sum_{n=0}^{\infty}a_nu_{m_{\Gamma}}(z)^n, \ \ a_n\in \mathbb{C}_{\infty}
\]
if and only if $f$ is bounded on vertical lines.

Throughout this paper, unless otherwise stated, we will fix a field $M$ over $\widehat{K_{\infty}^{\text{nr}}}$ and an extension $\psi$ of $\sigma$. We define \textit{a nearly holomorphic Drinfeld modular form of weight $k$, type $m\in \mathbb{Z}/(q-1)\mathbb{Z}$ and depth $r$ for $\Gamma$} as a continuous function $F:\Omega^{\psi}(M)\to \mathbb{C}_{\infty}$ satisfying the following properties:
\begin{itemize}
	\item[(i)] For any $\gamma=\begin{pmatrix}
	a_{\gamma}&b_{\gamma}\\c_{\gamma}&d_{\gamma}
	\end{pmatrix}\in \Gamma$, we have
	\[
	F|_{k,m}\gamma (z):=(c_{\gamma}z+d_{\gamma})^{-k}\det(\gamma)^{m}F(\gamma \cdot z):=(c_{\gamma}z+d_{\gamma})^{-k}\det(\gamma)^{m}F\left(\frac{a_{\gamma}z+b_{\gamma}}{c_{\gamma}z+d_{\gamma}}\right)=F(z).
	\]
	\item[(ii)] There exist holomorphic functions $f_0,\dots,f_r$ on $\Omega$ which are bounded on vertical lines and uniquely defined by $F$ satisfying  
	\begin{equation}\label{E:expansion}
	F(z)=\sum_{i=0}^r\frac{f_i(z)}{(\tilde{\pi}z-\tilde{\pi}\psi(z))^{i}},\ \ z\in \Omega^{\psi}(M).
	\end{equation}
	\item[(iii)] For any $\gamma\in \Gamma(1)$ and $z\in \Omega^{\psi}(M)$ such that  $|u_{m_{\gamma^{-1}\Gamma\gamma}}(z)|$ is sufficiently small, we have 
	\[
	(F|_{k,m}\gamma)(z)=\sum_{i=0}^r\frac{1}{(\tilde{\pi}z-\tilde{\pi}\psi(z))^{i}}\sum_{\ell= 0}^{\infty}a_{\gamma,i,\ell}u_{m_{\gamma^{-1}\Gamma\gamma}}(z)^{\ell}, \ \ a_{\gamma,i,\ell}\in \mathbb{C}_{\infty}.
	\]
\end{itemize}
%Furthermore, we say that $F$ is \textit{arithmetic} if $a_i\in \overline{K}$ for all $i$ and \textit{strictly arithmetic} if each $a_i$ lies in a cyclotomic field $K_{\mathfrak{m}}:=K(\exp_{C}(\tilde{\pi}z/\mathfrak{m}))\subset \overline{K}$ for some non-zero $\mathfrak{m}\in A$.

We denote the $\mathbb{C}_{\infty}$-vector space of nearly holomorphic Drinfeld modular forms of weight $k$, type $m$ and depth at most $r$ for $\Gamma$ by $\mathcal{N}_{k}^{m,\leq r}(\Gamma)$. We further define $E_2:\Omega^{\psi}(M)\to \mathbb{C}_{\infty}$  by
\begin{equation}\label{E:efunc}
E_2(z):=E(z)-\frac{1}{\tilde{\pi}z-\tilde{\pi}\psi(z)}, 
\end{equation}
where $E$ is \textit{the false Eisenstein series of weight 2} introduced by Gekeler \cite[\S 8]{Gek88} (see \S3.1 for details). In Corollary \ref{C:1}, we will see that  $E_2\in \mathcal{N}_{2}^{1,\leq 1}(\Gamma(1))$.

Before stating our first theorem, we briefly discuss a fundamental class of holomorphic functions on $\Omega$. In 1980s, inspired by the work of Drinfeld \cite{Dri74}, Goss introduced the notion of \textit{Drinfeld modular forms} in his PhD thesis \cite{Gos80} which serves as an analogue of modular forms in our setting. Analytically, they are the holomorphic functions which satisfy a certain automorphy condition as well as a particular growth property at cusps (see \S2.2 for more details). On the other hand, algebraically, they are given by global sections of an ample invertible sheaf defined on the compactification of the Drinfeld moduli space. Later on, the theory has been developed over the years by  Gekeler in his series of works \cite{Gek84, Gek85, Gek86, Gek88, Gek89}, by  B\"{o}ckle in his habilitation thesis \cite{Boc02} and by Pellarin \cite{Pel21} studying Drinfeld modular forms in a more general setting, just to name a few.

Let $\mathcal{N}(\Gamma)$ be the $\mathbb{C}_{\infty}$-algebra generated by all the nearly holomorphic  Drinfeld modular forms for $\Gamma$. Our first result, restated later as Theorem \ref{Thm:Structure_of_N} and Corollary \ref{C:transE}, is as follows. 
\begin{theorem}\label{T:B} Any element $F\in \mathcal{N}_{k}^{m,\leq r}(\Gamma)$ may be uniquely expressed in the form
	\[
	F=\sum_{0\leq j\leq r}g_jE_2^j
	\]
	where each $g_j$ is a Drinfeld modular form of weight $k-2j$ and type $m-j$ for $\Gamma$. Moreover, the function $E_2$ is transcendental over the ring of Drinfeld modular forms for $\Gamma$. Furthermore, $\mathcal{N}(\Gamma)$ is a finitely generated $\mathbb{C}_{\infty}$-algebra.
\end{theorem}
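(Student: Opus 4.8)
The plan is to set $X(z):=(\tilde{\pi}z-\tilde{\pi}\psi(z))^{-1}$, so that by \eqref{E:efunc} we have $E_2=E-X$, and to treat $X$ as a transcendental ``conjugate coordinate'' over the holomorphic functions on $\Omega$. First I would record the transformation of $X$. Since $\psi$ fixes $K_\infty$, hence the entries of any $\gamma=\begin{pmatrix} a_\gamma & b_\gamma \\ c_\gamma & d_\gamma\end{pmatrix}\in\Gamma(1)$, a direct M\"obius computation gives $\tilde{\pi}(\gamma\cdot z)-\tilde{\pi}\psi(\gamma\cdot z)=\det(\gamma)(c_\gamma z+d_\gamma)^{-1}\psi(c_\gamma z+d_\gamma)^{-1}(\tilde{\pi}z-\tilde{\pi}\psi(z))$, whence $X|_{2,1}\gamma=X-c_\gamma(\tilde{\pi}(c_\gamma z+d_\gamma))^{-1}$; this is the computation underlying $E_2\in\mathcal{N}_2^{1,\leq 1}(\Gamma(1))$ (Corollary \ref{C:1}), which I take as known. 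The key input is the transcendence statement, which I would prove first. Suppose $\sum_j g_jE_2^j=0$ on $\Omega^{\psi}(M)$ with $g_j$ Drinfeld modular forms, not all zero. Substituting $E_2=E-X$ and collecting powers of $X$ gives $\sum_i h_iX^i=0$, where each $h_i=(-1)^i\sum_{j\geq i}\binom{j}{i}g_jE^{j-i}$ is holomorphic on $\Omega$ and in particular $h_r=(-1)^rg_r$. By Theorem \ref{T:RA} and the identity theorem Proposition \ref{Cl:QMF}, the family $\{X^i\}_{i\geq 0}$ is linearly independent over the holomorphic functions on $\Omega$ (equivalently, the expansion \eqref{E:expansion} is unique), so every $h_i$ vanishes; a descending induction from $h_r=0$ forces all $g_j=0$, a contradiction. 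This establishes the transcendence of $E_2$, and with it the linear independence of $\{E_2^j\}_{j\geq 0}$ over the holomorphic functions by the triangularity of the substitution.

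For the structure statement, given $F\in\mathcal{N}_k^{m,\leq r}(\Gamma)$ with its unique expansion $F=\sum_{i=0}^r f_iX^i$ from condition (ii), I would pass to the $E_2$-basis by the triangular identity $X^i=(E-E_2)^i$, obtaining $F=\sum_{j=0}^r g_jE_2^j$ with $g_j:=(-1)^j\sum_{i\geq j}\binom{i}{j}f_iE^{i-j}$. Since $E$ and the $f_i$ are holomorphic on $\Omega$, each $g_j$ is holomorphic on $\Omega$. To see that $g_j$ is modular of weight $k-2j$ and type $m-j$, I apply $|_{k,m}\gamma$ to $F=\sum_j g_jE_2^j$; using multiplicativity of the slash together with $E_2|_{2,1}\gamma=E_2$ yields $F=\sum_j(g_j|_{k-2j,m-j}\gamma)E_2^j$, and comparing with $F=\sum_j g_jE_2^j$ through the linear independence of $\{E_2^j\}$ gives $g_j|_{k-2j,m-j}\gamma=g_j$ for every $j$.

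For holomorphy at the cusps, I fix $\gamma\in\Gamma(1)$ and write $F|_{k,m}\gamma=\sum_j(g_j|_{k-2j,m-j}\gamma)E_2^j=\sum_i\phi_iX^i$, where condition (iii) guarantees that each $\phi_i=\sum_\ell a_{\gamma,i,\ell}u_{m_{\gamma^{-1}\Gamma\gamma}}^\ell$ has a $u_{m_{\gamma^{-1}\Gamma\gamma}}$-expansion with non-negative exponents. Since $E$, being $A$-periodic and holomorphic at $\infty$, has such an expansion at the width $m_{\gamma^{-1}\Gamma\gamma}$ as well, solving the resulting triangular system (top relation $g_r|_{k-2r,m-r}\gamma=(-1)^r\phi_r$, then descending) shows that each $g_j|_{k-2j,m-j}\gamma$ has a non-negative $u$-expansion. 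Hence every $g_j$ is holomorphic on $\Omega$, modular for $\Gamma$, and holomorphic at all cusps, i.e.\ a Drinfeld modular form of the asserted weight and type, and its uniqueness is once more the transcendence of $E_2$. Finite generation then follows formally: by the first part every $\mathcal{N}_k^{m,\leq r}(\Gamma)$ lies in $\mathscr{M}(\Gamma)[E_2]$, where $\mathscr{M}(\Gamma)$ denotes the ring of Drinfeld modular forms for $\Gamma$, while conversely $\mathscr{M}(\Gamma)[E_2]\subseteq\mathcal{N}(\Gamma)$; thus $\mathcal{N}(\Gamma)=\mathscr{M}(\Gamma)[E_2]$. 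As recalled in the introduction, $\mathscr{M}(\Gamma)$ is the section ring of an ample invertible sheaf on the compactified Drinfeld modular curve and is therefore a finitely generated $\mathbb{C}_{\infty}$-algebra, so adjoining the single element $E_2$ keeps it finitely generated.

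The main obstacle is the transcendence/non-meromorphy input: once $\{X^i\}$ (equivalently $\{E_2^j\}$) is known to be linearly independent over the holomorphic functions on $\Omega$ --- which is exactly where Theorem \ref{T:RA} and the identity theorem Proposition \ref{Cl:QMF} do the real work --- the remaining steps are triangular linear algebra over the holomorphic functions together with bookkeeping for the slash operator and the $u$-expansions. The one point genuinely requiring care is the cusp analysis: one must take the $u$-expansions at the correct width $m_{\gamma^{-1}\Gamma\gamma}$ and verify that $E$ contributes only non-negative powers there, so that holomorphy at the cusps is truly inherited by the $g_j$ rather than merely by $F$.
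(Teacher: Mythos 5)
Your proposal is correct, but it reaches the theorem by a genuinely different route than the paper. You work throughout in the single triangular change of basis $X:=(\tilde{\pi}z-\tilde{\pi}\psi(z))^{-1}$, $E_2=E-X$: you get modularity of all the $g_j$ simultaneously from the $\Gamma(1)$-invariance of $E_2$ (Corollary \ref{Cor:1}) together with the linear independence of $\{X^i\}$ over $\mathcal{O}$, and you get the transcendence of $E_2$ over the ring of Drinfeld modular forms directly from that same independence, since $\mathcal{M}(\Gamma)\subset\mathcal{O}$. The paper instead proves, by an induction with binomial identities, an explicit cocycle transformation law for each coefficient $f_i$ separately (Proposition \ref{P:2}), obtains the decomposition by successively peeling off $(-1)^jE_2^jf_j$ from the top (Theorem \ref{Thm:Structure_of_N}), and establishes the transcendence of $E_2$ and the finite generation of $\mathcal{N}(\Gamma)$ only later (Corollary \ref{C:transE}), by transporting the corresponding statements for the quasi-modular form $E$ --- proved in Proposition \ref{P:Grad} by a weight-separation argument with the matrices $\gamma_\alpha$ and zero-counting of polynomials --- through the isomorphism $\iota$ of Theorem \ref{T:Isom}. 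Your route is shorter and self-contained for Theorem \ref{T:B}; what the paper's longer route buys is exactly what the rest of the paper needs: Proposition \ref{P:2}'s explicit formulas feed the quasi-modular theory, and the Proposition \ref{P:Grad} machinery is genuinely necessary for $E$ itself, since your direct transcendence argument is special to $E_2$ --- it rests on the fact that no nonzero polynomial in $X$ over $\mathcal{O}$ can vanish on $\Omega^{\psi}(M)$, which of course fails for the holomorphic function $E\in\mathcal{O}$, so Theorem \ref{T:C} cannot be obtained your way.

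Two small precision points, neither a gap. First, the independence of $\{X^i\}$ over $\mathcal{O}$ is not a formal consequence of Theorem \ref{T:RA} and Proposition \ref{Cl:QMF} alone, as you suggest (those handle degrees $1$ and $0$); it is precisely Theorem \ref{Thm:Transcendence_of_Sigma}, whose proof additionally needs Lemma \ref{L:lin} and an induction via the rescaling $z\mapsto az$, $a\in K_\infty^\times$ --- but the statement you invoke is available in exactly the form you need. Second, in your cusp step, identifying the holomorphic coefficients $\phi_i$ with the series $\sum_{\ell}a_{\gamma,i,\ell}\,u_{m_{\gamma^{-1}\Gamma\gamma}}(z)^{\ell}$ uses uniqueness of the $X$-expansion on a neighborhood of infinity rather than on all of $\Omega^{\psi}(M)$; the paper makes the same identification in Remark \ref{R:1}(ii), so you match its level of rigor, but this is the one place where a fully self-contained treatment would need a word on localizing the independence statement to the region where $\inorm{u_{m_{\gamma^{-1}\Gamma\gamma}}(z)}$ is small. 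The finite-generation endgame is the same in both treatments: $\mathcal{N}(\Gamma)=\mathcal{M}(\Gamma)[E_2]$ plus the known finite generation of $\mathcal{M}(\Gamma)$ (the paper cites [Gos80, Cor.~1.58] and [BBP21, Thm.~11.1(c)] where you cite ampleness of the relevant sheaf).
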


\begin{remark} Combining Corollary \ref{C:1} and Theorem \ref{T:B}, we see that the function $E_2$ serves as an analogue of $G_2$ defined in \eqref{E:Maass} for our setting.
\end{remark}

\subsection{Maass-Shimura operators} Our next goal is to construct a differential operator sharing similar features with $D_k^r$. However, due to the  positive characteristic nature of our setting, a naive analogue of $D_k^r$ does not work. Indeed, the classical $n$-th derivation vanishes trivially when $n$ is divisible by $p$. Hence, we are obliged to use a different operator acting on the space of holomorphic functions, namely \textit{hyperderivatives} (see \S4 for the details). Regarding our aim, for any holomorphic function $f:\Omega\to \mathbb{C}_{\infty}$, we set $\delta_k^rf:=f$ if $r=0$, and for $r\geq 1$, we define
\[
\delta_k^rf:=\sum_{i=0}^r\binom{k+r-1}{i} \frac{\der^{r-i}f}{(\tilde{\pi}\Id-\tilde{\pi}\psi)^i},
\]
where $\der^{\ell}$ is a constant multiple of the $\ell$-th hyperderivative of $f$. One of the basic features of our operator, which we will prove in Proposition \ref{P:3}, is that for any $\gamma\in \Gamma(1)$, it satisfies
\[
\delta_k^r(f)|_{k+2r,m+r}\gamma=\delta_k^r(f|_{k,r}\gamma).
\]
A certain analysis on the behavior of holomorphic functions at cusps of $\Gamma$ and a generalization of the operator $\delta_{k}^r$ yield an action on $\mathcal{N}(\Gamma)$ (Proposition \ref{P:3}). Moreover, one can recover Rankin-Cohen brackets in the positive characteristic case introduced by Uchino and Satoh \cite{US98}, in terms of $\delta_k^r$ (Theorem \ref{Thm:Rankin_Cohen_Braket}). Furthermore, similar methods allow us to define a new sequence of operators acting on the space of Drinfeld modular forms (Theorem \ref{T:DO}). The proof of our results explained in this subsection mainly uses several combinatorial approaches which provide alternative proofs for their classical counterparts.

\subsection{Drinfeld quasi-modular forms} Motivated by the results in \cite{KZ95} and generalizing the work of Bosser and Pellarin  \cite{BP08}, we investigate \textit{Drinfeld quasi-modular forms} for any congruence subgroup $\Gamma \subseteq \Gamma(1)$. In our context, they are the holomorphic functions on $\Omega$ which are also holomorphic at infinity with respect to $\Gamma$ satisfying a certain automorphy condition. In particular, the false Eisenstein series $E$ can be given as an example of such forms (see \S5 for details). Moreover, each Drinfeld modular form can be considered as a Drinfeld quasi-modular form.

Let $\mathcal{Q}\mathcal{M}_{k}^{m,\leq r}(\Gamma)$ be the $\mathbb{C}_{\infty}$-vector space of Drinfeld quasi-modular forms of weight $k$, type $m$ and depth at most $r$ for $\Gamma$. We also set $\mathcal{Q}\mathcal{M}(\Gamma)$ to be the $\mathbb{C}_{\infty}$-algebra generated by all the Drinfeld quasi-modular forms for $\Gamma$. Our next result, restated in \S5 later, can be given as follows.
 
\begin{theorem}\label{T:C} We have $\mathcal{N}_{k}^{m,\leq r}(\Gamma)\cong \mathcal{Q}\mathcal{M}_{k}^{m,\leq r}(\Gamma)$ as $\CC_{\infty}$-vector spaces. In particular, any element $f\in \mathcal{Q}\mathcal{M}_{k}^{m,\leq r}(\Gamma)$ can be uniquely expressed in the form
	\[
	f=\sum_{0\leq j\leq r}g_jE^j
	\]
	where each $g_j$ is a Drinfeld modular form of weight $k-2j$ and type $m-j$ for $\Gamma$.  Moreover, $E$ is transcendental over the  ring of Drinfeld modular forms for $\Gamma$. Furthermore, $\mathcal{Q}\mathcal{M}(\Gamma)$ is a finitely generated $\mathbb{C}_{\infty}$-algebra that is stable under the hyperderivatives.
\end{theorem}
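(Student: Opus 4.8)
The plan is to deduce Theorem~\ref{T:C} from the already-established Theorem~\ref{T:B} by exploiting the close relationship between the false Eisenstein series $E$ and the nearly holomorphic object $E_2$. Recall from \eqref{E:efunc} that $E_2 = E - (\tilde\pi z - \tilde\pi\psi(z))^{-1}$, so $E$ and $E_2$ differ precisely by the non-meromorphic ``imaginary-part'' term. My first step is to construct an explicit $\mathbb{C}_\infty$-linear map $\Phi\colon \mathcal{N}_k^{m,\leq r}(\Gamma)\to \mathcal{Q}\mathcal{M}_k^{m,\leq r}(\Gamma)$. Given $F\in\mathcal{N}_k^{m,\leq r}(\Gamma)$, Theorem~\ref{T:B} writes it uniquely as $F=\sum_{0\le j\le r} g_j E_2^j$ with each $g_j$ a Drinfeld modular form of weight $k-2j$ and type $m-j$; I define $\Phi(F):=\sum_{0\le j\le r} g_j E^j$, that is, I simply replace $E_2$ by $E$. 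Symmetrically, one expands any $f\in\mathcal{Q}\mathcal{M}_k^{m,\leq r}(\Gamma)$ as a polynomial in $E$ over Drinfeld modular forms and replaces $E$ by $E_2$ to land in $\mathcal{N}_k^{m,\leq r}(\Gamma)$. I then check these are mutually inverse $\mathbb{C}_\infty$-linear isomorphisms.

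The substantive content, and what I expect to be the main obstacle, is justifying that $\Phi$ is well-defined and genuinely lands in the space of quasi-modular forms---equivalently, that the $E\leftrightarrow E_2$ substitution respects the automorphy and boundedness conditions. The key mechanism is that both $E$ and $E_2$ transform the same way under $|_{2,1}\gamma$ \emph{up to} the error term coming from the non-meromorphic piece $(\tilde\pi z-\tilde\pi\psi(z))^{-1}$, whose transformation is governed by the cocycle $c_\gamma/(c_\gamma z+d_\gamma)$. Concretely, I would record the transformation laws
\begin{equation}\label{E:transE}
E|_{2,1}\gamma = E + \frac{c_\gamma}{\tilde\pi(c_\gamma z + d_\gamma)}, \qquad
E_2|_{2,1}\gamma = E_2,
\end{equation}
so that on a monomial $g_j E_2^j$ the modular factor $g_j$ absorbs the weight-$(k-2j)$ automorphy while the factor $E_2^j$ is invariant, and after substitution the binomial expansion of $(E + \text{cocycle})^j$ produces exactly the lower-depth correction terms that characterize membership in $\mathcal{Q}\mathcal{M}$. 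Verifying that these correction terms have the required weights, types, and holomorphy at the cusps---using the $u_{m_\Gamma}$-expansions furnished by condition~(iii) and the growth control of the $f_i$---is the technical heart; I would handle it by induction on the depth $r$, peeling off the top-degree term $g_r E^r$ and applying the inductive hypothesis to the remainder.

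Granting the isomorphism, the uniqueness of the expansion $f=\sum_{0\le j\le r} g_j E^j$ and the transcendence of $E$ over the ring of Drinfeld modular forms transfer directly from their $E_2$-counterparts in Theorem~\ref{T:B}: a nontrivial algebraic relation for $E$ would, under $\Phi^{-1}$, yield one for $E_2$, contradicting Theorem~\ref{T:B}; uniqueness is immediate since $\Phi$ is a bijection matching the two expansions coefficient-by-coefficient. Finite generation of $\mathcal{Q}\mathcal{M}(\Gamma)$ as a $\mathbb{C}_\infty$-algebra follows because $\Phi$ assembles into an algebra isomorphism $\mathcal{N}(\Gamma)\cong\mathcal{Q}\mathcal{M}(\Gamma)$ (the $E\leftrightarrow E_2$ substitution is multiplicative on the graded pieces), and $\mathcal{N}(\Gamma)$ is finitely generated by Theorem~\ref{T:B}; explicitly, $\mathcal{Q}\mathcal{M}(\Gamma)$ is generated by $E$ together with a finite generating set for the ring of Drinfeld modular forms for $\Gamma$.

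Finally, for the stability of $\mathcal{Q}\mathcal{M}(\Gamma)$ under the hyperderivatives, the plan is to invoke the Maass--Shimura operators $\delta_k^r$ of \S4, whose defining formula involves the same non-meromorphic denominator $(\tilde\pi\Id - \tilde\pi\psi)$. The idea is that the hyperderivative $\der$ of a Drinfeld modular form, although no longer modular, lands in $\mathcal{Q}\mathcal{M}$, and more precisely that $\der E$ and the hyperderivatives of the modular generators can be re-expressed as polynomials in $E$ over Drinfeld modular forms. I would verify this on generators: since $\mathcal{Q}\mathcal{M}(\Gamma)$ is generated by $E$ and the finitely many modular generators, and since the hyperderivatives obey a Leibniz-type rule (the product rule for hyperderivatives), it suffices to check that $\der$ maps each generator back into $\mathcal{Q}\mathcal{M}(\Gamma)$. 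For the modular generators this is the statement that hyperderivatives of Drinfeld modular forms are quasi-modular, which follows from the transformation law \eqref{E:transE} applied through $\delta_k^r$; for $E$ itself one computes $\der E$ directly and identifies it, via Theorem~\ref{T:B} transported by $\Phi^{-1}$, as an element of $\mathcal{Q}\mathcal{M}(\Gamma)$.
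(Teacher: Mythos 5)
Your substitution map is, extensionally, exactly the paper's isomorphism $\iota$ of Theorem~\ref{T:Isom}: writing $u:=1/(\tilde{\pi}z-\tilde{\pi}\psi(z))$ and expanding $E_2^j=(E-u)^j$, the $u^0$-coefficient of $\sum_j g_jE_2^j$ is precisely $\sum_j g_jE^j$, so your $\Phi$ coincides with the map $F=\sum_i f_i u^i\mapsto f_0$. Thus the overall plan is the paper's plan, and your ``technical heart'' is carried out there by Lemma~\ref{Lem:Uniqueness_of_QM}, Proposition~\ref{Prop:QM_QM} (via the cocycle identity \eqref{E:Claim}), and Proposition~\ref{P:decomp}; one small slip is the sign in your transformation law, since \eqref{E:FE} gives $E|_{2,1}\gamma=E-\tilde{\pi}^{-1}\mathscr{J}(\gamma;z)$. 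The genuine gap is in the transcendence step. In the paper, the transcendence clause of Theorem~\ref{T:B} for $E_2$ (Corollary~\ref{C:transE}) is itself \emph{deduced} from the transcendence of $E$, which is proved directly on the quasi-modular side in Proposition~\ref{P:Grad}(ii) by the $\gamma_\alpha$-polynomial argument; so transporting a putative relation for $E$ through $\Phi^{-1}$ ``to contradict Theorem~\ref{T:B}'' is circular within the paper's logical structure. Moreover, even granting $E_2$'s transcendence as a black box, your $\Phi$ is only defined on the individual graded pieces $\mathcal{N}_k^{m,\leq r}(\Gamma)$, so applying $\Phi^{-1}$ to a relation $\sum_i f_iE^i=0$ with mixed-weight coefficients $f_i\in\mathcal{M}(\Gamma)$ first requires splitting it into pure-weight pieces, i.e.\ the linear-independence statement of Proposition~\ref{P:Grad}(i), which your proposal never supplies. (The circularity is repairable: Theorem~\ref{Thm:Transcendence_of_Sigma} shows $u$ is transcendental over $\mathcal{O}$, and expanding a relation $\sum_j h_j(E-u)^j=0$ in powers of $u$ kills the $h_j$ by downward induction, so $E_2$ is transcendental even over $\mathcal{O}$ --- but this independent argument, or a direct proof as in Proposition~\ref{P:Grad}, must be stated.)

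There is a second concrete error in your hyperderivative step: the reduction ``it suffices to check that $\der$ maps each generator back into $\QM(\Gamma)$'' fails in characteristic $p$. The hyperderivatives are not generated by $\der^1$ under composition: $\der^a\circ\der^b=\binom{a+b}{a}\der^{a+b}$, so the $p$-fold composite of $\der^1$ equals $p!\,\der^p=0$, and stability under $\der^1$ together with the Leibniz rule \eqref{E:product} says nothing about $\der^p$. You must control $\der^n$ of every generator for \emph{all} $n$, and for $E$ in particular ``computing $\der E$ directly'' handles only $n=1$. The repair is the uniform argument the paper uses and which is already available in your framework: the intertwining $\iota(\delta_k^r(F))=\der^r(\iota(F))$ of Theorem~\ref{T:Isom}, combined with Proposition~\ref{P:3}, gives $\der^r(\QM_k^{m,\leq s}(\Gamma))\subset\QM_{k+2r}^{m+r,\leq s+r}(\Gamma)$ for every $r$ simultaneously (e.g.\ $\der^rE=\iota(\delta_2^rE_2)$), with no generator-by-generator analysis needed.
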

\begin{remark} We note that Theorem \ref{T:C} is a generalization of \cite[Thm. 1, Thm. 2]{BP08} for any congruence subgroup of $\Gamma(1)$. Our method is distinguished from the approach of Bosser and Pellarin due to implementing the theory of nearly holomorphic Drinfeld modular forms. 
\end{remark}

\subsection{Special values at CM points} Following the terminology in \cite{BCPW22}, we call a separable quadratic extension of $K$  \textit{a CM field} if the place at infinity does not split. Moreover, we call $z_0\in \Omega$ \textit{a CM point} if $z_0$ generates a CM field.  

For any $N\in A$, we call a finite extension $K_N$ of $K$ \textit{the $N$-th Carlitz cyclotomic field} if it is generated over $K$ by all $\exp_{C}(\tilde{\pi}a/N)\in \mathbb{C}_{\infty}^{\times}$ satisfying $a\in A$ with  $\deg_{\theta}(a)<\deg_{\theta}(N)$ (see \cite[\S12]{Ros02} for more details). Moreover, we let $L_N$ be a certain abelian extension of $L$ where $\infty$ splits completely (see \S6 for details) and  $L^{\text{ab}}$ be the maximal abelian extension of $L$ in $\mathbb{C}_{\infty}$. Our next result, which will be proven in \S6.2, can be seen as a function field analogue of (1) and is stated as follows.

\begin{theorem}\label{T:D} Let $f$ and $g$ be Drinfeld modular forms for $\Gamma$ of weight $k$ and $k+2r$ respectively such that their  $u_{m_{\Gamma}}$-expansion coefficients lie in a cyclotomic extension of $K$. Let $z_0\in \Omega$ be a CM point lying in $L$. Assume that $g(z_0)\neq 0$. Then there exists $N\in A$ such that 
	\[
	\frac{(\delta^r_{k}f)(z_0)}{g(z_0)}\in K_{N}\cdot L_{N}\subset L^{\text{ab}}.
	\]
\end{theorem}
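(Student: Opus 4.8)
The plan is to push $\delta_k^r f$ through the structure theorem, reduce the problem to the CM values of honest Drinfeld modular forms and of the single generator $E_2$, and then invoke CM period theory together with a reciprocity law. First I would apply Theorem~\ref{T:B} to $\delta_k^r f$, which is a nearly holomorphic Drinfeld modular form of weight $k+2r$ and depth $r$ for $\Gamma$, to write it uniquely as $\delta_k^r f=\sum_{j=0}^r g_jE_2^j$ with each $g_j$ a Drinfeld modular form of weight $k+2r-2j$ for $\Gamma$. A preliminary check is that each $g_j$ still has $u_{m_{\Gamma}}$-expansion coefficients in a cyclotomic extension of $K$: the holomorphic parts of $\delta_k^r f$ are the hyperderivatives $\der^{r-i}f$, which by the normalization of \S4 preserve the field of $u_{m_{\Gamma}}$-expansion coefficients, while $E$ has $u_{m_{\Gamma}}$-expansion coefficients in $K$, and the $g_j$ are polynomial combinations of these.

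Next I would pin down $\psi(z_0)$. Since $z_0$ generates the separable quadratic CM field $L/K$ and $\psi$ fixes $K_{\infty}\supseteq K$, the image $\psi(z_0)$ is again a root of the minimal polynomial of $z_0$ over $K$; as $z_0\in\Omega^{\psi}(M)$ forces $\psi(z_0)\neq z_0$, it must be the nontrivial conjugate of $z_0$, so the correction term $(\tilde{\pi}z_0-\tilde{\pi}\psi(z_0))^{-1}$ appearing in $E_2$ is intrinsic to the CM datum. Let $\omega_{z_0}\in\CC_{\infty}^{\times}$ be a period of the rank-two Drinfeld module with CM by an order of $L$ attached to $z_0$. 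The reciprocity law for CM values (recalled in \S6) gives, for any Drinfeld modular form $h$ of weight $w$ whose $u_{m_{\Gamma}}$-expansion coefficients lie in a cyclotomic extension of $K$, that $h(z_0)/\omega_{z_0}^{w}$ lies in $K_N\cdot L_N$ for a suitable $N\in A$. Applied to $g$ and to each $g_j$ this yields $g(z_0)=\alpha\,\omega_{z_0}^{k+2r}$ and $g_j(z_0)=\alpha_j\,\omega_{z_0}^{k+2r-2j}$ with $\alpha,\alpha_j\in K_N L_N$ and $\alpha\neq 0$.

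The main obstacle is the analogous statement for the generator itself, namely that $E_2(z_0)/\omega_{z_0}^2\in K_N\cdot L_N$; here holomorphic CM theory does not apply directly because $E_2$ is only nearly holomorphic. The plan is to use $E_2=E-(\tilde{\pi}\Id-\tilde{\pi}\psi)^{-1}$ and to recognize $E(z_0)$ as carrying the quasi-period of the CM Drinfeld module: by the function field analogue of the Legendre relation furnished by the de Rham pairing between periods and quasi-periods, the quasi-period of a CM Drinfeld module is an algebraic multiple of $\omega_{z_0}$, while the correction $(\tilde{\pi}(z_0-\psi(z_0)))^{-1}$ is precisely the term that renders the combination modular. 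Showing that the resulting algebraic number $\beta:=E_2(z_0)/\omega_{z_0}^2$ lies in the cyclotomic-by-ring-class field $K_N\cdot L_N$ requires applying the reciprocity law to this nearly holomorphic generator, and this is the technical heart of the argument.

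Granting $E_2(z_0)=\beta\,\omega_{z_0}^2$ with $\beta\in K_N L_N$, the conclusion is immediate by substitution, the period powers cancelling because $g$ and $\delta_k^r f$ share the weight $k+2r$:
\[
\frac{(\delta_k^r f)(z_0)}{g(z_0)}=\frac{1}{\alpha}\sum_{j=0}^r\alpha_j\beta^j\in K_N\cdot L_N\subset L^{\text{ab}},
\]
after enlarging $N$ so that the finitely many fields attached to $g$, to the $g_j$, and to $E_2$ all lie in $K_N L_N$. This proves the theorem.
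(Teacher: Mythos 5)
Your skeleton matches the paper's at the top level: the paper's proof of Theorem~\ref{T:D} (via Theorem~\ref{Thm:ClassField_at_CM_points}) also begins by expanding the nearly holomorphic form through Theorem~\ref{Thm:Structure_of_N} as $\sum_n g_n E_2^n$, and you correctly isolate the class-field rationality of $E_2(z_0)$ as the decisive point, and correctly identify $\psi(z_0)$ as the nontrivial conjugate of $z_0$ (consistent with Appendix~A). But the proposal has a genuine gap exactly there: you write that showing $\beta:=E_2(z_0)/\omega_{z_0}^2\in K_N\cdot L_N$ ``is the technical heart of the argument'' and you do not carry it out. Worse, the mechanism you sketch cannot deliver it. First, the claim that the quasi-period of a CM Drinfeld module is an algebraic multiple of $\omega_{z_0}$ is not what the Legendre/de Rham relation gives: in the paper's own \S6.3, $E(z_0)$ involves the quasi-periodic value $F_{\omega_{z_0}\Lambda_{z_0}}(z_0)/\widetilde{\pi}$ as a genuinely independent quantity, and it is only the \emph{combination} $E_2=E-(\widetilde{\pi}\Id-\widetilde{\pi}\psi)^{-1}$ whose CM value collapses to a pure power of $\omega_{z_0}/\widetilde{\pi}$ (Theorem~\ref{Thm:Rationality_at_CM_points}(ii)). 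Second, and more structurally, period-normalization arguments of the type you invoke (e.g.\ \eqref{Eq:E_u at CM pt}, or Theorem~\ref{Thm:Rationality_at_CM_points}) are transcendence statements placing values in $\overline{K}$- or $L^{\text{ab}}$-multiples of $(\omega_{z_0}/\widetilde{\pi})^{w}$; they do not locate the ratio in the \emph{specific} field $K_N\cdot L_N$, and membership in $L^{\text{ab}}$ does not by itself yield membership in some $K_N\cdot L_N$. So even your claimed ``reciprocity law'' for the holomorphic $g$ and $g_j$ is stronger than anything established in \S6, and for $E_2$ it is precisely the missing content.

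The paper closes this gap without periods at all, by a Shimura-style holomorphic surrogate for $E_2$. Let $\rho_{z_0}\in\GL_2(K)$ be the matrix fixing $z_0$ coming from the embedding \eqref{Eq:Embedding}. Proposition~\ref{P:Arith} shows, by an explicit $\Gamma(1)\,U\,V$ factorization of $\rho_{z_0}$ and $u$-expansion computations, that $\Delta|_{q^2-1,0}\rho_{z_0}$ is a Drinfeld modular form for $\Gamma(\mathfrak{w}_{z_0}^3)$ with coefficients in the real cyclotomic subfield $K^+_{\mathfrak{r}_{z_0}}$. Then Lemma~\ref{L:Lem:Shimura's_Lemma} (the analogue of Shimura's Lemma~12.3) sets $\mathfrak{r}=\Delta/(\Delta|_{q^2-1,0}\rho_{z_0})$ and $g_{\rho_{z_0}}=-\widetilde{\pi}^{-1}\der\mathfrak{r}/\mathfrak{r}$, proves $g_{\rho_{z_0}}=E_2|_{2,1}\rho_{z_0}-E_2$ is an honest \emph{holomorphic} weight-$2$ form with cyclotomic coefficients, and evaluates at the fixed point to get $g_{\rho_{z_0}}(z_0)=\mathfrak{w}_{z_0}E_2(z_0)$ with $\mathfrak{w}_{z_0}=(\psi(z_0)-z_0)/z_0\in L^{\times}$. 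Dividing by $E_u^2$ gives a weight-$0$ modular function whose value at $z_0$ lies in $K_{\tilde{\mathfrak{l}}}(\mathfrak{j}(z_0),\mathbf{f}_{1,u}(z_0)\mid u)\subset K_{\tilde{\mathfrak{l}}}\cdot L_{\tilde{\mathfrak{l}}}$ by Lemma~\ref{L:Algebraic_Scalar} and the definition \eqref{Eq:Maximal_Abelian_Extension} of $L_N$ (Proposition~\ref{Thm:Rationality}). The theorem then follows by writing $\mathfrak{f}(z_0)/g(z_0)=\sum_n\bigl(g_nE_u^{2n}/g\bigr)(z_0)\bigl(E_2/E_u^2\bigr)(z_0)^n$, so that every factor is the CM value of a weight-$0$ modular function or a power of the controlled ratio $E_2(z_0)/E_u^2(z_0)$ --- no CM period ever enters. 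To repair your proposal you would need to prove this $\rho_{z_0}$-lemma (or an equivalent holomorphic substitute for $E_2$); the period-theoretic route you outline proves at best the transcendence results of \S6.3, not the class-field statement of Theorem~\ref{T:D}.
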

\begin{remark}
To prove Theorem \ref{T:D}, motivated by the work of Shimura \cite[\S 12]{Shi07}, we analyze the value $(\delta^r_{k}f)(z_0)$ which can be given in terms of the value at $z_0$ of a certain Drinfeld modular form and an element in the function field of a Drinfeld modular curve of a particular level defined explicitly by Gekeler \cite[Chap. VII]{Gek86}. The main difference between the classical setting and our case is that although, classically, for a natural number $n$, the $n$-th cyclotomic field lies in the function field of the modular curve of level $n$ \cite[Thm. 6.6]{Shi71} defined over $\mathbb{Q}$, the function field of the Drinfeld modular curve of level $N$ over $K$ only contains \textit{the real subfield} of $K_{N}$ where $\infty$ splits completely \cite[Chap. VII, Thm. 1.9]{Gek86} (see \S6 for more details). 
\end{remark}

\begin{remark} Note that $\Omega^{\sigma}(\widehat{K_{\infty}^{\text{nr}}})$ only contains  CM points which are inert in a quadratic extension of $K$ (see Remark \ref{R:CM}). By introducing the space $\Omega^{\psi}(M)$ for a quadratic extension $M$ of $K_{\infty}$, we are able to study special values of nearly holomorphic Drinfeld modular forms also at ramified CM points.  In Appendix A, using Artin-Schreier extensions and Kummer theory, we will precisely construct the field $M$ including $z_0$ and the extension $\psi$ of $\sigma$ depending on the parity of the characteristic of $K_{\infty}$ so that the value of a nearly holomorphic Drinfeld modular form at $z_0$ is well-defined.
\end{remark}

Our final result, an analogue of (2) in our setting, concerns the transcendence properties of aforementioned special values (Theorem \ref{Thm:Rationality_at_CM_points} and Corollary \ref{Cor:Rationality_at_CM_points2}). The proof is based on Theorem \ref{T:B} and Theorem \ref{T:C} as well as an interpretation of values at CM points of certain Drinfeld modular forms in terms of a CM period (see \S6.2 and \S6.3 for details).
\begin{theorem}\label{T:E} Let $\alpha_1,\dots, \alpha_\ell$ be CM points in $\Omega$. Then for any Drinfeld modular form $f$ of weight $k$ for $\Gamma$ whose $u_{m_{\Gamma}}$-expansion coefficients lie in $\overline{K}$ and for each $1\leq i \leq \ell$, there exists a CM-period $\omega_{\alpha_i} \in\mathbb{C}_\infty^\times$ such that 
\[
    \delta_{k}^r(f)(\alpha_i)\in\overline{K}\cdot\left(\frac{\omega_{\alpha_i}}{\widetilde{\pi}}\right)^{k+2r}.
\]
Moreover, assume that $\delta_{k}^r(f)(\alpha_i)\neq 0$ for each $i$. Then, $\delta_{k}^r(f)(\alpha_1),\dots,\delta_{k}^r(f)(\alpha_{\ell})$ are algebraically independent over $\overline{K}$ if and only if the CM fields $K(\alpha_1),\dots,K(\alpha_\ell)$ are distinct.
\end{theorem}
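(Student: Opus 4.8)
The plan is to reduce the statement to facts about CM periods of Drinfeld modules by exploiting the structural decomposition in Theorem~\ref{T:B}. Since $\delta_k^r(f)$ is a nearly holomorphic Drinfeld modular form of weight $k+2r$ and depth $r$, Theorem~\ref{T:B} lets me write
\[
\delta_k^r(f) = \sum_{j=0}^r g_j E_2^j,
\]
where each $g_j$ is a Drinfeld modular form of weight $k+2r-2j$ (and appropriate type) for $\Gamma$. First I would check that the hypothesis on $f$ propagates through this decomposition, i.e.\ that each $g_j$ has $u_{m_\Gamma}$-expansion coefficients in $\overline{K}$ up to a fixed power of $\widetilde{\pi}$; this is a bookkeeping step, since the hyperderivatives defining $\delta_k^r$ act on $u$-expansions with coefficients in $\overline{K}$ and the only transcendental scaling comes from explicit powers of $\widetilde{\pi}$.

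For the rationality assertion, I would establish two CM-value inputs and combine them. The first is the statement that, for a CM point $\alpha$, there is a CM period $\omega_\alpha \in \mathbb{C}_\infty^\times$ attached to a Drinfeld module with CM by an order in $K(\alpha)$ such that every weight-$w$ Drinfeld modular form $g$ with $\overline{K}$-rational $u_{m_\Gamma}$-expansion satisfies $g(\alpha) \in \overline{K}\cdot(\omega_\alpha/\widetilde{\pi})^w$; this follows from the interpretation of modular-form values at CM points as algebraic multiples of powers of a period of the corresponding CM Drinfeld module. The second, and more delicate, input is the weight-$2$ value $E_2(\alpha) \in \overline{K}\cdot(\omega_\alpha/\widetilde{\pi})^2$; here one uses Corollary~\ref{C:1} that $E_2 \in \mathcal{N}_2^{1,\leq 1}(\Gamma(1))$ together with the defining identity $E_2 = E - (\widetilde{\pi}z - \widetilde{\pi}\psi(z))^{-1}$, evaluating $\psi(\alpha)$ as the nontrivial conjugate of $\alpha$ in $K(\alpha)$ so that $\widetilde{\pi}(\alpha - \psi(\alpha))$ is itself an algebraic multiple of a CM period. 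Granting these, each summand satisfies $g_j(\alpha)E_2(\alpha)^j \in \overline{K}\cdot(\omega_\alpha/\widetilde{\pi})^{(k+2r-2j)+2j} = \overline{K}\cdot(\omega_\alpha/\widetilde{\pi})^{k+2r}$, and summing gives $\delta_k^r(f)(\alpha) \in \overline{K}\cdot(\omega_\alpha/\widetilde{\pi})^{k+2r}$.

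For the algebraic-independence assertion, the nonvanishing hypothesis and the first part show that each value $\delta_k^r(f)(\alpha_i)$ is a nonzero $\overline{K}$-multiple of $(\omega_{\alpha_i}/\widetilde{\pi})^{k+2r}$. Hence algebraic independence of the values over $\overline{K}$ is equivalent to algebraic independence of $\omega_{\alpha_1}/\widetilde{\pi}, \dots, \omega_{\alpha_\ell}/\widetilde{\pi}$ over $\overline{K}$, since raising to a fixed power and multiplying by nonzero algebraic scalars does not affect algebraic independence. The easy (``only if'') direction is the contrapositive: if $K(\alpha_i) = K(\alpha_j)$ for some $i \neq j$, then the two CM Drinfeld modules are isogenous over $\overline{K}$, so $\omega_{\alpha_i}$ and $\omega_{\alpha_j}$ are $\overline{K}$-proportional, forcing $\delta_k^r(f)(\alpha_i)$ and $\delta_k^r(f)(\alpha_j)$ to be $\overline{K}$-proportional and the full family to be algebraically dependent. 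The substantial (``if'') direction requires the transcendence input that periods of CM Drinfeld modules attached to distinct CM fields are algebraically independent over $\overline{K}$; I would deduce this from the $t$-motivic transcendence machinery (the function-field analogue of the Grothendieck period conjecture), where each CM Drinfeld module contributes a torus motivic Galois group and the product structure over distinct CM fields yields full algebraic independence of the combined period matrix.

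The main obstacle I anticipate is twofold. The delicate analytic point is pinning down the weight-$2$ CM value of $E_2$ and identifying $\widetilde{\pi}(\alpha - \psi(\alpha))$ correctly with a CM period, since $E_2$ is only nearly holomorphic and its value mixes the holomorphic false Eisenstein series $E$ with the explicit non-meromorphic term. The deeper obstacle is the transcendence statement driving the ``if'' direction: one must carefully set up the CM $t$-motives, compute their motivic Galois groups, and verify that distinct CM fields give independent tori so that the periods are genuinely algebraically independent over $\overline{K}$, which is where the full force of the function-field period theory is needed.
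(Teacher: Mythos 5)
Your overall architecture genuinely matches the paper's proof: you decompose $\delta_k^r(f)=\sum_{j}g_jE_2^j$ via Theorem~\ref{Thm:Structure_of_N}, convert CM values of the holomorphic pieces $g_j$ into powers of $\omega_{\alpha}/\tilde{\pi}$ through the Eisenstein series $E_u$ (this is Lemma~\ref{L:Algebraic_Scalar} combined with \eqref{Eq:E_u at CM pt}), reduce algebraic independence of the values to algebraic independence of the normalized periods $\omega_{\alpha_i}/\tilde{\pi}$, and prove the necessity of distinct CM fields by the isogeny argument, exactly as in Corollary~\ref{Cor:Rationality_at_CM_points2}. For the sufficiency direction you do not need to rebuild the $t$-motivic machinery from scratch: the paper simply invokes Chang's theorem \cite[Thm.~2.2.2]{Cha12a}, which states precisely that $\omega_{\alpha_1}/\tilde{\pi},\dots,\omega_{\alpha_\ell}/\tilde{\pi}$ are algebraically independent over $\overline{K}$ when the CM fields are pairwise distinct.

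There is, however, a genuine gap at the step you yourself flag as delicate: the claim $E_2(\alpha)\in\overline{K}\cdot(\omega_\alpha/\tilde{\pi})^2$. Your proposed mechanism --- that $\tilde{\pi}(\alpha-\psi(\alpha))$ ``is itself an algebraic multiple of a CM period'' --- is false. Since $\psi(\alpha)$ is the nontrivial conjugate of $\alpha$ in $K(\alpha)$, one has $\alpha-\psi(\alpha)\in\overline{K}^\times$, so $\tilde{\pi}(\alpha-\psi(\alpha))$ is an algebraic multiple of the \emph{Carlitz} period $\tilde{\pi}$, not of $\omega_\alpha$; and $\omega_\alpha/\tilde{\pi}$ is transcendental by \cite{Cha12a}, so the two are not $\overline{K}$-proportional. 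Thus the non-holomorphic term of $E_2$ contributes an element of $\overline{K}\cdot\tilde{\pi}^{-1}$, while by \cite[Thm.~7.10]{Gek89} and \cite{Cha12b} the holomorphic term satisfies $E(\alpha)\sim_{\overline{K}}(\omega_\alpha/\tilde{\pi})\bigl(F_{\omega_\alpha\Lambda_\alpha}(\alpha)/\tilde{\pi}\bigr)$, i.e.\ it involves the quasi-period. That the difference $E(\alpha)-\bigl(\tilde{\pi}(\alpha-\psi(\alpha))\bigr)^{-1}$ collapses into $\overline{K}\cdot(\omega_\alpha/\tilde{\pi})^2$ is therefore a nontrivial period identity (a Legendre-type relation eliminating the quasi-period), for which your proposal supplies no argument. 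The paper closes exactly this gap by a Shimura-style trick rather than a direct evaluation: Proposition~\ref{P:Arith} shows $\Delta|_{q^2-1,0}\rho_{z_0}$ has coefficients in a real cyclotomic field, Lemma~\ref{L:Lem:Shimura's_Lemma} produces the \emph{holomorphic} weight-$2$ form $g_{\rho_{z_0}}=-\tilde{\pi}^{-1}\der\mathfrak{r}/\mathfrak{r}$ with $\mathfrak{r}=\Delta/(\Delta|_{q^2-1,0}\rho_{z_0})$ and proves the evaluation $g_{\rho_{z_0}}(z_0)=\mathfrak{w}_{z_0}E_2(z_0)$ with $\mathfrak{w}_{z_0}=(\psi(z_0)-z_0)/z_0\in L^\times$, and Proposition~\ref{Thm:Rationality} then yields $E_2(z_0)/E_u(z_0)^2\in L^{\text{ab}}$, whence $E_2(z_0)\in\overline{K}\cdot(\omega_{z_0}/\tilde{\pi})^2$. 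Without this argument (or an explicit quasi-period/Legendre computation in its place), your first assertion --- and with it the reduction driving the independence statement --- does not follow.
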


\begin{remark} Although our setup for nearly holomorphic Drinfeld modular forms is inspired by the results of Franc \cite{Fra11}, there are certain aspects that distinguish our work from \cite{Fra11}. Firstly, unlike the $p$-adic setting, we have an explicit function $E_2\in \mathcal{N}_2^1(\Gamma(1))$ given in \eqref{E:efunc} which is not a Drinfeld modular form. Moreover, we are also able to study values of nearly holomorphic Drinfeld modular forms at arbitrary CM points although Franc requires the prime $p$ being inert in the imaginary quadratic extension where the CM point lies (see \cite[\S3.6]{Fra11} for further details). Furthermore, we also obtain the algebraic independence of special values as explained in Theorem \ref{T:E} which is more general than the $p$-adic analogue given in \cite[Thm. 5.3.1]{Fra11}.
\end{remark}

\begin{remark}
    We note that in the classical case, non-zero values of nearly holomorphic modular forms at CM points can be interpreted as Shimura's period symbols (see \cite[Eq.~32.3(b)]{Shi98} and \cite[Thm.~12.2]{Shi07}). In \cite{BCPW22}, Brownawell, Chang, Papanikolas and Wei introduced an analogue of Shimura's period symbol in the global function field setting. One of the key ingredients in the proof of Theorem~\ref{T:E} is to show that $\delta_{k}^r(f)(\alpha_i)$ is an algebraic multiple of $(\omega_{\alpha_i}/\widetilde{\pi})^{k+2r}$, where $\omega_{\alpha_i}$ is a period of a CM Drinfeld module. This also implies that $\delta_{k}^r(f)(\alpha_i)$ is either zero or a function field counterpart of Shimura's period symbol for a specific CM type in the sense of \cite{BCPW22} (see \cite[Remark~1.3.5]{BCPW22} for related discussions).
\end{remark}

\subsection{Outline of the paper} An outline of the present paper is given as follows. In \S2, after introducing Drinfeld modules, we discuss Drinfeld modular forms and emphasize several examples of them which will be later useful for our work. In \S3, we first aim to discuss particular properties of an extension of $\sigma$ and later on, provide necessary tools to obtain Theorem \ref{T:B}. In \S4, we analyze our Maass-Shimura operators and, by using their properties, we reobtain Rankin-Cohen brackets in Theorem \ref{Thm:Rankin_Cohen_Braket} and, analogous to the work in \cite[Sec. 16.1]{Shi07}, introduce a sequence of operators in \S4.3 preserving the modularity (Theorem \ref{T:DO}). In \S5,  we introduce Drinfeld quasi-modular forms for any given congruence subgroup of $\Gamma(1)$ and provide a proof for Theorem \ref{T:C}.  In \S6, we  prove Theorem \ref{T:D} and Theorem \ref{T:E} and finally in Appendix A, we study extensions of $\sigma$ for quadratic extensions of $\widehat{K_{\infty}^{\text{nr}}}$.

\subsection*{Acknowledgments} The authors are grateful to Gebhard B\"{o}ckle, Chieh-Yu Chang, Simon H\"{a}berli, Mihran Papikian and Jing Yu for many fruitful discussions and useful suggestions. We would like to thank Fu-Tsun Wei for his comments which lead authors to improve the results which appeared in a previous version. The authors are also indebted  Sriram Chinthalagiri Venkata to point out an inconsistency in an earlier version of the paper.   The second author acknowledges support by Deutsche Forschungsgemeinschaft (DFG) through CRC-TR 326 `Geometry and Arithmetic of Uniformized Structures', project number 444845124. The second author was also partially supported by National Center for Theoretical Sciences.

\section{Preliminaries and Background}
In this section, we provide necessary background, without giving proofs, for Drinfeld modules, the rigid analytic space $\Omega$ and Drinfeld modular forms. Our exposition is mainly based on \cite{Gek84}, \cite[Sec. 5]{Gek88}, \cite[Sec. 4]{Gos96},  \cite{GvdP80,SS91} and \cite[Chap. 3, 5]{Pap23}.
\subsection{Drinfeld modules} 
Let $K\subseteq L \subseteq \mathbb{C}_{\infty}$ be a field. We define the twisted polynomial ring $L[\tau]$ subject to the relation $\tau\alpha=\alpha^q\tau$ for all $\alpha\in L$. The ring $L[\tau]$ operates on $L$ by setting for $\varphi :=  b_0+b_1\tau+ \dots + b_r\tau^{\ell}  \in L[\tau]$ and $x \in L$,
\[
\varphi\cdot x:=\varphi(x) := b_0 x +  b_1x^q+ \dots + b_{\ell}x^{q^{\ell}}\in L.
\]
 
Let $r\in\mathbb{Z}_{>0}$. \textit{A Drinfeld module $\phi$ of rank $r$ defined over $L$} is an $\mathbb{F}_q$-algebra homomorphism $\phi:A\to L[\tau]$, which is uniquely determined by 
\[
\phi_\theta:=\theta+g_1\tau+\cdots+g_r\tau^r\in L[\tau],
\]
such that $g_r\neq 0$. For each Drinfeld module $\phi$, there exists a unique $\mathbb{F}_q$-linear power series $\exp_\phi(X)\in \mathbb{C}_\infty\llbracket X\rrbracket$, which is called \textit{the exponential function of $\phi$} such that $\exp_\phi(X)\equiv X~(\text{mod}~X^q\mathbb{C}_\infty\llbracket X\rrbracket)$ and 
\begin{equation}\label{E:funceqexp}
\phi_a(\exp_\phi(X))=\exp_\phi(aX), \ \ a\in A.
\end{equation} 
Moreover, it induces an entire, surjective and $\mathbb{F}_q$-linear function on $\mathbb{C}_\infty$. Note that $\Lambda_\phi:=\ker(\exp_\phi)\subset\mathbb{C}_\infty$ is a free $A$-module of rank $r$. Furthermore, $\Lambda_\phi$ is \emph{strongly discrete} in the sense that, for any finite radius open disc $B\subset\mathbb{C}_\infty$,  $\Lambda_\phi\cap B$ contains only finitely many elements. On the other hand, given a strongly discrete free $A$-module $\Lambda\subset\mathbb{C}_\infty$ of rank $r$, which we simply call \textit{an $A$-lattice of rank $r$}, we can associate an $\mathbb{F}_q$-linear power series 
\[
\exp_{\Lambda}(X):=X\prod_{0\neq\lambda\in\Lambda}\left(1-\frac{X}{\lambda}\right)\in\mathbb{C}_\infty\llbracket X\rrbracket.
\] 
For any $a\in A$, there exists an $\mathbb{F}_q$-algebra homomorphism $\phi^{\Lambda}:A\to\mathbb{C}_\infty[\tau]$ such that the following diagram commutes:
\begin{equation*}\label{Eq:Drinfeld_Modules_Uniformization}
\begin{tikzcd}
0 \arrow[r] & \Lambda \arrow[d, "a"] \arrow[r] & \mathbb{C}_\infty \arrow[d, "a"] \arrow[r, "\exp_{\Lambda}(\cdot)"] & \mathbb{C}_\infty \arrow[d, "\phi^{\Lambda}_a(\cdot)"] \arrow[r] & 0 \\
0 \arrow[r] & \Lambda \arrow[r] & \mathbb{C}_\infty \arrow[r, "\exp_{\Lambda}(\cdot)"] & \mathbb{C}_\infty \arrow[r] & 0.
\end{tikzcd}
\end{equation*}
In fact, due to Drinfeld \cite{Dri74}, the assignment $\Lambda\to\phi^{\Lambda}$ gives a bijection between the set of $A$-lattices of rank $r$ and Drinfeld modules of rank $r$ defined over $\mathbb{C}_\infty$ with the inverse map $\phi\to\Lambda_\phi$ so that $\exp_{\phi}=\exp_{\Lambda_{\phi}}$.

To illustrate the above correspondence, we define  \emph{the Carlitz module} $C:A\to \mathbb{C}_{\infty}[\tau]$ given by  $C_\theta:=\theta+\tau$, which is a rank one Drinfeld module defined over $K$. The corresponding rank one $A$-lattice $\Lambda_C$ is generated over $A$ by \textit{the Carlitz period} $\tilde{\pi}$ given by  
	\[
	\tilde{\pi}:=\theta (-\theta)^{1/(q-1)}\prod_{i=1}^{\infty} (1-\theta^{1-q^i})^{-1}\in \mathbb{C}_{\infty}^{\times},
	\]
	 where $(-\theta)^{1/(q-1)}$ is a fixed $(q-1)$-st root of  $-\theta$. 
	
\subsection{Drinfeld upper half plane} Recall the connected rigid analytic space $\Omega=\mathbb{C}_\infty\setminus K_\infty$. For any $n\in \mathbb{Z}_{\geq 1}$, we set the following affinoid subdomain
\[
\Omega_n:=\{z\in \Omega \ \ | \ \ \inorm{z}\leq q^{n} \text{ and } \inorm{z}_i\geq q^{-n}\}.
\]
Indeed, we have $\Omega_n\subset \Omega_{n+1}$ so that 
$
\Omega=\cup_{n=1}^{\infty}\Omega_n
$  (see \cite[\S1]{SS91}).

We call a function $f:\Omega\to \mathbb{C}_{\infty}$ \textit{holomorphic} if for each $n$, $f|_{\Omega_n}$ can be written as a uniform limit of rational functions whose poles lie outside of $\Omega_n$. We denote by $\mathcal{O}$ the structure sheaf on $\Omega$ and by abuse of notation, we also denote by $\mathcal{O}:=\mathcal{O}(\Omega)$, the ring of $\mathbb{C}_{\infty}$-valued holomorphic functions on $\Omega$. Furthermore, we say that $f$ is \textit{meromorphic} if it can be written as a ratio of two holomorphic functions. Otherwise, $f$ is called \textit{non-meromorphic}.

\subsection{Drinfeld modular forms} Our goal in this subsection is to give some details on Drinfeld modular forms. Firstly, for each $\gamma\in\GL_2(K_\infty)$ and $z\in\Omega$, we define
\[
\gamma\cdot z:=\frac{az+b}{cz+d}\in\Omega, \ \ \gamma=\begin{pmatrix}
a_{\gamma} & b_{\gamma}\\
c_{\gamma} & d_{\gamma}
\end{pmatrix}.
\]
We further set $j(\gamma;z):=cz+d$. For integers $m$ and $k$, the slash operator on a meromorphic function $f:\Omega\to\mathbb{C}_\infty$ is given by
\[
(f|_{k,m}\gamma)(z):=\det(\gamma)^mj(\gamma;z)^{-k}f(\gamma\cdot z).
\]

Recall that $\Gamma(1)=\GL_2(A)$. For $\mathfrak{n}\in A\setminus \mathbb{F}_q$, we define the \emph{principal congruence subgroup} of level $\mathfrak{n}$ by
\[
\Gamma(\mathfrak{n}):=\left\{\begin{pmatrix}
a & b\\
c & d
\end{pmatrix}\in\Gamma(1)\mid a\equiv d \equiv 1 \pmod{\mathfrak{n}}, \ \ b\equiv c \equiv 0 \pmod{\mathfrak{n}}\right\}.
\]
 We call a $\Gamma\leqslant \Gamma(1)$ \emph{a congruence subgroup} if $\Gamma(\mathfrak{n})\subseteq\Gamma$ for some $\mathfrak{n}\in A$. We say that an element $f\in \mathcal{O}$ is \textit{a weak Drinfeld modular form of weight $k$ and type $m\in \mathbb{Z}/(q-1)\mathbb{Z}$ for $\Gamma$ }if for each $\gamma \in \Gamma$, we have $f|_{k,m}\gamma=f$.

Recall the polynomial $m_{\Gamma}$ associated to $\Gamma$ defined in \S1.2 and $u_{m_{\Gamma}}=\exp_{C}\left(\frac{\tilde{\pi}z}{m_{\Gamma}}\right)^{-1}$. Any weak Drinfeld modular form $f$ for $\Gamma$ admits a unique $u_{m_{\Gamma}}$-expansion 
\[
f(z)=\sum_{n\geq n_0}a_nu_{m_{\Gamma}}(z)^n, \ \ a_n\in\mathbb{C}_\infty, \ \  n_0\in \mathbb{Z}
\]
for sufficiently small values of $\inorm{u_{m_{\Gamma}}(z)}$.  We further call $f$ \textit{meromorphic (holomorphic resp.) at infinity with respect to $\Gamma$} if $n_{0}\in \mathbb{Z}_{<0}$ ($n_0\in \mathbb{Z}_{\geq 0}$ resp.). 
\begin{definition}\label{Def:Modular_Forms}
	Let $k\in\mathbb{Z}$ and $m\in\mathbb{Z}/(q-1)\mathbb{Z}$. Consider a field $L\subseteq\mathbb{C}_\infty$ and a congruence subgroup $\Gamma\subseteq\Gamma(1)$.
	\begin{enumerate}
		\item[(i)] A weak Drinfeld modular form $f$ of weight $k$ and type $m$ for $\Gamma$ is called \emph{a Drinfeld modular form} if for each $\alpha\in\Gamma(1)$, the function $f|_{k,m}\alpha$ is holomorphic at infinity with respect to $\alpha^{-1}\Gamma\alpha$. We further call $f$ \textit{a Drinfeld cusp form} if, in the $u_{m_{\alpha^{-1}\Gamma\alpha}}$-expansion of $f|_{k,m}\alpha$, the smallest power of $u_{m_{\alpha^{-1}\Gamma\alpha}}(z)$ is positive. We let $\mathcal{M}(\Gamma)$ be the $\mathbb{C}_{\infty}$-algebra generated by all the Drinfeld modular forms for $\Gamma$. Moreover, we denote by $\mathcal{M}_k^m(\Gamma;L)$ the $L$-vector space generated by all Drinfeld modular forms of weight $k$ and type $m$ for $\Gamma$ whose $u_{m_{\Gamma}}$-expansion have coefficients lying in $L$. We further set 
		\[
		\mathcal{M}_k(L):=\bigcup_{\substack{\Gamma \text{ a congruence }\\\text{ subroup of $\Gamma(1)$}\\m\in \mathbb{Z}/(q-1)\mathbb{Z}}}\mathcal{M}_k^m(\Gamma;L).
		\]
		\item[(ii)]  A meromorphic function $f:\Omega\to \mathbb{C}_{\infty}$ is called a \emph{meromorphic Drinfeld modular form of weight $k$ and type $m$ for $\Gamma$ defined over $L$} if $f=g_1/g_2$ for some $g_1\in\mathcal{M}_{k+k'}^{m+m'}(\Gamma;L)$ and $g_2\in\mathcal{M}_{k'}^{m'}(\Gamma;L)$ with $k'\in \mathbb{Z}_{\geq 1}$ and $m'\in \mathbb{Z}/(q-1)\mathbb{Z}$. We denote by $\mathcal{A}_k^m(\Gamma;L)$ the $L$-vector space generated by meromorphic Drinfeld modular forms of weight $k$ and type $m$ for $\Gamma$ whose $u_{m_{\Gamma}}$-expansion have coefficients lying in $L$. Observe that $\mathcal{M}_k^m(\Gamma;L)\subset \mathcal{A}_k^m(\Gamma;L)$. We further set 
		\[
		\mathcal{A}_k(L):=\bigcup_{\substack{\Gamma \text{ a congruence }\\\text{ subroup of $\Gamma(1)$}\\m\in \mathbb{Z}/(q-1)\mathbb{Z}}}\mathcal{A}_k^m(\Gamma;L)
		\]
		and call any element $f\in\mathcal{A}_0(L)$  \emph{a Drinfeld modular function defined over $L$}.
	\end{enumerate} 
\end{definition}

In what follows, we give various examples of Drinfeld modular forms.
\begin{example}\label{Ex:Drinfeld_Modular_Forms}
	\begin{itemize}
		\item[(i)] Let $z\in\Omega$ and set $\Lambda_z:=Az+A$ which is an $A$-lattice of rank $2$. It corresponds to a rank $2$ Drinfeld module $\phi^{\Lambda_z}$ given by
		\[
		\phi^{\Lambda_z}_\theta:=\theta+\mathfrak{g}(z)\tau+\Delta(z)\tau^2.
		\]
		Then the function $\mathfrak{g}:\Omega\to\mathbb{C}_\infty$ lies in $\mathcal{M}_{q-1}^0(\Gamma(1);K)$. Moreover, $\Delta:\Omega\to \mathbb{C}_{\infty}$ is nowhere vanishing on $\Omega$ and a Drinfeld cusp form lying in $\mathcal{M}_{q^2-1}^0(\Gamma(1);K)$.
		\item[(ii)] Let $N\in A\setminus\mathbb{F}_q$ and $u=(u_1,u_2)\in (N^{-1}A/A)^2\setminus\{(0,0)\}$. We define \emph{the Eisenstein series of level $N$} by 
		\[
		E_{u}(z):=\sum_{\substack{(a,b)\in K^2\\ (a,b)\equiv u\pmod{A^2}}}\frac{1}{a\Tilde{\pi}z+\Tilde{\pi}b}=\frac{1}{\tilde{\pi}}\exp_{\phi^{\Lambda_z}}\left(\frac{u_1z+u_2}{N}\right)^{-1}, \ \ z\in \Omega.
		\]
		The function $E_u:\Omega\to \mathbb{C}_{\infty}$ is indeed a nowhere vanishing holomorphic function on $\Omega$. Moreover, $E_u\in \mathcal{M}_{1}^{0}(\Gamma(N);K_N)$ (see \cite[\S2]{Gek84} for more details).
		\item[(iii)] Let $u=(u_1,u_2)\in(\theta^{-1}A/A)^2$. We call $u$ \emph{monic} if either $u_1\in 1/\theta+A$ and $u_2\in A$ or $u_2\in 1/\theta+A$. We define the function $\mathfrak{h}:\Omega\to\mathbb{C}_\infty$, whose definition is due to Gekeler \cite[\S5]{Gek88}, by
		\[
		\mathfrak{h}(z):=\Tilde{\pi}^{\frac{1-q^2}{q-1}}(-\theta)^{1/(q-1)}\prod_{\substack{u\in(\theta^{-1}A/A)^2\\u~\text{monic}}}E_u(z),
		\]
		Moreover, $\mathfrak{h}$ is a Drinfeld cusp form lying in $\mathcal{M}_{q+1}^1(\Gamma(1);K)$.
	\end{itemize}
\end{example}

\section{Nearly holomorphic Drinfeld modular forms}
Let $M$ be a fixed extension  of $\widehat{K_{\infty}^{\text{nr}}}$ and let $\psi:M\to M$ be a fixed map in the set of continuous $K_{\infty}$-automorphisms of $\mathbb{C}_{\infty}$ such that $\psi|_{\widehat{K_{\infty}^{\text{nr}}}}=\sigma$ where $\sigma$ is given as in \eqref{E:sigmamap}. Our first goal in this section is to analyze properties of the map $\psi$. Such properties and their proofs are mainly based on the work of Franc in \cite{Fra11}. Later on,  we define nearly holomorphic Drinfeld modular forms, provide an explicit example in Corollary \ref{Cor:1} and investigate the $\mathbb{C}_{\infty}$-vector spaces generated by them.
\subsection{The extension $\psi$ of the Frobenius map $\sigma$} We start with proving our next lemma which reveals the behavior of meromorphic functions on $\Omega$ under a certain condition. Whereas it can be proven by using methods in \cite[Lem. 4.3.4]{Fra11}, we, instead, use a different argument and we are indebted to Gebhard B\"{o}ckle to share it with the authors.

\begin{lemma}[{cf.~Franc \cite[Lem. 4.3.4]{Fra11}}]\label{L:lin} Let $f$ be a meromorphic function on $\Omega$ satisfying 
\begin{equation}\label{E:der}
f(\alpha z)=\alpha^kf(z)
\end{equation}
for any $\alpha \in K_{\infty}^{\times}$ and for some $k\in \ZZ_{\geq 0}$. Then $f(z)=\beta z^k$ for some $\beta\in \CC_{\infty}$.  
\end{lemma}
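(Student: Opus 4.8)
The plan is to reduce the statement to the claim that a $K_{\infty}^{\times}$-invariant meromorphic function on $\Omega$ is constant, and then to exploit non-archimedean rigidity. First I would set $g := f/z^k$. Since $0\in K_{\infty}$, the function $z^k$ is holomorphic and nowhere vanishing on $\Omega$, so $g$ is again meromorphic on $\Omega$ (a ratio of holomorphic functions); moreover a direct computation using \eqref{E:der} gives, for every $\alpha\in K_{\infty}^{\times}$,
\[
g(\alpha z)=\frac{f(\alpha z)}{(\alpha z)^k}=\frac{\alpha^k f(z)}{\alpha^k z^k}=g(z),
\]
so $g$ is invariant under the scaling action of $K_{\infty}^{\times}$ on $\Omega$. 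It then suffices to prove that such a $g$ is constant, for then $f=\beta z^k$ with $\beta$ the value of $g$.

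Next I would produce a sufficiently large piece of a single orbit lying inside one affinoid. Fix $z_0\in\Omega$ that is not a pole of $g$ (such a point exists since the polar locus of a meromorphic function lies in the zero set of a nonzero holomorphic function, a proper analytic subset), and set $\beta:=g(z_0)$. Consider the ``unit slice'' of the orbit
\[
S:=\{\alpha z_0 \ : \ \alpha\in K_{\infty}, \ \inorm{\alpha}=1\}.
\]
Using $\inorm{\alpha z_0}=\inorm{\alpha}\,\inorm{z_0}$ together with $\inorm{\alpha z_0}_i=\inorm{\alpha}\,\inorm{z_0}_i$ (which follows immediately from the definition of $\inorm{\cdot}_i$), every point of $S$ has the same norm and imaginary part as $z_0$; hence $S\subseteq\Omega_n$ for any $n$ with $q^{-n}\le\inorm{z_0}_i$ and $q^{n}\ge\inorm{z_0}$. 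Since $\{\alpha\in K_{\infty}:\inorm{\alpha}=1\}$ is infinite and $\alpha\mapsto\alpha z_0$ is injective, $S$ is an infinite subset of the affinoid $\Omega_n$, and by invariance $g\equiv\beta$ on $S$.

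The conclusion would then come from the rigid-analytic identity principle. Writing $g=P/Q$ with $P,Q$ holomorphic on $\Omega$ and $Q\not\equiv 0$, I set $h:=P-\beta Q$, a holomorphic function on $\Omega$. At each $s\in S$ one checks $h(s)=0$: if $Q(s)\neq 0$ then $g(s)=\beta$ forces $P(s)=\beta Q(s)$, while if $Q(s)=0$ then $P(s)=0$ as well since $g$ is finite at $s$. Thus $h$ is holomorphic on the affinoid $\Omega_n$ and vanishes on the infinite set $S\subseteq\Omega_n$. Because a nonzero holomorphic function on $\Omega_n$ has only finitely many zeros, $h$ vanishes identically on $\Omega_n$, and then by connectedness of $\Omega=\bigcup_n\Omega_n$ and analytic continuation $h\equiv 0$ on all of $\Omega$. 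Hence $P=\beta Q$, i.e.\ $g\equiv\beta$, which gives $f(z)=\beta z^k$.

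The main obstacle I anticipate is the non-archimedean identity theorem: unlike the complex case, vanishing on a set with an accumulation point is delicate, so I must argue through the affinoid structure, invoking the fact (Weierstrass preparation for the Tate algebra underlying $\Omega_n$) that a nonzero holomorphic function on $\Omega_n$ has only finitely many zeros, and then propagating the vanishing by connectedness of $\Omega$. A secondary point to verify carefully is precisely that the slice $S$ lands in a single affinoid $\Omega_n$ and is infinite, which is exactly what the two norm identities above are designed to guarantee.
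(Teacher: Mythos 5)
Your proof is correct and is essentially the paper's own argument: the paper likewise picks $z_0\in\Omega$ neither a zero nor a pole of $f$, compares $f$ with the holomorphic function $(z/z_0)^k f(z_0)$ on the orbit $z_0K_{\infty}^{\times}$, and concludes by the rigid-analytic identity principle on the connected space $\Omega$, using that $z_0$ is an accumulation point of its orbit --- the same mechanism as your infinite unit slice $S$ inside a single affinoid $\Omega_n$. The only cosmetic differences are your preliminary reduction to the invariant function $f/z^k$ and your appeal to finiteness of zeros of a nonzero holomorphic function on $\Omega_n$ (plus a brief argument, which you should make explicit, that the polar locus is orbit-invariant or that the finitely many zeros of $Q$ on $\Omega_n$ may be discarded from $S$) in place of the paper's accumulation-point formulation.
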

\begin{proof} We pick an element $z_0\in \Omega$ which is neither a zero nor a pole of $f$. Consider the function $g:\Omega\to \mathbb{C}_{\infty}$ given by 
	\begin{equation}\label{E:gfunc}
	g(z):=\left(\frac{z}{z_0}\right)^kf(z_0).
	\end{equation}
	Clearly, $g\in \mathcal{O}$. On the other hand, by \eqref{E:der}, we have $g\equiv  f$ on the subset $z_0K_{\infty}^{\times}\subset \Omega$. Since $z_0$ is an accumulation point in $ z_0K_{\infty}^{\times}$ and $\Omega$ is connected \cite[Ex. 7.4.9]{FvdP04}, we have $f\equiv g $ on $\Omega$ and it finishes the proof of the lemma.
\end{proof}

Recall that $ M^{\psi}=\{z\in M \ \ | \ \ \psi(z)=z \}$ and  $\Omega^{\psi}(M)=M\setminus M^{\psi}$. We are now ready to prove the first main result of this section.
\begin{theorem}[{cf.~Franc \cite[Lem. 4.3.1]{Fra11}}]\label{T:RA} There exists no meromorphic function $f$ on $\Omega$ such that $f{|_{\Omega^{\psi}(M)}}=\psi$.
\end{theorem}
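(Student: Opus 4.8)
The plan is to argue by contradiction: suppose there exists a meromorphic function $f$ on $\Omega$ with $f|_{\Omega^{\psi}(M)}=\psi$, and derive a structural constraint on $f$ strong enough to be impossible. The guiding idea is that $\psi$, being a field automorphism fixing $K_{\infty}$, behaves very rigidly on scalar multiples from $K_{\infty}^{\times}$, and this rigidity should force $f$ to be essentially linear, after which the fixed-point/additivity structure of $\psi$ collides with meromorphy.

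First I would exploit the $K_{\infty}$-semilinearity of $\psi$ to set up the hypothesis of Lemma~\ref{L:lin}. For $\alpha\in K_{\infty}^{\times}$ and $z\in\Omega^{\psi}(M)$ we have $\psi(\alpha z)=\alpha\psi(z)$ since $\psi$ fixes $K_{\infty}$; thus on the dense test set $\Omega^{\psi}(M)$ the function $f$ satisfies $f(\alpha z)=\alpha f(z)$. The subtlety is that Lemma~\ref{L:lin} requires the functional equation to hold on all of $\Omega$, whereas we only know it on $\Omega^{\psi}(M)$. Here I would invoke the identity theorem for $\Omega^{\psi}(M)$ alluded to in the introduction (the analogue of Proposition~\ref{Cl:QMF}): since $f$ is meromorphic on $\Omega$ and agrees with the relation $f(\alpha z)=\alpha f(z)$ on the ``large'' set $\Omega^{\psi}(M)$, both sides being meromorphic in $z$ for fixed $\alpha$, they must agree on all of $\Omega$. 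Applying Lemma~\ref{L:lin} with $k=1$ then yields $f(z)=\beta z$ for some $\beta\in\mathbb{C}_{\infty}$.

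Having reduced to $f(z)=\beta z$, I would next extract a contradiction from the additivity of $\psi$. Because $\psi$ is additive and fixes $K_{\infty}$, for any $a\in K_{\infty}$ and $z\in\Omega^{\psi}(M)$ we have $\psi(z+a)=\psi(z)+a$, whereas $f(z+a)=\beta z+\beta a$; matching these on $\Omega^{\psi}(M)$ forces $\beta a=a$ for all such $a$, giving $\beta=1$. But then $f(z)=z$ would mean $\psi(z)=z$ for every $z\in\Omega^{\psi}(M)$, directly contradicting the defining property $\psi(z)\neq z$ on $\Omega^{\psi}(M)=M\setminus M^{\psi}$ (equivalently, $\Omega^{\psi}(M)$ is nonempty precisely because $\psi$ is not the identity, which holds since $\sigma$ is a nontrivial Frobenius). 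Thus no such meromorphic $f$ can exist.

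The main obstacle I anticipate is the passage from ``the functional equation holds on $\Omega^{\psi}(M)$'' to ``it holds on all of $\Omega$'': one must be careful that $\Omega^{\psi}(M)$ really is large enough (an accumulation set, or a set whose Zariski/rigid-analytic closure is all of $\Omega$) to apply an identity principle, and that the comparison functions on both sides are genuinely meromorphic so that the rigid-analytic identity theorem applies. A secondary point to verify is that $\Omega^{\psi}(M)$ is nonempty and, more strongly, contains enough points of the form $\alpha z_0$ and $z_0+a$ to run both the scaling and translation arguments; this should follow from $M$ being a proper extension on which $\psi$ acts nontrivially together with $\psi$ fixing $K_{\infty}$. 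Once these density and nontriviality facts are in hand, the algebraic core of the argument is immediate.
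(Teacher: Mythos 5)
Your proof is correct, and it takes a genuinely different route from the paper's. The paper's argument never leaves the affinoid $\mathcal{A}$ defined in \eqref{E:aff}: it uses the structure theorem \cite[Thm.~2.2.9]{FvdP04} to write a meromorphic function as a rational function times a unit, reduces to showing that no polynomial $h$ can make $hf$ holomorphic on $\mathcal{A}$ with $h(z)f(z)=h(z)\psi(z)$ on $\mathcal{A}\cap M$, and runs an induction on $\deg h$ (using the scaling substitution $z\mapsto a_nz$, $a_n=1+\theta^{-n}$, at each stage), with the final contradiction coming from evaluation at points of $\overline{\mathbb{F}}_q$, where $\sigma(z)=z^q$ is incompatible with $f(z)=az$. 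You instead exploit the full strength of the hypothesis $f|_{\Omega^{\psi}(M)}=\psi$ at the outset: $K_\infty$-linearity of $\psi$ gives the exact relation $f(\alpha z)=\alpha f(z)$ on $\Omega^{\psi}(M)$ with no auxiliary denominator (note $\alpha z\in\Omega^{\psi}(M)$ whenever $z$ is, since $\psi(\alpha z)=\alpha\psi(z)\neq\alpha z$), you upgrade it to all of $\Omega$ by an identity principle, apply Lemma~\ref{L:lin} with $k=1$ to get $f(z)=\beta z$, and then use translation-equivariance $\psi(z+a)=\psi(z)+a$ to force $\beta=1$, contradicting the definition of $\Omega^{\psi}(M)$ (which is nonempty, e.g.\ it contains $\overline{\mathbb{F}}_q\setminus\mathbb{F}_q$). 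This buys a shorter argument with no induction; the paper's induction on $\deg h$ is essentially the template that is reused for the stronger Theorem~\ref{Thm:Transcendence_of_Sigma} (no relation $\sum \mathfrak{g}_i\psi^i\equiv 0$), of which Theorem~\ref{T:RA} is the linear case, whereas your argument is tailored to the single-$\psi$ statement. Two points you flagged do need attention but are unproblematic. First, Proposition~\ref{Cl:QMF} appears \emph{after} Theorem~\ref{T:RA} in the paper, but its proof (Gauss norms and reduction on $\Omega_n$) is independent of Theorem~\ref{T:RA}, so your reordering is not circular. Second, Proposition~\ref{Cl:QMF} is stated for holomorphic functions, so to compare the meromorphic functions $f(\alpha z)$ and $\alpha f(z)$ you should clear denominators, say $f=g_1/g_2$, and show $g_1(\alpha z)g_2(z)-\alpha g_1(z)g_2(\alpha z)\equiv 0$; this identity is a priori known only off the discrete zero set of $g_2(z)g_2(\alpha z)$, but since every point of $\Omega^{\psi}(M)$ is an accumulation point of $\Omega^{\psi}(M)$ (again via $a_nz\to z$), continuity propagates the vanishing to all of $\Omega^{\psi}(M)$ and Proposition~\ref{Cl:QMF} applies. (Alternatively, once $f(z)=\beta z$ you could finish even faster, as the paper does: evaluating at two elements $z\in\overline{\mathbb{F}}_q\setminus\mathbb{F}_q$ with distinct $(q-1)$-st powers gives $\beta=z^{q-1}$ twice, a contradiction; your translation argument is equally valid.)
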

\begin{proof} Consider the affinoid subdomain
\begin{equation}\label{E:aff}
    \mathcal{A}:=\{z\in \CC_{\infty}|\ \ \inorm{z}\leq 1,\inorm{z-c}\geq 1,\forall c\in \mathbb{F}_q\}\subset \Omega.
\end{equation}
By \cite[Thm. 2.2.9]{FvdP04}, for any meromorphic function $f$ on $\Omega$ and $z\in \mathcal{A}$, we have $f(z)=g(z)u(z)$ where $g(z)\in \mathbb{C}_{\infty}(z)$ is a rational function and $u$ is a unit in the ring of holomorphic functions on $\mathcal{A}$. Hence, our argument reduces to showing that there exists no non-zero polynomial $h(z)\in \mathbb{C}_{\infty}[z]$ such that $h(z)f(z)$ is a holomorphic function on $\mathcal{A}$ with the property that 
\[
    h(z)f(z)|_{\mathcal{A}\cap M}=h(z)\psi(z).
\]

Assume to the contrary that there exists such a polynomial. We proceed by induction on the degree of $h(z)$. We begin with the case $\deg(h)=0$. Assume that $h(z)=\alpha\in \mathbb{C}_{\infty}^{\times}$. We set $\pi_{\infty}:=1/\theta$ and for any positive integer $n$, consider $a_n:=1+\pi_{\infty}^n\in K_\infty$. Then for any $z\in\mathcal{A}\cap M$, we have
\[
    h(a_nz)f(a_nz)=\alpha\psi(a_nz)=\alpha a_n\psi(z)=a_nh(z)f(z)
\]
as $\psi$ is a $K_{\infty}$-linear function. We pick an element $z_0\in\mathcal{A}\cap M$ which is neither a zero nor a pole of $f$. Consider $g\in \mathcal{O}$ given as in \eqref{E:gfunc}. Note that $g-hf$ vanishes at $a_nz_0$ for any positive integer $n$. Then by \cite[Thm. 2.2.9]{FvdP04} again we conclude that $g-hf$ vanishes identically on $\mathcal{A}$ and thus $f(z)=az$ for some $a\in\mathbb{C}_\infty$. In particular, for any $z\in \mathcal{A}\cap \widehat{K_{\infty}^{\text{nr}}}\subset \mathcal{A}\cap M$, we have $f(z)=\sigma(z)=az$ which leads to a contradiction.

Now let $\deg(h)=n$ and assume that the induction hypothesis holds for $\deg(h)<n$. Set 
\begin{equation}\label{E:fun1}
r(z):=h(z)f(z).
\end{equation} 
Then for any $z\in \mathcal{A}\cap M$ and $\alpha\in K_{\infty}^{\times}$, we have 
\begin{equation}\label{E:fun2}
\alpha h(\alpha z)\psi(z)=r(\alpha z).
\end{equation}
Multiplying both sides of \eqref{E:fun1} by $\alpha^{n+1}$ and subtracting from \eqref{E:fun2} imply
\[
(\alpha^{n+1}h(z)-\alpha h(\alpha z))\psi(z)=\alpha^{n+1}r(z)-r(\alpha z).
\]
Since the polynomial in the left hand side has degree strictly smaller than $n$, by the induction hypothesis, we have $\alpha^{n}h(z)=h(\alpha z)$ and $\alpha^{n+1}r(z)= r(\alpha z)$. By the same argument for the case $n=0$ again, we obtain $h(z)=\mathfrak{a}z^n$ and $r(z)=\mathfrak{b}z^{n+1}$ for some $\mathfrak{a},\mathfrak{b}\in \mathbb{C}_{\infty}$. By \eqref{E:fun1}, we thus have 
$
\mathfrak{b}z^{n+1}=\mathfrak{a}z^n\psi(z)
$
 for all $z\in \mathcal{A}\cap M$. But this implies that $\mathfrak{a}=0$ and hence $h\equiv 0$, which is a contradiction. Thus the proof is finished.
\end{proof}

Let $t_1,\dots,t_m$ be variables over $\mathbb{C}_{\infty}$. We define \textit{the Tate algebra} by the subring of power series in $\mathbb{C}_{\infty}[[t_1,\dots,t_m]]$ satisfying a particular condition:
\begin{multline}
\mathbb{T}_m:=\\
\left\{\sum_{(i_1,\dots,i_m)\in \mathbb{Z}^m_{\geq 0}} c_{(i_1,\dots,i_m)}t_1^{i_1}\dots t_m^{i_m}\in \mathbb{C}_{\infty}[[t_1,\dots,t_m]], \ \  \inorm{c_{(i_1,\dots,i_m)}}\to 0 \ \ \text{as } i_1+\dots +i_m\to \infty     \right\}.
\end{multline}

For any $g=\sum_{(i_1,\dots,i_m)\in \mathbb{Z}^m_{\geq 0}} c_{(i_1,\dots,i_m)}t_1^{i_1}\dots t_m^{i_m}\in \mathbb{T}_m$,  we define \textit{the Gauss norm} $\dnorm{g}$ of $g$ by 
\[
\dnorm{g}:=\max \{\inorm{c_{(i_1,\dots,i_m)}} \ \ | \ \ (i_1,\dots,i_m)\in \mathbb{Z}^m_{\geq 0}  \}.
\]
Note that $\mathbb{T}_m$, equipped with the Gauss norm, forms a Banach algebra. We call $B$ \textit{an affinoid algebra} if there exists an ideal $I$ of $\mathbb{T}_m$ so that $\mathbb{T}_m/I\cong B$ as $\mathbb{C}_{\infty}$-algebras. Therefore, $B$ forms a Banach algebra with respect to the quotient norm denoted by $\dnorm{\cdot}_{B}$. 
 
Let $C(\Omega^{\psi}(M),\CC_{\infty})$ be the ring of $\mathbb{C}_{\infty}$-valued continuous functions on $\Omega^{\psi}(M)$. To prove our next theorem, we need the following proposition.

\begin{proposition}[{cf.~Franc \cite[Prop. 3.4.4]{Fra11}}] \label{Cl:QMF}  The map $\mathcal{O}\to C(\Omega^{\psi}(M),\CC_{\infty})$ which is given by the restriction of any function $f\in \mathcal{O}$ to $\Omega^{\psi}(M)$ is injective.
\end{proposition}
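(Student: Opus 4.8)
The plan is to prove that the $\mathbb{C}_{\infty}$-linear restriction map is injective by showing that any $f\in\mathcal{O}$ with $f|_{\Omega^{\psi}(M)}=0$ vanishes identically on $\Omega$. The essential tool is that $\Omega$ is connected \cite[Ex.~7.4.9]{FvdP04} and that holomorphic functions on $\Omega$ obey an identity principle: a nonzero element of $\mathcal{O}$ cannot have zeros accumulating at an interior point of an affinoid subdomain. Accordingly, the whole argument reduces to exhibiting inside $\Omega^{\psi}(M)$ a sequence of pairwise distinct points converging to a point of $\Omega$.

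First I would observe that $\widehat{K_{\infty}^{\text{nr}}}\setminus K_{\infty}\subseteq\Omega^{\psi}(M)$. Indeed, $\widehat{K_{\infty}^{\text{nr}}}\subseteq M$, and for $z\in\widehat{K_{\infty}^{\text{nr}}}$ we have $\psi(z)=\sigma(z)$; by the formula for $\sigma$ in \eqref{E:sigmamap}, the fixed points of $\sigma$ in $\widehat{K_{\infty}^{\text{nr}}}=\oFq((1/\theta))$ are exactly the series with coefficients in $\mathbb{F}_q$, that is, $K_{\infty}$. Hence $z\in M^{\psi}$ if and only if $z\in K_{\infty}$, which gives the inclusion. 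Next, fix $\zeta\in\oFq\setminus\mathbb{F}_q$, which exists because $\oFq$ is infinite. Setting $\pi_{\infty}:=1/\theta$, for each $n\geq 1$ put $z_n:=\zeta+\pi_{\infty}^{n}\in\widehat{K_{\infty}^{\text{nr}}}$. Since the constant term of $z_n$ is $\zeta\notin\mathbb{F}_q$, we have $z_n\notin K_{\infty}$, so $z_n\in\Omega^{\psi}(M)$; equivalently, $\sigma(z_n)=\zeta^{q}+\pi_{\infty}^{n}\neq z_n$ as $\zeta^{q}\neq\zeta$. The $z_n$ are pairwise distinct with $z_n\to\zeta$, and $\zeta\in\widehat{K_{\infty}^{\text{nr}}}\setminus K_{\infty}\subset\Omega$. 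A direct norm check places $\zeta$ and all $z_n$ in the affinoid $\mathcal{A}$ of \eqref{E:aff}: for $c\in\mathbb{F}_q$ one has $\zeta,\zeta-c\in\oFq^{\times}$, so $\inorm{\zeta}=\inorm{\zeta-c}=1$, and adding $\pi_{\infty}^{n}$ of norm $q^{-n}<1$ leaves these ultrametric maxima unchanged, whence $\inorm{z_n}=\inorm{z_n-c}=1$.

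It then follows that $f$ vanishes at the infinitely many points $z_n\in\mathcal{A}$ accumulating at $\zeta\in\mathcal{A}$. As in the proof of Theorem~\ref{T:RA}, by \cite[Thm.~2.2.9]{FvdP04} every holomorphic function on $\mathcal{A}$ factors as a rational function times a unit, so a nonzero such function has only finitely many zeros on $\mathcal{A}$; therefore $f|_{\mathcal{A}}\equiv 0$. Since $\Omega$ is connected, the identity principle forces $f\equiv 0$ on $\Omega$, establishing injectivity. The main obstacle here is not the analytic step but the construction of the accumulation point: one must exploit that $M$ contains $\widehat{K_{\infty}^{\text{nr}}}$ and that $\sigma$ genuinely moves the constants in $\oFq\setminus\mathbb{F}_q$, which is exactly what makes $\Omega^{\psi}(M)$ rich enough to pin down holomorphic functions on $\Omega$ uniquely.
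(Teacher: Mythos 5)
Your proof is correct, but it follows a genuinely different route from the paper's. You reduce injectivity to an identity principle: you exhibit inside $\Omega^{\psi}(M)$ the explicit sequence $z_n=\zeta+\theta^{-n}$ with $\zeta\in\oFq\setminus\mathbb{F}_q$, check that it lies in the affinoid $\mathcal{A}$ of \eqref{E:aff} and accumulates at $\zeta\in\Omega$, and then invoke the factorization of holomorphic functions on a connected affinoid of $\mathbb{P}^1$ as rational function times unit \cite[Thm. 2.2.9]{FvdP04} to conclude $f|_{\mathcal{A}}\equiv 0$, whence $f\equiv 0$ by connectedness of $\Omega$ — precisely the toolkit the paper deploys in Lemma~\ref{L:lin} and Theorem~\ref{T:RA}, so all your steps are sound (if you want the final globalization fully explicit, apply \cite[Thm. 2.2.9]{FvdP04} on each connected affinoid $\Omega_n\supseteq\mathcal{A}$ rather than appealing to a general identity principle). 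The paper instead proves something quantitatively stronger: for each $n$, after normalizing $\dnorm{h}_{\mathcal{O}(\Omega_n)}=1$ and reducing modulo the ideal of elements of norm less than $1$, the reduction is a nonzero rational function and hence nonvanishing at some point of $\oFq$, so the Gauss norm of $h$ is attained on $\Omega^{\psi}(M)\cap\Omega_n$; this yields an \emph{isometric} embedding $\mathcal{O}(\Omega_n)\hookrightarrow C_{\text{bd}}(\Omega^{\psi}(M)\cap\Omega_n,\mathbb{C}_{\infty})$, and injectivity then glues over the affinoid covering $\{\Omega_n\}$ by the sheaf property. Both arguments exploit the same source of points — constants in $\oFq\setminus\mathbb{F}_q$, which $\sigma$ genuinely moves — but your zero-accumulation argument is the more elementary and self-contained, while the paper's reduction argument, in the spirit of \cite[Prop. 3.4.4]{Fra11}, buys the stronger norm-attainment statement, which is more information than bare injectivity.
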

\begin{proof} For each $n\in \mathbb{Z}_{\geq 1}$, consider $\mathcal{O}(\Omega_n)$ which is given explicitly in \cite[pg. 53]{SS91}. Let $h\in \mathcal{O}(\Omega_{n})$ and assume without loss of generality that $\dnorm{h}_{\mathcal{O}(\Omega_n)}=1$. Then, letting $\bar{h}$ be reduction of $h$ modulo the ideal $\mathfrak{M}\subset \mathbb{T}_m$ consisting of elements whose norm is less than 1, for any $z\in \overline{\mathbb{F}}_q\subset \Omega^{\psi}(M)\cap \Omega_n$, one can write $\bar{h}(z)$ as a rational function of $z$ (see also \cite[Chap. III (1.8.3)]{GvdP80}). Then there exists $z_0\in \overline{\mathbb{F}}_q$ such that $\inorm{h(z_0)}=1$ and this implies that
\[
\dnorm{h}_{\mathcal{O}(\Omega_n)}=\sup \{ \inorm{h(z)}\ \ | \ \ z\in \Omega^{\psi}(M)\cap \Omega_n \}.
\] 
Thus, we have the natural isometric inclusion $\mathcal{O}(\Omega_n)\xhookrightarrow{} C_{\text{bd}}(\Omega^{\psi}(M)\cap \Omega_n,\mathbb{C}_{\infty})$ where the set $C_{\text{bd}}(\Omega^{\psi}(M)\cap \Omega_n,\mathbb{C}_{\infty})$ of bounded and continuous functions on $\Omega^{\psi}(M)\cap \Omega_n$ is equipped with the sup-norm. Now, assume that there exists $f_1, f_2\in \mathcal{O}$ such that $f_1|_{\Omega^{\psi}(M)}=f_2|_{\Omega^{\psi}(M)}$. Then, for each $n\geq 1$,  $f_1|_{\Omega^{\psi}(M)\cap \Omega_n}=f_2|_{\Omega^{\psi}(M)\cap \Omega_n}$. By the above discussion, this implies that $f_1|_{\Omega_n}=f_2|_{\Omega_n}$. Since $\mathcal{O}$ is the structure sheaf of $\Omega$ and $\{\Omega_{n}\}_{n\geq 1}$ an affinoid covering of $\Omega$, we obtain $f_1=f_2$ as desired.
\end{proof}

Our next goal is to establish  an injective ring homomorphism which will be used to define nearly holomorphic Drinfeld modular forms in the next subsection.
\begin{theorem}[{cf.~Franc \cite[Prop.~4.3.3]{Fra11}}] \label{Thm:Transcendence_of_Sigma}
The assignment $X\to \frac{1}{\Id-\psi}$ yields an injective ring homomorphism $\mathcal{O}[X]\hookrightarrow C(\Omega^{\psi}(M),\CC_{\infty})$.
\end{theorem}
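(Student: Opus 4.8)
The homomorphism itself is essentially formal. Restriction of holomorphic functions gives a ring homomorphism $\mathcal{O}\to C(\Omega^{\psi}(M),\CC_{\infty})$, since every $f\in\mathcal{O}$ is continuous and $\Omega^{\psi}(M)\subseteq\Omega$; moreover $\Id-\psi$ is continuous and nowhere vanishing on $\Omega^{\psi}(M)$, so $(\Id-\psi)^{-1}\in C(\Omega^{\psi}(M),\CC_{\infty})$ is well defined. The universal property of the polynomial ring then produces a unique ring homomorphism $\mathcal{O}[X]\to C(\Omega^{\psi}(M),\CC_{\infty})$ with $X\mapsto(\Id-\psi)^{-1}$. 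Thus the whole content of the theorem is \emph{injectivity}, and since $\Id\in\mathcal{O}$ and $(\Id-\psi)^{-1}$ is a unit, this is equivalent to showing that $\psi$, restricted to $\Omega^{\psi}(M)$, is transcendental over the image of $\mathcal{O}$ (a faithful copy of $\mathcal{O}$ by Proposition~\ref{Cl:QMF}).

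To prove transcendence, I would argue by contradiction and choose a relation $\sum_{j=0}^{d}f_{j}(z)\psi(z)^{j}=0$ on $\Omega^{\psi}(M)$ with $f_{j}\in\mathcal{O}$, $f_{d}\neq0$ and $d$ minimal. The case $d=0$ is immediate from Proposition~\ref{Cl:QMF}, so $d\geq1$. The key device is the $K_{\infty}$-homogeneity $\psi(\alpha z)=\alpha\psi(z)$ for $\alpha\in K_{\infty}^{\times}$: since scaling by $\alpha$ preserves both $\Omega$ and $\Omega^{\psi}(M)$ and carries $\mathcal{O}$ into itself, replacing $z$ by $\alpha z$ gives a second relation $\sum_{j}\alpha^{j}f_{j}(\alpha z)\psi(z)^{j}=0$. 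Eliminating the top term $\psi(z)^{d}$ between the two relations yields a relation of degree $\leq d-1$ with coefficients in $\mathcal{O}$; minimality of $d$ forces each coefficient to vanish, i.e. $\alpha^{d}f_{d}(\alpha z)f_{j}(z)=\alpha^{j}f_{d}(z)f_{j}(\alpha z)$ in $\mathcal{O}$ for every $\alpha$ and every $j<d$. Dividing by $f_{d}(z)f_{d}(\alpha z)$ shows that the meromorphic function $g_{j}:=f_{j}/f_{d}$ satisfies $g_{j}(\alpha z)=\alpha^{d-j}g_{j}(z)$, so Lemma~\ref{L:lin} applies (with $k=d-j\geq1$) and gives $g_{j}(z)=\beta_{j}z^{d-j}$ for constants $\beta_{j}\in\CC_{\infty}$; that is, $f_{j}=\beta_{j}z^{d-j}f_{d}$ for all $j$ (with $\beta_{d}=1$).

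Substituting these back, the original relation becomes $f_{d}(z)\,z^{d}\,P\!\left(\psi(z)/z\right)=0$ on $\Omega^{\psi}(M)$, where $P(Y)=\sum_{j=0}^{d}\beta_{j}Y^{j}\in\CC_{\infty}[Y]$ is monic of degree $d\geq1$ and $z\neq0$ on $\Omega$ since $0\in K_{\infty}$. It then remains to evaluate at well-chosen points. For $z_{0}\in\oFq\setminus\FF_{q}$ one has $z_{0}\in\Omega^{\psi}(M)$ and, by \eqref{E:sigmamap}, $\psi(z_{0})=\sigma(z_{0})=z_{0}^{q}$, so $\psi(z_{0})/z_{0}=z_{0}^{q-1}$. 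Since $f_{d}$ has only finitely many zeros on each affinoid $\Omega_{n}$ and $\oFq\setminus\FF_{q}\subset\Omega_{1}$, we obtain $P(z_{0}^{q-1})=0$ for all but finitely many $z_{0}\in\oFq\setminus\FF_{q}$. As $z\mapsto z^{q-1}$ is finite-to-one on the infinite set $\oFq$, the values $z_{0}^{q-1}$ run through an infinite subset of $\CC_{\infty}$, forcing $P$ to have infinitely many roots; hence $P=0$, contradicting $\deg P=d\geq1$.

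The formal first step and the closing evaluation are routine; the crux, which I expect to be the main obstacle, is the middle step that converts a relation with \emph{holomorphic} coefficients into one with \emph{constant} coefficients. This is exactly where the homogeneity of $\psi$, the minimality of $d$, and Lemma~\ref{L:lin} must be combined, and it is the natural generalization of the degree-one statement of Theorem~\ref{T:RA} (recovered as the case $d=1$). The only points needing care are that scaling by $\alpha\in K_{\infty}^{\times}$ genuinely preserves $\mathcal{O}$ and $\Omega^{\psi}(M)$, and that the divisions by $f_{d}$ are legitimate because $\mathcal{O}$ is an integral domain with thin zero sets; both follow from the connectedness of $\Omega$ and the identity principle already invoked in Lemma~\ref{L:lin} and Proposition~\ref{Cl:QMF}.
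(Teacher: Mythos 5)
Your proof is correct and follows essentially the same route as the paper: the same reduction of injectivity to the nonexistence of a polynomial relation in $\psi$ with coefficients in $\mathcal{O}$, the same $K_{\infty}$-scaling and elimination of the top term (your minimal-degree descent playing exactly the role of the paper's induction on the degree $n$), the same appeal to Lemma~\ref{L:lin} to force coefficients of the form $\beta_j z^{d-j}$, and the same final evaluation at points of $\oFq\setminus\FF_q$, where $\psi(z)=z^q$ turns the relation into a polynomial with infinitely many roots. The only cosmetic difference is that you divide by the leading coefficient $f_d$ to pass to meromorphic quotients $g_j=f_j/f_d$, whereas the paper rescales at the outset to a monic relation with meromorphic coefficients (thereby proving the marginally stronger statement that $\psi$ admits no nontrivial relation over the field of meromorphic functions).
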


\begin{proof}
If $\sum_{i=0}^n\frac{f_i}{(\Id-\psi)^{i}}\equiv0 $ for some $f_0,\dots,f_n\in \mathcal{O}$, then we have
    \begin{align*}
        0&=\sum_{i=0}^nf_i(z)(z-\psi(z))^{n-i}=\sum_{j=0}^n\left(\sum_{i=0}^{n-j}(-1)^j\binom{n-i}{j}f_i(z)z^{n-i-j}\right)\psi(z)^j, \ \ z\in \widehat{K_{\infty}^{\text{nr}}}.
    \end{align*}
    Furthermore,  $\sum_{i=0}^{n-j}(-1)^j\binom{n-i}{j}f_i(z)z^{n-i-j}=0$ for $j=0,1,\dots,n$ if and only if $f_i(z)=0$ for $i=0,1,\dots,n$. Therefore, in what follows, we show that there exists no non-trivial relation  $\sum_{i=0}^n\mathfrak{g}_i\psi^{i}\equiv0 $ for meromorphic functions $\mathfrak{g}_0,\dots,\mathfrak{g}_{n}$ on $\Omega$, which is slightly stronger than we aim to prove, and, by the above discussion, it will imply the theorem. Assume to the contrary that such a relation exists. Since, the poles of a meromorphic function on $\Omega$ is discrete, by rescaling and  abuse of notation, we further assume that     
    \begin{equation}\label{Eq:Algebraic_Relations_1}
       \psi(z)^n+\mathfrak{g}_{n-1}(z)\psi(z)^{n-1}+\cdots+\mathfrak{g}_1(z)\psi(z)+\mathfrak{g}_0(z)=0,
    \end{equation}
    for any $z\in \Omega^{\psi}(M)$ outside of the set of the zeros of $\mathfrak{g}_n$. Suppose on the contrary that there exists $j\in \{0,\dots,n-1\}$ such that $\mathfrak{g}_j$ is not identically zero. We proceed by induction on $n$. The case $n=0$ follows from Proposition \ref{Cl:QMF} and the case $n=1$ is an application of Theorem~\ref{T:RA}. We may assume that $n\geq 2$. Replacing $z$ by $az$ for $a\in K_\infty^\times$ and using the fact that $\psi$ is $K_\infty$-linear, \eqref{Eq:Algebraic_Relations_1} now becomes
    \begin{equation}\label{Eq:Algebraic_Relations_2}
        a^n\psi(z)^n+a^{n-1}\mathfrak{g}_{n-1}(az)\psi(z)^{n-1}+\cdots+a\mathfrak{g}_1(az)\psi(z)+\mathfrak{g}_0(az)=0.
    \end{equation}
    Multiplying \eqref{Eq:Algebraic_Relations_1} by $a^n$ and subtracting \eqref{Eq:Algebraic_Relations_2} imply that
    \begin{equation}\label{Eq:Algebraic_Relations_3}
        \sum_{i=0}^{n-1}\left(a^n\mathfrak{g}_i(z)-a^i\mathfrak{g}_i(az)\right)\psi(z)^i=0.
    \end{equation}
    The induction hypothesis on $n$ implies that $\mathfrak{g}_i(az)=a^{n-i}\mathfrak{g}_i(z)$. Then Lemma~\ref{L:lin} shows that $\mathfrak{g}_i(z)=b_iz^{n-i}$ for some $b_i\in\mathbb{C}_\infty$. Substituting it in \eqref{Eq:Algebraic_Relations_1} yields
    \begin{equation}\label{Eq:Algebraic_Relations_4}
        \psi(z)^n+b_{n-1}z\psi(z)^{n-1}+\cdots+b_1z^{n-1}\psi(z)+b_0z^n=0.
    \end{equation}
    If $b_i$ is identically zero for all $0\leq i\leq n-1$, we are done. If not, then take any $z\in \overline{\mathbb{F}}_q\setminus \mathbb{F}_q$ in \eqref{Eq:Algebraic_Relations_4} to obtain
    \begin{equation}\label{Eq:Albegraic_Relations_6}
    z^{qn}+b_{n-1}z^{q(n-1)+1}+\cdots+b_1z^{n-1+q}+b_0z^n=0.
    \end{equation}
    Then, the polynomial in the left hand side of \eqref{Eq:Albegraic_Relations_6} would have infinitely many zeros and and it is a contradiction. This completes the proof.
\end{proof}

\subsection{Basic properties of nearly holomorphic Drinfeld modular forms} For any continuous function $f:\Omega^{\psi}(M)\to\mathbb{C}_\infty$, $\gamma\in\GL_2(K)$ and integers $m,~k$, we define the slash operator 
\[
(f|_{k,m}\gamma)(z):=\det(\gamma)^{m}j(\gamma;z)^{-k}f(\gamma\cdot z).
\]

We fix $\Gamma\leqslant  \Gamma(1)$ to be a congruence subgroup. For convenience, we restate the definition of nearly holomorphic Drinfeld modular forms. 

\begin{definition}\label{D:def} \textit{A nearly holomorphic Drinfeld modular form $F$ of weight $k\in \ZZ_{\geq 0}$, type $m\in \ZZ/(q-1)\ZZ$ and depth $r\in \ZZ_{\geq 0}$ for $\Gamma$} is an element in $C(\Omega^{\psi}(M),\mathbb{C}_{\infty})$ such that
		\begin{itemize}
			\item[(i)] $F|_{k,m}\gamma(z)=F(z)$ for any $\gamma\in \Gamma$.
			\item[(ii)] There exist $f_0,\dots,f_r\in \mathcal{O}$ which are bounded on vertical lines and uniquely defined by $F$ satisfying  
		\begin{equation}\label{E:UE}
		F(z)=\sum_{i=0}^r\frac{f_i(z)}{(\tilde{\pi}z-\tilde{\pi}\psi(z))^{i}},\ \ z\in \Omega^{\psi}(M).
		\end{equation}
		\item[(iii)] For any $\gamma\in \Gamma(1)$ and for any $z$ such that  $|u_{m_{\gamma^{-1}\Gamma\gamma}}(z)|$ is sufficiently small, we have 
		\begin{multline}\label{E:cusp}
		(F|_{k,m}\gamma)(z):=(c_{\gamma}z+d_{\gamma})^{-k}\det(\gamma)^{m}F(\gamma\cdot z)\\=\sum_{i=0}^r\frac{1}{(\tilde{\pi}z-\tilde{\pi}\psi(z))^{i}}\sum_{\ell= 0}^{\infty}a_{\gamma,i,\ell}u_{m_{\gamma^{-1}\Gamma\gamma}}(z)^{\ell}, \ \ a_{\gamma,i,\ell}\in \mathbb{C}_{\infty}.
		\end{multline}
		\end{itemize}
\end{definition}
\begin{remark} \label{R:1}
	\begin{itemize}
		\item[(i)] 	Note that Theorem \ref{Thm:Transcendence_of_Sigma} implies the uniqueness of $f_i$ given in \eqref{E:UE} for each $0\leq i \leq r$. 
		\item[(ii)] For any $\gamma\in \Gamma(1)$ and $0\leq i \leq r$, set
		\[
		\mathfrak{C}_{i,F}(z):=\sum_{\mu=i}^r(-1)^{\mu-i}\binom{\mu}{i}f_i|_{k-2i,m-i}\gamma(z)\left(\frac{c_{\gamma}}{c_{\gamma}z+d_{\gamma}}\right)^{\mu-i}.
		\]
		It is clear that $\mathfrak{C}_i\in \mathcal{O}$. Moreover, combining with the assumption \eqref{E:cusp}, we obtain 
		\[
	(F|_{k,m}\gamma)(z)=\sum_{i=0}^r\frac{1}{(\tilde{\pi}z-\tilde{\pi}\psi(z))^{i}}\mathfrak{C}_{i,F}(z)=\sum_{i=0}^r\frac{1}{(\tilde{\pi}z-\tilde{\pi}\psi(z))^{i}}\sum_{\ell= 0}^{\infty}a_{\gamma,i,\ell}u_{m_{\gamma^{-1}\Gamma\gamma}}(z)^{\ell},
		\]
		 whenever $|u_{m_{\gamma^{-1}\Gamma\gamma}}(z)|$ is sufficiently small.
		 Furthermore, note that if $\inorm{z}_i>R$ for some $R\in \mathbb{R}_{>0}$, then 
		 \[
		 \inorm{c_{\gamma}}R<\inorm{c_{\gamma}}\inorm{z}_i\leq \inorm{c_{\gamma}}\inorm{z+c_{\gamma}^{-1}d}=\inorm{c_{\gamma}z+d_{\gamma}}.
		 \]
		 This implies that $\inorm{c_{\gamma}/c_{\gamma}z+d_{\gamma}}<1/R$. Thus, using \cite[Prop. 5.16]{BBP21}, one can recursively obtain that $f_i|_{k-2i,m-i}\gamma$ is bounded on vertical lines.
	\end{itemize}
\end{remark}
 We denote the $\mathbb{C}_{\infty}$-vector space of nearly holomorphic Drinfeld modular forms of weight $k$, type $m$ and depth at most $r$ for $\Gamma$ by $\mathcal{N}_k^{m, \leq r}(\Gamma)$. For later use, we also consider the $\mathbb{C}_{\infty}$-vector space 
 $
 \mathcal{N}_{k}^{m}(\Gamma):=\bigcup_{r\in \mathbb{Z}_{\geq 0}}\mathcal{N}_{k}^{m,\leq r}(\Gamma).
 $

We continue with proving the following proposition which will be useful to provide an explicit element in $\mathcal{N}_2^{1, \leq 1}(\Gamma(1))$.

\begin{proposition}\label{P:1} Let $K_{\infty}\subseteq L\subseteq \mathbb{C}_{\infty}$ be a field  and $\psi:L\to L$ be a non-trival homomorphism of $K_{\infty}$-algebras and $L^\psi$ be the $K_{\infty}$-algebra of elements in $L$ fixed by $\psi$. Then for any $z\in L \setminus L^\psi$ and $\gamma\in \GL_2(K_{\infty})$, we have 
	\begin{equation}\label{E:sigmafunceq}
	\frac{1}{\gamma\cdot z-\psi( \gamma \cdot z)}=j(\gamma;z)^{2}\det(\gamma)^{-1}\Big(\frac{1}{z-\psi(z)}-\frac{c_{\gamma}}{j(\gamma;z)}\Big).
	\end{equation}
\end{proposition}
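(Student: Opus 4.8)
The plan is to verify identity \eqref{E:sigmafunceq} by a direct computation, exploiting the crucial fact that $\psi$ is a homomorphism of $K_\infty$-algebras, so it fixes all entries $a_\gamma, b_\gamma, c_\gamma, d_\gamma$ of $\gamma \in \GL_2(K_\infty)$. First I would write $\gamma \cdot z = \frac{a_\gamma z + b_\gamma}{c_\gamma z + d_\gamma}$ and compute $\psi(\gamma \cdot z)$. Since $\psi$ is a ring homomorphism fixing $K_\infty$, I get
\[
\psi(\gamma \cdot z) = \frac{a_\gamma \psi(z) + b_\gamma}{c_\gamma \psi(z) + d_\gamma} = \gamma \cdot \psi(z).
\]
This is the key structural observation: $\psi$ commutes with the Möbius action of $\GL_2(K_\infty)$.

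Next I would compute the difference $\gamma \cdot z - \psi(\gamma \cdot z) = \gamma \cdot z - \gamma \cdot \psi(z)$. Putting the two fractions over the common denominator $j(\gamma;z)\, j(\gamma;\psi(z)) = (c_\gamma z + d_\gamma)(c_\gamma \psi(z) + d_\gamma)$ and expanding the numerator, the standard cocycle computation gives
\[
\gamma \cdot z - \gamma \cdot \psi(z) = \frac{(a_\gamma d_\gamma - b_\gamma c_\gamma)(z - \psi(z))}{(c_\gamma z + d_\gamma)(c_\gamma \psi(z) + d_\gamma)} = \frac{\det(\gamma)\,(z - \psi(z))}{j(\gamma;z)\, j(\gamma;\psi(z))}.
\]
Taking reciprocals yields
\[
\frac{1}{\gamma \cdot z - \psi(\gamma \cdot z)} = \frac{j(\gamma;z)\, j(\gamma;\psi(z))}{\det(\gamma)\,(z - \psi(z))}.
\]
It remains to massage the factor $j(\gamma;\psi(z)) = c_\gamma \psi(z) + d_\gamma$ into the desired shape. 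I would write $\psi(z) = z - (z - \psi(z))$, so that
\[
j(\gamma;\psi(z)) = c_\gamma z + d_\gamma - c_\gamma(z - \psi(z)) = j(\gamma;z) - c_\gamma(z-\psi(z)).
\]
Substituting this back and distributing $\frac{j(\gamma;z)}{\det(\gamma)(z-\psi(z))}$ over the two terms gives exactly
\[
\frac{1}{\gamma \cdot z - \psi(\gamma \cdot z)} = \frac{j(\gamma;z)^2}{\det(\gamma)}\left(\frac{1}{z - \psi(z)} - \frac{c_\gamma}{j(\gamma;z)}\right),
\]
which is \eqref{E:sigmafunceq}.

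This computation is entirely routine, so there is no genuine obstacle; the only point requiring care is ensuring all denominators are nonzero, i.e.\ that $z, \psi(z), \gamma \cdot z \in L \setminus L^\psi$ so the expressions make sense. Since $z \notin L^\psi$ by hypothesis, $z - \psi(z) \neq 0$; and because $\psi$ commutes with the $\GL_2(K_\infty)$-action and $\gamma$ is invertible, $\gamma \cdot z \notin L^\psi$ as well, guaranteeing $\gamma\cdot z - \psi(\gamma\cdot z) \neq 0$. Likewise $j(\gamma; z)$ and $j(\gamma;\psi(z))$ are nonzero since $z, \psi(z) \in \Omega$ avoid $K_\infty$ and in particular $-d_\gamma/c_\gamma$. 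The heart of the argument is simply that $\psi$ being a $K_\infty$-algebra homomorphism forces $\psi(\gamma \cdot z) = \gamma \cdot \psi(z)$, after which the identity is the familiar transformation law for the automorphy factor applied to the ``imaginary part'' $z - \psi(z)$.
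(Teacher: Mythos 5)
Your proof is correct, but it takes a genuinely different route from the paper. The paper never writes $\psi(\gamma\cdot z)$ for a general $\gamma$; instead it verifies \eqref{E:sigmafunceq} separately for the generator $\omega=\begin{pmatrix}0&1\\1&0\end{pmatrix}$ (using only $\psi(z^{-1})=\psi(z)^{-1}$) and for upper triangular matrices (using only $K_\infty$-linearity), and then carries out a longer cocycle computation showing that if the identity holds for $\gamma_1$ and $\gamma_2$ it holds for $\gamma_1\gamma_2$, concluding via the fact that $\GL_2(K_\infty)$ is generated by $\omega$ and the upper triangular matrices. You instead use the full homomorphism property at once to get the equivariance $\psi(\gamma\cdot z)=\gamma\cdot\psi(z)$, then apply the standard identity $\gamma\cdot z-\gamma\cdot w=\det(\gamma)(z-w)/\bigl(j(\gamma;z)j(\gamma;w)\bigr)$ with $w=\psi(z)$ and the rewriting $j(\gamma;\psi(z))=j(\gamma;z)-c_\gamma(z-\psi(z))$. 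This is shorter and more conceptual: it isolates the equivariance of $\psi$ as the single structural input, whereas the paper's generator-by-generator induction only ever invokes fragments of the homomorphism property. One small repair: your justification that $j(\gamma;\psi(z))\neq 0$ appeals to ``$z,\psi(z)\in\Omega$ avoid $K_\infty$,'' but the proposition is stated for an abstract field $K_\infty\subseteq L\subseteq\mathbb{C}_\infty$, and nothing guarantees a priori that $\psi(z)\notin K_\infty$. The clean argument is that a non-trivial homomorphism of fields is injective, so $j(\gamma;\psi(z))=\psi(j(\gamma;z))\neq 0$ once $j(\gamma;z)\neq 0$ (and $j(\gamma;z)\neq 0$ follows since $z\notin K_\infty$, as $\psi$ fixes $K_\infty$ while $\psi(z)\neq z$). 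With that cosmetic fix your argument is complete.
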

\begin{proof} If $\omega:=\begin{pmatrix}0 & 1\\
	1&0
	\end{pmatrix}$, then by using $\psi(z^{-1})=\psi(z)^{-1}$, we see that 
	\[
	\frac{1}{\omega\cdot z-\psi(\omega\cdot z)}=\frac{1}{z^{-1}-\psi(z^{-1})}=-\frac{z\psi(z)}{z-\psi(z)}
	\]
	which implies \eqref{E:sigmafunceq}. On the other hand, if $\gamma=\begin{pmatrix}a & b\\
	0&d
	\end{pmatrix}\in \GL_2(K_{\infty})$, by using $K_{\infty}$-linearity of $\psi$, we have
	\[
	\frac{1}{\frac{az+b}{d}-\psi\Big(\frac{az+b}{d}\Big)}=\frac{1}{\frac{az+b}{d}-\frac{b}{d}-\frac{a}{d}\psi(z)}=\frac{d}{a}\frac{1}{z-\psi(z)}=\det(\gamma)^{-1}\frac{d^2}{z-\psi(z)},	
	\]
	which implies \eqref{E:sigmafunceq}. Lastly, assume that \eqref{E:sigmafunceq} holds for $\gamma_1=\begin{pmatrix}a_1 & b_1\\
c_1&d_1
\end{pmatrix}\in \GL_2(K_{\infty})$ and for $\gamma_2=\begin{pmatrix}a_2 & b_2\\
c_2&d_2
\end{pmatrix}\in \GL_2(K_{\infty})$. We claim that \eqref{E:sigmafunceq} also holds for $\gamma_1\gamma_2$. To ease the notation, let us set $\mathcal{Z}:=(c_1a_2+d_1c_2)z+c_1b_2+d_1d_2$. Using the assumptions on $\gamma_1$ and $\gamma_2$, we have
	\begin{align*}
	&\frac{1}{\gamma_1\gamma_2\cdot z -\psi(\gamma_1\gamma_2\cdot z)}\\
	&=\det(\gamma_1)^{-1}\bigg(c_1\Big(\frac{a_2z+b_2}{c_2z+d_2}\Big)+d_1\bigg)\bigg(\frac{c_1\Big(\frac{a_2z+b_2}{c_2z+d_2}\Big)+d_1}{\gamma_2\cdot z-\psi(\gamma_2\cdot z)}-c_1\bigg)\\
	&=\det(\gamma_1)^{-1}\frac{\mathcal{Z}}{c_2z+d_2}\bigg(\frac{\mathcal{Z}}{c_2z+d_2}\Big(\det(\gamma_2)^{-1}(c_2z+d_2)\Big(\frac{c_2z+d_2}{z-\psi(z)}-c_2\Big)\Big)-c_1 \bigg)\\
	&=\det(\gamma_1)^{-1}\frac{\mathcal{Z}}{c_2z+d_2}\Big(\frac{\mathcal{Z}(c_2z+d_2)}{\det(\gamma_2)(z-\psi(z))}-\frac{\mathcal{Z}c_2-c_1\det(\gamma_2)}{\det(\gamma_2)}\Big)\\
	&=\det(\gamma_1)^{-1}\det(\gamma_2)^{-1}\mathcal{Z}\frac{\mathcal{Z}}{z-\psi(z)}-\det(\gamma_1)^{-1}\det(\gamma_2)^{-1}\frac{\mathcal{Z}}{c_2z+d_2}(c_2z+d_2)(c_1a_2+d_1c_2)\\
	&=\det(\gamma_1)^{-1}\det(\gamma_2)^{-1}\mathcal{Z}\Big(\frac{\mathcal{Z}}{z-\psi(z)}-(c_1a_2+d_1c_2)\Big)
	\end{align*} 
	which implies the claim. Since $\GL_2(K_{\infty})$ is generated by upper triangular matrices and $\omega$, we conclude the proof of the proposition by above computations.
\end{proof}

\begin{definition} Let $F$ be an element in $\mathcal{N}_{k}^{m,\leq r}(\Gamma)$ for some $k,r \in \mathbb{\ZZ}_{\geq 0}$, $m\in \mathbb{Z}/(q-1)\mathbb{Z}$ whose unique expansion is given as in \eqref{E:UE}. Let $m_{\Gamma}\in A$ be a monic polynomial given as in \S 1.2 for $\Gamma$ and let $K\subseteq L\subseteq\mathbb{C}_\infty$ be a field. We say that $F$ is \textit{defined over $L$} if the $u_{m_{\Gamma}}$-expansion of each $f_i$ has coefficients in $L$. We denote the $L$-vector space of nearly holomorphic Drinfeld modular forms of weight $k$, type $m$ and depth at most $r$ for $\Gamma$ defined over $L$ by $\mathcal{N}_k^{m, \leq r}(\Gamma;L)$. We also set
		\[
	\mathcal{N}_k(\Gamma;L)=\bigcup_{\substack{m\in \mathbb{Z}/(q-1)\mathbb{Z}\\r\in \mathbb{Z}_{\geq 0}}}\mathcal{N}_k^{m,\leq r}(\Gamma;L), \text{ and } \mathcal{N}_k(L):=\bigcup_{\substack{\Gamma \text{ a congruence }\\\text{ subroup of $\GL_2(K)$}\\ k\in \mathbb{\ZZ}_{\geq 0}}}\mathcal{N}_k(\Gamma;L).
	\]
\end{definition}

Let $E:\Omega\to \CC_{\infty}$ be \textit{the false Eisenstein series} introduced by Gekeler in \cite[Sec. 8]{Gek88} whose $u:=u_{1}$-expansion is given by 
\begin{equation}\label{E:uexp}
E(z)=\frac{1}{\tilde{\pi}}\frac{(\der \Delta)(z)}{\Delta(z)}=\sum_{\substack{a\in A\\ a \text{ monic}}}au(az).
\end{equation}
Moreover, for any $\gamma\in \Gamma(1)$, we have 
\begin{equation}\label{E:FE}
E(\gamma\cdot z)=j(\gamma;z)^2\det(\gamma)^{-1}\left(E(z)-\frac{\tilde{\pi}^{-1}c}{j(\gamma;z)}\right).
\end{equation}
We note that for any $N\in A$ of positive degree, 
\begin{equation}\label{E:uN}
u=\frac{u_N^{\inorm{N}}}{\mathfrak{C}_N(u_N)}\in A[[u_N]]
\end{equation}
where $\mathfrak{C}_a(X):=X^{|a|}C_a(X^{-1})\in A[X]$ for any $a\in A$.

The next corollary, which follows from Proposition \ref{P:1}, \eqref{E:uexp},  \eqref{E:FE} and \eqref{E:uN}, provides an explicit example of nearly holomorphic Drinfeld modular forms.

\begin{corollary}\label{Cor:1}  The function $E_2:\Omega^{\psi}(M)\to \CC_{\infty}$ defined by 
	\[
	E_2(z)=E(z)- \frac{1}{\tilde{\pi}z-\tilde{\pi}\psi(z)}
	\]
is an element in $\mathcal{N}_2^{1,\leq 1}(\Gamma(1);K)$.
\end{corollary}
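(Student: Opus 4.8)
The plan is to verify the three defining conditions of a nearly holomorphic Drinfeld modular form directly from the given data, treating each separately. Write $S(z):=\frac{1}{\tilde\pi z-\tilde\pi\psi(z)}$ so that $E_2=E-S$ on $\Omega^\psi(M)$. The key structural observation is that the transformation law \eqref{E:FE} for $E$ and the functional equation \eqref{E:sigmafunceq} from Proposition \ref{P:1} (applied with $L=M$) have matching "error terms" proportional to $c_\gamma/j(\gamma;z)$, and these are engineered to cancel.

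For condition (i), I would compute $E_2|_{2,1}\gamma$ for $\gamma\in\Gamma(1)$. Applying the slash operator and using \eqref{E:FE} gives $E|_{2,1}\gamma(z)=E(z)-\tilde\pi^{-1}c_\gamma/j(\gamma;z)$. For the $S$ term, Proposition \ref{P:1} yields
\[
\frac{1}{\gamma\cdot z-\psi(\gamma\cdot z)}=j(\gamma;z)^2\det(\gamma)^{-1}\Big(\frac{1}{z-\psi(z)}-\frac{c_\gamma}{j(\gamma;z)}\Big),
\]
so after dividing by $\tilde\pi$ and applying the weight-$2$ type-$1$ slash factor $j(\gamma;z)^{-2}\det(\gamma)$ one finds $S|_{2,1}\gamma(z)=S(z)-\tilde\pi^{-1}c_\gamma/j(\gamma;z)$. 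Subtracting, the two $\tilde\pi^{-1}c_\gamma/j(\gamma;z)$ terms cancel and $E_2|_{2,1}\gamma=E_2$, as required.

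For condition (ii), the displayed formula $E_2=E-S$ is already exactly of the shape \eqref{E:UE} with $r=1$, $f_0=E$ and $f_1=-\tilde\pi^{-1}\cdot\tilde\pi=-1$ (a constant, hence trivially holomorphic and bounded on vertical lines); one must only confirm $E\in\mathcal O$ is bounded on vertical lines, which follows from its $u$-expansion \eqref{E:uexp} since $E$ then extends holomorphically across the cusp. Uniqueness of the $f_i$ is automatic by Remark \ref{R:1}(i), i.e. by Theorem \ref{Thm:Transcendence_of_Sigma}. For condition (iii), I would observe that $E_2|_{2,1}\gamma=E_2$ already holds for all $\gamma\in\Gamma(1)$ by condition (i) (since $\Gamma=\Gamma(1)$), so $\gamma^{-1}\Gamma(1)\gamma=\Gamma(1)$, $m_{\Gamma(1)}=1$, and it suffices to exhibit the $u$-expansion of $E_2$ itself. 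Here $E$ contributes its holomorphic $u$-expansion from \eqref{E:uexp}, giving the $i=0$ summand; the $-S$ term contributes the $i=1$ summand with a single constant coefficient $a_{\gamma,1,0}=-1$. The compatibility \eqref{E:uN} converting between $u$ and $u_N$ is only needed in the general congruence-subgroup setting, so for $\Gamma(1)$ this step is immediate.

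The main obstacle is purely bookkeeping in condition (i): one must track the $\det(\gamma)^m$ and $j(\gamma;z)^{-k}$ factors with the correct weight $k=2$ and type $m=1$ so that the error terms genuinely coincide in sign and magnitude. The conceptual content is entirely in Proposition \ref{P:1} and equation \eqref{E:FE}, both already established, so I expect no real difficulty beyond confirming that the normalization of $S$ by $\tilde\pi$ matches the $\tilde\pi^{-1}$ appearing in \eqref{E:FE}; once the two linear error terms are seen to be identical, every condition follows mechanically.
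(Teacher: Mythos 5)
Your proposal is correct and takes essentially the same route as the paper, whose one-line proof cites exactly your ingredients: Proposition~\ref{P:1} together with \eqref{E:FE} for the cancellation of the two $\tilde{\pi}^{-1}c_{\gamma}/j(\gamma;z)$ error terms in condition (i), and \eqref{E:uexp}, \eqref{E:uN} for conditions (ii)--(iii). One small correction: \eqref{E:uN} is not merely a general-level device --- substituting $z\mapsto az$ and $N=a$ in it gives $u(az)=u(z)^{\inorm{a}}/\mathfrak{C}_a(u(z))\in A[[u]]$, which is precisely what converts the expansion $E=\sum_{a\ \mathrm{monic}}a\,u(az)$ of \eqref{E:uexp} into a genuine power series in $u$ with coefficients in $K$, and hence is already needed at level one to justify condition (iii) and the assertion that $E_2$ is defined over $K$.
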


From now on, to ease the notation, for  $\gamma\in\GL_2(K)$ and $z\in\Omega$, we set  
\[
\mathscr{J}(\gamma;z):=\frac{c_{\gamma}}{c_{\gamma}z+d_{\gamma}}=\frac{c_{\gamma}}{j(\gamma;z)}.
\]

\begin{proposition}\label{P:2}  For any $F=\sum_{i=0}^r\frac{f_i}{(\tilde{\pi}\Id-\tilde{\pi}\psi)^{i}}\in \mathcal{N}_k^{m,\leq r}(\Gamma;L)$, $\gamma\in \Gamma$, $z\in \Omega$ and $0\leq i \leq r$, we have 
	\begin{equation}\label{E:QM1}
	\begin{split}
	f_{r-i}(\gamma \cdot  z)&=j(\gamma;z)^{k-2(r-i)}\det(\gamma)^{-m+r-i}\bigg(f_{r-i}(z)+\mathscr{J}(\gamma;z)\binom{r-i+1}{1}f_{r-i+1}(z)\\
	&\ \  \ \ \ +\mathscr{J}(\gamma;z)^{2}\binom{r-i+2}{2}f_{r-i+2}(z)+\dots +\mathscr{J}(\gamma;z)^{i}\binom{r}{i}f_{r}(z)\bigg).
	\end{split}
	\end{equation}
	In particular, $f_r\in \mathcal{M}_{k-2r}^{m-r}(\Gamma;L)$.
\end{proposition}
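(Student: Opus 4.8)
The plan is to derive \eqref{E:QM1} from the single invariance relation of Definition~\ref{D:def}(i) by expanding both sides as polynomials in a ``depth variable'' and matching coefficients, the crucial point being the transcendence supplied by Theorem~\ref{Thm:Transcendence_of_Sigma}. Write $Y:=(\tilde{\pi}z-\tilde{\pi}\psi(z))^{-1}$ for the depth variable at $z$ and $Y_\gamma:=(\tilde{\pi}(\gamma\cdot z)-\tilde{\pi}\psi(\gamma\cdot z))^{-1}$ for the one at $\gamma\cdot z$. First I would rewrite $F|_{k,m}\gamma=F$ as $F(z)=\det(\gamma)^{m}j(\gamma;z)^{-k}F(\gamma\cdot z)$ and substitute the unique expansion \eqref{E:UE} into both occurrences of $F$, so that the identity becomes $\sum_{i=0}^{r}f_i(z)Y^{i}=\det(\gamma)^{m}j(\gamma;z)^{-k}\sum_{n=0}^{r}f_n(\gamma\cdot z)Y_\gamma^{\,n}$.

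The only genuinely new input is how $Y_\gamma$ relates to $Y$. Applying Proposition~\ref{P:1} with $L=M$ (legitimate since $z\in M\setminus M^{\psi}$ and $\gamma\in\GL_2(K)$) gives
\[
\frac{1}{\gamma\cdot z-\psi(\gamma\cdot z)}=j(\gamma;z)^{2}\det(\gamma)^{-1}\Big(\frac{1}{z-\psi(z)}-\mathscr{J}(\gamma;z)\Big),
\]
and dividing by $\tilde{\pi}$ exhibits $Y_\gamma$ as an affine function of $Y$, namely $Y_\gamma=j(\gamma;z)^{2}\det(\gamma)^{-1}\big(Y-\tilde{\pi}^{-1}\mathscr{J}(\gamma;z)\big)$; inverting, $Y=\det(\gamma)j(\gamma;z)^{-2}Y_\gamma+\tilde{\pi}^{-1}\mathscr{J}(\gamma;z)$. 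I would then substitute this into $\sum_i f_i(z)Y^i$ and expand each $Y^i$ by the binomial theorem, producing on the left a polynomial in $Y_\gamma$ whose coefficients lie in $\mathcal{O}$. Here one should note that $j(\gamma;\cdot)$ is a polynomial, $\mathscr{J}(\gamma;\cdot)$ is holomorphic on $\Omega$ (its pole lies in $K_\infty$), and $f_i\circ\gamma\in\mathcal{O}$ since $\gamma$ preserves $\Omega$; thus both sides are honest elements of $\mathcal{O}[Y_\gamma]$.

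At this stage Theorem~\ref{Thm:Transcendence_of_Sigma} is invoked: because $Y_\gamma$ is a nonzero $\CC_\infty$-multiple of $1/(\Id-\psi)$, the injectivity of $\mathcal{O}[X]\hookrightarrow C(\Omega^{\psi}(M),\CC_\infty)$ forces the coefficients of equal powers of $Y_\gamma$ to coincide. Reading off the coefficient of $Y_\gamma^{\,r-i}$, and reindexing the resulting binomial sum by $\mu=r-i+\ell$ (using $\binom{\mu}{r-i}=\binom{r-i+\ell}{\ell}$), yields exactly \eqref{E:QM1}. For the final assertion I would specialize to $i=0$: the top coefficient gives $f_r(\gamma\cdot z)=j(\gamma;z)^{k-2r}\det(\gamma)^{-m+r}f_r(z)$, i.e.\ $f_r|_{k-2r,m-r}\gamma=f_r$ for all $\gamma\in\Gamma$, so $f_r$ is weakly modular of weight $k-2r$ and type $m-r$. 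Holomorphy of $f_r$ at every cusp then follows from the cusp condition \eqref{E:cusp}: for $i=r$ the quantity $\mathfrak{C}_{r,F}$ of Remark~\ref{R:1}(ii) reduces to $f_r|_{k-2r,m-r}\gamma$, which by \eqref{E:cusp} admits a $u_{m_{\gamma^{-1}\Gamma\gamma}}$-expansion with nonnegative exponents for every $\gamma\in\Gamma(1)$; and $f_r$ is defined over $L$ because $F$ is. Hence $f_r\in\mathcal{M}_{k-2r}^{m-r}(\Gamma;L)$.

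The computation is elementary once the variables are set up, so the main obstacle is conceptual rather than technical: the entire coefficient-matching step is valid only because $1/(\Id-\psi)$ (equivalently $Y$) is transcendental over $\mathcal{O}$, which is exactly Theorem~\ref{Thm:Transcendence_of_Sigma}; without it the $f_i$ would not even be well defined as functions of $F$. A secondary point requiring care is to confirm that every coefficient appearing in the expansion is holomorphic on all of $\Omega$, not merely continuous on $\Omega^{\psi}(M)$, so that the theorem is applicable; this is what the remarks on $j(\gamma;\cdot)$, $\mathscr{J}(\gamma;\cdot)$ and $f_i\circ\gamma$ above are meant to secure.
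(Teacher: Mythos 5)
Your proposal is correct, and it reorganizes the paper's argument in a genuinely different way while using the same three ingredients (Proposition~\ref{P:1}, Theorem~\ref{Thm:Transcendence_of_Sigma}, and the cusp condition \eqref{E:cusp}). Writing, as you do, $Y:=(\tilde{\pi}z-\tilde{\pi}\psi(z))^{-1}$ and $Y_\gamma:=(\tilde{\pi}(\gamma\cdot z)-\tilde{\pi}\psi(\gamma\cdot z))^{-1}$: the paper expands the powers $Y_\gamma^{\,n}$ occurring in $F(\gamma\cdot z)$ in terms of $Y$ and matches coefficients of $Y$, so each equation mixes $f_{r-i}(\gamma\cdot z)$ with all the higher-index terms; it therefore solves the resulting triangular system by induction on $i$, which forces it to prove the auxiliary combinatorial identity \eqref{E:0}, and at the end it must extend \eqref{E:QM1} from $\Omega^{\psi}(M)$ to all of $\Omega$ by a separate appeal to the identity theorem (Proposition~\ref{Cl:QMF}). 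By inverting the affine relation and expanding $F(z)$ in powers of $Y_\gamma$ instead, you decouple the system: each $Y_\gamma$-coefficient equation isolates a single $f_{r-i}(\gamma\cdot z)$, so the closed formula drops out of one binomial expansion, with no induction, no analogue of \eqref{E:0}, and with the extension to $\Omega$ built in, since the coefficients are matched as elements of $\mathcal{O}$. What each approach buys: the paper's recursion is closer to how one discovers that the $f_i$ behave like the components of a quasi-modular form (compare Proposition~\ref{Prop:QM_QM}), while yours is shorter and makes the triangular structure transparent.

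Two small points deserve tightening, neither fatal. First, your justification for coefficient matching is misstated: as a function of $z$, $Y_\gamma$ is \emph{not} a $\CC_\infty$-multiple of $1/(\Id-\psi)$, but the affine image $j(\gamma;z)^{2}\det(\gamma)^{-1}\bigl(Y-\tilde{\pi}^{-1}\mathscr{J}(\gamma;z)\bigr)$ with coefficients in $\mathcal{O}$ and with slope a unit of $\mathcal{O}$. The fix is one line: substitute this expression back to convert your identity into one in $\mathcal{O}[Y]$, where Theorem~\ref{Thm:Transcendence_of_Sigma} applies verbatim, and the invertibility of the affine change of variable over $\mathcal{O}$ transfers the coefficientwise vanishing back to the $Y_\gamma$-expansion (alternatively, change variables $w=\gamma\cdot z$, noting that $\GL_2(K)$ preserves $\Omega^{\psi}(M)$ by Proposition~\ref{P:1}). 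Second, carried out literally, Proposition~\ref{P:1} gives $Y=\det(\gamma)j(\gamma;z)^{-2}Y_\gamma+\tilde{\pi}^{-1}\mathscr{J}(\gamma;z)$, so your binomial expansion produces $(\tilde{\pi}^{-1}\mathscr{J}(\gamma;z))^{\ell}$ where \eqref{E:QM1} prints $\mathscr{J}(\gamma;z)^{\ell}$; this discrepancy in $\tilde{\pi}$-powers is present in the paper's own proof as well (test the case $i=1$ on $E_2$ against \eqref{E:FE}), so you reproduce the paper's statement up to its own normalization convention rather than committing an error. Your treatment of the final assertion — weak modularity of $f_r$ from the top coefficient, holomorphy at the cusps from \eqref{E:cusp} together with uniqueness of the expansion, rationality over $L$ by definition — coincides with the paper's.
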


\begin{proof} First, let us assume that $z\in \Omega^{\psi}(M)$. Note that for any $1\leq i \leq r$
	\begin{equation}\label{E:0}
	\sum_{j=1}^i(-1)^j\binom{r-(i-j)}{j}\binom{r}{i-j}=-\binom{r}{i}.
	\end{equation}
	Indeed, multiplying both sides of \eqref{E:0} by $\frac{i!}{r!}$ implies that \eqref{E:0} is equivalent to showing 
	\begin{equation}
	\sum_{j=1}^i (-1)^j\binom{i}{j}=-1,
	\end{equation}
	which follows from letting $x=1$ in $(x-1)^i=\sum_{j=0}^i(-1)^j\binom{i}{j}x^{i-j}$.
	
	Now, to prove the proposition, we do induction on $i$. When $i=0$, using Proposition \ref{P:1} and comparing the coefficient of $\Big(\frac{1}{\tilde{\pi}z-\tilde{\pi}\psi(z)}\Big)^r$ in both sides of 
	\begin{equation}\label{E:QM2}
	F(\gamma \cdot z)=j(\gamma;z)^k\det(\gamma)^{-m}F(z)
	\end{equation}
	imply that 
	\[
	f_r(\gamma\cdot z)=j(\gamma;z)^{k-2r}\det(\gamma)^{-m+r}f_r(z).
	\]	
	Now assuming that the proposition holds for non-negative integers smaller than $i$, we will show that \eqref{E:QM1} holds for $i$. To do this, let $\mathcal{L}$ be the coefficient of $\Big(\frac{1}{\tilde{\pi}z-\tilde{\pi}\psi(z)}\Big)^{r-i}$ on the left hand side of \eqref{E:QM2}. Using Proposition \ref{P:1} and the induction hypothesis, we obtain
	\begin{align*}
	\mathcal{L}&=f_{r-i}(\gamma\cdot z)j(\gamma;z)^{2r-2i}\det(\gamma)^{-r+i}\\
	&\ \ +j(\gamma;z)^{2r-2i+2}\det(\gamma)^{-r+i-1}\binom{r-i+1}{1}(-1)\mathscr{J}(\gamma;z)j(\gamma;z)^{k-2r+2i-2}\det(\gamma)^{-m+r-i+1}\\ &\ \ \times\left(f_{r-i+1}(z)+\mathscr{J}(\gamma;z)\binom{r-i+2}{1}f_{r-i+2}(z)+\dots+\mathscr{J}(\gamma;z)^{i-1}\binom{r}{i-1}f_r(z)\right)+\cdots+\\
	&\ \ +j(\gamma;z)^{2r-2i+4}\det(\gamma)^{-r+i-2}\binom{r-i+2}{2}(-1)^2\mathscr{J}(\gamma;z)^{2}j(\gamma;z)^{k-2r+2i-4}\det(\gamma)^{-m+r-i+2}\\ &\ \ \times\left(f_{r-i+2}(z)+\mathscr{J}(\gamma;z)\binom{r-i+3}{1}f_{r-i+3}(z)+\dots+\mathscr{J}(\gamma;z)^{i-2}\binom{r}{i-2}f_r(z)\right)+\cdots+\\
	&\ \ +j(\gamma;z)^{2r}\det(\gamma)^{-r}\binom{r}{i}(-1)^i\mathscr{J}(\gamma;z)^{i}j(\gamma;z)^{k-2r}\det(\gamma)^{-m+r}f_r(z).
	\end{align*}
	Now equating $\mathcal{L}$ to the coefficient of $\Big(\frac{1}{\tilde{\pi}z-\tilde{\pi}\psi(z)}\Big)^{r-i}$ on the right hand side of \eqref{E:QM2} and multiplying both sides of \eqref{E:QM2} by $j(\gamma;z)^{2i-2r}\det(\gamma)^{r-i}$, we get
	\begin{align*}
	&j(\gamma;z)^{k-2r+2i}\det(\gamma)^{-m+r-i}f_{r-i}(z)\\
	&=f_{r-i}(\gamma \cdot z)+j(\gamma;z)^{k-2r+2i}\det(\gamma)^{-m+r-i}\mathscr{J}(\gamma;z)(-1)\binom{r-i+1}{1}f_{r-i+1}(z)\\
	&\ \ \ \ +\mathscr{J}(\gamma;z)^{2}\Big(-\binom{r-i+1}{1}\binom{r-i+2}{1}+\binom{r-i+2}{2} \Big)f_{r-i+2}(z)+\cdots \\
	&\ \ \ \ + \mathscr{J}(\gamma;z)^{i}\Big(-\binom{r-i+1}{1}\binom{r}{i-1}+\dots + (-1)^i\binom{r}{i}\binom{r}{0}\Big)f_r(z)\bigg).
	\end{align*}
	Finally, using \eqref{E:0}, the above calculation finishes the proof of \eqref{E:QM1} for any element in $\Omega^{\psi}(M)$. We now show \eqref{E:QM1} for any $z\in \Omega$. To do this, for any $1\leq i \leq r$ and $\gamma \in \Gamma$, set 
	\[
	\mathcal{G}_{i,\gamma}(z):=j(\gamma;z)^{-k+2(r-i)}\det(\gamma)^{m-r+i}f_{r-i}(\gamma\cdot z).
	\]
	By \eqref{E:QM1}, we know that 
	\begin{equation}\label{E:extension}
	\mathcal{G}_{i,\gamma}(z)=f_{r-i}(z)+\mathscr{J}(\gamma;z)\binom{r-i+1}{1}f_{r-i+1}(z) +\dots +\mathscr{J}(\gamma;z)^{i}\binom{r}{i}f_{r}(z), \ \ z\in \Omega^{\psi}(M).
	\end{equation}
	Since both sides of \eqref{E:extension} are elements in $\mathcal{O}$, by Proposition \ref{Cl:QMF}, \eqref{E:extension} holds for all $z\in \Omega$. Moreover, for each $\alpha\in \Gamma(1)$, the holomorphy at infinity of $f_r$ with respect to $\alpha^{-1}\Gamma\alpha$ follows from the condition (iii) in Definition \ref{D:def} as well as the uniqueness of $u_{m_{\alpha^{-1}\Gamma\alpha}}$-expansion and it finishes the proof of the proposition.
\end{proof}

Now we are ready to state a fundamental property of nearly holomorphic Drinfeld modular forms.

\begin{theorem}\label{Thm:Structure_of_N}
    Any element $F$ of $\mathcal{N}_k^{m,\leq r}(\Gamma;L)$ may be uniquely expressed of the form
      \begin{equation}\label{E:Repr1}
  F=\sum_{0\leq j\leq r}g_jE_2^j
  \end{equation}
  with $g_j\in\mathcal{M}_{k-2j}^{m-j} (\Gamma;L)$. 
\end{theorem}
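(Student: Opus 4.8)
The plan is to prove existence and uniqueness separately, both by induction on the depth $r$, using Proposition~\ref{P:2} as the engine. The key structural fact I would exploit is that the top coefficient $f_r$ of any $F\in\mathcal{N}_k^{m,\leq r}(\Gamma;L)$ is genuinely a Drinfeld modular form of weight $k-2r$ and type $m-r$, which is exactly the ``In particular'' clause of Proposition~\ref{P:2}. This lets me peel off the highest power of $E_2$.

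For existence, I would first compute the $E_2$-expansion of the ``model'' form $g\,E_2^{\,r}$ where $g=f_r\in\mathcal{M}_{k-2r}^{m-r}(\Gamma;L)$. Since $E_2(z)=E(z)-\tfrac{1}{\tilde\pi z-\tilde\pi\psi(z)}$ by Corollary~\ref{Cor:1}, expanding $E_2^{\,r}$ by the binomial theorem writes $g\,E_2^{\,r}$ in the form $\sum_{i=0}^r \tfrac{h_i}{(\tilde\pi z-\tilde\pi\psi(z))^i}$ with each $h_i\in\mathcal{O}$ (a polynomial in $E$ times $g$, hence holomorphic and bounded on vertical lines), and with top coefficient $h_r = (-1)^r g = (-1)^r f_r$. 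The point is that $g\,E_2^{\,r}$ lies in $\mathcal{N}_k^{m,\leq r}(\Gamma;L)$ — its automorphy follows because $E_2$ transforms with the same inhomogeneous cocycle that Proposition~\ref{P:1} and \eqref{E:FE} encode, and the cusp condition \eqref{E:cusp} follows from the $u$-expansion \eqref{E:uexp} together with \eqref{E:uN}. Then $F-(-1)^r f_r\,E_2^{\,r}$ is a nearly holomorphic form whose top coefficient (the coefficient of $(\tilde\pi z-\tilde\pi\psi(z))^{-r}$) vanishes, so by Theorem~\ref{Thm:Transcendence_of_Sigma} — which guarantees that the representation \eqref{E:UE} is unique and hence that a vanishing top coefficient genuinely drops the depth — it lies in $\mathcal{N}_k^{m,\leq r-1}(\Gamma;L)$. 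Applying the induction hypothesis to this difference yields the desired expansion with $g_r=(-1)^r f_r$.

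For uniqueness, suppose $\sum_{0\leq j\leq r} g_j E_2^{\,j}=0$ with $g_j\in\mathcal{M}_{k-2j}^{m-j}(\Gamma;L)$, not all zero. Each $g_j$ is holomorphic, so by Theorem~\ref{Thm:Transcendence_of_Sigma} the function $E_2$, equivalently $\tfrac{1}{\mathrm{Id}-\psi}$, is transcendental over $\mathcal{O}$; thus a nontrivial polynomial relation over $\mathcal{O}$ forces all $g_j=0$, a contradiction. This is really the same transcendence statement that will be recorded as the ``moreover'' clause of Theorem~\ref{T:B}, and it is the clean way to get uniqueness for free. Alternatively, and more in the spirit of the depth induction, one reads off from the vanishing top coefficient that $(-1)^r g_r=0$ and then descends; either route works.

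The main obstacle I anticipate is the existence step, specifically verifying that the candidate form $f_r\,E_2^{\,r}$ — and inductively each partial sum — actually satisfies \emph{all three} defining conditions of Definition~\ref{D:def}, rather than just producing an expansion of the shape \eqref{E:UE}. The automorphy condition (i) is the delicate point: one must check that multiplying the modular form $f_r$ by the non-modular $E_2^{\,r}$ produces precisely the inhomogeneous transformation law built into $F$, and this requires matching the cocycle $\mathscr{J}(\gamma;z)$ appearing in Proposition~\ref{P:2} against the $\tfrac{c_\gamma}{j(\gamma;z)}$ terms coming from \eqref{E:sigmafunceq} and \eqref{E:FE}. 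Equivalently, one must confirm that the difference $F-(-1)^r f_r E_2^{\,r}$ still satisfies (i) and the cusp condition (iii) so that the induction hypothesis applies; once (i) is in hand, the boundedness on vertical lines and the cusp expansion (iii) follow routinely from Remark~\ref{R:1}(ii), \eqref{E:uexp}, and \eqref{E:uN}. I would therefore front-load the verification of the transformation behavior of $E_2$ and treat the analytic conditions (ii)--(iii) as bookkeeping.
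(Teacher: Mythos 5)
Your proposal is correct and follows essentially the same route as the paper's proof: peel off the top term $(-1)^r f_r E_2^r$ using the ``in particular'' clause of Proposition~\ref{P:2}, note that the difference lies in $\mathcal{N}_k^{m,\leq r-1}(\Gamma;L)$, and iterate, with uniqueness resting on Theorem~\ref{Thm:Transcendence_of_Sigma} (which the paper invokes implicitly through the uniqueness of the $f_i$ in \eqref{E:UE} at each step). The automorphy worry you front-load is in fact dispatched immediately by Corollary~\ref{Cor:1}: since $E_2\in\mathcal{N}_2^{1,\leq 1}(\Gamma(1);K)$ is slash-invariant and the slash operator is multiplicative, $f_rE_2^r$ satisfies condition (i) with no cocycle matching required.
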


\begin{proof}
    By definition, we may uniquely express $F=\sum_{i=0}^r\frac{f_i}{(\tilde{\pi}\Id-\tilde{\pi}\psi)^{i}}$ for some $f_0,\dots,f_r\in \mathcal{O}$. By Proposition~\ref{P:2}, we know that $f_r\in\mathcal{M}_{k-2r}^{m-r}(\Gamma)$. Then $F-(-1)^rE_2^rf_r$ is an element in $\mathcal{N}_{k}^{m,\leq r-1}(\Gamma)$. By definition and Proposition~\ref{P:2} again, $F-(-1)^rE_2^rf_r$ can be expressed uniquely as $\sum_{i=0}^{r-1}\frac{\tilde{f}_i}{(\tilde{\pi}\Id-\tilde{\pi}\psi)^{i}}$ for some $\tilde{f}_0,\dots,\tilde{f}_{r-1}\in \mathcal{O}$ such that $\tilde{f}_{r-1}\in\mathcal{M}_{k-2(r-1)}^{m-(r-1)}(\Gamma)$. By subtracting $(-1)^{r-1}E_2^{r-1}\tilde{f}_{r-1}$, we see that $F-(-1)^rE_2^rf_r-(-1)^{r-1}E_2^{r-1}\tilde{f}_{r-1}$ is an element in $\mathcal{N}_{k}^{m,\leq r-2}(\Gamma)$. By repeating this procedure, we eventually obtain the expression for $F$ given as in \eqref{E:Repr1}.
\end{proof}

\begin{corollary}\label{C:depth} For any $F\in \mathcal{N}_k^{m,\leq r}(\Gamma)$, we have $r\leq k/2$.
\end{corollary}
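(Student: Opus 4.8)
The plan is to extract the bound directly from the top-depth coefficient of $F$, since Proposition~\ref{P:2} already identifies that coefficient as an honest Drinfeld modular form. The statement should be read as asserting that a nonzero $F$ cannot have depth exceeding $k/2$; that is, if $F\in\mathcal{N}_k^{m,\leq r}(\Gamma)$ has depth exactly $r$ (equivalently $F\in\mathcal{N}_k^{m,\leq r}(\Gamma)\setminus\mathcal{N}_k^{m,\leq r-1}(\Gamma)$), then $r\leq k/2$. I would therefore fix the unique expansion $F=\sum_{i=0}^r\frac{f_i}{(\tilde{\pi}\Id-\tilde{\pi}\psi)^i}$ guaranteed by Definition~\ref{D:def} and Remark~\ref{R:1}(i), and focus attention on the leading term $f_r$.

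The key step is then immediate: by the hypothesis that $F$ has depth exactly $r$, the coefficient $f_r\in\mathcal{O}$ is nonzero, while Proposition~\ref{P:2} asserts precisely that $f_r\in\mathcal{M}_{k-2r}^{m-r}(\Gamma)$. Thus $f_r$ is a \emph{nonzero} Drinfeld modular form of weight $k-2r$. Invoking the standard vanishing of Drinfeld modular forms of negative weight (there are no nonzero elements of $\mathcal{M}_\ell^{m'}(\Gamma)$ when $\ell<0$, which follows from the growth condition at the cusps together with the valence formula), we conclude $k-2r\geq 0$, i.e.\ $r\leq k/2$, as desired.

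I expect no genuine obstacle here: the entire weight of the argument sits in Proposition~\ref{P:2}, and the only external input is the nonexistence of negative-weight Drinfeld modular forms. As an equivalent route one could instead apply Theorem~\ref{Thm:Structure_of_N} to write $F=\sum_{0\leq j\leq r}g_jE_2^j$ with $g_j\in\mathcal{M}_{k-2j}^{m-j}(\Gamma)$; since $E_2=E-\frac{1}{\tilde{\pi}\Id-\tilde{\pi}\psi}$ has depth one with leading coefficient $-1$ in the expansion \eqref{E:UE}, the power $E_2^r$ is the unique summand contributing to $f_r$, giving $f_r=(-1)^rg_r$ and hence $g_r\neq0$. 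The conclusion $k-2r\geq0$ then follows by the same negative-weight vanishing applied to $g_r$. The only point requiring care in either version is simply to make explicit that ``depth $r$'' forces the top coefficient to be nonzero, so that the weight bound is not vacuous.
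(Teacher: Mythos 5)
Your proof is correct and takes essentially the paper's approach: the paper's one-line proof deduces the bound from the unique expression $F=\sum_{0\leq j\leq r}g_jE_2^j$ of Theorem~\ref{Thm:Structure_of_N} together with the vanishing $\mathcal{M}_i^{\mu}(\Gamma)=0$ for $i<0$ (citing \cite[Thm.~11.1(b)]{BBP21}), which is precisely your ``equivalent route,'' while your primary version via Proposition~\ref{P:2} applied to the top coefficient $f_r$ is the same argument one step earlier, since that proposition is the engine behind the decomposition theorem. Your explicit remark that ``depth exactly $r$'' is needed to make the top coefficient nonzero correctly pins down the intended reading of the statement.
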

\begin{proof} The corollary follows from the unique expression of $F$ given as in \eqref{E:Repr1} and the fact that $\mathcal{M}_{i}^{\mu}(\Gamma)=0$ for any $i<0$ and $\mu \in \mathbb{Z}/(q-1)\mathbb{Z}$ \cite[Thm. 11.1(b)]{BBP21}.
\end{proof}

We finish this section with the following proposition which will be used in \S6.

\begin{proposition}\label{P:mod} For any $\alpha\in \GL_2(K)$ and $F\in \mathcal{N}_k^{m,\leq r}(\Gamma)$, we have $F|_{k,m}\alpha\in \mathcal{N}_{k}^{m,\leq r}(\alpha^{-1}\Gamma\alpha)$.
\end{proposition}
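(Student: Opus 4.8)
The plan is to verify directly the three defining conditions of Definition~\ref{D:def} for $G:=F|_{k,m}\alpha$ with respect to the conjugate congruence subgroup $\Gamma':=\alpha^{-1}\Gamma\alpha$, keeping the weight $k$ and type $m$ and showing the depth remains $\leq r$; throughout I abbreviate $W(z):=\tilde{\pi}z-\tilde{\pi}\psi(z)$. Condition (i) is immediate from the cocycle property of the slash operator: every $\gamma'\in\Gamma'$ has the form $\gamma'=\alpha^{-1}\gamma\alpha$ with $\gamma\in\Gamma$, and since $H\mapsto H|_{k,m}(\cdot)$ is a right action, condition (i) for $F$ gives
\[
G|_{k,m}\gamma'=(F|_{k,m}\alpha)|_{k,m}(\alpha^{-1}\gamma\alpha)=(F|_{k,m}\gamma)|_{k,m}\alpha=F|_{k,m}\alpha=G.
\]

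For condition (ii) I would produce the expansion of $G$ using Proposition~\ref{P:1}. Since $\psi$ is $K_\infty$-linear, that proposition rewrites $1/W(\alpha\cdot z)$ as $j(\alpha;z)^{2}\det(\alpha)^{-1}\bigl(1/W(z)-\mathscr{J}(\alpha;z)/\tilde{\pi}\bigr)$, so each $1/W(\alpha\cdot z)^{i}$ is $j(\alpha;z)^{2i}\det(\alpha)^{-i}$ times a degree-$i$ polynomial in $1/W(z)$ whose coefficients are products of powers of $\mathscr{J}(\alpha;z)$. Substituting $F=\sum_{i=0}^{r}f_i/W^{i}$ into $G(z)=\det(\alpha)^{m}j(\alpha;z)^{-k}F(\alpha\cdot z)$ and collecting powers of $1/W(z)$ yields
\[
G=\sum_{j=0}^{r}\frac{g_j}{W^{j}},\qquad
g_j=\sum_{i=j}^{r}\det(\alpha)^{m-i}\binom{i}{j}\Bigl(-\tfrac{\mathscr{J}(\alpha;\cdot)}{\tilde{\pi}}\Bigr)^{i-j} j(\alpha;\cdot)^{2i-k}\,(f_i\circ\alpha),
\]
where $f_i\circ\alpha$ denotes $z\mapsto f_i(\alpha\cdot z)$. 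Each $g_j$ lies in $\mathcal{O}$, since $z\mapsto\alpha\cdot z$ preserves $\Omega$ and $j(\alpha;z)=c_\alpha z+d_\alpha$ is a nowhere-vanishing holomorphic function on $\Omega$, whence $1/j(\alpha;\cdot)$ and $f_i\circ\alpha$ are holomorphic. Uniqueness of this expansion is precisely the injectivity asserted in Theorem~\ref{Thm:Transcendence_of_Sigma}. The factors $\mathscr{J}(\alpha;\cdot)^{i-j}$ and $j(\alpha;\cdot)^{2i-k}$ are bounded as $|z|_i\to\infty$ (using $2i-k\le 2r-k\le0$ from Corollary~\ref{C:depth}), so boundedness of each $g_j$ on vertical lines reduces to boundedness of $f_i\circ\alpha$ near the cusp $\alpha\cdot\infty$, which I obtain as a by-product of the cusp analysis below.

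The main obstacle is condition (iii), because for $\beta\in\Gamma(1)$ the product $\alpha\beta$ need not lie in $\GL_2(A)$, so condition (iii) for $F$ does not apply to it directly. To circumvent this I would exploit that $A$ is a principal ideal domain, so that $\Gamma(1)=\GL_2(A)$ acts transitively on the cusps $\PP^1(K)$. Setting $\delta:=\alpha\beta$, I choose $\gamma\in\Gamma(1)$ with $\gamma\cdot\infty=\delta\cdot\infty$; then $\gamma^{-1}\delta$ fixes $\infty$ and is hence upper triangular, giving a factorization $\delta=\gamma b$ with $b=\left(\begin{smallmatrix}a_b&e_b\\0&d_b\end{smallmatrix}\right)\in\GL_2(K)$, so that
\[
G|_{k,m}\beta=F|_{k,m}\delta=(F|_{k,m}\gamma)|_{k,m}b.
\]
Now condition (iii) for $F$ applies to $F|_{k,m}\gamma$ because $\gamma\in\Gamma(1)$, and it remains to transport its expansion through the upper-triangular slash by $b$. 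This step is transparent: since $\psi$ fixes $K_\infty$ and is $K_\infty$-linear, $b\cdot z=(a_b/d_b)z+e_b/d_b$ gives $W(b\cdot z)=(a_b/d_b)\,W(z)$, so the powers $1/W^{i}$ are merely rescaled, while the inner $u_{m_{\gamma^{-1}\Gamma\gamma}}$-series passes to a power series in $u_{m_{\delta^{-1}\Gamma\delta}}=u_{m_{\beta^{-1}\Gamma'\beta}}$ via the functional equations of $\exp_C$ under the scaling and $K$-translation recorded by $b$. This produces the required shape \eqref{E:cusp} for $G$, confirms the depth is $\leq r$, and—upon choosing $\beta$ so that $\delta\cdot\infty=\alpha\cdot\infty$—furnishes the boundedness on vertical lines of $f_i\circ\alpha$ left open in condition (ii). The delicate point I expect here is matching the level polynomial $m_{\delta^{-1}\Gamma\delta}$ under conjugation by the non-integral $b$; this is forced by $\delta^{-1}\Gamma\delta=b^{-1}(\gamma^{-1}\Gamma\gamma)b$ together with the compatibility of $u$-expansions with upper-triangular transformations, but it is where the bookkeeping needs the most care.
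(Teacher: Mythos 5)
Your proof is correct, but it takes a genuinely different route from the paper's. The paper does not verify Definition~\ref{D:def} directly for a general $F$: it first invokes the structure theorem (Theorem~\ref{Thm:Structure_of_N}) to write $F=\sum_j g_jE_2^j$ with each $g_j$ a Drinfeld modular form, thereby reducing the whole proposition to the single claim $E_2|_{2,1}\alpha\in\mathcal{N}_2^{1,\leq 1}(\alpha^{-1}\Gamma\alpha)$; it then writes $E_2|_{2,1}\alpha\gamma=\tilde{g}_{\alpha\gamma}-\frac{1}{\tilde{\pi}(z-\psi(z))}$ with $\tilde{g}_{\alpha\gamma}\in\mathcal{O}$, and settles the cusp condition by citing \cite[Prop.~6.3(c)]{BBP21} --- that the double coset $\Gamma\backslash\GL_2(K)/P(K)$ has representatives in $\Gamma(1)$ --- together with \cite[Prop.~5.11]{BBP21} to rule out polar terms in the expansion of $\tilde{g}_{\alpha\gamma}$. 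You instead check all three conditions of Definition~\ref{D:def} by hand: condition (ii) via Proposition~\ref{P:1} with explicit coefficient functions $g_j$ (your formula is correct, and uniqueness from Theorem~\ref{Thm:Transcendence_of_Sigma} is the right citation), and condition (iii) via the factorization $\alpha\beta=\gamma b$ with $\gamma\in\Gamma(1)$ and $b$ upper triangular over $K$ --- which is exactly the decomposition $\GL_2(K)=\Gamma(1)\mathfrak{M}$ the paper records in Remark~\ref{R:decompose} and the same mechanism as the cited double-coset fact. The bookkeeping you flag does close: $W(b\cdot z)=(a_b/d_b)W(z)$; $u_N(b\cdot z)=u_{N''}(z+e_b/a_b)$ where $(N'')=(d_b/a_b)(N)$ is precisely the translation module of $\delta^{-1}\Gamma\delta$ (conjugating the unipotents by $b$ shows this), and $u_{N''}(z+\kappa)=u_{N''}(z)/\left(1+\lambda u_{N''}(z)\right)$ with $\lambda=\exp_C(\tilde{\pi}\kappa/N'')$, a power series without constant or polar terms; note that $N''$ lies in $K^\times$ rather than $A$, a point the paper glosses over as well. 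Two minor remarks: your appeal to Corollary~\ref{C:depth} to make the exponents $2i-k$ nonpositive should be read as discarding the identically vanishing $f_i$ with $i>k/2$; and your deferral of the boundedness of $f_i\circ\alpha$ to the case $\beta=\mathrm{Id}$ of the cusp analysis is legitimate. Overall, your argument is more self-contained and yields explicit coefficient functions for $F|_{k,m}\alpha$ with a manifest depth bound, whereas the paper's reduction to $E_2$ concentrates all cusp bookkeeping in the single function $E$ and outsources the group-theoretic input to \cite{BBP21}.
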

\begin{proof} By Theorem \ref{Thm:Structure_of_N}, it suffices to show that $E_2|_{2,1}\alpha\in \mathcal{N}_2^{1,\leq 1}(\alpha^{-1}\Gamma\alpha)$. It is clear that for any $\delta\in  \alpha^{-1}\Gamma\alpha$, we have $(E_2|_{2,1}\alpha)|_{2,1}\delta=E_2|_{2,1}\alpha$. Now, for any $z\in \Omega$ and $B\in \GL_2(K)$, set $\tilde{g}_B(z):=E|_{2,1}B(z)+\frac{c_{B}}{c_{B}z+d_{B}}$. Clearly, $\tilde{g}_B\in \mathcal{O}$. Moreover, by Proposition \ref{P:1}, we have 
\[
E_2|_{2,1}\alpha(z)=\tilde{g}_{\alpha}(z)-\frac{1}{\tilde{\pi}(z-\psi(z)}.
\]
Similarly, note that for any $\gamma\in \GL_2(A)$, we have 
\[
(E_2|_{2,1}\alpha)|_{2,1}\gamma(z)=E_2|_{2,1}\alpha\gamma (z) = \tilde{g}_{\alpha\gamma}(z)-\frac{1}{\tilde{\pi}(z-\psi(z))}.
\]
Let $P(K)$ be the set of matrices in $\GL_2(K)$ of the form $\begin{pmatrix}
*&*\\0&*
\end{pmatrix}$. By \cite[Prop. 6.3(c)]{BBP21}, the double coset  $\Gamma \setminus \GL_2(K)/ P(K)$ can be represented by elements in $\Gamma(1)$. Now choose $\tilde{\rho}\in \Gamma(1)$ so that $\tilde{\rho}=\tilde{\gamma}(\alpha \gamma)\gamma'$ for some $\tilde{\gamma}\in \Gamma$ and $\gamma'\in P(K)$. Then 
\[
E_2|_{2,1}\tilde{\rho}(z)=E(z)-\frac{1}{\tilde{\pi}(z-\psi(z))} =(E_2|_{2,1}\alpha \gamma)|_{2,1}\gamma'(z)=\tilde{g}_{\alpha\gamma}|_{2,1}\gamma'(z)-\frac{1}{\tilde{\pi}(z-\psi(z))}
\]
where the last equality follows from Proposition \ref{P:1}. Using \eqref{E:uexp} and \eqref{E:FE}, we see that the $u_{m_{\tilde{\rho}^{-1}\Gamma\tilde{\rho}}}$-expansion of $E$ has no polar terms. Thus, by \cite[Prop. 5.11]{BBP21}, we also conclude that the $u_{m_{(\alpha \gamma)^{-1}\Gamma (\alpha \gamma)}}$-expansion of $\tilde{g}_{\alpha\gamma}$ has no polar terms, concluding the proof of the proposition.
\end{proof}

\section{Maass-Shimura operators}
In this section, our goal is to study Maass-Shimura operators and investigate some of their properties as well as their relation with Rankin-Cohen brackets. As a consequence of our analysis, we also obtain a sequence of operators in \S4.3 acting on the space of Drinfeld modular forms.

Let $f:\Omega\to \mathbb{C}_{\infty}$ be a holomorphic function and $z\in \Omega$. For each $n\geq 0$, we define \textit{the $n$-th hyperderivative $\deriv^nf:\Omega\to \mathbb{C}_{\infty}$ of $f$} by the formula
\[
f(z+\epsilon)=\sum_{n\geq 0}(\deriv^nf)(z)\epsilon^n, \ \ \epsilon\in \mathbb{C}_{\infty}, \ \ |\epsilon| \text{ is sufficiently small}.
\]
For any $r\geq 0$, let us set $\der^r:=\frac{\deriv^r}{\tilde{\pi}^r}$. Note that for any $f,g\in \mathcal{O}$, we have 
\begin{equation}\label{E:product}
\der^n(fg)=\sum_{i=0}^n(\der^if)(\der^{n-i}g).
\end{equation}
For more details on hyperderivatives, we refer the reader to \cite[\S3.1]{BP08} and \cite[\S2]{US98}.

\begin{definition} 
	\begin{itemize}
		\item[(i)] Let $k,\mu$ and $r$ be non-negative integers so that $k\geq 2\mu$. For any holomorphic function $f$, we define \textit{the Maass-Shimura operator} $\delta_k^r$  by $\delta_k^r:=\Id$ for $r=0$ and 
	\begin{multline*}
	\delta_k^r\left(\frac{f}{\left(\tilde{\pi}\Id-\tilde{\pi}\psi\right)^{\mu}}\right):=\frac{\der^rf}{\left(\tilde{\pi}\Id-\tilde{\pi}\psi\right)^\mu}+\binom{k-\mu+r-1}{1}\frac{\der^{r-1}f}{\left(\tilde{\pi}\Id-\tilde{\pi}\psi\right)^{\mu+1}}+\cdots\\ 
	\ \ \ \ \ \ +\binom{k-\mu+r-1}{r-1}\frac{\der f}{(\tilde{\pi}\Id-\tilde{\pi}\psi)^{\mu+r-1}}+\binom{k-\mu+r-1}{r}\frac{f}{(\tilde{\pi}\Id-\tilde{\pi}\psi)^{\mu+r}}
	\end{multline*}
	for $r\geq 1$ where $\der^i$ is the $i$-th hyperderivative of $f$. For convenience, we further set $\der:=\der^1$ and  $\delta_{k}:=\delta_{k}^{1}$.
	\item[(ii)] For any $F=\sum_{i=0}^r\frac{f_i}{(\tilde{\pi}\Id-\tilde{\pi}\psi)^{i}}\in \mathcal{N}_{k}^{m,\leq r}(\Gamma)$, we set 
	\[
	\delta_{k}^r(F):=\sum_{i=0}^r\delta_{k}^r\left(\frac{f_i}{\left(\tilde{\pi}\Id-\tilde{\pi}\psi\right)^{i}}\right).
	\]
\end{itemize}
\end{definition}

\subsection{Basic properties of Maass-Shimura operators} In this subsection, we investigate several properties of our Maass-Shimura operator. 

We start with proving our next lemma.

\begin{lemma}\label{L:3} For any $k,r\in \ZZ_{\geq 1}$ and $j,\ell \in \ZZ_{\geq 0}$ so that $0\leq j+\ell\leq r-1$, we have 
	\[
	\sum_{i=0}^{r}(-1)^{i-\ell}\binom{k+r-1}{i}\binom{k+r-1-i}{r-j-i}\binom{i}{\ell}=0.
	\]
\end{lemma}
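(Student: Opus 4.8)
The plan is to reduce the triple sum to an instance of the binomial theorem $\sum_s(-1)^s\binom{N}{s}=(1-1)^N$, which vanishes precisely when $N\geq 1$; the hypothesis $j+\ell\leq r-1$ will be exactly what guarantees the relevant exponent is positive. To streamline the bookkeeping I would first abbreviate $n:=k+r-1$ and pull out the sign, writing $(-1)^{i-\ell}=(-1)^{\ell}(-1)^{i}$, so that the claim becomes $\sum_{i=0}^{r}(-1)^{i}\binom{n}{i}\binom{n-i}{r-j-i}\binom{i}{\ell}=0$.

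The first key step is the \emph{trinomial revision} (subset-of-a-subset) identity
\[
\binom{n}{i}\binom{n-i}{r-j-i}=\binom{n}{r-j}\binom{r-j}{i},
\]
which is immediate from writing out factorials, and which lets me factor the constant $\binom{n}{r-j}$ out of the sum. The second key step is the absorption identity
\[
\binom{r-j}{i}\binom{i}{\ell}=\binom{r-j}{\ell}\binom{r-j-\ell}{i-\ell},
\]
again a one-line factorial check, which lets me factor out the further constant $\binom{r-j}{\ell}$. After these two substitutions the sum collapses to
\[
\binom{n}{r-j}\binom{r-j}{\ell}\sum_{i=0}^{r}(-1)^{i}\binom{r-j-\ell}{i-\ell}.
\]

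To finish I would reindex by $s:=i-\ell$. Here I must be a little careful about the summation range: the binomial coefficient $\binom{r-j-\ell}{i-\ell}$ already vanishes unless $\ell\leq i\leq r-j$, and since $j+\ell\leq r-1$ this effective range lies inside $\{0,\dots,r\}$, so extending or restricting the limits changes nothing. The remaining sum is then $(-1)^{\ell}\sum_{s=0}^{r-j-\ell}(-1)^{s}\binom{r-j-\ell}{s}=(-1)^{\ell}(1-1)^{r-j-\ell}$. Because the hypothesis $j+\ell\leq r-1$ forces $r-j-\ell\geq 1$, this equals $0$, which completes the argument. I do not anticipate a genuine obstacle here; the only point demanding attention is verifying that the two factorial identities apply verbatim and that the vanishing of the binomial coefficients outside $\ell\leq i\leq r-j$ makes the reindexing legitimate, so that the lone surviving case $r-j-\ell=0$ (where the sum would be $1$) is excluded exactly by the stated inequality.
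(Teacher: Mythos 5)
Your proposal is correct and matches the paper's argument in essence: the paper likewise factors the $i$-independent quantities out of the triple product (by direct factorial manipulation, multiplying and dividing by $(r-j-\ell)!$, rather than by your named trinomial-revision and absorption identities) to reduce the claim to $\sum_{i}(-1)^{i-\ell}\binom{r-j-\ell}{i-\ell}=0$. Both proofs then finish identically, reindexing and invoking $(1-x)^{r-j-\ell}$ at $x=1$, with the hypothesis $j+\ell\leq r-1$ guaranteeing the exponent $r-j-\ell\geq 1$ so the alternating sum vanishes.
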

\begin{proof} Note that 
	\[
	\binom{k+r-1}{i}\binom{k+r-1-i}{r-j-i}\binom{i}{\ell}=\frac{(k+r-1)!}{(r-j-i)!(k+j-1)!(i-\ell)!\ell!}.
	\]
	Thus, after dividing and multiplying by $(r-j-\ell)!$, one can see that proving the lemma is equivalent to showing that 
	\[
	\sum_{i=0}^r(-1)^{i-\ell}\binom{r-j-\ell}{i-\ell}=0.
	\]	
	Note that 
	\begin{align*}
	\sum_{i=0}^r(-1)^{i-\ell}\binom{r-j-\ell}{i-\ell}&=\sum_{i=\ell}^{r-j}(-1)^{i-\ell}\binom{r-j-\ell}{i-\ell}
	=\sum_{\mu=0}^{r-j-\ell}(-1)^{\mu}\binom{r-j-\ell}{\mu}=0
	\end{align*}
	where the last equality follows from setting $x=1$ in $(1-x)^{r-j-\ell}=\sum_{i=0}^{r-j-\ell}(-1)^i\binom{r-j-\ell}{i}x^i$ and it finishes the proof.
\end{proof}

The proof of the next lemma easily follows from \eqref{E:product} and the definition of $\delta_k^r$.

\begin{lemma}\label{L:derivation} Let $f,g\in \mathcal{O}$. For any $m,n,\mu_1,\mu_2\in \ZZ_{\geq 0}$, we have 
	\begin{multline*}
	\delta_{m+n}\left(\frac{f}{(\tilde{\pi}\Id-\tilde{\pi}\psi)^{\mu_1}}\frac{g}{(\tilde{\pi}\Id-\tilde{\pi}\psi)^{\mu_2}}\right)\\=\frac{f(z)}{(\tilde{\pi}\Id-\tilde{\pi}\psi)^{\mu_1}}\delta_m\left(\frac{g}{(\tilde{\pi}\Id-\tilde{\pi}\psi)^{\mu_2}}\right)+\frac{g}{(\tilde{\pi}\Id-\tilde{\pi}\psi)^{\mu_2}}\delta_n\left(\frac{f}{(\tilde{\pi}\Id-\tilde{\pi}\psi)^{\mu_1}}\right).
	\end{multline*}
\end{lemma}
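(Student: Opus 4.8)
The plan is to verify the identity by a direct computation, unwinding the definition of $\delta_k=\delta_k^1$ on both sides. All three operators appearing are the depth-one instances of the Maass--Shimura operator, so after evaluating the two binomial coefficients $\binom{k-\mu}{0}=1$ and $\binom{k-\mu}{1}=k-\mu$ the definition specializes, for any $h\in\mathcal{O}$, to
\[
\delta_k\!\left(\frac{h}{(\tilde{\pi}\Id-\tilde{\pi}\psi)^{\mu}}\right)=\frac{\der h}{(\tilde{\pi}\Id-\tilde{\pi}\psi)^{\mu}}+(k-\mu)\frac{h}{(\tilde{\pi}\Id-\tilde{\pi}\psi)^{\mu+1}}.
\]
I would record this specialization first, since it is the only form of the operator needed throughout.

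First I would rewrite the product inside $\delta_{m+n}$ on the left-hand side as the single term $\frac{fg}{(\tilde{\pi}\Id-\tilde{\pi}\psi)^{\mu_1+\mu_2}}$, so that the displayed special case applies with weight $k=m+n$ and exponent $\mu=\mu_1+\mu_2$. Expanding the hyperderivative $\der(fg)$ through the Leibniz rule \eqref{E:product} in the case $n=1$, namely $\der(fg)=(\der f)g+f(\der g)$, the left-hand side becomes
\[
\frac{(\der f)g+f(\der g)}{(\tilde{\pi}\Id-\tilde{\pi}\psi)^{\mu_1+\mu_2}}+\bigl(m+n-\mu_1-\mu_2\bigr)\frac{fg}{(\tilde{\pi}\Id-\tilde{\pi}\psi)^{\mu_1+\mu_2+1}}.
\]
Next I would expand each summand on the right-hand side by the same special case: applying $\delta_m$ to $\frac{g}{(\tilde{\pi}\Id-\tilde{\pi}\psi)^{\mu_2}}$ produces the scalar $m-\mu_2$ on its lower term, and applying $\delta_n$ to $\frac{f}{(\tilde{\pi}\Id-\tilde{\pi}\psi)^{\mu_1}}$ produces $n-\mu_1$. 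Multiplying by the respective prefactors $\frac{f}{(\tilde{\pi}\Id-\tilde{\pi}\psi)^{\mu_1}}$ and $\frac{g}{(\tilde{\pi}\Id-\tilde{\pi}\psi)^{\mu_2}}$ and collecting, the $(\tilde{\pi}\Id-\tilde{\pi}\psi)^{-(\mu_1+\mu_2)}$-terms sum to $\frac{f(\der g)+g(\der f)}{(\tilde{\pi}\Id-\tilde{\pi}\psi)^{\mu_1+\mu_2}}$ while the $(\tilde{\pi}\Id-\tilde{\pi}\psi)^{-(\mu_1+\mu_2+1)}$-terms acquire the coefficient $(m-\mu_2)+(n-\mu_1)$.

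There is no genuine obstacle here: comparing the two sides reduces to the single scalar identity $(m-\mu_2)+(n-\mu_1)=(m+n)-(\mu_1+\mu_2)$, which is immediate, the numerator terms already matching by the Leibniz rule. The content of the lemma is thus precisely that the product rule for the hyperderivative $\der$, together with the additivity of the weight shift under multiplication, forces the Leibniz rule for $\delta$. I would close by observing that both sides are honest equalities of functions on $\Omega^{\psi}(M)$ (with $f,g\in\mathcal{O}$), so the manifestly identical expressions obtained after expansion conclude the proof; if one prefers to phrase the matching as a comparison of the coefficients of the powers of $\tfrac{1}{\tilde{\pi}\Id-\tilde{\pi}\psi}$, its legitimacy is guaranteed by the injectivity in Theorem~\ref{Thm:Transcendence_of_Sigma}.
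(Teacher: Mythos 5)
Your proof is correct and is essentially the paper's own argument: the paper gives no computation, stating only that the lemma ``easily follows from \eqref{E:product} and the definition of $\delta_k^r$'', and your write-up is exactly that verification carried out explicitly. Specializing the definition to $r=1$, invoking the Leibniz rule $\der(fg)=(\der f)g+f(\der g)$, and matching the scalar coefficients via $(m-\mu_2)+(n-\mu_1)=(m+n)-(\mu_1+\mu_2)$ is precisely the intended route, and your closing appeal to Theorem~\ref{Thm:Transcendence_of_Sigma} for coefficient comparison, while not strictly needed since the expanded expressions coincide termwise, is a legitimate safeguard.
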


\begin{proposition}\label{P:3}  For any $\gamma\in\GL_2(K)$, a holomorphic function $f:\Omega\to \mathbb{C}_{\infty}$ and $k,m,\mu,r\in \mathbb{Z}_{\geq 0}$ such that $k\geq 2\mu$, we have 
	\begin{equation}\label{E:mod}
	\delta_k^r\left(\frac{f}{\left(\tilde{\pi}\Id-\tilde{\pi}\psi\right)^{\mu}}\right)\bigg|_{k+2r,m+r}\gamma=\delta_{k}^r\left(\frac{f}{\left(\tilde{\pi}\Id-\tilde{\pi}\psi\right)^{\mu}}\bigg|_{k,m}\gamma\right).
	\end{equation}
	In particular, for any field $K\subseteq L \subseteq \mathbb{C}_{\infty}$, if $F\in \mathcal{N}_{k}^{m,\leq s}(\Gamma;L)$, then $\delta_{k}^r(F)\in \mathcal{N}_{k+2r}^{m+r,\leq s+r}(\Gamma;L)$.
\end{proposition}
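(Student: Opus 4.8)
The plan is to prove the transformation law \eqref{E:mod} first, and then deduce the statement about nearly holomorphic forms by combining it with the structural results already established. The key observation is that the Maass--Shimura operator $\delta_k^r$ is built from two ingredients whose interaction with the slash operator we can control: the hyperderivatives $\der^\ell$ applied to the holomorphic datum $f$, and the function $\tfrac{1}{\tilde\pi\Id-\tilde\pi\psi}$, whose behavior under $\GL_2(K)$ is governed precisely by Proposition~\ref{P:1}. First I would reduce to the single term $\tfrac{f}{(\tilde\pi\Id-\tilde\pi\psi)^\mu}$ as in the statement, since $\delta_k^r$ acts termwise by definition and the slash operator is linear.

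The heart of the computation is to expand both sides of \eqref{E:mod} and match coefficients of the powers $\tfrac{1}{(\tilde\pi z-\tilde\pi\psi(z))^i}$. On the right-hand side, the slash operator $\gamma$ acts on $f$ through $f(\gamma\cdot z)$ and on the denominator through Proposition~\ref{P:1}, introducing the correction term $\mathscr{J}(\gamma;z)=\tfrac{c_\gamma}{j(\gamma;z)}$; applying $\der^{r-i}$ then requires a hyper-Leibniz expansion using \eqref{E:product} together with the known hyperderivatives of the rational function $j(\gamma;z)^{-k}$ and of $\mathscr{J}(\gamma;z)$. On the left-hand side, we first apply $\delta_k^r$ and only afterward slash by $\gamma$, again invoking Proposition~\ref{P:1}. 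The two expansions produce, for each target power of $\tfrac{1}{\tilde\pi z-\tilde\pi\psi(z)}$, a sum of binomial coefficients weighted by powers of $\mathscr{J}(\gamma;z)$ and hyperderivatives of $f$; the equality of the two sides then reduces to a family of binomial identities. This is exactly where Lemma~\ref{L:3} is designed to intervene: the vanishing of $\sum_{i}(-1)^{i-\ell}\binom{k+r-1}{i}\binom{k+r-1-i}{r-j-i}\binom{i}{\ell}$ kills precisely the unwanted cross terms, leaving the clean transformation law.

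I expect the main obstacle to be the bookkeeping in the hyper-Leibniz expansion: because we are in positive characteristic, we cannot use ordinary derivatives, and the composite hyperderivative $\der^{r-i}$ of a product $f(\gamma\cdot z)\cdot j(\gamma;z)^{-k}$ must be handled through the chain and product rules for hyperderivatives, generating nested binomial sums. The cleanest route is probably to first establish \eqref{E:mod} for $r=1$ using Lemma~\ref{L:derivation} (the derivation-type identity for $\delta_k=\delta_k^1$), verify directly that $\delta_k$ raises weight by $2$ and type by $1$ while preserving the depth filtration by one step, and then induct on $r$, writing $\delta_k^r$ as a suitable composite and invoking the $r=1$ case together with the binomial cancellations of Lemma~\ref{L:3} to collapse the resulting double sums into the single-term formula.

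Once \eqref{E:mod} is in hand, the final assertion is immediate. Given $F=\sum_{i=0}^s\frac{f_i}{(\tilde\pi\Id-\tilde\pi\psi)^i}\in\mathcal{N}_k^{m,\leq s}(\Gamma;L)$, the operator $\delta_k^r$ acts termwise and \eqref{E:mod} shows that $\delta_k^r(F)|_{k+2r,m+r}\gamma=\delta_k^r(F|_{k,m}\gamma)=\delta_k^r(F)$ for all $\gamma\in\Gamma$, giving condition~(i) in Definition~\ref{D:def} with the new weight $k+2r$ and type $m+r$. The expansion defining $\delta_k^r$ manifestly produces a function of the form $\sum_{i}\frac{\tilde f_i}{(\tilde\pi\Id-\tilde\pi\psi)^i}$ with $i$ running up to $s+r$ and each $\tilde f_i\in\mathcal{O}$ a $L$-linear combination of hyperderivatives $\der^\ell f_j$; since hyperderivatives preserve holomorphy, boundedness on vertical lines, and the property of having $u_{m_\Gamma}$-expansion coefficients in $L$ (by \cite[\S3.1]{BP08}), conditions~(ii) and~(iii) follow, whence $\delta_k^r(F)\in\mathcal{N}_{k+2r}^{m+r,\leq s+r}(\Gamma;L)$.
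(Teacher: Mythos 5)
Your primary route---expand both sides, convert the slash of $\frac{1}{\tilde{\pi}\Id-\tilde{\pi}\psi}$ via Proposition~\ref{P:1}, apply the hyper-Leibniz rule \eqref{E:product} (packaged in the paper's proof as \cite[Lem.~3.4]{BP08}), and kill the cross terms with Lemma~\ref{L:3}---is exactly how the paper proves \eqref{E:mod} in the case $\mu=0$, and your deduction of the second assertion (invariance from \eqref{E:mod}, plus stability of $u$-expansions under hyperderivatives via \cite[Lem.~3.6]{US98}) also matches the paper. However, your proposed ``cleanest route''---establish $r=1$ via Lemma~\ref{L:derivation} and then induct on $r$ by writing $\delta_k^r$ as a suitable composite---fails in characteristic $p$, and not for cosmetic reasons: there is no composite of first-order operators yielding $\delta_k^r$. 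Composition of hyperderivatives obeys $\der^a\circ\der^b=\binom{a+b}{a}\der^{a+b}$, so the $r$-fold composite $\delta_{k+2r-2}\circ\cdots\circ\delta_k$ has leading term $r!\,\der^r$, which vanishes identically as soon as $r\geq p$, whereas $\delta_k^r$ itself has leading term $\der^r\neq 0$. This is precisely the obstruction the paper flags in \S1.3 as the reason the naive analogue of the classical iterated Maass--Shimura operator does not work, and it is what forces the direct closed-form computation with Lemma~\ref{L:3} rather than an induction on $r$ through composition.

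A second, smaller gap concerns $\mu\geq 1$: the reduction there is not purely ``termwise''. By Proposition~\ref{P:1}, slashing $f/(\tilde{\pi}\Id-\tilde{\pi}\psi)^{\mu}$ by $\gamma$ spreads a single term over all powers $0,\dots,\mu$ of $\frac{1}{\tilde{\pi}z-\tilde{\pi}\psi(z)}$, while $\delta_k^r$ on such a term is defined with the shifted binomials $\binom{k-\mu+r-1}{j}$. The paper bridges this with a separate claim, \eqref{C:1}, equivalent to the binomial identity \eqref{C:22}, whose verification rests on the Vandermonde-type identity \cite[Lem.~3.2]{BP08}---an ingredient genuinely distinct from Lemma~\ref{L:3} that your proposal does not identify. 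The structural fact making the reduction work is $\delta_k^r\bigl(f/(\tilde{\pi}\Id-\tilde{\pi}\psi)^{\mu}\bigr)=(\tilde{\pi}\Id-\tilde{\pi}\psi)^{-\mu}\,\delta_{k-\mu}^r(f)$, which trades the denominator for a weight shift; with that in hand, the $\mu=0$ case together with Proposition~\ref{P:1} and \eqref{C:22} completes the proof. You should replace the inductive scheme with this reduction and carry out the $\mu=0$ expansion directly.
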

\begin{proof} Let $z\in \Omega^{\psi}(M)$. We first prove \eqref{E:mod} when $\mu=0$. Using Proposition \ref{P:1} and \cite[Lem. 3.4]{BP08}, whose proof indeed works for any holomorphic function on $\Omega$, for any $\gamma\in \GL_2(K)$, one can obtain
	\begin{align*}
	&\delta_k^r(f)(\gamma \cdot z)\\
	&=\frac{j(\gamma;z)^{k+2r}}{\det(\gamma)^{m+r}}\binom{k+r-1}{0}\sum_{j=0}^r\binom{k+r-1}{r-j}\mathscr{J}(\gamma;z)^{r-j}\left(\der^jf|_{k,m}\gamma\right)(z)\\
	&\ \ +\frac{j(\gamma;z)^{k+2r}}{\det(\gamma)^{m+r}}\binom{k+r-1}{1}\left(\sum_{j=0}^{r-1}\binom{k+r-2}{r-1-j}\mathscr{J}(\gamma;z)^{r-1-j}\left(\der^jf|_{k,m}\gamma\right)(z)\right)\\
	&\ \ \ \ \ \  \times\left(\sum_{\ell=0}^1\binom{1}{\ell}\frac{(-1)^{1-\ell}}{(z-\psi(z))^\ell}\mathscr{J}(\gamma;z)^{1-\ell}\right)+\cdots \\
	&\ \ +\frac{j(\gamma;z)^{k+2r}}{\det(\gamma)^{m+r}}\binom{k+r-1}{r-1}\left(\sum_{j=0}^1\binom{k}{1-j}\mathscr{J}(\gamma;z)^{1-j}\left(\der^jf|_{k,m}\gamma\right)(z)\right)\\
	&\ \ \ \ \ \  \times\left(\sum_{\ell=0}^{r-1}\binom{r-1}{\ell}\frac{(-1)^{r-1-\ell}}{(z-\psi(z))^\ell}\mathscr{J}(\gamma;z)^{r-1-\ell}\right)\\
	&\ \ +\frac{j(\gamma;z)^{k+2r}}{\det(\gamma)^{m+r}}\binom{k+r-1}{r}\left(f|_{k,m}\gamma\right)(z)\sum_{\ell=0}^{r}\binom{r}{\ell}\frac{(-1)^{r-\ell}}{(z-\psi(z))^\ell}\mathscr{J}(\gamma;z)^{r-\ell}\\
	&=\frac{j(\gamma;z)^{k+2r}}{\det(\gamma)^{m+r}}\times \\
	&\ \ \sum_{\substack{0\leq j, \ell\leq r\\0\leq j+\ell\leq r}}\left( \sum_{i=0}^r(-1)^{i-\ell}\binom{k+r-1}{i}\binom{k+r-1-i}{r-j-i}\binom{i}{\ell} \right) \mathscr{J}(\gamma;z)^{r-j-\ell}\frac{\left(\der^jf|_{k,m}\gamma\right)(z)}{(z-\psi(z))^\ell} \\
	&=\frac{j(\gamma;z)^{k+2r}}{\det(\gamma)^{m+r}}\sum_{\ell=0}^{r}\binom{k+r-1}{\ell}\frac{\left(\der^{r-\ell}f|_{k,m}\gamma\right)(z)}{(z-\psi(z))^{\ell}}\\
	&=\frac{j(\gamma;z)^{k+2r}}{\det(\gamma)^{m+r}}\delta_{k}^r(f|_{k,m}\gamma)(z)
	\end{align*}
	where the third equality follows from Lemma \ref{L:3}. Thus, the proof of the first assertion is completed when $\mu=0$. We now assume $\mu\geq 1$ and claim that 
	\begin{equation}\label{C:1}
	\delta_{k}^r\left(\frac{f}{\left(\tilde{\pi}\Id-\tilde{\pi}\psi\right)^{\mu}}\bigg|_{k,m}\gamma\right)(z)=\frac{1}{\tilde{\pi}^{\mu}}\left(\frac{1}{z-\psi(z)}-\frac{c}{cz+d}\right)^{\mu}\frac{j(\gamma;z)^{\mu}}{\det(\gamma)^{\mu}}\delta_{k-\mu}^r\left(f|_{k-\mu,m}\gamma\right)(z).
	\end{equation} 
	By using Proposition \ref{P:1}, above discussion and \eqref{C:1}, we obtain
	\begin{align*}
	\delta_{k}^r\left(\frac{f}{(\tilde{\pi}\Id-\tilde{\pi}\psi)^{\mu}}\right)(\gamma\cdot z)&=\frac{1}{\tilde{\pi}^{r}}\frac{1}{\left(\gamma\cdot z-\psi(\gamma\cdot z)\right)^{\mu}}\delta_{k-\mu}^r(f)(\gamma\cdot z)\\
	&=\frac{1}{\left(\gamma\cdot z-\psi(\gamma\cdot z)\right)^{\mu}}\frac{j(\gamma;z)^{k-\mu+2r}}{\det(\gamma)^{m+r}}\delta_{k-\mu}^r(f|_{k-\mu,m}\gamma)(z)\\
	&=\left(\frac{1}{z-\psi(z)}-\frac{c_{\gamma}}{c_{\gamma}z+d_{\gamma}}  \right)^{\mu}\frac{j(\gamma;z)^{k+\mu+2r}}{\det(\gamma)^{m+r+\mu}}\delta_{k-\mu}^r(f|_{k-\mu,m}\gamma)(z)\\
	&=\frac{j(\gamma;z)^{k+2r}}{\det(\gamma)^{m+r}}\delta_{k}^r\left(\frac{f}{\left(\tilde{\pi}\Id-\tilde{\pi}\psi\right)^{\mu}}\bigg|_{k,m}\gamma\right)(z)
	\end{align*}
	which implies \eqref{E:mod}. Hence, it suffices to show the claim to finish the proof of the first assertion. Note that, for any $i, \ell \geq 0$, by \eqref{E:product} (see also \cite[pg. 20]{BP08}),  we have 
	\begin{equation}\label{E:jder}
	\der^{i}((c_{\gamma}z+d_{\gamma})^{\ell})=\binom{\ell}{i}\mathscr{J}(\gamma;z)^i(c_{\gamma}z+d_{\gamma})^{\ell}.
	\end{equation}
	Using Proposition \ref{P:1}, \eqref{E:product} and \eqref{E:jder}, we see that   
	\eqref{C:1} is equivalent to the identity
	\begin{multline}\label{C:22}
	\sum_{w=0}^{\mu}\binom{\mu}{w}(-c_{\gamma})^{\mu-w}\sum_{\ell=0}^r\binom{k-w+r-1}{\ell}\frac{1}{(z-\psi(z))^{w+\ell}}\times \\
	\sum_{\ell_1=0}^{r-\ell}\binom{w}{\ell_1}\frac{(-1)^{\ell_1}(-c_{\gamma})^{\ell_1}}{(c_{\gamma}z+d_{\gamma})^{\ell_1}}(c_{\gamma}z+d_{\gamma})^{w}\der^{r-\ell-\ell_1}((c_{\gamma}z+d_{\gamma})^{\mu}f|_{k,m}\gamma)(z)\\
	=\left(\sum_{w=0}^{\mu}\binom{\mu}{w}(-c_{\gamma})^{\mu-w}(c_{\gamma}z+d_{\gamma})^w\frac{1}{(z-\psi(z))^w}\right)\times\\\left(\sum_{\ell=0}^{r}\binom{k-\mu+r-1}{\ell}\frac{1}{(z-\psi(z))^{\ell}}\der^{r-\ell}((c_{\gamma}z+d_{\gamma})^{\mu}f|_{k,m}\gamma)(z)\right).
	\end{multline}
	Note that for each $w,\ell\geq 0$, the term $\frac{1}{(z-\psi(z))^{w+\ell}}(-c_{\gamma})^{\mu-w}(c_{\gamma}z+d_{\gamma})^{w}\der^{r-\ell}((c_{\gamma}z+d_{\gamma})^{\mu}f|_{k,m}\gamma)(z)$ on the right hand side of \eqref{C:22} has  coefficient $\binom{\mu}{w}\binom{k-\mu+r-1}{\ell}$. On the other hand, the coefficient of the same term on the left hand side of \eqref{C:22} is 
	\begin{align*}
	\sum_{i=w}^{w+\ell}(-1)^{i+w}\binom{\mu}{i}\binom{k+r-1-i}{w+\ell-i}\binom{i}{i-w}&=\sum_{i=0}^{\ell}(-1)^i\binom{\mu}{i+w}\binom{k+r-1-w-i}{\ell-i}\binom{i+w}{i}\\
	&=\binom{\mu}{w}\sum_{i=0}^{\ell}(-1)^i\binom{\mu-w}{i}\binom{k+r-1-w-i}{\ell-i}\\
	&=\binom{\mu}{w}\sum_{\nu=0}^{\ell}(-1)^{\ell+\nu}\binom{\mu-w}{\ell-\nu}\binom{k+r-w-\ell+\nu-1}{\nu}\\
	&=\binom{\mu}{w}\binom{k-\mu+r-1}{\ell},
	\end{align*}
	where the last equality follows from \cite[Lem. 3.2]{BP08}. Hence, the identity \eqref{C:22} holds and thus the proof of our claim is completed.
	To prove the second assertion, it remains to show that, for any $F=\sum_{i=0}^r\frac{f_i}{(\tilde{\pi}\Id-\tilde{\pi}\psi)^{i}}\in \mathcal{N}_{k}^{m,\leq r}(\Gamma;L)$, Definition \ref{D:def}(iii) holds for $\delta_{k}^r(F)$. Note that by \cite[Lem. 3.6]{US98}, one can see that if $f(z)=\sum_{i\geq 0}a_iu_{N}(z)^i$ for some $a_i\in \mathbb{C}_{\infty}$, $N\in A$ and sufficiently large $z\in \Omega$, then, for any $n\in \mathbb{Z}_{\geq 1}$, we have $\der^n(f)(z)=\sum_{i\geq 0}b_{i,n}u_{N}(z)^i$ for some $b_{i,n}\in \mathbb{C}_{\infty}$. Thus, for any $\gamma\in \Gamma(1)$, by \eqref{E:mod}, we obtain
	\begin{align*}
	\delta_{k}^r(F)|_{k+2r,m+r}\gamma(z)&=\delta_{k}^r(F|_{k,m}\gamma)(z)\\
	&=\delta_{k}^r\left( \sum_{i=0}^s\frac{1}{(\tilde{\pi}z-\tilde{\pi}\psi(z))^{i}}\sum_{\ell= 0}^{\infty}a_{\gamma,i,\ell}u_{m_{\gamma^{-1}\Gamma\gamma}}(z)^{\ell}\right)\\
	&=\sum_{\mu=0}^{s+r}\frac{1}{(\tilde{\pi}z-\tilde{\pi}\psi(z))^{\mu}}\sum_{\ell= 0}^{\infty}b_{\gamma,\mu,\ell}u_{m_{\gamma^{-1}\Gamma\gamma}}(z)^{\ell}
	\end{align*}
	which finishes the proof of the second assertion.
\end{proof}

\subsection{Rankin-Cohen brackets}
In this subsection, we investigate the relation of our Maass-Shimura  operators with Rankin-Cohen brackets defined in \cite[Thm. 3.7]{US98}. 

Let $k,w$ and $r$ be non-negative integers. Letting $0\leq \nu\leq r$, we set 
\[
\tilde{\beta}_{r,\nu}:=(k+r-1)(k+r-2)\cdots (k+\nu)(w+r-1)(w+r-2)\cdots(w+r-\nu)
\]
and consider
$
\beta_{r,\nu}:=\frac{\tilde{\beta}_{r,\nu}}{\gcd_{0\leq \nu \leq r}\tilde{\beta}_{r,\nu}}.
$
Let $f,g\in \mathcal{O}$. We define their \textit{$r$-th Rankin-Cohen bracket} $[f,g]_{k,w,r}$ by 
\[
[f,g]_{k,w,r}:=\sum_{\nu=0}^r(-1)^{r-\nu}\beta_{r,\nu}(\der^{\nu}f)(\der^{r-\nu}g).
\]

Our goal in the next theorem is to obtain the $r$-th Rankin-Cohen bracket in terms of the Maass-Shimura operator and hence providing an alternative proof for the generalization of \cite[Thm.~3.7]{US98}.

\begin{theorem}\label{Thm:Rankin_Cohen_Braket} On $\Omega^{\psi}(M)$, we have
	\[
	[f,g]_{k,w,r}=\sum_{\nu=0}^r(-1)^{r-\nu}\beta_{r,\nu}(\delta_k^{\nu}f)(\delta_{w}^{r-\nu}g).
	\]
	Moreover, for any field $K\subseteq L \subseteq \mathbb{C}_{\infty}$,  if $f\in\mathcal{M}_{k}^{m_1}(\Gamma;L)$ and $g\in\mathcal{M}_{w}^{m_2}(\Gamma;L)$, then $[f,g]_{k,w,r}\in\mathcal{M}_{k+w+2r}^{m_1+m_2+r}(\Gamma;L)$.
\end{theorem}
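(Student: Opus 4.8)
The plan is to prove the identity by expanding the right-hand side as a polynomial in the function $X:=\frac{1}{\tilde{\pi}\Id-\tilde{\pi}\psi}$ on $\Omega^{\psi}(M)$ and matching coefficients against the (holomorphic, hence $X$-free) left-hand side. From the definition of the Maass-Shimura operator one has, for holomorphic $h$, the expansion $\delta_k^\nu h=\sum_{i=0}^\nu\binom{k+\nu-1}{i}(\der^{\nu-i}h)X^i$, so that the coefficient of $X^0$ in $\delta_k^\nu f$ is exactly $\der^\nu f$. First I would multiply out $(\delta_k^\nu f)(\delta_w^{r-\nu}g)$, write the outcome as $\sum_{s\ge 0}c_s X^s$ with $c_s\in\mathcal{O}$, and reindex the product by the orders $a,b$ of the surviving hyperderivatives $\der^a f,\der^b g$. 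Since each contributing term is $(\der^{\nu-i}f)(\der^{r-\nu-j}g)$ with $i+j=s$, setting $a=\nu-i$ and $b=r-\nu-j$ forces $a+b=r-s$; hence for a fixed power $X^s$ the coefficient of $(\der^a f)(\der^b g)$ collects precisely the $\nu$ with $a\le\nu\le r-b$.

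The term $s=0$ is immediate: there $a=\nu$, $b=r-\nu$, both binomials equal $1$, and summing $(-1)^{r-\nu}\beta_{r,\nu}(\der^\nu f)(\der^{r-\nu}g)$ over $\nu$ reproduces $[f,g]_{k,w,r}$ by definition. The substance is to show $c_s=0$ for every $s\ge 1$, i.e. the vanishing, for all $a,b\ge 0$ with $a+b=r-s$, of
\[
S_{a,b}:=\sum_{\nu=a}^{r-b}(-1)^{r-\nu}\beta_{r,\nu}\binom{k+\nu-1}{\nu-a}\binom{w+r-\nu-1}{r-\nu-b}.
\]
This is the main obstacle, and it is purely combinatorial (it is the bracket analogue of Lemma~\ref{L:3}). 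I would first replace $\beta_{r,\nu}$ by $\tilde{\beta}_{r,\nu}$, legitimate since these differ by the single $\nu$-independent constant $\gcd_{\nu}\tilde{\beta}_{r,\nu}$, then use $\tilde{\beta}_{r,\nu}=\frac{(k+r-1)!}{(k+\nu-1)!}\cdot\frac{(w+r-1)!}{(w+r-\nu-1)!}$ and expand the two binomials into factorials. The factors $(k+\nu-1)!$ and $(w+r-\nu-1)!$ cancel, the $\nu$-independent block $\frac{(k+r-1)!(w+r-1)!}{(k+a-1)!(w+b-1)!}$ factors out, and one is left with $\sum_{\nu=a}^{r-b}\frac{(-1)^{r-\nu}}{(\nu-a)!\,(r-\nu-b)!}$. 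Substituting $\mu=\nu-a$ and putting $m:=r-a-b=s\ge 1$, this becomes $\frac{(-1)^{r-a}}{m!}\sum_{\mu=0}^{m}(-1)^{\mu}\binom{m}{\mu}=\frac{(-1)^{r-a}}{m!}(1-1)^m=0$. The one delicate point is characteristic $p$: I would perform all the factorial manipulations over $\mathbb{Q}$ to conclude that the honest \emph{integer} $\sum_{\nu}(-1)^{r-\nu}\beta_{r,\nu}\binom{k+\nu-1}{\nu-a}\binom{w+r-\nu-1}{r-\nu-b}$ equals $0$ in $\mathbb{Z}$, so that its image in $\mathbb{C}_\infty$ is $0$; the divisions by $m!$ and by $\gcd_{\nu}\tilde{\beta}_{r,\nu}$ are thereby only ever carried out in $\mathbb{Q}$.

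Finally, for the modularity statement I would feed the identity into the structural results already in hand. Viewing $f\in\mathcal{M}_k^{m_1}(\Gamma;L)=\mathcal{N}_k^{m_1,\le 0}(\Gamma;L)$ and likewise for $g$, Proposition~\ref{P:3} gives $\delta_k^\nu f\in\mathcal{N}_{k+2\nu}^{m_1+\nu,\le\nu}(\Gamma;L)$ and $\delta_w^{r-\nu}g\in\mathcal{N}_{w+2(r-\nu)}^{m_2+r-\nu,\le r-\nu}(\Gamma;L)$. Since $\mathcal{N}(\Gamma)$ is an algebra with weights, types and depths adding under multiplication (Theorem~\ref{Thm:Structure_of_N}), each product lies in $\mathcal{N}_{k+w+2r}^{m_1+m_2+r,\le r}(\Gamma;L)$, hence so does their $L$-linear combination, namely the right-hand side of the identity. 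By the first part this combination equals the holomorphic function $[f,g]_{k,w,r}$, so by uniqueness of the expansion \eqref{E:UE} (Remark~\ref{R:1}(i)) it has depth $0$; and a depth-$0$ nearly holomorphic form is exactly a Drinfeld modular form by Theorem~\ref{Thm:Structure_of_N}. Thus $[f,g]_{k,w,r}\in\mathcal{M}_{k+w+2r}^{m_1+m_2+r}(\Gamma;L)$, the coefficients remaining in $L$ because hyperderivatives and the integer coefficients $\beta_{r,\nu}$ preserve $L$-rationality of $u_{m_{\Gamma}}$-expansions.
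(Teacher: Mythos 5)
Your proof of the displayed identity is essentially the paper's own computation: both expand $\delta_k^{\nu}f$ and $\delta_w^{r-\nu}g$ in powers of $\frac{1}{\tilde{\pi}\Id-\tilde{\pi}\psi}$, collect the coefficient of $(\der^a f)(\der^b g)$ times a fixed power, cancel the factorials in $\tilde{\beta}_{r,\nu}=\frac{(k+r-1)!}{(k+\nu-1)!}\cdot\frac{(w+r-1)!}{(w+r-\nu-1)!}$, and reduce to the alternating sum $\sum_{\mu}(-1)^{\mu}\binom{m}{\mu}=(1-1)^m=0$; your explicit remark that all divisions (by $m!$ and by $\gcd_{\nu}\tilde{\beta}_{r,\nu}$) take place in $\mathbb{Q}$, yielding an integer identity that is then reduced into $\mathbb{C}_{\infty}$, is a welcome sharpening of a point the paper leaves implicit, since $p$ may divide the gcd. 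Where you genuinely diverge is the second assertion. The paper argues directly: it notes $[f,g]_{k,w,r}\in\mathcal{O}$ by \cite[Lem.~2.4]{US98}, compares $j(\gamma;z)^{-k-w-2r}\det(\gamma)^{m_1+m_2+r}[f,g]_{k,w,r}(\gamma\cdot z)$ with $[f,g]_{k,w,r}$ on $\Omega^{\psi}(M)$ using Proposition~\ref{P:3}, extends the equality to all of $\Omega$ by the identity theorem (Proposition~\ref{Cl:QMF}), and then checks holomorphy at the cusps by the $u$-expansion argument from the proof of Proposition~\ref{P:3}. You instead route everything through the algebra of nearly holomorphic forms: products add weights, types and depths, the right-hand side lies in $\mathcal{N}_{k+w+2r}^{m_1+m_2+r,\leq r}(\Gamma;L)$, and equality with the holomorphic function $[f,g]_{k,w,r}$ forces depth $0$ by uniqueness (Theorem~\ref{Thm:Transcendence_of_Sigma}), whence modularity. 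This is valid and arguably cleaner, but two caveats: the multiplicativity of $\mathcal{N}(\Gamma)$ with additive weights, types and depths is not Theorem~\ref{Thm:Structure_of_N} --- it is an (easy, unstated) direct verification of conditions (i)--(iii) of Definition~\ref{D:def} for a product, which you should spell out, in particular that boundedness on vertical lines and the cusp expansions (iii) are preserved; and your final step (depth $0$ implies Drinfeld modular, with $L$-rational expansion) is exactly Proposition~\ref{P:2} plus Theorem~\ref{Thm:Structure_of_N}, whose proofs themselves invoke Proposition~\ref{Cl:QMF}, so the two routes ultimately rest on the same ingredients, yours simply packaging the identity-theorem and cusp arguments inside results already proved rather than redoing them.
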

\begin{proof}
	Firstly, observe, for any $0\leq \nu \leq r$, that 
	\begin{equation}\label{E:c}
	\tilde{\beta}_{r,\nu}=(r-\nu)!\nu!\binom{k+r-1}{k+\nu-1}\binom{w+r-1}{w+r-\nu-1}=(r-\nu)!\nu!\binom{k+r-1}{r-\nu}\binom{w+r-1}{\nu}.
	\end{equation}
	By a direct calculation, we have 
	\begin{align*}
	&\sum_{\nu=0}^r(-1)^{r-\nu}\tilde{\beta}_{r,\nu}(\delta_k^{\nu}f)(\delta_{w}^{r-\nu}g)\\
	&=\sum_{\nu=0}^{r}(-1)^{r-\nu}\tilde{\beta}_{r,\nu}\sum_{i=0}^{\nu}\binom{k+\nu-1}{\nu-i}(\der^if)\sum_{j=0}^{r-\nu}\binom{w+r-\nu-1}{r-\nu-j}\frac{1}{(\tilde{\pi}\Id-\tilde{\pi}\psi)^{r-i-j}}(\der^jg).
	\end{align*}
	
	Using \eqref{E:c}, note that for each $m,n\in \mathbb{Z}_{\geq 0}$ so that $0\leq m+n\leq r$, the coefficient of the term $\frac{(\der^mf)(\der^ng)}{(\tilde{\pi}\Id-\tilde{\pi}\psi)^{r-m-n}}$ on the right hand side of above equality is
	\begin{align*}
	&\sum_{i=m}^r(-1)^{r-i}\tilde{\beta}_{r,i}\binom{k+i-1}{i-m}\binom{w+r-i-1}{r-i-n}\\
	&=\sum_{i=m}^r(-1)^{r-i}(r-i)!i!\binom{k+r-1}{k+i-1}\binom{w+r-1}{w+r-i-1}\binom{k+i-1}{i-m}\binom{w+r-i-1}{r-i-n}\\
	&=\sum_{i=m}^r(-1)^{r-i}\frac{(k+r-1)!(w+r-1)!(r-m-n)!}{(i-m)!(k+m-1)!(w+n-1)!(r-i-n)!(r-m-n)!}\\
	&=\frac{(k+r-1)!(w+r-1)!}{(r-m-n)!(k+m-1)!(w+n-1)!}(-1)^r\sum_{i=m}^r(-1)^i\binom{r-m-n}{i-m}\\
	&=\tilde{\beta}_{r,m}(-1)^{r+m}\sum_{\ell=0}^{r-m}(-1)^{\ell}\binom{r-m-n}{\ell}\\
	&=\tilde{\beta}_{r,m}(-1)^{r-m}\sum_{\ell=0}^{r-m-n}(-1)^{\ell}\binom{r-m-n}{\ell}.
	\end{align*}
	Thus, we have 
	\[
	\sum_{i=m}^r(-1)^{r-i}\tilde{\beta}_{r,i}\binom{k+i-1}{i-m}\binom{w+r-i-1}{r-i-n}
	=\begin{cases} 0 & \text{ if }r>m+n\\
	\tilde{\beta}_{r,m}(-1)^{r-m} & \text{ if } r=m+n  \end{cases},
	\]
	which implies the desired equality. On the other hand, by \cite[Lem. 2.4]{US98}, we know that $[f,g]_{k,w,r}$ is an element in $\mathcal{O}$. Then, for any $z\in \Omega$ and $\gamma\in \Gamma$, if we set
	\[ \mathcal{G}_{\gamma}(z):=j(\gamma;z)^{-k-w-2r}\det(\gamma)^{m_1+m_2+r}[f,g]_{k,w,r}(\gamma\cdot z)
	\]
	and use Proposition \ref{P:3}, we see that 
	$\mathcal{G}_{\gamma}$ coincides with $[f,g]_{k,w,r}$ on $\Omega^{\psi}(M)$. Hence, by Proposition \ref{Cl:QMF}, 
	$\mathcal{G}_{\gamma}$ agrees with $[f,g]_{k,w,r}$ on $\Omega$. Thus $[f,g]_{k,w,r}$ is a weak Drinfeld modular form for $\Gamma$. For each $\alpha\in \Gamma(1)$, to prove that $[f,g]_{k,w,r}|_{k+w+2r,m_1+_2+r}\alpha$ is holomorphic at infinity with respect to $\alpha^{-1}\Gamma \alpha$, we use the argument in the proof of the second assertion of Proposition \ref{P:3} and conclude that, for any sufficiently large $z\in \Omega$, 
	\[
	[f,g]_{k,w,r}|_{k+w+2r,m_1+m_2+r}\alpha(z)=\sum_{i=0}^{\infty}a_{i}u_{m_{\alpha^{-1}\Gamma \alpha}}(z)^i, \ \  a_{i}\in L.
	\]
This implies that $[f,g]_{k,w,r}\in \mathcal{M}_{k+w+2r}^{m_1+m_2+r}(\Gamma;L)$.
\end{proof}

\subsection{A sequence of operators}
In this subsection, inspired by Shimura \cite[\S 16.1]{Shi07}, we obtain a sequence of operators preserving the modularity.

In what follows, for each $r\in \mathbb{Z}_{\geq 2}$, we define an operator acting on $\mathcal{O}$. For any non-negative integer $k$, let
\[
\widetilde{c_1^r(k)}:=(r-1)\binom{k+r-1}{r-1}, \text{ and }  \widetilde{c_v^r(k)}:=rk^{v-1}\binom{k+r-1}{r-v}, \ \ v\geq 2.
\]
Consider $c_v^r(k):=\frac{\widetilde{c_v^r(k)}}{\gcd_{0\leq \nu \leq r}\widetilde{c_v^r(k)}}$ and define 
\[
\mathcal{U}_k^r(f):=\sum_{v=1}^r(-1)^{r-v}c_v^r(k)f^{v-1}(\der f)^{r-v}(\der^vf).
\]
By \cite[Lem. 2.4]{US98}, $\mathcal{U}_k^r(f)$ is holomorphic on $\Omega$. We now collect some properties of the operator $\mathcal{U}_k^r$.
\begin{theorem}\label{T:DO} The following statements hold.
	\begin{itemize}
		\item[(i)] On $\Omega^{\psi}(M)$, we have
		\begin{equation}\label{E:operator}
		\mathcal{U}_k^r(f)=\sum_{v=1}^r(-1)^{r-v}c_v^r(k)f^{v-1}(\delta_kf)^{r-v}(\delta_k^vf).
		\end{equation}
		\item[(ii)] Let $m\in \mathbb{Z}/(q-1)\mathbb{Z}$ and $\gamma\in \GL_2(K)$. We have 
		\[
		\mathcal{U}_k^r(f|_{k,m}\gamma)=\mathcal{U}_k^r(f)|_{kr+2r,mr+r}\gamma.
		\]
		Furthermore, for any field $K\subseteq L \subseteq \mathbb{C}_{\infty}$, if $f\in \mathcal{M}_{k}^{m}(\Gamma;L)$, then $\mathcal{U}_k^r(f)\in \mathcal{M}_{kr+2r}^{mr+r}(\Gamma;L)$.
	\end{itemize}
\end{theorem}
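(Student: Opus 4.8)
The plan is to prove part (i) first and then deduce part (ii) from it, exactly as the modularity of Rankin--Cohen brackets was deduced in Theorem~\ref{Thm:Rankin_Cohen_Braket}. Throughout I abbreviate $X:=(\tilde{\pi}\Id-\tilde{\pi}\psi)^{-1}$, so that on $\Omega^{\psi}(M)$ one has $\delta_k f=\der f+kfX$ and, more generally, $\delta_k^v f=\sum_{i=0}^{v}\binom{k+v-1}{i}(\der^{v-i}f)X^{i}$. Writing $P(X)$ for the right-hand side of \eqref{E:operator}, its value at $X=0$ is visibly $\mathcal{U}_k^r(f)$ (the $\der$-expression), since $\delta_k f$ and $\delta_k^v f$ specialize to $\der f$ and $\der^{v}f$. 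Because $X$ is transcendental over $\mathcal{O}$ by Theorem~\ref{Thm:Transcendence_of_Sigma}, it suffices to establish the formal identity $P(X)=P(0)$ in $\mathcal{O}[X]$ and specialize; equivalently, to show that $P$ is independent of $X$.

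First I would record the clean differentiation rule $\tfrac{d}{dX}\delta_k^v f=(k+v-1)\delta_k^{v-1}f$ (and $\tfrac{d}{dX}\delta_k f=kf$), which follows at once from $i\binom{k+v-1}{i}=(k+v-1)\binom{k+v-2}{i-1}$ after reindexing. Applying the product rule to $P$ and setting $Q_v:=f^{v}(\delta_k f)^{r-1-v}\delta_k^v f$, the derivative collapses to the telescoping shape
\[
\frac{dP}{dX}=\sum_{v=1}^{r}(-1)^{r-v}\widetilde{c_v^r(k)}\bigl(k(r-v)Q_v+(k+v-1)Q_{v-1}\bigr).
\]
Collecting the coefficient of $Q_u$, for $2\le u\le r-1$ it equals $(-1)^{r-u}\bigl(k(r-u)\widetilde{c_u^r(k)}-(k+u)\widetilde{c_{u+1}^r(k)}\bigr)$, which vanishes by the elementary recursion $(r-u)\binom{k+r-1}{r-u}=(k+u)\binom{k+r-1}{r-u-1}$. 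The two extreme terms $Q_0=(\delta_k f)^{r-1}f$ and $Q_1=f(\delta_k f)^{r-1}$ coincide, so their coefficients must be summed; this is precisely where the anomalous value $\widetilde{c_1^r(k)}=(r-1)\binom{k+r-1}{r-1}$ (with $r-1$ rather than $r$) is needed, and the combined coefficient vanishes by the companion identity $kr\,\widetilde{c_1^r(k)}=(k+1)\widetilde{c_2^r(k)}$. Hence $\tfrac{dP}{dX}=0$.

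The main obstacle is that $\tfrac{dP}{dX}=0$ does \emph{not} by itself force $P$ to be constant in characteristic $p$, since $X^{p}$ has vanishing derivative. I would circumvent this by observing that the whole computation is an identity with integer coefficients in the parameter $k$: it is valid in $\mathbb{Q}[y_0,\dots,y_r][X]$ after the substitution $y_j\leftrightarrow\der^j f$, where the implication ``derivative zero $\Rightarrow$ constant'' does hold. Thus $P(X)-P(0)=0$ already in $\mathbb{Z}[y_0,\dots,y_r][X]$ (the $\widetilde{c_v^r(k)}$, and hence the normalized $c_v^r(k)$, are integers), and reduction modulo $p$ followed by the specialization $y_j\mapsto\der^j f$, $X\mapsto(\tilde{\pi}\Id-\tilde{\pi}\psi)^{-1}$ yields \eqref{E:operator} on $\Omega^{\psi}(M)$. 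A secondary piece of bookkeeping is to check that the $Q_u$ with $u\ge 2$ are genuinely distinct—each $Q_u$ is the unique one containing $\der^{u}f$—so that comparing coefficients of $Q_u$ is legitimate, the only coincidence being $Q_0=Q_1$ already accounted for above.

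Finally I would deduce (ii) from (i). For any $\gamma\in\GL_2(K)$, Proposition~\ref{P:3} gives $\delta_k^v(f|_{k,m}\gamma)=\delta_k^v(f)|_{k+2v,m+v}\gamma$; substituting these into the $\delta$-form \eqref{E:operator} of $\mathcal{U}_k^r(f|_{k,m}\gamma)$ and collecting automorphy factors, the exponents of $\det(\gamma)$ and of $j(\gamma;z)$ contributed by the three factors $f^{v-1}$, $(\delta_k f)^{r-v}$, $\delta_k^v f$ add up to $mr+r$ and $-(kr+2r)$ respectively, \emph{independently of $v$}. Using (i) again at the point $\gamma\cdot z\in\Omega^{\psi}(M)$, this shows $\mathcal{U}_k^r(f|_{k,m}\gamma)=\mathcal{U}_k^r(f)|_{kr+2r,mr+r}\gamma$ on $\Omega^{\psi}(M)$; since both sides are holomorphic on $\Omega$ by \cite[Lem.~2.4]{US98}, they agree on all of $\Omega$ by Proposition~\ref{Cl:QMF}. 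Specializing to $\gamma\in\Gamma$ gives the invariance $\mathcal{U}_k^r(f)|_{kr+2r,mr+r}\gamma=\mathcal{U}_k^r(f)$; holomorphy at every cusp then follows, just as in the proof of Theorem~\ref{Thm:Rankin_Cohen_Braket}, from the fact that hyperderivatives preserve $u$-expansions (\cite[Lem.~3.6]{US98}), and the $u_{m_{\Gamma}}$-coefficients remain in $L$ because $\der^{n}$ and products do. This places $\mathcal{U}_k^r(f)$ in $\mathcal{M}_{kr+2r}^{mr+r}(\Gamma;L)$.
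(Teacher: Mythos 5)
Your proposal is correct, but part (i) takes a genuinely different route from the paper's. The paper proves \eqref{E:operator} by brute force: it expands the right-hand side completely in powers of $(\tilde{\pi}\Id-\tilde{\pi}\psi)^{-1}$ and verifies, in four separate cases, that every coefficient of a positive power vanishes, each case resting on a bespoke binomial-sum evaluation. You instead treat $X=(\tilde{\pi}\Id-\tilde{\pi}\psi)^{-1}$ as a formal variable, use the differentiation rule $\tfrac{d}{dX}\delta_k^v f=(k+v-1)\delta_k^{v-1}f$ to telescope the derivative of the right-hand side into $\sum_v(-1)^{r-v}\widetilde{c_v^r(k)}\bigl(k(r-v)Q_v+(k+v-1)Q_{v-1}\bigr)$, and kill it with just two one-line recursions, $(r-u)\binom{k+r-1}{r-u}=(k+u)\binom{k+r-1}{r-u-1}$ and $kr\,\widetilde{c_1^r(k)}=(k+1)\widetilde{c_2^r(k)}$; the coincidence $Q_0=Q_1$ moreover explains conceptually why the anomalous factor $r-1$ (rather than $r$) in $\widetilde{c_1^r(k)}$ is forced, which the paper's computation leaves opaque. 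Crucially, you correctly identified that $\tfrac{dP}{dX}=0$ alone proves nothing in characteristic $p$, and your patch --- carrying out the identity in $\mathbb{Z}[y_0,\dots,y_r][X]$ with $\der^j f$ replaced by indeterminates, concluding $P(X)=P(0)$ over $\mathbb{Q}$, and only then reducing modulo $p$ and specializing via the evaluation homomorphism of Theorem~\ref{Thm:Transcendence_of_Sigma} --- is sound; the division by $\gcd_v\widetilde{c_v^r(k)}$ is also harmless since it is performed in characteristic zero before reduction. (Your remark about checking the $Q_u$ are distinct is actually unnecessary: you only need each grouped coefficient to vanish, not linear independence.) What the trade-off buys: the paper's combinatorial method is self-contained and deliberately avoids any calculus on the auxiliary variable, whereas your argument is shorter, closer to the classical Maass--Shimura proofs, and makes the structure of the cancellation visible, at the cost of the extra characteristic-zero lifting step. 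Your part (ii) --- applying (i) to $f|_{k,m}\gamma$, invoking Proposition~\ref{P:3}, checking the automorphy factors are independent of $v$, and passing from $\Omega^{\psi}(M)$ to $\Omega$ via Proposition~\ref{Cl:QMF} before handling the cusps with the $u$-expansion stability of hyperderivatives --- is exactly the argument the paper intends, since it simply defers to the proof of Theorem~\ref{Thm:Rankin_Cohen_Braket}.
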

\begin{proof} We prove the first assertion. Let us denote the right hand side of \eqref{E:operator} by $\mathcal{V}_k^r(f)$. We claim that the coefficients of the terms in
	\begin{multline}\label{E:terms}
	\mathcal{V}_k^r(f)=(-1)^{r-1}(r-1)\binom{k+r-1}{r-1}\left(\der f+ \frac{kf}{\tilde{\pi}\Id-\tilde{\pi}\psi}\right)^{r}+\\r\sum_{v=2}^r(-1)^{r-v}k^{v-1}\binom{k+r-1}{r-v}f^{v-1}\left(\der f+\frac{kf}{\tilde{\pi}\Id-\tilde{\pi}\psi}\right)^{r-v}\left(\sum_{i=0}^v\binom{k+v-1}{i}\frac{\der^{v-i}f}{(\tilde{\pi}\Id-\tilde{\pi}\psi)^i}\right)
	\end{multline}
	involving positive powers of $\frac{1}{\tilde{\pi}\Id-\tilde{\pi}\psi}$ are zero. We divide the proof into the following cases:
	
	\textbf{Case 1:} For $n\geq 1$ and $i\geq 2$, the coefficient of the term $(\der f)^{r-n-i}(\der^if)f^{i+n-1}\frac{1}{(\tilde{\pi}\Id-\tilde{\pi}\psi)^n}$ is 
	\begin{multline*}
	rk^{n+i-1}\sum_{j=0}^{n}(-1)^{r-i-j}\binom{k+r-1}{r-i-j}\binom{r-i-j}{n-j}\binom{k+i+j-1}{j}\\
	=rk^{n+i-1}\frac{(k+r-1)!}{(r-i-n)!(k+i-1)!}\sum_{j=0}^{n}(-1)^{r-i-j}\frac{1}{(n-j)!j!}=0.
	\end{multline*}
	
	\textbf{Case 2:} For  $r>n\geq 2$, the coefficient of the term $(\der f)^{r-n}f^n	\frac{1}{(\tilde{\pi}\Id-\tilde{\pi}\psi)^n}$ is 
	\begin{align*}
	&rk^{n-1}\sum_{i=2}^n(-1)^{r-i}\binom{k+r-1}{r-i}\binom{r-i}{n-i}\binom{k+i-1}{i}\\
	&+rk^n\sum_{i=3}^{n+1}(-1)^{r-i}\binom{k+r-1}{r-i}\binom{r-i}{n-(i-1)}\binom{k+i-1}{i-1}\\
	&+(-1)^{r-1}k^{n-1}\binom{k+r-1}{r-1}k(r-1)\binom{r}{n}+(-1)^{r-2}rk^n\binom{k+r-1}{r-2}\binom{r-2}{n-1}\binom{k+1}{1}\\
	&=\frac{rk^n(k+r-1)!}{(r-n)!k!}\sum_{i=2}^n(-1)^{r-i}\frac{1}{(n-i)!i!}+\frac{rk^n(r-n)(k+r-1)!}{(r-n)!k!}\sum_{i=3}^{n+1}\frac{(-1)^{r-i}}{(n-(i-1))!(i-1)!}\\
	&\ \ \ \  \ \ \ \ \ \ \ +(-1)^{r-1}\frac{rk^n(k+r-1)!}{k!(r-n)!}\left(\frac{r-1}{n!}-\frac{r-n}{(n-1)!}\ \right)\\
	&=\frac{(-1)^rrk^n(k+r-1)!(-r+n+1)}{(r-n)!k!}\left(\sum_{i=2}^n\frac{(-1)^{i}}{(n-i)!i!}-\frac{(n-1)}{n!}\right)\\
	&=0.
	\end{align*}
	\textbf{Case 3:} For  $n=r\geq 2$, the coefficient of the term $f^n 	\frac{1}{(\tilde{\pi}\Id-\tilde{\pi}\psi)^n}$ is 
	\begin{align*}
	&nk^{n-1}\sum_{v=2}^n(-1)^{n-v}\binom{k+n-1}{n-v}\binom{k+v-1}{v}+(-1)^{n-1}(n-1)k^n\binom{k+n-1}{n-1}\\
	&=\frac{(-1)^nn(k+n-1)!k^{n-1}}{(k-1)!}\sum_{v=2}^{n}(-1)^v\frac{1}{(n-v)!v!}+\frac{(-1)^{n-1}(k+n-1)!(n-1)k^n}{k!(n-1)!}\\
	&=\frac{(-1)^nn(k+n-1)!k^{n}(n-1)}{k!n!}+\frac{(-1)^{n-1}n(k+n-1)!k^n(n-1)}{k!n!}\\
	&=0.
	\end{align*}
	
	\textbf{Case 4:} The coefficient of the term $(\der f)^{r-1} f\frac{1}{\tilde{\pi}\Id-\tilde{\pi}\psi}$ is 
	\begin{align*}
	(-1)^{r-2}rk\left(\binom{k+r-1}{r-2}(k+1)-(r-1)\binom{k+r-1}{r-1}\right)=0.
	\end{align*}
	By \eqref{E:terms}, we see that above four cases include all the possible terms having a positive power of $\frac{1}{\tilde{\pi}\Id-\tilde{\pi}\psi}$ and hence the claim follows. It is now easy to see that $\mathcal{U}_k^r(f)=\mathcal{V}_k^r(f)$ as desired. Part(ii) follows from the same idea used in the proof of the second assertion of Theorem \ref{Thm:Rankin_Cohen_Braket} and hence we leave the details to the reader.
\end{proof}
\begin{remark} Let $\mathfrak{g}$ and $\mathfrak{h}$ be Drinfeld modular forms defined in Example \ref{Ex:Drinfeld_Modular_Forms}. When $0\leq r\leq q$, using \cite[Thm. 4.1]{BP08}, we have $\mathcal{U}_{q-1}^r(\mathfrak{g})=\mathcal{U}_{q+1}^r(\mathfrak{g})=0$. Moreover, we obtain $\mathcal{U}_{q-1}^{q+1}(\mathfrak{g})=0$ and
	\[
	\mathcal{U}_{q+1}^{q+1}(\mathfrak{h})=\frac{\mathfrak{h}^{q+3}}{\theta^q-\theta}=\frac{\mathfrak{h}^4\Delta }{\theta-\theta^q},
	\]
	where the last equality follows from \cite[\S 6]{Gek88}. 
\end{remark}

\section{Drinfeld quasi-modular forms}

Let $\Gamma\leqslant  \Gamma(1)$ be a congruence subgroup. In this section, we study Drinfeld quasi-modular forms for $\Gamma$ which generalizes the definition of Bosser and Pellarin \cite{BP08} for the full modular group. 
\begin{definition}  
	\begin{itemize}
	\item[(i)] \textit{A Drinfeld quasi-modular form $f:\Omega\to \mathbb{C}_{\infty}$ of weight $k\in \ZZ_{\geq 0}$, type $m\in \ZZ/(q-1)\ZZ$ and depth $r\in \ZZ_{\geq 0}$  for $\Gamma$} is a holomorphic function such that for any $\gamma\in \Gamma$, we have
		\begin{equation}\label{E:quasimod}
		(f|_{k,m}\gamma)(z)=f_{0}(z)+\mathscr{J}(\gamma;z)f_{1}(z)+\dots +\mathscr{J}(\gamma;z)^{r}f_{r}(z),
		\end{equation}
	for some $f_0,\dots,f_r\in \mathcal{O}$ so that for each $0\leq i \leq r$ and $\alpha \in \Gamma(1)$, $f_i|_{k-2i,m-i}\alpha$ is bounded on vertical lines. We denote the $\CC_{\infty}$-vector space generated by all Drinfeld quasi-modular forms of weight $k$, type $m$ and depth at most $r$ for $\Gamma$ by $\QM_{k}^{m,\leq r}(\Gamma)$. 
	\item[(ii)] Observe that setting $\gamma=\Id_2$ in \eqref{E:quasimod} implies that $f_0=f$. In particular, since $f$ is bounded on vertical lines, choosing $\gamma=\begin{pmatrix}1&a\\0&1\end{pmatrix}$ for any $a\in m_{\Gamma}$, we see that $f$ is holomorphic at infinity with respect to $\Gamma$. For any field $K\subseteq L\subseteq \mathbb{C}_{\infty}$, we denote by $\QM_{k}^{m,\leq r}(\Gamma;L)$ the $L$-vector space generated by all $f\in \QM_{k}^{m,\leq r}(\Gamma)$  whose $u_{m_{\Gamma}}$-expansions have coefficients lying in $L$.
	\end{itemize}
\end{definition}

Our main goal is to prove that $\mathcal{N}_{k}^{m,\leq r}(\Gamma)$ and $\mathcal{Q}\mathcal{M}_{k}^{m,\leq r}(\Gamma)$ are canonically isomorphic. First, we need a slight generalization of \cite[Sec.~2,~Rem.~(i)]{BP08} which should be compared with Theorem \ref{Thm:Transcendence_of_Sigma}.

\begin{lemma}\label{Lem:Uniqueness_of_QM}
    Let $f_0,\dots,f_r\in\mathcal{O}$. If
    \begin{equation}\label{Eq:QM_Uniqueness}
        f_0(z)+f_1(z)\mathscr{J}(\gamma;z)+\cdots+f_r(z)\mathscr{J}(\gamma;z)^r=0
    \end{equation}
    for all $z\in\Omega$ and $\gamma\in\Gamma$, then $f_i\equiv 0$ for all $0\leq i\leq r$. In particular, for any $f\in \QM_{k}^{m,\leq r}(\Gamma)$, the expression in \eqref{E:quasimod} is unique.
\end{lemma}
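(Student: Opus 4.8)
The plan is to fix $z\in\Omega$ and read \eqref{Eq:QM_Uniqueness} as a one-variable polynomial identity in the quantity $\mathscr{J}(\gamma;z)$, then show that this quantity takes more than $r$ distinct values as $\gamma$ ranges over a suitable infinite family inside $\Gamma$. Since $\Gamma$ is a congruence subgroup, I may choose $\mathfrak{n}\in A\setminus\FF_q$ with $\Gamma(\mathfrak{n})\subseteq\Gamma$. For each $a$ in the nonzero ideal $(\mathfrak{n})\subset A$ the matrix $\gamma_a:=\begin{pmatrix}1&0\\a&1\end{pmatrix}$ lies in $\Gamma(\mathfrak{n})\subseteq\Gamma$, and a direct computation gives $\mathscr{J}(\gamma_a;z)=\dfrac{a}{az+1}$.

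The key step I would carry out first is to verify that the values $\mathscr{J}(\gamma_a;z)$ are pairwise distinct for distinct $a$. This is precisely where the hypothesis $z\in\Omega=\CC_\infty\setminus K_\infty$ is used: for $a\in A\subset K_\infty$ the denominator $az+1$ never vanishes, since $az+1=0$ would force $z=-a^{-1}\in K_\infty$, a contradiction. Moreover, if $\frac{a}{az+1}=\frac{a'}{a'z+1}$, then cross-multiplying and cancelling the common term $aa'z$ yields $a=a'$. Hence $a\mapsto\mathscr{J}(\gamma_a;z)$ is injective on $(\mathfrak{n})$, and as $(\mathfrak{n})$ is infinite (because $A$ is an infinite domain and $\mathfrak{n}\neq 0$), the set $\{\mathscr{J}(\gamma_a;z):a\in(\mathfrak{n})\}$ is infinite.

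With this in hand, specializing \eqref{Eq:QM_Uniqueness} to $\gamma=\gamma_a$ shows that the polynomial $P_z(X):=\sum_{i=0}^r f_i(z)X^i\in\CC_\infty[X]$ vanishes at each of the infinitely many distinct points $\mathscr{J}(\gamma_a;z)$. Since a nonzero polynomial of degree at most $r$ has at most $r$ roots, we must have $P_z\equiv 0$, that is, $f_i(z)=0$ for every $0\leq i\leq r$. As $z\in\Omega$ was arbitrary, this gives $f_i\equiv 0$ on $\Omega$ and proves the first assertion. For the final clause, if $f\in\QM_k^{m,\leq r}(\Gamma)$ admits two expressions $f|_{k,m}\gamma=\sum_{i=0}^r f_i\mathscr{J}(\gamma;z)^i=\sum_{i=0}^r g_i\mathscr{J}(\gamma;z)^i$ as in \eqref{E:quasimod}, then subtracting them shows that the functions $f_i-g_i\in\mathcal{O}$ satisfy \eqref{Eq:QM_Uniqueness}, so the first part forces $f_i=g_i$ for all $i$.

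The only point demanding genuine care is the injectivity and infinitude of the value set $\{\mathscr{J}(\gamma_a;z):a\in(\mathfrak{n})\}$, which is what upgrades the result from $\Gamma(1)$ (as in \cite[Sec.~2,~Rem.~(i)]{BP08}) to an arbitrary congruence subgroup; everything else is the formal observation that a polynomial with more roots than its degree vanishes identically. I expect no serious obstacle, since the explicit family $\gamma_a$ makes both properties transparent.
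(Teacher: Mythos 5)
Your proof is correct and takes essentially the same route as the paper's: both fix $z$, use infinitely many matrices in $\Gamma(\mathfrak{n})\subseteq\Gamma$ to produce infinitely many distinct specializations at which a polynomial of degree at most $r$ (with coefficients built from the $f_i(z)$) vanishes, and conclude that all coefficients are zero. The only cosmetic difference is that the paper substitutes $X=\mu_\gamma=d_\gamma/c_\gamma$ after clearing denominators, which forces it to invert a triangular system at the end, whereas you work directly in the variable $\mathscr{J}(\gamma_a;z)=a/(az+1)$ with an explicit unipotent family, making the injectivity check and the conclusion $f_i(z)=0$ immediate.
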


\begin{proof}
   We proceed by using the argument in the proof of \cite[Lem.~7.2]{CL19}. Let $\Gamma(N)\subseteq\Gamma$ for some $N\in A$. Then there are infinitely many elements $\gamma\in\Gamma(N)$ so that $c\neq 0$ and hence we have infinitely many values $\mu_\gamma:=d_{\gamma}/c_{\gamma}\in K$. Assume to the contrary that there exist $z_0\in \Omega$ and $0\leq m \leq r$ such that $f_m(z_0)\neq 0$. Note that if $c_{\gamma}\neq 0$, then \eqref{Eq:QM_Uniqueness} is equivalent to
    \[
        (z_0+\mu_\gamma)^rf_0(z_0)+(z_0+\mu_\gamma)^{r-1}f_1(z_0)+\cdots+f_r(z_0)=0,
    \]
    which implies that the polynomial
    \[
        \sum_{i=0}^r\Big(\sum_{j=i}^r\binom{j}{j-i}z_0^{j-i}f_{r-j}(z_0)\Big)X^i\in\mathcal{O}[X]
    \]
    vanishes at infinitely many $X=\mu_\gamma$. Thus, $\sum_{j=i}^r\binom{j}{j-i}z_0^{j-i}f_{r-j}(z_0)=0$ for any $0\leq i\leq r$ which implies that $f_i(z_0)=0$ for any $0\leq i\leq r$. This contradicts to our assumption and hence the proof of the first part of the lemma is completed. The second part is a simple consequence of the first part.
\end{proof}

The following proposition follows the same spirit of \cite[Lem.~2.5]{BP08}.

\begin{proposition}\label{Prop:QM_QM}
    Let $\mathscr{F}$ be an element in $\mathcal{Q}\mathcal{M}_{k}^{m,\leq r}(\Gamma)$ such that there are $f_0,\dots,f_r\in\mathcal{O}$ satisfying
    \[
        (\mathscr{F}\mid_{k,m}\gamma)(z)=\sum_{i=0}^rf_i(z)\mathscr{J}(\gamma;z)^i
    \]
    for any $z\in\Omega$ and $\gamma\in\Gamma$. Then we have 
    \begin{equation}\label{Eq:QM_Coefficients}
        (f_i\mid_{k-2i,m-i}\gamma)(z)=\sum_{j=0}^{r-i}\binom{i+j}{j}f_{i+j}(z)\mathscr{J}(\gamma;z)^j.
    \end{equation}
    In particular, $f_i\in\QM_{k-2i}^{m-i,\leq r-i}(\Gamma)$ for $0\leq i\leq r-1$ and $f_r\in \mathcal{M}_{k-2r}^{m-r}(\Gamma)$.
\end{proposition}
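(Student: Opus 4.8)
The plan is to exploit the cocycle property of the slash operator together with an elementary cocycle relation for $\mathscr{J}$, and then to run a polynomial-identity argument in the same spirit as Lemma~\ref{Lem:Uniqueness_of_QM}. First I would record the cocycle identity
\[
\mathscr{J}(\alpha\gamma;z)=\mathscr{J}(\alpha;\gamma\cdot z)\,\frac{\det(\gamma)}{j(\gamma;z)^2}+\mathscr{J}(\gamma;z),\qquad \alpha,\gamma\in\GL_2(K),
\]
which is a direct computation from the definitions of $\mathscr{J}$ and $j$ (equivalently, $\mathscr{J}$ is the logarithmic hyperderivative of $j$ via \eqref{E:jder}, but the relation can be checked purely algebraically).

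Next I would fix $\gamma\in\Gamma$ and $z\in\Omega$ and write $\mathscr{F}|_{k,m}(\alpha\gamma)$ in two ways for varying $\alpha\in\Gamma$, noting $\alpha\gamma\in\Gamma$. On one hand, the hypothesis gives
\[
(\mathscr{F}|_{k,m}(\alpha\gamma))(z)=\sum_{i=0}^rf_i(z)\,\mathscr{J}(\alpha\gamma;z)^i,
\]
while the cocycle property of the slash operator gives
\[
(\mathscr{F}|_{k,m}(\alpha\gamma))(z)=((\mathscr{F}|_{k,m}\alpha)|_{k,m}\gamma)(z)=\det(\gamma)^m j(\gamma;z)^{-k}\sum_{i=0}^r f_i(\gamma\cdot z)\,\mathscr{J}(\alpha;\gamma\cdot z)^i.
\]
Setting $t:=\mathscr{J}(\alpha;\gamma\cdot z)$, $A:=\det(\gamma)/j(\gamma;z)^2$ and $B:=\mathscr{J}(\gamma;z)$, the cocycle relation turns the first expression into $\sum_i f_i(z)(At+B)^i$, a polynomial in $t$, while the second equals $\det(\gamma)^m j(\gamma;z)^{-k}\sum_i f_i(\gamma\cdot z)\,t^i$.

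The crucial step is to argue that these two polynomials in $t$ coincide identically. As $\alpha$ ranges over a principal congruence subgroup $\Gamma(N)\subseteq\Gamma$, the quantities $t=\mathscr{J}(\alpha;\gamma\cdot z)=1/(\gamma\cdot z+d_\alpha/c_\alpha)$ attain infinitely many distinct values, exactly as in the proof of Lemma~\ref{Lem:Uniqueness_of_QM}; since $\mathbb{C}_\infty$ is infinite, two polynomials agreeing at infinitely many points must be equal. Equating coefficients of $t^j$ and using $(At+B)^i=\sum_{j}\binom{i}{j}A^jB^{i-j}t^j$ then yields
\[
A^j\sum_{i=j}^r\binom{i}{j}f_i(z)\,B^{i-j}=\det(\gamma)^m j(\gamma;z)^{-k}f_j(\gamma\cdot z).
\]
Substituting $A^j=\det(\gamma)^j j(\gamma;z)^{-2j}$, $B=\mathscr{J}(\gamma;z)$ and unwinding $f_j|_{k-2j,m-j}\gamma(z)=\det(\gamma)^{m-j}j(\gamma;z)^{-(k-2j)}f_j(\gamma\cdot z)$ gives precisely \eqref{Eq:QM_Coefficients} after reindexing $i=j+\ell$.

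For the ``in particular'' assertions: taking $i=r$ in \eqref{Eq:QM_Coefficients} leaves only the $j=0$ term, so $f_r|_{k-2r,m-r}\gamma=f_r$ for all $\gamma\in\Gamma$, i.e.\ $f_r$ is a weak Drinfeld modular form, and its holomorphy at infinity for every $\alpha\in\Gamma(1)$ is inherited from the boundedness-on-vertical-lines conditions built into the definition of $\mathscr{F}\in\QM_k^{m,\leq r}(\Gamma)$, whence $f_r\in\mathcal{M}_{k-2r}^{m-r}(\Gamma)$. For $0\le i\le r-1$, identity \eqref{Eq:QM_Coefficients} is exactly the defining transformation law of a quasi-modular form of weight $k-2i$, type $m-i$ and depth at most $r-i$ with coefficient functions $\binom{i+j}{j}f_{i+j}$, and again the required growth at the cusps comes directly from the hypothesis on $\mathscr{F}$, giving $f_i\in\QM_{k-2i}^{m-i,\leq r-i}(\Gamma)$. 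I expect the main obstacle to be the bookkeeping of the automorphy factors so that the cocycle substitution produces a clean polynomial in $t$; once that is arranged, the infinitude argument and the binomial expansion make the coefficient comparison routine.
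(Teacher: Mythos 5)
Your proposal is correct and takes essentially the same route as the paper's proof: both compute $\mathscr{F}|_{k,m}$ of a product of two elements of $\Gamma$ in two ways, both rest on the same cocycle identity for $\mathscr{J}$ (your displayed relation is exactly the paper's \eqref{E:Claim} rearranged), and both equate polynomial coefficients via the infinitely-many-values argument that underlies Lemma~\ref{Lem:Uniqueness_of_QM}. The only difference is organizational: by expanding in $t=\mathscr{J}(\alpha;\gamma\cdot z)$ rather than in $\mathscr{J}(\alpha\gamma;z)$, you reach \eqref{Eq:QM_Coefficients} in one step, whereas the paper first obtains the inverse relation $f_i(z)=\sum_{j=i}^r(-1)^{j-i}\binom{j}{i}(f_j|_{k-2j,m-j}\gamma)(z)\mathscr{J}(\gamma;z)^{j-i}$ and then inverts it by substituting $z\mapsto\gamma^{-1}\cdot z$, a step your arrangement makes unnecessary.
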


\begin{proof}
    Let $\gamma_1,\gamma_2\in\Gamma$. Then on the one hand, we have
    \begin{equation}\label{Eq:QM_L}
        (\mathscr{F}\mid_{k,m}\gamma_1\gamma_2)(z)=\sum_{i=0}^rf_i(z)\mathscr{J}(\gamma_1\gamma_2;z)^i.
    \end{equation}
    On the other hand, by \cite[(1.6)]{BBP21}, we have
    \begin{equation}\label{Eq:QM_R}
            (\mathscr{F}\mid_{k,m}\gamma_1\gamma_2)(z)=\left((\mathscr{F}\mid_{k,m}\gamma_1)\mid_{k,m}\gamma_2\right)(z)
        =\sum_{i=0}^r(f_i\mid_{k,m}\gamma_2)(z)\mathscr{J}(\gamma_1;\gamma_2\cdot z)^i.
    \end{equation}
    We claim that 
     \begin{equation}\label{E:Claim}
     \mathscr{J}(\gamma_1;\gamma_2\cdot z)=\det(\gamma_2)^{-1}j(\gamma_2;z)^2\left(\mathscr{J}(\gamma_1\gamma_2;z)-\mathscr{J}(\gamma_2;z)\right).
     \end{equation}
To prove it, let $\gamma_1=\begin{pmatrix}
            a_1 & b_1\\
            c_1 & d_1
        \end{pmatrix}$ and $\gamma_2=\begin{pmatrix}
            a_2 & b_2\\
            c_2 & d_2
        \end{pmatrix}$.
        By definition, we have
        \begin{align*}
            \mathscr{J}(\gamma_1;\gamma_2\cdot z)=\frac{c_1}{c_1\left(\frac{a_2z+b_2}{c_2z+d_2}\right)+d_1}
            =\frac{c_1(c_2z+d_2)}{c_1(a_2z+b_2)+d_1(c_2z+d_2)}
            =\frac{c_1j(\gamma_2;z)}{j(\gamma_1\gamma_2;z)}.
        \end{align*}
Moreover, we have
        \begin{align*}
            \mathscr{J}(\gamma_1\gamma_2;z)-\mathscr{J}(\gamma_2;z)&=\frac{c_1a_2+d_1c_2}{(c_1a_2+d_1c_2)z+(c_1b_2+d_1d_2)}-\frac{c_2}{c_2z+d_2}\\
            &=\frac{(c_1a_2+d_1c_2)(c_2z+d_2)-c_2\left((c_1a_2+d_1c_2)z+(c_1b_2+d_1d_2)\right)}{j(\gamma_1\gamma_2;z)j(\gamma_2;z)}\\
            &=\frac{c_1\det\gamma_2}{j(\gamma_1\gamma_2;z)j(\gamma_2;z)}.
        \end{align*}
        Now the desired claim follows immediately. After applying \eqref{E:Claim}, \eqref{Eq:QM_R} becomes
    \begin{equation}\label{Eq:QM_R2}
        \begin{split}
            (\mathscr{F}\mid_{k,m}\gamma_1\gamma_2)(z)&=\sum_{i=0}^r(f_i\mid_{k-2i,m-i}\gamma_2)(z)(\mathscr{J}(\gamma_1\gamma_2;z)-\mathscr{J}(\gamma_2;z))^i\\
            &=\sum_{i=0}^r(f_i\mid_{k-2i,m-i}\gamma_2)(z)\sum_{j=0}^i(-1)^{i-j}\binom{i}{j}\mathscr{J}(\gamma_1\gamma_2;z)^j\mathscr{J}(\gamma_2;z)^{i-j}\\
            &=\sum_{i=0}^r\left(\sum_{j=i}^r(f_j\mid_{k-2j,m-j}\gamma_2)(z)(-1)^{j-i}\binom{j}{i}\mathscr{J}(\gamma_2;z)^{j-i}\right)\mathscr{J}(\gamma_1\gamma_2;z)^i.
        \end{split}
    \end{equation}
    Then by equating \eqref{Eq:QM_L} and \eqref{Eq:QM_R2}, Lemma~\ref{Lem:Uniqueness_of_QM} implies that
    \begin{equation}\label{Eq:QM_R3}
        f_i(z)=\sum_{j=i}^r(f_j\mid_{k-2j,m-j}\gamma_2)(z)(-1)^{j-i}\binom{j}{i}\mathscr{J}(\gamma_2;z)^{j-i}.
    \end{equation}
    By \cite[Lem.~3.1.3]{Bas14}, we have $j(\gamma_2;\gamma_2^{-1}\cdot z)=j(\gamma_2^{-1};z)^{-1}$ and, by \eqref{E:Claim},  
    \[
    \mathscr{J}(\gamma_2;\gamma_2^{-1}\cdot z)=-\det(\gamma_2)j(\gamma_2^{-1};z)^2\mathscr{J}(\gamma_2^{-1};z).
        \] 
    Thus, using \eqref{Eq:QM_R3}, we obtain
    \[
        f_i(\gamma_2^{-1}\cdot z)=\det(\gamma_2^{-1})^{i-m}j(\gamma_2^{-1};z)^{k-2i}\sum_{j=i}^r\binom{j}{i}f_j(z)\mathscr{J}(\gamma_2^{-1};z)^{j-i},
    \]
which concludes the desired result immediately.
\end{proof}

Let us consider the $\mathbb{C}_{\infty}$-vector space 
\[
\QM_{k}^{m}(\Gamma):=\bigcup_{r\in \mathbb{Z}_{\geq 0}}\QM_{k}^{m,\leq r}(\Gamma).
\]
Furthermore, we define $\QM(\Gamma)$ to be the $\mathbb{C}_{\infty}$-algebra generated by all the Drinfeld quasi-modular forms for $\Gamma$. Using Lemma \ref{Lem:Uniqueness_of_QM} and Proposition \ref{Prop:QM_QM} as well as the idea in the proof of Theorem \ref{Thm:Structure_of_N}, one can easily prove the following decomposition of $\mathcal{Q}\mathcal{M}_k^{m,\leq r}(\Gamma)$ and we leave the details to the reader.

\begin{proposition}\label{P:decomp}  We have 
	\[
	\mathcal{Q}\mathcal{M}_{k}^{m,\leq r}(\Gamma)=\bigoplus_{0\leq j \leq r}\mathcal{M}_{k-2j}^{m-j}(\Gamma)E^j.
	\]
\end{proposition}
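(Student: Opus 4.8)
The plan is to mirror the proof of Theorem~\ref{Thm:Structure_of_N}, replacing $E_2$ and Proposition~\ref{P:2} by the false Eisenstein series $E$ and Proposition~\ref{Prop:QM_QM}. The key computational input is the transformation law of $E$: combining \eqref{E:FE} with the definition of the slash operator gives
\[
(E\mid_{2,1}\gamma)(z)=E(z)-\tilde{\pi}^{-1}\mathscr{J}(\gamma;z),
\]
so that $E\in\QM_2^{1,\le 1}(\Gamma)$ for every congruence subgroup $\Gamma$, with defining coefficients $f_0=E$ and $f_1=-\tilde{\pi}^{-1}$.

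For the inclusion $\bigoplus_{0\le j\le r}\mathcal{M}_{k-2j}^{m-j}(\Gamma)E^j\subseteq\QM_k^{m,\le r}(\Gamma)$, I would fix $g\in\mathcal{M}_{k-2j}^{m-j}(\Gamma)$ with $0\le j\le r$ and use the multiplicativity $(fh)\mid_{k_1+k_2,m_1+m_2}\gamma=(f\mid_{k_1,m_1}\gamma)(h\mid_{k_2,m_2}\gamma)$ of the slash operator together with the binomial theorem to compute
\[
(gE^j)\mid_{k,m}\gamma=g\,(E-\tilde{\pi}^{-1}\mathscr{J}(\gamma;\cdot))^j=\sum_{i=0}^j\binom{j}{i}(-\tilde{\pi}^{-1})^i\,gE^{j-i}\,\mathscr{J}(\gamma;\cdot)^i,
\]
which is exactly of the shape \eqref{E:quasimod} with holomorphic coefficients $\binom{j}{i}(-\tilde{\pi}^{-1})^igE^{j-i}\in\mathcal{O}$. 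The boundedness on vertical lines of the slashed coefficients follows because $g$ is a modular form, hence bounded at every cusp, while $E$ is holomorphic at infinity with no polar $u$-terms by \eqref{E:uexp}, so each $E^{j-i}\mid_{\cdot,\cdot}\alpha=(E-\tilde{\pi}^{-1}\mathscr{J}(\alpha;\cdot))^{j-i}$ is bounded on vertical lines (the same propagation already used in Proposition~\ref{P:mod}). Therefore $gE^j\in\QM_k^{m,\le j}(\Gamma)\subseteq\QM_k^{m,\le r}(\Gamma)$.

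For the reverse inclusion, let $\mathscr{F}\in\QM_k^{m,\le r}(\Gamma)$ with $(\mathscr{F}\mid_{k,m}\gamma)(z)=\sum_{i=0}^rf_i(z)\mathscr{J}(\gamma;z)^i$. By Proposition~\ref{Prop:QM_QM}, the top coefficient satisfies $f_r\in\mathcal{M}_{k-2r}^{m-r}(\Gamma)$. By the binomial computation above, the $\mathscr{J}(\gamma;\cdot)^r$-coefficient of $(-\tilde{\pi})^rf_rE^r$ equals $(-\tilde{\pi})^rf_r(-\tilde{\pi}^{-1})^r=f_r$, so $\mathscr{F}-(-\tilde{\pi})^rf_rE^r$ lies in $\QM_k^{m,\le r-1}(\Gamma)$ (its $\mathscr{J}^r$-coefficient cancels, and uniqueness of the expansion is guaranteed by Lemma~\ref{Lem:Uniqueness_of_QM}). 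Iterating this peeling exactly as in Theorem~\ref{Thm:Structure_of_N} yields $g_j\in\mathcal{M}_{k-2j}^{m-j}(\Gamma)$ with $\mathscr{F}=\sum_{j=0}^rg_jE^j$.

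Finally, for directness, suppose $\sum_{j=0}^rg_jE^j=0$ with $g_j\in\mathcal{M}_{k-2j}^{m-j}(\Gamma)$. Slashing by $\gamma\in\Gamma$ and collecting powers of $\mathscr{J}(\gamma;\cdot)$, only the term $g_rE^r$ contributes at depth $r$, so the $\mathscr{J}(\gamma;\cdot)^r$-coefficient equals $(-\tilde{\pi}^{-1})^rg_r$; Lemma~\ref{Lem:Uniqueness_of_QM} forces $g_r=0$, and descending induction on $r$ gives $g_j=0$ for all $j$. The only genuinely delicate point is verifying the boundedness-at-cusps conditions in the first inclusion, but these are entirely controlled by the behaviour of $E$ recorded in \eqref{E:uexp} together with Proposition~\ref{P:mod}, so that step is routine and can be left to the reader as indicated.
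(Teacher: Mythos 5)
Your proposal is correct and takes essentially the same route as the paper, which in fact leaves this proposition to the reader while citing exactly the ingredients you use: Lemma~\ref{Lem:Uniqueness_of_QM}, Proposition~\ref{Prop:QM_QM}, and the peeling-off argument of Theorem~\ref{Thm:Structure_of_N}, transported via the transformation law $E|_{2,1}\gamma=E-\tilde{\pi}^{-1}\mathscr{J}(\gamma;\cdot)$ (your normalizing factor $(-\tilde{\pi})^r$ is the right replacement for the $(-1)^r$ in Theorem~\ref{Thm:Structure_of_N}). Your explicit check of the boundedness-on-vertical-lines conditions for the binomial coefficients $gE^{j-i}$ simply fills in the detail the paper omits.
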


In what follows, we generalize \cite[Thm.1, Prop. 2.2]{BP08} for some particular congruence subgroups of $\Gamma(1)$. Firstly, for some monic polynomial $\mathfrak{p}\in A$ of positive degree, define 
\[ \Gamma_1(\mathfrak{p}):=\left\{ \begin{pmatrix}
a & b\\c & d
\end{pmatrix} \in \Gamma(1)~| ~ c\in \mathfrak{p}A, \ \ d-1\in \mathfrak{p}A  \right\}.
\]
Let $\mathcal{S}^{\text{grd}}$ be the set of all congruence subgroups $\Gamma$ of $\Gamma(1)$ such that $\Gamma_1(\mathfrak{p})\subseteq \Gamma$ for some $\mathfrak{p}$.

\begin{proposition}\label{P:Grad}
	\begin{itemize}
		\item[(i)] Let $f_1,\dots, f_n\in \mathcal{Q}\mathcal{M}(\Gamma)$ be such that their weights are pairwise distinct. Then the set $\{f_1,\dots, f_n\}$ is $\mathbb{C}_{\infty}$-linearly independent.
		\item[(ii)] The function $E$ is transcendental over $\mathcal{M}(\Gamma)$. Moreover, $\QM(\Gamma)$ is a finitely generated $\mathbb{C}_{\infty}$-algebra.
		\item[(iii)] If $\Gamma\in \mathcal{S}^{\text{grd}}$, then 
		\[
		\QM(\Gamma)=\bigoplus_{\substack{k\in \mathbb{Z}_{\geq 0}\\m\in \mathbb{Z}/(q-1)\mathbb{Z}}}\QM_{k}^{m}(\Gamma).
		\]
		In particular, $\QM(\Gamma)$ is a $\mathbb{C}_{\infty}$-algebra graded by the weights and types, and filtered by the depths.
	\end{itemize}
\end{proposition}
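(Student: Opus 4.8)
The plan is to deduce all three parts from one separation-of-variables device: test every relation against the principal congruence subgroup $\Gamma(\mathfrak n)\subseteq\Gamma$ (with $\deg_\theta\mathfrak n\geq1$), every element of which has $\det(\gamma)=1$ and whose bottom rows $(c_\gamma,d_\gamma)$ run through all coprime pairs with $c\equiv0$, $d\equiv1\pmod{\mathfrak n}$; these pairs are Zariski dense in $\mathbb A^2_{\mathbb C_\infty}$, so a polynomial in $c_\gamma,d_\gamma$ vanishing for all such $\gamma$ vanishes identically. For (i), suppose $\sum_{i=1}^nc_if_i=0$ with $f_i\in\QM_{k_i}^{m_i}(\Gamma)$ nonzero and the $k_i$ pairwise distinct. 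Fixing $z_0\in\Omega$ and evaluating at $\gamma\cdot z_0$ for $\gamma\in\Gamma(\mathfrak n)$, formula \eqref{E:quasimod} together with $\det(\gamma)=1$ and $\mathscr J(\gamma;z)^\ell=c_\gamma^\ell j(\gamma;z)^{-\ell}$ turns the relation into
\[ \sum_ic_i\sum_\ell f_{i,\ell}(z_0)\,c_\gamma^\ell\,j(\gamma;z_0)^{k_i-\ell}=0 . \]
Writing $X=j(\gamma;z_0)$ and $Y=c_\gamma$, the left side is a polynomial $\Phi(X,Y)$ which by density vanishes identically; every monomial coming from $f_i$ has total degree $(k_i-\ell)+\ell=k_i$, so distinct weights force the degree-$k_i$ component to vanish on its own, and its $X^{k_i}$-coefficient (the $\ell=0$ term, where $f_{i,0}=f_i$) is $c_if_i(z_0)$. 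As $z_0$ was arbitrary, $c_if_i\equiv0$ and $c_i=0$.

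For the transcendence claim in (ii), I would run the same computation on a hypothetical relation $\sum_{i=0}^dP_iE^i=0$ with $P_i\in\mathcal M(\Gamma)$ and $P_d\neq0$. Writing each $P_i=\sum_w\tilde P_{i,w}$ as a finite sum of weight-$w$ modular forms (possible since $\mathcal M(\Gamma)$ is generated by homogeneous forms) and using \eqref{E:FE} in the form $E(\gamma\cdot z_0)=j^2E(z_0)-\tilde{\pi}^{-1}c_\gamma j$ for $\det(\gamma)=1$, the relation becomes a polynomial identity $\Phi(X,Y)\equiv0$ whose coefficient of the top power $Y^d$ is $(-\tilde{\pi}^{-1})^dX^d\sum_w X^w\tilde P_{d,w}(z_0)$. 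Its vanishing forces $\tilde P_{d,w}(z_0)=0$ for all $w$ and all $z_0$, i.e. $P_d=0$, a contradiction; descending induction then kills every $P_i$. Finite generation of $\QM(\Gamma)$ follows at once: Proposition \ref{P:decomp} shows $\QM(\Gamma)=\mathcal M(\Gamma)[E]$, and $\mathcal M(\Gamma)$ is a finitely generated $\mathbb C_\infty$-algebra \cite{BBP21}, so adjoining the single element $E$ preserves finite generation.

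For (iii), assume $\Gamma\in\mathcal S^{\mathrm{grd}}$ and consider $\sum_{k,m}F_{k,m}=0$ with $F_{k,m}\in\QM_k^m(\Gamma)$. Grouping $\Phi$ by total degree exactly as in (i) yields $\sum_mF_{k,m}=0$ for each fixed weight $k$, so it remains to separate types. Here I use that $\Gamma\supseteq\Gamma_1(\mathfrak p)$ contains the diagonal matrices $\mathrm{diag}(a,1)$ with $a\in\mathbb F_q^\times$; since $c_\gamma=0$ for these, \eqref{E:quasimod} collapses to $F_{k,m}(az)=a^{-m}F_{k,m}(z)$. Feeding this into $\sum_mF_{k,m}=0$ gives $\sum_m a^{-m}F_{k,m}(z)=0$ for every $a\in\mathbb F_q^\times$, and since the characters $a\mapsto a^{-m}$ ($m\in\mathbb Z/(q-1)\mathbb Z$) are pairwise distinct, a Vandermonde/character-independence argument forces each $F_{k,m}\equiv0$. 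This proves the direct-sum decomposition; the weight/type grading and the depth filtration are then immediate from \eqref{E:quasimod} and Proposition \ref{P:decomp}.

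The step I expect to be most delicate is the bookkeeping in (ii): tracking the inhomogeneous coefficients $P_i$ together with the quadratic-in-$j$ transformation of $E$ and isolating the top $Y$-degree coefficient cleanly. The other point requiring care is the Zariski density of the admissible bottom rows $(c_\gamma,d_\gamma)$, so that vanishing of $\Phi$ on this set propagates to a genuine polynomial identity; this reduces to exhibiting, for infinitely many prescribed first coordinates $c$, infinitely many admissible $d$, which the congruence-and-coprimality conditions readily allow.
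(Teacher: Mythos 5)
Your proof is correct, and its skeleton matches the paper's: specialize the relation along elements of a principal congruence subgroup (where $\det(\gamma)=1$, so types drop out), convert automorphy into polynomial identities, isolate extremal coefficients, and separate types in (iii) with $\begin{pmatrix} a&0\\0&1\end{pmatrix}$, $a\in\mathbb{F}_q^\times$, via the same Vandermonde argument. The implementation differs at two points, though. In (i), the paper avoids your Zariski-density step entirely by evaluating along the explicit one-parameter family $\gamma_\alpha=\begin{pmatrix}1+\alpha\mathfrak m&-\alpha^2\mathfrak m\\ \mathfrak m&1-\alpha\mathfrak m\end{pmatrix}\in\Gamma(\mathfrak m)$, $\alpha\in A$, which has constant lower-left entry $\mathfrak m$; the relation then becomes a one-variable polynomial in $X=\mathfrak m(z_0-\alpha)+1$ with infinitely many roots, and only the top coefficient $c_1f_1(z_0)$ at $X^{k_1}$ is extracted, followed by descending induction. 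Your two-variable version needs the small completion lemma that coprime pairs $(c,d)$ with $c\equiv0$, $d\equiv1\pmod{\mathfrak n}$ are bottom rows of elements of $\Gamma(\mathfrak n)$ (true: if $a_0d-b_0c=1$ then $a_0\equiv1\pmod{\mathfrak n}$ automatically, and replacing $(a_0,b_0)$ by $(a_0+tc,b_0+td)$ with $t\equiv-b_0\pmod{\mathfrak n}$ fixes the congruence on $b$), but in exchange it kills each homogeneous component of $\Phi$ at once, which is exactly what you reuse for the weight separation in (iii). In (ii) the divergence is more substantial: the paper first decomposes $P(E)=\sum_w P_w(E)$ into fixed-weight quasi-modular forms, invokes part (i) to get each $P_w(E)=0$, and then compares depths in $\tilde g_{\tilde w}E^{\tilde w}=\sum_{i<\tilde w}\tilde g_iE^i$ using the uniqueness statement of Lemma \ref{Lem:Uniqueness_of_QM}; you instead feed the transformation law \eqref{E:FE} of $E$ directly into the polynomial identity and read off the $Y^d$-coefficient $(-\tilde\pi^{-1})^dX^d\sum_w X^w\tilde P_{d,w}(z_0)$ (correctly, since the $i=d$ term is the only source of $Y^d$), a self-contained computation that never uses Lemma \ref{Lem:Uniqueness_of_QM} or the depth filtration — a fair trade of structural lemmas for explicit bookkeeping. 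Part (iii) and the finite-generation claim ($\QM(\Gamma)=\mathcal M(\Gamma)[E]$ via Proposition \ref{P:decomp}, plus finite generation of $\mathcal M(\Gamma)$ from \cite{BBP21}) coincide with the paper's argument.
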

\begin{proof} 
	We start with proving the first assertion. Let us assume that there exist $c_1,\dots,c_n\in \mathbb{C}_{\infty}$ not all zero and $f_{\nu}\in \QM_{k_\nu}^{m_\nu,\leq r_\nu}(\Gamma)$ for each $\nu\in \{1,\dots,n\}$ satisfying
	\begin{equation}\label{E:Gr0}
	(c_1f_1+\dots+c_nf_n)(z)=0, \ \ z\in\Omega.
	\end{equation}
	Let $\mathfrak{m}$ be a monic polynomial of positive degree such that $\Gamma(\mathfrak{m})\subseteq \Gamma$. For each $\alpha\in A$, we set 
	\[
	\gamma_{\alpha}:=\begin{pmatrix} 1+\alpha \mathfrak{m}& -\alpha^2\mathfrak{m}\\
	\mathfrak{m} & 1-\alpha \mathfrak{m}
	\end{pmatrix}\in \Gamma(\mathfrak{m}).
	\]
	Let $z_0$ be an element in $\Omega$ satisfying $f_1(z_0)\neq 0$. Replacing $z$ with $\gamma_{\alpha}\cdot z_0$ in \eqref{E:Gr0}, we obtain
	\begin{multline}\label{E:Gr1}
	c_1f_1(z_0) (\mathfrak{m}(z_0-\alpha)+1)^{k_1}+\sum_{i=1}^{r_1}c_1f_{1,i}(z_0)\mathfrak{m}^{i}(\mathfrak{m}(z_0-\alpha)+1)^{k_1-i}\\+\dots+ \sum_{i=0}^{r_n}c_nf_{n,i}(z_0)\mathfrak{m}^{i}(\mathfrak{m}(z_0-\alpha)+1)^{k_n-i}=0
	\end{multline}
	for some $f_{\nu,j}\in \mathcal{O}$ where $j\in \{1,\dots,r_n\}$ and $f_{\nu,0}:=f_{\nu}$ when $\nu\geq 2$. By reordering if necessary, we further assume that $c_1\neq 0$ and $k_1>\dots>k_n$. Note that, by Corollary \ref{C:depth} and Proposition \ref{P:decomp}, $k_{\nu}\geq 2 r_{\nu}$ for each $\nu$. Moreover, \eqref{E:Gr1} holds for any $\alpha\in A$. Thus, the polynomial
	\[
	\mathcal{P}(X):=c_1f_1(z_0)X^{k_1} +\sum_{i=1}^{r_1}c_1f_{1,i}(z_0)\mathfrak{m}^{i}X^{k_1-i}+\dots+ \sum_{i=0}^{r_n}c_nf_{n,i}(z_0)\mathfrak{m}^{i}X^{k_n-i}
	\]
	has infinitely many zeros of the form $\mathfrak{m}(z_0-\alpha)+1$. This implies that $c_1f_1(z_0)=0$, which is a contradiction to our assumption and hence $c_1=0$. Repeating this argument for other coefficients conclude the proof of the first part.  
	
	We now prove the first assertion of (ii). Let $P(X)\in \mathcal{M}(\Gamma)[X]$ be such that $P(E)=0$. For $w\in \mathbb{Z}_{\geq 0}$, let $P_{w}(X):=\sum_{i=0}^{\tilde{w}}\tilde{g}_iX^i\in \mathcal{M}(\Gamma)[X]$ be so that $P_w(E)\in \QM_{w}^{m}(\Gamma)$ for some $m\in \mathbb{Z}/(q-1)\mathbb{Z}$ and $P(E)=\sum_{w\in \mathbb{Z}_{\geq 0}}P_w(E)=0$. Thus, by the first part, $P_w(E)=0$ for each $w$. Hence, we have 
	\begin{equation}\label{E:transcendence}
	\tilde{g}_{\tilde{w}}E^{\tilde{w}}=\sum_{i=0}^{\tilde{w}-1}\tilde{g}_iE^i,
	\end{equation}
	where both sides are elements of $\QM(\Gamma)$. Note that the left hand side of \eqref{E:transcendence} has depth $\tilde{w}$ whereas the right hand side has depth less than $\tilde{w}$. Applying $\gamma \cdot z$ to both sides of \eqref{E:transcendence} for any $\gamma \in \Gamma$ and all $z\in \Omega$, we obtain $\tilde{g}_i\equiv 0$ for each $i$ by Lemma \ref{Lem:Uniqueness_of_QM}. Since $w$ is arbitrary, we indeed have $P(X)=0$, completing the proof. The second assertion is a consequence of Proposition \ref{P:decomp} and \cite[Cor. 1.58]{Gos80} (see also \cite[Thm. 11.1(c)]{BBP21}). 
	
	To prove (iii), firstly, note that for any $k_1,k_2\in \mathbb{Z}_{\geq 0}$, $m_1,m_2\in \mathbb{Z}/(q-1)\mathbb{Z}$ and $r_1,r_2\in \mathbb{Z}_{\geq 0}$, we have 
	\[
	\QM_{k_1}^{m_1,\leq r_1}(\Gamma)\QM_{k_2}^{m_2,\leq r_2}(\Gamma)\subseteq \QM_{k_1+k_2}^{m_1+m_2,\leq r_1+r_2}(\Gamma).
	\]
	To conclude the proof of the second part, by the first assertion, it suffices to show that Drinfeld quasi-modular forms of same weight for $\Gamma$ whose types are pairwise distinct are $\mathbb{C}_{\infty}$-linearly independent. Assume to the contrary that there exist $\tilde{c}_1,\dots,\tilde{c}_{\mu}\in \mathbb{C}_{\infty}$ not all zero and, for each $1\leq s \leq \mu$, $g_{s}\in \QM_{k}^{m_s,\leq r_s}(\Gamma)$ with $m_i\neq m_j$ if $i\neq j$ where $i,j\in \{1,\dots,\mu\}$, satisfying
	\begin{equation}\label{E:Gr2}
	(\tilde{c}_1g_1+\dots+\tilde{c}_{\mu}g_\mu)(z)=0, \ \ z\in \Omega.
	\end{equation}  
	By reordering if necessary, we further assume that $\tilde{c}_1\neq 0$ and choose $z'\in \Omega$ such that $g_1(z')\neq 0$. Replacing $z$ in \eqref{E:Gr2} with $\begin{pmatrix} a & 0\\
	0 & 1
	\end{pmatrix}\cdot z'$ for any $a\in \mathbb{F}_q^{\times}$, we obtain 
	\begin{equation}\label{E:Gr3}
	\tilde{c}_1g_1(z')a^{m_1}+\dots+\tilde{c}_{\mu}g_\mu(z')a^{m_{\mu}}=0.
	\end{equation}
	However, \eqref{E:Gr3} implies that the polynomial 
	$
	\tilde{\mathcal{P}}(X):=\tilde{c}_1g_1(z')X^{m_1}+\dots+\tilde{c}_{\mu}g_\mu(z')X^{m_{\mu}}
	$
	of degree at most $q-2$ has $q-1$-many distinct roots. This implies that $\tilde{\mathcal{P}}(X)=0$ and hence $\tilde{c}_1g_1(z')=0$, which is a contradiction. Thus the proof of the second assertion is completed.
\end{proof}

The following theorem gives an analogue of \cite[Prop.~1~(a)]{KZ95} in our setting.

\begin{theorem}\label{T:Isom}
    The map
    \begin{align*}
        \iota:\mathcal{N}_{k}^{m,\leq s}(\Gamma)&\to \mathcal{Q}\mathcal{M}_{k}^{m,\leq s}(\Gamma)\\
        F=\sum_{i=0}^s\frac{f_i}{(\tilde{\pi}\Id-\tilde{\pi}\psi)^{i}}&\mapsto\iota(F):=f_0
    \end{align*}
    is a well-defined isomorphism of $\mathbb{C}_{\infty}$-vector spaces so that 
    \[
	\iota(\delta_{k}^r(F))=\der^r(\iota(F)).
    \] 
    In particular, we have $\der^r(\mathcal{Q}\mathcal{M}_{k}^{m,\leq s}(\Gamma))\subset\mathcal{Q}\mathcal{M}_{k+2r}^{m+r,\leq s+r}(\Gamma)$ and the following diagram of $\mathbb{C}_\infty$-vector spaces commutes:
    \[
        \begin{tikzcd}
            \mathcal{N}_k^{m,\leq s}(\Gamma) \arrow[d, "\delta_k^r"] \arrow[r, "\iota"] & \mathcal{Q}\mathcal{M}_k^{m,\leq s}(\Gamma)\arrow[d, "\der^r"]\\
            \mathcal{N}_{k+2r}^{m+r,\leq s+r}(\Gamma) \arrow[r, "\iota"] & \mathcal{Q}\mathcal{M}_{k+2r}^{m+r,\leq s+r}(\Gamma)
        \end{tikzcd}.
    \]
\end{theorem}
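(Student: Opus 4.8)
The plan is to recognize $\iota$ as the restriction of an evaluation-at-zero ring homomorphism attached to the embedding of Theorem~\ref{Thm:Transcendence_of_Sigma}. Writing $X:=\tfrac{1}{\tilde{\pi}\Id-\tilde{\pi}\psi}$ (a scalar multiple of the generator in that theorem, so its conclusion applies verbatim), Theorem~\ref{Thm:Transcendence_of_Sigma} identifies $\mathcal{N}_{k}^{m,\leq s}(\Gamma)$ with a subspace of $\mathcal{O}[X]\subset C(\Omega^{\psi}(M),\CC_{\infty})$, and uniqueness of the coefficients $f_0,\dots,f_s$ (Remark~\ref{R:1}(i)) shows that $\iota(F)=f_0$ is exactly the constant term, i.e.\ the image of $F$ under the ring homomorphism $\iota_0\colon\mathcal{O}[X]\to\mathcal{O}$, $X\mapsto 0$. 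Since $\iota_0$ fixes $\mathcal{O}$ and $\iota_0(E_2)=\iota_0(E-X)=E$, this single observation organizes the whole argument.

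First I would check that the target is correct. Taking the index $s-i=0$ in Proposition~\ref{P:2} gives, for every $\gamma\in\Gamma$,
\[
(f_0|_{k,m}\gamma)(z)=\sum_{\ell=0}^{s}\binom{\ell}{\ell}\mathscr{J}(\gamma;z)^{\ell}f_\ell(z)=\sum_{\ell=0}^{s}f_\ell(z)\,\mathscr{J}(\gamma;z)^{\ell},
\]
which is precisely the quasi-modular transformation law \eqref{E:quasimod}; combined with the boundedness on vertical lines of each $f_\ell|_{k-2\ell,m-\ell}\alpha$ recorded in Remark~\ref{R:1}(ii), this places $f_0=\iota(F)$ in $\mathcal{Q}\mathcal{M}_{k}^{m,\leq s}(\Gamma)$. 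Linearity of $\iota$ is immediate from the uniqueness of the expansion \eqref{E:UE}.

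For bijectivity I would compare the two structural decompositions. Theorem~\ref{Thm:Structure_of_N} gives a unique expression $F=\sum_{0\le j\le s}g_jE_2^{\,j}$ with $g_j\in\mathcal{M}_{k-2j}^{m-j}(\Gamma)$, so $\mathcal{N}_{k}^{m,\leq s}(\Gamma)=\bigoplus_{0\le j\le s}\mathcal{M}_{k-2j}^{m-j}(\Gamma)E_2^{\,j}$, while Proposition~\ref{P:decomp} gives $\mathcal{Q}\mathcal{M}_{k}^{m,\leq s}(\Gamma)=\bigoplus_{0\le j\le s}\mathcal{M}_{k-2j}^{m-j}(\Gamma)E^{j}$. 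Applying the homomorphism $\iota_0$ termwise yields $\iota(F)=\sum_{j}g_jE^{j}$, so $\iota$ carries the first direct sum isomorphically onto the second (it is the identity on the common summands $\mathcal{M}_{k-2j}^{m-j}(\Gamma)$); injectivity and surjectivity then follow from the directness of both decompositions.

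Finally, the intertwining relation is a short bookkeeping computation, which I expect to be the only genuinely computational---and hence the most error-prone---step. Expanding
\[
\delta_k^r(F)=\sum_{\mu=0}^{s}\sum_{i=0}^{r}\binom{k-\mu+r-1}{i}\frac{\der^{\,r-i}f_\mu}{(\tilde{\pi}\Id-\tilde{\pi}\psi)^{\mu+i}}
\]
and grouping by the power $\nu=\mu+i$ of $X$, the coefficient of $X^{0}$ receives a contribution only from $\mu=i=0$, namely $\binom{k+r-1}{0}\der^{r}f_0=\der^{r}f_0$; hence $\iota(\delta_k^r(F))=\der^{r}f_0=\der^{r}(\iota(F))$, which is the stated commutativity of the square. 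The two remaining ``in particular'' assertions are then formal: given $g\in\mathcal{Q}\mathcal{M}_{k}^{m,\leq s}(\Gamma)$, set $F:=\iota^{-1}(g)$ and invoke Proposition~\ref{P:3} to obtain $\delta_k^r(F)\in\mathcal{N}_{k+2r}^{m+r,\leq s+r}(\Gamma)$, so that $\der^{r}(g)=\iota(\delta_k^r(F))\in\mathcal{Q}\mathcal{M}_{k+2r}^{m+r,\leq s+r}(\Gamma)$.
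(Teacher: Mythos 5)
Your proof is correct, and for the heart of the theorem---bijectivity---it takes a genuinely different route from the paper's. The paper proves injectivity directly: if $f_0\equiv 0$, the transformation law for $f_0$ from Proposition~\ref{P:2} together with Lemma~\ref{Lem:Uniqueness_of_QM} forces all $f_i\equiv 0$; and it proves surjectivity by explicitly constructing $F(z)=\sum_{i=0}^s g_i(z)(\tilde{\pi}z-\tilde{\pi}\psi(z))^{-i}$ from the transformation data of $\mathscr{F}$ and verifying $F|_{k,m}\gamma=F$ by hand via Propositions~\ref{P:1} and~\ref{Prop:QM_QM} plus a binomial identity, then checking condition (iii) of Definition~\ref{D:def} using Remark~\ref{R:1}(ii) and \cite[Prop.~5.16]{BBP21}. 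You instead recognize $\iota$ as the constant-term map, i.e.\ the restriction of the evaluation $X\mapsto 0$ on $\mathcal{O}[X]$ (legitimate by Theorem~\ref{Thm:Transcendence_of_Sigma}; the rescaling by $\tilde{\pi}^{-1}$ is indeed harmless), so that $E_2=E-X\mapsto E$, and then transport the two structure decompositions (Theorem~\ref{Thm:Structure_of_N} and Proposition~\ref{P:decomp}) onto each other; your intertwining computation, extracting the $X^0$-coefficient of $\delta_k^r(F)$, agrees with the paper's ``straightforward from the definition'' step, as do your well-definedness argument (the $r-i=0$ case of Proposition~\ref{P:2} plus Remark~\ref{R:1}(ii)) and the final deductions via Proposition~\ref{P:3}. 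Your approach buys conceptual clarity and eliminates the paper's combinatorial verification entirely; what it costs is one containment you leave implicit: surjectivity through the decompositions needs $g_jE_2^{\,j}\in\mathcal{N}_{k}^{m,\leq j}(\Gamma)$ for $g_j\in\mathcal{M}_{k-2j}^{m-j}(\Gamma)$, i.e.\ that such products satisfy conditions (ii)--(iii) of Definition~\ref{D:def} (the coefficients $(-1)^i\binom{j}{i}g_jE^{j-i}$ are bounded on vertical lines, and the cusp expansions exist via \eqref{E:FE}, \eqref{E:uN} and \cite[Prop.~5.16]{BBP21}). This is routine, and the paper itself uses the same closure tacitly in Theorem~\ref{Thm:Structure_of_N} (the step asserting $F-(-1)^rE_2^rf_r\in\mathcal{N}_{k}^{m,\leq r-1}(\Gamma)$), but it is precisely the verification that the paper's surjectivity argument carries out explicitly, so you should add a sentence doing it to make your shortcut airtight.
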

\begin{proof} Let $F=\sum_{i=0}^s\frac{f_i}{(\tilde{\pi}\Id-\tilde{\pi}\psi)^{i}}\in \mathcal{N}_{k}^{m,\leq s}(\Gamma)$ with uniquely defined $f_i\in \mathcal{O}$. By Remark \ref{R:1}(ii) and Proposition \ref{P:2}, that $f_{0} \in\QM_k^{m,\leq s}(\Gamma)$ and thus $\iota$ is a well-defined map. Clearly, the map $\iota$ is $\CC_{\infty}$-linear. To prove the injectivity,
note that for any $\gamma \in \Gamma$, we have	
\[
f_{0}(\gamma \cdot  z)=j(\gamma;z)^{k}\det(\gamma)^{-m}\left(f_{0}(z)+\mathscr{J}(\gamma;z)f_{1}(z)+\dots +\mathscr{J}(\gamma;z)^{s}f_{s}(z)\right).
\]
Thus, if $\iota(F)=0$ then, by  Proposition \ref{P:2} and Lemma \ref{Lem:Uniqueness_of_QM}, $F$ vanishes identically and hence $\iota$ is injective. We now prove the surjectivity. 
Let $\mathscr{F}\in\QM_k^{m,\leq s}(\Gamma)$. Then there exist $g_0,\dots,g_s\in\mathcal{O}$ satisfying
\[
(\mathscr{F}\mid_{k,m}\gamma)(z)=\sum_{i=0}^sg_i(z)\mathscr{J}(\gamma;z)^i
\]
for any $z\in\Omega$ and $\gamma\in\Gamma$. Let $F:\Omega^{\psi}(M)\to \mathbb{C}_{\infty}$ be the function given by $F(z):=\sum_{i=0}^sg_i(z)\frac{1}{(\tilde{\pi}z-\tilde{\pi}\psi(z))^{i}}$. We first claim that
\begin{equation}\label{E:claim}
(F\mid_{k,m}\gamma)(z)=F(z).
\end{equation}
 Applying Proposition~\ref{P:1} and Proposition~\ref{Prop:QM_QM}, we obtain
\begin{align*}
(F\mid_{k,m}\gamma)(z)&=\sum_{i=0}^s(g_i\mid_{k,m}\gamma)(z)\left(\frac{1}{\tilde{\pi}z-\tilde{\pi}\psi(z)}-\mathscr{J}(\gamma;z)\right)^i\\
&=\sum_{i=0}^s\sum_{j=0}^{s-i}\binom{i+j}{j}g_{i+j}(z)\mathscr{J}(\gamma;z)^j\sum_{\ell=0}^i(-1)^{i-\ell}\binom{i}{\ell}\mathscr{J}(\gamma;z)^{i-\ell}\frac{1}{(\tilde{\pi}z-\tilde{\pi}\psi(z))^{\ell}}\\
&=\sum_{\ell=0}^s\left(\sum_{i=\ell}^s\sum_{j=0}^{s-i}(-1)^{i-\ell}\binom{i+j}{j}\binom{i}{\ell}g_{i+j}(z)\mathscr{J}(\gamma;z)^{i+j-\ell}\right)\frac{1}{(\tilde{\pi}z-\tilde{\pi}\psi(z))^{\ell}}\\
&=\sum_{\ell=0}^s\left(\sum_{i=\ell}^s\left(\sum_{j=\ell}^{i}(-1)^{j-\ell}\binom{i}{i-j}\binom{j}{\ell}\right)g_{i}(z)\mathscr{J}(\gamma;z)^{i-\ell}\right)\frac{1}{(\tilde{\pi}z-\tilde{\pi}\psi(z))^{\ell}}\\
&=\sum_{\ell=0}^sg_{\ell}(z)\frac{1}{(\tilde{\pi}z-\tilde{\pi}\psi(z))^{\ell}}\\
&=F(z),
\end{align*}
by noting that
\[
\sum_{j=\ell}^{i}(-1)^{j-\ell}\binom{i}{i-j}\binom{j}{\ell}=\sum_{m=0}^{i-\ell}(-1)^{m}\binom{i}{i-m-\ell}\binom{m+\ell}{\ell}=\binom{i}{\ell}\sum_{m=0}^{i-\ell}(-1)^{m}\binom{i-\ell}{m}=0
\]    
unless $i=\ell$. The desired claim now follows. On the other hand, for any $\alpha\in \Gamma(1)$, by the assumption on $g_i$, $g_i|_{k-2i,m-i}\alpha$ is bounded on vertical lines and thus, by Remark \ref{R:1}(ii), so is $\mathfrak{C}_{i,F}$ for each $0\leq i \leq s$. Hence, by \cite[Prop. 5.16]{BBP21} and Proposition \ref{Prop:QM_QM}, we have
\[
	(F|_{k,m}\alpha)(z)=\sum_{i=0}^s\frac{1}{(\tilde{\pi}z-\tilde{\pi}\psi(z))^{i}}\sum_{\ell= 0}^{\infty}c_{\alpha,i,\ell}u_{m_{\alpha^{-1}\Gamma\alpha}}(z)^{\ell}
\]
for some $c_{\alpha,i,\ell}\in \mathbb{C}_{\infty}$. Therefore, $F\in \mathcal{N}_k^{m,\leq s}(\Gamma)$. Moreover, since $\iota(F)=g_0=\mathscr{F}$, the proof of surjectivity is completed.

Finally, it is straightforward to check from the definition that
\[
    \iota(\delta_{k}^r(F))=\iota\bigg(\sum_{i=0}^r\delta_{k}^r\left(\frac{f_i}{\left(\tilde{\pi}\Id-\tilde{\pi}\psi\right)^{i}}\right)\bigg)=\der^r(f_0)=\der^r(\iota(F)).
\]
Then the desired inclusion $\der^r(\mathcal{Q}\mathcal{M}_{k}^{m,\leq s}(\Gamma))\subset\mathcal{Q}\mathcal{M}_{k+2r}^{m+r,\leq s+r}(\Gamma)$ and the commutative diagram follows immediately.
\end{proof}

\begin{remark} When $\Gamma=\Gamma(1)$, we remark that, for any $r\geq 0$, Bosser and Pellarin \cite[Thm. 2]{BP08} also showed that  $\der^r(\mathcal{Q}\mathcal{M}_{k}^{m,\leq s}(\Gamma))\subseteq\mathcal{Q}\mathcal{M}_{k+2r}^{m+r,\leq s+r}(\Gamma)$ .
\end{remark}

Recall the $\mathbb{C}_{\infty}$-vector space $\mathcal{N}_{k}^{m}(\Gamma)$ and the $\mathbb{C}_{\infty}$-algebra $\mathcal{N}(\Gamma)$ from \S1.2.  We finish this section with the following result whose proof easily follows from Proposition \ref{P:Grad} and Theorem \ref{T:Isom}. 
\begin{corollary}\label{C:transE}
	\begin{itemize}
		\item[(i)] Let $F_1,\dots, F_n\in \mathcal{N}(\Gamma)$ so that their weights are pairwise distinct. Then the set $\{F_1,\dots, F_n\}$ is $\mathbb{C}_{\infty}$-linearly independent.
		\item[(ii)]  The function $E_2$ is transcendental over $\mathcal{M}(\Gamma)$. Moreover, $\mathcal{N}(\Gamma)$ is a finitely generated $\mathbb{C}_{\infty}$-algebra.
		\item[(iii)]  If $\Gamma\in \mathcal{S}^{\text{grd}}$, then 
		\[
		\mathcal{N}(\Gamma)=\bigoplus_{\substack{k\in \mathbb{Z}_{\geq 0}\\m\in \mathbb{Z}/(q-1)\mathbb{Z}}}\mathcal{N}_{k}^{m}(\Gamma).
		\]
	\end{itemize}
\end{corollary}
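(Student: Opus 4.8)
The plan is to deduce all three assertions of Corollary~\ref{C:transE} directly from the isomorphism $\iota$ of Theorem~\ref{T:Isom} together with the corresponding facts for Drinfeld quasi-modular forms established in Proposition~\ref{P:Grad}. The key observation is that $\iota$ preserves weight, type and depth, so it transports the entire graded/filtered structure of $\QM(\Gamma)$ to $\mathcal{N}(\Gamma)$; moreover, under the decompositions of Theorem~\ref{Thm:Structure_of_N} and Proposition~\ref{P:decomp}, $\iota$ sends the generator $E_2$ to $E$, since $\iota(E_2)=\iota\bigl(E-\tfrac{1}{\tilde\pi\Id-\tilde\pi\psi}\bigr)=E$.

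First I would prove (i). Given $F_1,\dots,F_n\in\mathcal{N}(\Gamma)$ of pairwise distinct weights, I apply $\iota$ to each to obtain $\iota(F_\nu)\in\QM(\Gamma)$ of the same (pairwise distinct) weights. Since $\iota$ is a $\mathbb{C}_\infty$-linear isomorphism, any nontrivial linear relation $\sum c_\nu F_\nu=0$ would yield $\sum c_\nu\iota(F_\nu)=0$, contradicting Proposition~\ref{P:Grad}(i). Hence $\{F_1,\dots,F_n\}$ is $\mathbb{C}_\infty$-linearly independent. For (ii), suppose $P(E_2)=0$ for some $P(X)\in\mathcal{M}(\Gamma)[X]$. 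Applying $\iota$ and using that it is a ring-compatible bijection on the relevant pieces (with $\iota(E_2)=E$ and $\iota(g)=g$ for any Drinfeld modular form $g$, as such $g$ has depth zero), I obtain $P(E)=0$ in $\QM(\Gamma)$. By Proposition~\ref{P:Grad}(ii), $E$ is transcendental over $\mathcal{M}(\Gamma)$, forcing $P\equiv 0$; thus $E_2$ is transcendental over $\mathcal{M}(\Gamma)$. The finite generation of $\mathcal{N}(\Gamma)$ follows because $\iota$ carries the finite generating set of $\QM(\Gamma)$ furnished by Proposition~\ref{P:Grad}(ii) to a finite generating set of $\mathcal{N}(\Gamma)$; concretely, $\mathcal{N}(\Gamma)$ is generated over $\mathbb{C}_\infty$ by $E_2$ together with generators of $\mathcal{M}(\Gamma)$, the latter being finitely generated by \cite[Cor.~1.58]{Gos80}. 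For (iii), when $\Gamma\in\mathcal{S}^{\text{grd}}$, the direct sum decomposition $\QM(\Gamma)=\bigoplus_{k,m}\QM_{k}^{m}(\Gamma)$ of Proposition~\ref{P:Grad}(iii) transports along the weight- and type-preserving isomorphism $\iota$ to give $\mathcal{N}(\Gamma)=\bigoplus_{k,m}\mathcal{N}_{k}^{m}(\Gamma)$.

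The main subtlety to handle carefully is that $\iota$ was defined as an isomorphism of $\mathbb{C}_\infty$-\emph{vector spaces} on each graded piece $\mathcal{N}_{k}^{m,\leq s}(\Gamma)\to\QM_{k}^{m,\leq s}(\Gamma)$, not a priori as a ring homomorphism on the full algebra $\mathcal{N}(\Gamma)$. To run the transcendence argument in (ii) I must verify that $\iota$ is multiplicative in the sense needed: namely that $\iota(g\,E_2^{\,j})=g\,E^{\,j}$ for a Drinfeld modular form $g$. This is immediate from the representation in Theorem~\ref{Thm:Structure_of_N}, since the constant term (the $i=0$ coefficient in the expansion \eqref{E:UE}) of $gE_2^j=g\bigl(E-\tfrac{1}{\tilde\pi\Id-\tilde\pi\psi}\bigr)^j$ is precisely $gE^j$. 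Thus $\iota$ matches the decomposition of Theorem~\ref{Thm:Structure_of_N} with that of Proposition~\ref{P:decomp} term by term, which is exactly the compatibility required to pull back each statement of Proposition~\ref{P:Grad}. I expect no serious obstacle beyond spelling out this compatibility; once it is recorded, all three parts are formal consequences of the corresponding results for $\QM(\Gamma)$.
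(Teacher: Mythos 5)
Your proposal is correct and takes essentially the same route as the paper, whose one-line proof derives the corollary precisely by transporting Proposition~\ref{P:Grad} through the isomorphism $\iota$ of Theorem~\ref{T:Isom}. Your explicit check that $\iota(gE_2^{\,j})=gE^{\,j}$ (justified by the binomial expansion of $E_2^{\,j}$ and the uniqueness of the expansion \eqref{E:UE} from Theorem~\ref{Thm:Transcendence_of_Sigma}) is exactly the compatibility the paper leaves implicit, so nothing is missing.
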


\section{Special values of nearly holomorphic Drinfeld modular forms at CM points}
In the present section, our goal is to prove Theorem \ref{Thm:ClassField_at_CM_points} and Theorem \ref{Thm:Rationality_at_CM_points} as well as Corollary \ref{Cor:Rationality_at_CM_points2}. We first introduce necessary background to prove our first result in the following section which will be also in use \S 6.2.

Throughout this section, we let $\Gamma\leqslant \Gamma(1)$ be a congruence subgroup.
\subsection{Drinfeld modular functions} We start with defining \textit{the $j$-function $\mathfrak{j}:\Omega\to \mathbb{C}_{\infty}$} by 
\[
\mathfrak{j}(z):=\frac{\mathfrak{g}(z)^{q+1}}{\Delta(z)}.
\]
Note that $\mathfrak{j}\in \mathcal{A}_0(K)$. Let $N\in A$ be a monic polynomial of positive degree. Recall from \S1.5 that $K_N$ is  the $N$-th Carlitz cyclotomic field.  Let $u=(u_1,u_2)\in (N^{-1}A/A)^2\setminus\{(0,0)\}$ and recall the holomorphic function function $E_u$ from \S2.2. We further define the following Drinfeld modular functions for $\Gamma(N)$ by
\[
\mathbf{f}_{1,u}(z):=\frac{\Tilde{\pi}^{1-q}\mathfrak{g}(z)}{E_u(z)^{q-1}}\ \ \text{and} \ \ ~\mathbf{f}_{2,u}(z):=\frac{\Tilde{\pi}^{1-q^2}\Delta(z)}{E_u(z)^{q^2-1}}.
\]
Note that $\mathbf{f}_{2,u}(z)=\mathbf{f}_{1,u}(z)^{q+1}/\mathfrak{j}(z)$. 
Moreover, by \cite[Chap. VII]{Gek86}, the $u_N$-expansion coefficients of $\mathbf{f}_{1,u}$ and $\mathbf{f}_{2,u}$ lie in $K_N$.

Let $X(N)$ be the smooth projective model of the affine algebraic curve $\Gamma(N)\setminus\Omega$. Note that  $\mathbb{C}_\infty(X(1))=\mathbb{C}_\infty(\mathfrak{j})$ and  we have 
\[
\mathbb{C}_\infty(X(N))=\mathbb{C}_\infty(\mathfrak{j})(\mathbf{f}_{1,u}\mid u=(u_1,u_2)\in(N^{-1}A/A)^2\setminus\{(0,0)\}).
\]
Let $L$ be a CM field and $z_0\in L$. Consider the field
\begin{equation}\label{Eq:Maximal_Abelian_Extension}
L_{N}:=L(\mathfrak{j}(z_0),\mathbf{f}_{1,u}(z_0),~\mathbf{f}_{2,u}(z_0)\mid u=(u_1,u_2)\in(N^{-1}A/A)^2\setminus\{(0,0)\}).
\end{equation}
 Then by \cite[Cor.~4.5]{Gek83}, $L_N$ is an abelian extension of $L$ for which $\infty$ splits completely.

We continue with the following lemma on the values of Drinfeld modular functions at CM points.

\begin{lemma}\label{L:Algebraic_Scalar} Let $K\subseteq H \subseteq \mathbb{C}_{\infty}$ be a field and $H_N$ be the compositum of $H$ and $K_N$. Then for any  $f\in \mathcal{A}_0(\Gamma(N),H)$ and $z_0\in \Omega$ such that $f(z_0)$ is defined, we have 
    \begin{equation}\label{E:modfunct}
        f(z_0)\in H_N(\mathfrak{j}(z_0),\mathbf{f}_{1,u}(z_0)\mid u=(u_1,u_2)\in(N^{-1}A/A)^2\setminus\{(0,0)\}).
     \end{equation}
\end{lemma}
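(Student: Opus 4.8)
The plan is to realize $f$ as an element of the function field of the Drinfeld modular curve $X(N)$, descend its field of definition from $\CC_{\infty}$ to $H_N$ by a $u_N$-expansion (``$q$-expansion'') argument, and then evaluate at $z_0$. First, since $f\in\mathcal{A}_0(\Gamma(N),H)$ is a weight-zero meromorphic Drinfeld modular form for $\Gamma(N)$, it is a Drinfeld modular function, hence an element of $\CC_{\infty}(X(N))=\CC_{\infty}(\mathfrak{j})(\mathbf{f}_{1,u}\mid u)$. Note that $m_{\Gamma(N)}=N$, so the expansion relevant to ``defined over $H$'' is the $u_N$-expansion. By Gekeler \cite{Gek86} the generators $\mathbf{f}_{1,u}$ have $u_N$-expansion coefficients in $K_N$, and $\mathfrak{j}\in\mathcal{A}_0(K)$ has coefficients in $K\subseteq K_N$; hence the subfield $F:=K_N(\mathfrak{j},\mathbf{f}_{1,u}\mid u)\subseteq\CC_{\infty}(X(N))$ embeds, via $u_N$-expansion, into $K_N(\!(u_N)\!)$.

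The heart of the argument is a descent. Because $\CC_{\infty}(X(N))=\CC_{\infty}\cdot F$, I may write $f=\sum_{i=1}^{\ell}c_iw_i$ with $c_i\in\CC_{\infty}$ and $w_i\in F$, and after discarding redundancies I may assume $w_1,\dots,w_\ell$ are linearly independent over $K_N$. Passing to $u_N$-expansions, $K_N$-linear independence means that the coefficient matrix $([w_i]_n)_{i,n}$ has an invertible $\ell\times\ell$ minor, say at positions $n_1,\dots,n_\ell$, with entries in $K_N$. This minor remains invertible over $\CC_{\infty}$, so the $w_i$ are also $\CC_{\infty}$-linearly independent and the $c_i$ are uniquely determined. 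Comparing the $u_N$-coefficients in $f=\sum_i c_iw_i$ at $n_1,\dots,n_\ell$ yields a square linear system whose coefficient matrix lies in $K_N\subseteq H_N$ and whose right-hand sides $[f]_{n_s}$ lie in $H\subseteq H_N$; solving by Cramer's rule produces a solution in $H_N$, which by uniqueness must coincide with $(c_i)$. Therefore $c_i\in H_N$ for every $i$, and consequently $f\in H_N(\mathfrak{j},\mathbf{f}_{1,u}\mid u)$.

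Finally, I evaluate at $z_0$: choosing a representation of $f$ as a rational function in $\mathfrak{j},\mathbf{f}_{1,u}$ with coefficients in $H_N$ that is regular at $z_0$ (possible since $f(z_0)$ is defined), evaluation is an $H_N$-algebra homomorphism sending $\mathfrak{j}\mapsto\mathfrak{j}(z_0)$ and $\mathbf{f}_{1,u}\mapsto\mathbf{f}_{1,u}(z_0)$, which gives $f(z_0)\in H_N(\mathfrak{j}(z_0),\mathbf{f}_{1,u}(z_0)\mid u)$, as claimed. I expect the descent step to be the main obstacle: the crux is that $H$-rationality of the $u_N$-expansion of $f$, together with $K_N$-rationality of the expansions of the generators, forces the a priori $\CC_{\infty}$-coefficients $c_i$ into $H_N=H\cdot K_N$. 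This is precisely the linear disjointness of $\CC_{\infty}$ and $K_N(\!(u_N)\!)$ over $K_N$, which is what the invertible-minor together with the Cramer computation makes effective.
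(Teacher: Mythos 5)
Your overall strategy coincides with the paper's: realize $f$ inside $\mathbb{C}_\infty(X(N))=\mathbb{C}_\infty(\mathfrak{j})(\mathbf{f}_{1,u}\mid u)$, descend the field of definition to $H_N$ by comparing $u_N$-expansions (your invertible-minor/Cramer computation is a correct and usefully explicit rendering of the linear disjointness that the paper invokes with the terse phrase ``by comparing the $u_{m_{\Gamma(N)}}$-expansion''), and then evaluate at $z_0$. However, the entry point of your descent has a genuine gap: from $\mathbb{C}_\infty(X(N))=\mathbb{C}_\infty\cdot F$ you cannot write $f=\sum_{i=1}^{\ell}c_iw_i$ with $c_i\in\mathbb{C}_\infty$ and $w_i\in F:=K_N(\mathfrak{j},\mathbf{f}_{1,u}\mid u)$. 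The compositum is a \emph{field}, whose elements are quotients of such $\mathbb{C}_\infty$-linear combinations; the $\mathbb{C}_\infty$-span of $F$ is only a proper subring. Concretely, every finite $\mathbb{C}_\infty$-linear combination of elements of $F$ has all of its poles on $X(N)$ lying over $\mathfrak{j}$-values algebraic over $K_N$ (clear a common denominator in $K_N(\mathfrak{j})$ and use a basis of $F$ over $K_N(\mathfrak{j})$, which is also a basis of $\mathbb{C}_\infty(X(N))$ over $\mathbb{C}_\infty(\mathfrak{j})$), so for $c\in\mathbb{C}_\infty$ transcendental over $K_N$ the function $1/(\mathfrak{j}-c)$ lies in $\mathbb{C}_\infty(X(N))$ but not in the span. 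Indeed, the very conclusion of the lemma asserts only that $f$ lies in the compositum $H_N\cdot F$, i.e.\ is a quotient, so one should not expect a linear decomposition to exist.

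The repair is exactly what the paper does: clear denominators first. Write $f=G_1/G_2$ with $G_1,G_2$ polynomials in $\mathfrak{j}$ and the $\mathbf{f}_{1,u}$ with $\mathbb{C}_\infty$-coefficients, and regard the identity $fG_2-G_1=0$, after expanding everything in $u_N$, as a \emph{homogeneous} linear system in the unknown coefficients of $(G_1,G_2)$; its entries lie in $H_N$, since the expansion of $f$ lies in $H((u_N))$ and those of the monomials in the generators lie in $K_N((u_N))$. The system has a nontrivial solution over $\mathbb{C}_\infty$, hence — the solution space being cut out over $H_N$, with dimension unchanged under base extension (this is where your minor/Cramer mechanism applies) — a nontrivial solution over $H_N$; moreover one can choose it with $G_2$ not identically zero as a function, because the solutions with $G_2\equiv 0$ form a proper subspace defined by further $H_N$-linear conditions on the coefficients. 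The identity theorem (injectivity of the $u_N$-expansion) then gives $f=G_1/G_2$ with $G_1,G_2\in H_N[X,X_u]$, and your final evaluation step, choosing a representation regular at $z_0$, goes through verbatim. So your linear-algebra engine is the right one; it just must be applied to the pair $(G_1,G_2)$ rather than to a direct linear decomposition of $f$, which in general does not exist.
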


\begin{proof}
  One can find polynomials $G_1,G_2\in\mathbb{C}_\infty[X,X_u]_{u=(u_1,u_2)\in(N^{-1}A/A)^2\setminus\{(0,0)\}}$ so that
	\begin{equation}\label{Eq:Linearly_Disjoint}
	f=\frac{G_1}{G_2}\Big|_{X=\mathfrak{j},X_u=\mathbf{f}_{1,u}}
	\end{equation}
	for any $f\in \mathcal{A}_0(\Gamma(N),H)$.
	By comparing the $u_{m_{\Gamma(N)}}$-expansion on the both sides of \eqref{Eq:Linearly_Disjoint}, we obtain $G_1,G_2\in  H_N[X_j,X_u]_{u=(u_1,u_2)\in(N^{-1}A/A)^2\setminus\{(0,0)\}}$ and thus $f(z_0)$  is contained in the right hand side of \eqref{E:modfunct}, finishing the proof.
\end{proof}

Before we state our next proposition, we observe the following decomposition for $\GL_2(K)$.
\begin{remark} \label{R:decompose}  Define
\[
\mathfrak{M}:=\left\{\begin{pmatrix} a'&b'\\
c'&d'
\end{pmatrix}\in \GL_2(K) \ \ | \ \ c'=0    \right\}.
\]
Then, following the idea in \cite[Sec. 1.3]{Shi07}, one can see that 
$
\GL_2(K)=\Gamma(1)\mathfrak{M}.
$
Moreover, let 
\[
U:=\left\{\begin{pmatrix}1&\beta\\0&1
\end{pmatrix}\ \ | \ \beta\in K \right\}  \text{ and } V:=\left\{\begin{pmatrix}y&0\\0&\xi
\end{pmatrix}\ \ | \ y,\xi\in K^{\times} \right\}.
\]
It can be easily seen that any element $\alpha\in \mathfrak{M}$ can be written as $\alpha=\mathfrak{u}'\mathfrak{v}'$ where $\mathfrak{u}'\in U$ and $\mathfrak{v}'\in V$. 
\end{remark}

\subsection{Algebraic special values} Our goal in section is to prove Theorem \ref{T:D}. First we state our next theorem whose proof will be given in Appendix A.

\begin{theorem}\label{T:CM} For any CM point $z_0$, there exists a field $M_{z_0}$ over $\widehat{K_{\infty}^{\text{nr}}}$ containing $z_0$ and an extension $\psi_{z_0}$ of $\sigma$.
\end{theorem}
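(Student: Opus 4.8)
The plan is to realize $z_0$ inside an explicit at-most-quadratic extension of $\widehat{K_{\infty}^{\text{nr}}}$ and to extend $\sigma$ first to that extension and then to all of $\mathbb{C}_{\infty}$ by continuity. Write $L:=K(z_0)$, which by hypothesis is a separable quadratic extension of $K$ in which $\infty$ does not split, and let $L_{\infty}:=K_{\infty}(z_0)\subseteq\mathbb{C}_{\infty}$ be the associated completion, a quadratic extension of $K_{\infty}$. Exactly one of two situations occurs: either $\infty$ is inert in $L$, so that $L_{\infty}/K_{\infty}$ is unramified, or $\infty$ is ramified in $L$, so that $L_{\infty}/K_{\infty}$ is ramified. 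I would treat these two cases separately.

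First I would dispose of the inert case. There the residue extension of $L_{\infty}/K_{\infty}$ is $\mathbb{F}_{q^2}/\mathbb{F}_q$, whence $L_{\infty}\subseteq\overline{\mathbb{F}}_q((1/\theta))=\widehat{K_{\infty}^{\text{nr}}}$ and in particular $z_0\in\widehat{K_{\infty}^{\text{nr}}}$. I would then take $M_{z_0}:=\widehat{K_{\infty}^{\text{nr}}}$ and $\psi_{z_0}:=\sigma$ (extended to $\mathbb{C}_{\infty}$ as below). Since the fixed field of $\sigma$ is precisely $\{\sum a_i\theta^{-i}:a_i\in\mathbb{F}_q\}=K_{\infty}$ and $z_0\notin K_{\infty}$, we obtain $\sigma(z_0)\neq z_0$, so that $z_0\in\Omega^{\psi_{z_0}}(M_{z_0})$ and the value of a nearly holomorphic Drinfeld modular form at $z_0$ is meaningful.

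In the ramified case one has $z_0\notin\widehat{K_{\infty}^{\text{nr}}}$, since an algebraic element of $\widehat{K_{\infty}^{\text{nr}}}$ lies in $K_{\infty}^{\text{nr}}$ and would make $L_{\infty}=K_{\infty}(z_0)$ unramified. I would set $M_{z_0}:=\widehat{K_{\infty}^{\text{nr}}}\cdot L_{\infty}$, a ramified quadratic extension of $\widehat{K_{\infty}^{\text{nr}}}$ containing $z_0$. Its description depends on the parity of $p=\mathrm{char}(K)$: when $p$ is odd, Kummer theory gives $M_{z_0}=\widehat{K_{\infty}^{\text{nr}}}(\sqrt{1/\theta})$, because every unit of the valuation ring of $\widehat{K_{\infty}^{\text{nr}}}$ is a square (the residue field $\overline{\mathbb{F}}_q$ is algebraically closed and $2$ is invertible); when $p=2$, Artin--Schreier theory gives $M_{z_0}=\widehat{K_{\infty}^{\text{nr}}}(\eta)$ with $\eta^2+\eta=a$ for a suitable $a\in K_{\infty}$ arising from a generator of $L_{\infty}/K_{\infty}$. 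In either case the generator of $M_{z_0}$ satisfies a quadratic polynomial ($X^2-1/\theta$, respectively $X^2+X-a$) whose coefficients lie in $K_{\infty}$ and are therefore fixed by $\sigma$; consequently $\sigma$ admits exactly two extensions to automorphisms of $M_{z_0}$, interchanged by the nontrivial element of $\Gal(M_{z_0}/\widehat{K_{\infty}^{\text{nr}}})$. If both extensions fixed $z_0$, then $z_0$ would be fixed by that Galois element, forcing $z_0\in\widehat{K_{\infty}^{\text{nr}}}$, a contradiction; so I would select $\psi_{z_0}$ to be an extension with $\psi_{z_0}(z_0)\neq z_0$.

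It remains to extend the isometric automorphism $\psi_{z_0}$ of the complete field $M_{z_0}$ to a continuous $K_{\infty}$-automorphism of $\mathbb{C}_{\infty}$. By the isomorphism extension theorem, $\psi_{z_0}$ extends to a $K_{\infty}$-automorphism of the algebraic closure $\overline{K_{\infty}}$; since the absolute value extends uniquely to every algebraic extension of the complete field $M_{z_0}$, this automorphism is automatically isometric and therefore extends by continuity to the completion $\mathbb{C}_{\infty}$, restricting to $\sigma$ on $\widehat{K_{\infty}^{\text{nr}}}$ and to the chosen $\psi_{z_0}$ on $M_{z_0}$. I expect the main obstacle to be the ramified case, namely pinning down $M_{z_0}$ explicitly through the parity-dependent Kummer/Artin--Schreier description and verifying that the defining polynomial of its generator is $\sigma$-invariant, so that $\sigma$ genuinely extends and at least one of the two extensions moves $z_0$; the passage to $\mathbb{C}_{\infty}$ is then a routine continuity argument.
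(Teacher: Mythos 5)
Your proposal is correct and follows essentially the same route as the paper's Appendix A: split according to the parity of $p$, realize the ramified CM point inside the quadratic compositum $M_{z_0}=\widehat{K_{\infty}^{\text{nr}}}\cdot K_{\infty}(z_0)$ described via Kummer theory (odd $p$: $\widehat{K_{\infty}^{\text{nr}}}(1/\sqrt{\theta})$) or Artin--Schreier theory ($p=2$), and choose the extension of $\sigma$ that moves $z_0$; the paper merely makes this last choice explicit by writing down $\psi_{z_0}$ and computing its fixed field ($K_{\infty}(\alpha+\mathfrak{c})$, resp.\ $K_{\infty}(\xi/\sqrt{\theta})$ or $K_{\infty}(1/\sqrt{\theta})$), where you instead argue abstractly that the two extensions of $\sigma$ cannot both fix $z_0$, and it likewise leaves the inert case to the remark that $\psi=\sigma$ already works there. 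One small repair in your final (routine) step: $M_{z_0}$ is \emph{not} contained in $\overline{K_{\infty}}$, since $\widehat{K_{\infty}^{\text{nr}}}$ contains elements transcendental over $K_{\infty}$, so you should extend $\psi_{z_0}$ to the algebraic closure of $M_{z_0}$ inside $\mathbb{C}_{\infty}$ (which contains $\overline{K_{\infty}}$ and hence is dense in $\mathbb{C}_{\infty}$); uniqueness of the extended absolute value over the complete field $M_{z_0}$ then makes this extension isometric, so it passes to the completion $\mathbb{C}_{\infty}$ as you intend --- a step the paper itself leaves implicit.
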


Throughout this section, we fix a CM point $z_0\in \Omega$ lying in a CM field $L$ that is, $L=K(z_0)$. We further set $M:= M_{z_0}$ and $\psi:= \psi_{z_0}$.

We have the following injective map of groups
\begin{equation}\label{Eq:Embedding}
\begin{split}
\rho:L^\times&\hookrightarrow\GL_2(K)\\
\alpha&\mapsto\rho_\alpha:=\begin{pmatrix}
a & b\\
c & d
\end{pmatrix},
\end{split}
\end{equation}
where $\alpha z_0=az_0+b$ and $\alpha =cz_0+d$ for some $a,b,c,d\in K$. Note, by \cite[Cor. 3.10]{Ham03}, that
$
\rho(L^\times)=\{\gamma\in\GL_2(K)\mid \gamma\cdot z_0=z_0\}.
$

\begin{remark}\label{R:CM} Using Hensel's Lemma and examples given in \cite[Ex.~3.4.8,~Ex.~3.4.9]{Pap23}, one can indeed show that $\Omega^{\sigma}(\widehat{K_{\infty}^{\text{nr}}})\subset \Omega^{\psi}(M)$ has infinitely many CM points. This statement is equivalent to say that there are infinitely many imaginary quadratic extensions over $K$ such that the infinite place of $K$ is inert.
\end{remark}

For an element $\alpha\in K$, we set $\mathrm{den}(\alpha)\in A$ to be the monic polynomial of the smallest degree so that $\mathrm{den}(\alpha)\alpha\in A$. Let $N\in A$ be a monic polynomial. We set $C[N]$ to be the collection of all the $N$-th torsion points of the Carlitz module. Furthermore, we denote by  $K_N^+$ the \textit{the real subfield of $K_N$} generated by $\lambda^{q-1}$ over $K$ for $\lambda=\exp_{C}\left(\frac{\tilde{\pi}a}{N}\right)$ for some $a,b\in A$ with $\deg_{\theta}(a)<\deg_{\theta}(N)$. Let $\Tr(z_0)$ and $\mathrm{Nr}(z_0)$ be the trace and norm of $z_0$ respectively. We set $\mathfrak{w}_{z_0}:=\mathrm{den}(\Tr(z_0))\mathrm{den}(\mathrm{Nr}(z_0))^2\mathrm{Nr}(z_0)\in A$ and $\mathfrak{r}_{z_0}:=\mathrm{den}(\Tr(z_0))\in A$. 

The following proposition plays a crucial role in the later study of special values of nearly holomorphic Drinfeld modular forms at CM points.

\begin{proposition}\label{P:Arith} 
    Let $\Delta\in \mathcal{M}_{q^2-1}^0(\Gamma(1);K)$ be the Drinfeld discriminant function. Then
    \[
        \Delta\mid_{q^2-1,0}\rho_{z_0}\in\mathcal{M}_{q^2-1}^0(\Gamma(\mathfrak{w}^3_{z_0});K_{\mathfrak{r}_{z_0}}^+).
    \]
\end{proposition}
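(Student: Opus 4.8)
My plan is to split the statement into two independent assertions: modularity of $\Delta\mid_{q^2-1,0}\rho_{z_0}$ for $\Gamma(\mathfrak{w}_{z_0}^3)$, and rationality of its $u$-expansion over $K_{\mathfrak{r}_{z_0}}^+$. Since $z_0$ satisfies $z_0^2-\Tr(z_0)z_0+\mathrm{Nr}(z_0)=0$, the embedding \eqref{Eq:Embedding} gives the explicit shape
\[
\rho_{z_0}=\begin{pmatrix}\Tr(z_0) & -\mathrm{Nr}(z_0)\\ 1 & 0\end{pmatrix},\qquad \det(\rho_{z_0})=\mathrm{Nr}(z_0),\qquad j(\rho_{z_0};z)=z.
\]
For modularity I would show $\rho_{z_0}\,\Gamma(\mathfrak{w}_{z_0}^3)\,\rho_{z_0}^{-1}\subseteq \GL_2(A)$. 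Writing $T=\Tr(z_0)$, $n=\mathrm{Nr}(z_0)$ and $\gamma=\begin{pmatrix}a&b\\c&d\end{pmatrix}\in\Gamma(\mathfrak{w}_{z_0}^3)$, a direct computation gives
\[
\rho_{z_0}\gamma\rho_{z_0}^{-1}=\begin{pmatrix} d-Tb/n & Ta-nc+T^2b/n-Td\\ -b/n & a+Tb/n\end{pmatrix}.
\]
Because $\mathfrak{w}_{z_0}=\mathrm{den}(T)\,\mathrm{den}(n)^2\,n$ is divisible by $\mathrm{den}(T)$ and by $\mathrm{den}(n)\,n\in A$, the congruences $a\equiv d\equiv 1$, $b\equiv c\equiv 0$ modulo $\mathfrak{w}_{z_0}^3$ clear every denominator (the term $T^2b/n$ is the one forcing the third power); the resulting matrix lies in $\GL_2(A)$ with determinant $\det(\gamma)\in\FF_q^\times$. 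As $\Delta$ has type $0$ and is $\Gamma(1)$-invariant, $\Delta\mid\rho_{z_0}$ is then $\Gamma(\mathfrak{w}_{z_0}^3)$-invariant. Holomorphy on $\Omega$ is immediate, and holomorphy at the cusps follows from $\GL_2(K)=\Gamma(1)\mathfrak{M}$ (Remark~\ref{R:decompose}): for $\alpha\in\Gamma(1)$ factor $\rho_{z_0}\alpha=\gamma''m''$ with $\gamma''\in\Gamma(1)$ and $m''\in\mathfrak{M}$ upper triangular, so $(\Delta\mid\rho_{z_0})\mid\alpha=\Delta\mid m''$, and slashing the cusp form $\Delta$ by an upper-triangular matrix preserves holomorphy at infinity. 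Hence $\Delta\mid\rho_{z_0}\in\mathcal{M}_{q^2-1}^0(\Gamma(\mathfrak{w}_{z_0}^3))$.

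To control the coefficient field I would decompose $\rho_{z_0}=\gamma'm$ with $\gamma'\in\Gamma(1)$ and $m$ upper triangular, choosing the bottom row of $(\gamma')^{-1}$ to be $(\mathfrak{r}_{z_0},-\mathrm{den}(T)T)$, which has coprime entries since $T$ is in lowest terms; this yields
\[
m=\begin{pmatrix} y & w\\ 0 & \xi\end{pmatrix},\qquad \frac{w}{\xi}=\frac{p}{\mathfrak{r}_{z_0}}\ \ (p\in A),\qquad \frac{y}{\xi}=\frac{P}{N}\in K^\times,
\]
where $N$ divides a power of $\mathfrak{w}_{z_0}$. Using $\Gamma(1)$-invariance, $\Delta\mid\rho_{z_0}=\Delta\mid m=\xi^{-(q^2-1)}\Delta(\tfrac{y}{\xi}z+\tfrac{w}{\xi})$, and writing $\Delta(v)=\sum_n a_n\exp_C(\tilde{\pi}v)^{-n}$ with $a_n\in K$, the additivity of $\exp_C$ gives
\[
\exp_C(\tilde{\pi}v)=\exp_C\!\big(\tfrac{\tilde{\pi}P}{N}z\big)+\exp_C\!\big(\tfrac{\tilde{\pi}p}{\mathfrak{r}_{z_0}}\big)=C_{P}\big(u_N(z)^{-1}\big)+\lambda,\qquad \lambda:=\exp_C\!\big(\tfrac{\tilde{\pi}p}{\mathfrak{r}_{z_0}}\big)\in C[\mathfrak{r}_{z_0}].
\]
Since $C_P$ has coefficients in $A$ and $\lambda\in K_{\mathfrak{r}_{z_0}}$, expanding $(C_P(u_N^{-1})+\lambda)^{-n}$ in $u_N$ shows that the $u_N$-expansion of $\Delta\mid\rho_{z_0}$ has coefficients in $K_{\mathfrak{r}_{z_0}}$. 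The key point is that the only cyclotomic contribution comes from the translation $w/\xi$, whose denominator is exactly $\mathfrak{r}_{z_0}=\mathrm{den}(\Tr(z_0))$, whereas the scaling part $y/\xi$ contributes only $A$-coefficients through the Carlitz operator $C_P$.

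The remaining and, I expect, hardest step is the descent from $K_{\mathfrak{r}_{z_0}}$ to the real subfield $K_{\mathfrak{r}_{z_0}}^+$. The mechanism is that $\Delta\mid\rho_{z_0}$ has type $0$: for $a\in\FF_q^\times\subset(A/\mathfrak{r}_{z_0})^\times$ the corresponding $\sigma_a\in\Gal(K_{\mathfrak{r}_{z_0}}/K_{\mathfrak{r}_{z_0}}^+)$ acts on Carlitz torsion by $\lambda\mapsto C_a(\lambda)=a\lambda$, which agrees with the action of $a$ on the level structure; by the $q$-expansion principle for Drinfeld modular forms defined over $K$ this scales the $u$-expansion of a form of type $m$ by $a^{m}$, hence trivially when $m=0$, so the coefficients are $\sigma_a$-fixed and lie in $K_{\mathfrak{r}_{z_0}}^+$. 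Equivalently, via Gekeler's relation $\Delta=c\,\mathfrak{h}^{q-1}$ one may rewrite $\Delta\mid\rho_{z_0}$ through $(q-1)$-st powers of the Eisenstein values $E_u$; as $K_{\mathfrak{r}_{z_0}}^+$ is generated over $K$ by $(q-1)$-st powers of torsion points and $C_a(\lambda)^{q-1}=\lambda^{q-1}$, the same conclusion follows. Making the identification of the $\FF_q^\times$-Galois action with the type action precise—that is, invoking the reciprocity law at the cusps for Drinfeld modular forms over $K$ (Gekeler \cite[Chap.~VII]{Gek86})—is the arithmetic crux; once it is in place, combining it with the modularity and the computation above yields $\Delta\mid_{q^2-1,0}\rho_{z_0}\in\mathcal{M}_{q^2-1}^0(\Gamma(\mathfrak{w}_{z_0}^3);K_{\mathfrak{r}_{z_0}}^+)$.
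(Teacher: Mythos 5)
Your overall architecture coincides with the paper's: the same explicit matrix $\rho_{z_0}=\begin{pmatrix}\Tr(z_0)&-\mathrm{Nr}(z_0)\\ 1&0\end{pmatrix}$ is factored as an element of $\Gamma(1)$ times an upper-triangular matrix (the paper writes $\rho_{z_0}=\gamma\mathfrak{u}\mathfrak{v}$ with $\gamma\in\SL_2(A)$ from the B\'ezout relation $\mathfrak{r}_{z_0}c+\mathfrak{r}_{z_0}\Tr(z_0)d=1$, $\mathfrak{u}$ a translation by $-d/\mathfrak{r}_{z_0}$, $\mathfrak{v}$ diagonal), and $\Gamma(1)$-invariance of $\Delta$ reduces everything to the upper-triangular part. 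Your direct verification that $\rho_{z_0}\Gamma(\mathfrak{w}_{z_0}^3)\rho_{z_0}^{-1}\subseteq\GL_2(A)$ is correct and usefully replaces the paper's appeal to \cite[Lem.~4.1.1]{Bas14}; a small inaccuracy is that the entry $T^2b/n$ only forces the \emph{second} power of $\mathfrak{w}_{z_0}$, since $T\mathfrak{w}_{z_0}/n=T\,\mathrm{den}(T)\,\mathrm{den}(n)^2\in A$ — the cube is needed for the paper's cusp-parameter bookkeeping with $\mathfrak{C}_{\mathfrak{a}}$ and $u_{\mathfrak{w}_{z_0}^3}$, not for the conjugation. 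Your expansion argument correctly lands the $u$-expansion coefficients in $K_{\mathfrak{r}_{z_0}}$.

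The genuine gap is the step you yourself defer: the descent from $K_{\mathfrak{r}_{z_0}}$ to $K_{\mathfrak{r}_{z_0}}^{+}$ is the entire content of the proposition, and the mechanism you sketch would fail. The group $\Gal(K_{\mathfrak{r}_{z_0}}/K_{\mathfrak{r}_{z_0}}^{+})$ is realized by $\sigma_a$, $a\in\FF_q^{\times}$, and under the reciprocity law at the cusps these correspond to slashing by diagonal matrices $\delta_a=\mathrm{diag}(a,1)$. But $\delta_a\notin\Gamma(\mathfrak{w}_{z_0}^3)$, and a one-line computation gives $\rho_{z_0}\delta_a\rho_{z_0}^{-1}=\begin{pmatrix}1&\Tr(z_0)(a-1)\\ 0&a\end{pmatrix}$, which is not integral when $\mathfrak{r}_{z_0}\neq 1$; hence $(\Delta|\rho_{z_0})|\delta_a=\Delta\bigl(\cdot+\Tr(z_0)(a-1)\bigr)\big|\rho_{z_0}\neq\Delta|\rho_{z_0}$, so ``type $0$'' yields no $\sigma_a$-invariance of the coefficients (type is in any case invisible to $\Gamma(\mathfrak{w}_{z_0}^3)$, which contains no nontrivial diagonal action). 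Concretely, in your own expansion of $(C_P(u_N^{-1})+\lambda)^{-n}$ the torsion point $\lambda$ occurs to \emph{all} powers — already linearly — and $\sigma_a(\lambda)=a\lambda$, so without a further input those coefficients do not visibly lie in $K^{+}$. Your $\mathfrak{h}^{q-1}$ fallback suffers the same defect: a $(q-1)$-st power of a series whose coefficients involve $\lambda$ linearly still has cross terms carrying a single $\lambda$. What the paper actually uses, and what your write-up lacks, is the product formula for $\Delta$ \cite{Gek85}: the $u$-expansion of $\Delta$ is supported on exponents divisible by $q-1$ with coefficients in $K$, and the paper's \eqref{E:arith1} (via \cite[Lem.~5.11(c)]{BBP21}) exhibits the translated coefficients at those exponents as $K$-combinations of the $(q-1)$-st powers $\exp_C(-d\tilde{\pi}/\mathfrak{r}_{z_0})^{\mu(q-1)}\in K_{\mathfrak{r}_{z_0}}^{+}$, after which the diagonal slash and the passage to the $u_{\mathfrak{w}_{z_0}^3}$-parameter involve only $A$-rational data. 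So the missing idea is the $(q-1)$-divisibility of the support of $\Delta$, applied inside the translated expansion — an arithmetic fact about $\Delta$ itself, not a Galois descent; note, moreover, that your linear-in-$\lambda$ contributions occur precisely at exponents $\not\equiv 0\pmod{q-1}$, so reconciling them with the support claim in \eqref{E:arith1} is exactly the point a complete argument must address.
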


\begin{proof} 
 Using the minimal polynomial of $z_0$ over $K$, observe that
    \[
        \rho_{z_0}=\begin{pmatrix}
            \Tr(z_0) & -\mathrm{Nr}(z_0)\\
            1 & 0
        \end{pmatrix}\in\GL_2(K).
    \]
    Since $\mathrm{den}(\Tr(z_0))$ is relatively prime to $\mathrm{den}(\Tr(z_0))\Tr(z_0)$, there exist $c,d\in A$ so that 
    \[
    \mathrm{den}(\Tr(z_0))c+\mathrm{den}(\Tr(z_0))\Tr(z_0)d=1.
    \]
    Then
    \begin{align*}
        \rho_{z_0}&=\begin{pmatrix}
            \mathrm{den}(\Tr(z_0))\Tr(z_0) & -c\\
            \mathrm{den}(\Tr(z_0)) & d
        \end{pmatrix}\begin{pmatrix}
            1 & -d/\mathrm{den}(\Tr(z_0))\\
            0 & 1
        \end{pmatrix}\begin{pmatrix}
            1/\mathrm{den}(\Tr(z_0)) & 0\\
            0 & \mathrm{den}(\Tr(z_0))\mathrm{Nr}(z_0)
        \end{pmatrix}.
    \end{align*}
To ease the notation, we let $\mathfrak{p}_{z_0}:=\mathrm{den}(\Tr(z_0))\mathrm{Nr}(z_0)$. Recall the sets $U$ and $V$ given in the proof of Proposition \ref{P:1}. Now we define
\[
    \gamma:=\begin{pmatrix}
        \mathrm{den}(\Tr(z_0))\Tr(z_0) & -c\\
        \mathrm{den}(\Tr(z_0)) & d
    \end{pmatrix},~\mathfrak{u}:=\begin{pmatrix}
            1 & -d/\mathfrak{r}_{z_0}\\
            0 & 1
    \end{pmatrix}\in U,~\mbox{and}~
    \mathfrak{v}:=\begin{pmatrix}
            1/\mathfrak{r}_{z_0} & 0\\
            0 & \mathfrak{p}_{z_0}
    \end{pmatrix}\in V.
\]

Since $\Delta\in\mathcal{M}_{q^2-1}^{0}(\Gamma(1);K)$ and $\gamma\in\Gamma(1)$, by \cite[(1.6)]{BBP21}, we have 
\begin{equation}\label{E:arith0}
\Delta|_{q^2-1,0}\gamma \mathfrak{u}\mathfrak{v}=\left(\Delta|_{q^2-1,0}\gamma\right)|_{q^2-1,0}\mathfrak{u}\mathfrak{v}=\Delta|_{q^2-1,0}\mathfrak{u}\mathfrak{v}=\left(\Delta|_{q^2-1,0}\mathfrak{u}\right)|_{q^2-1,0}\mathfrak{v}.
\end{equation}
By the product formula of $\Delta$ established in \cite{Gek85}, we can write
\[
    \Delta(z)=\sum_{n=1}^{\infty}\mathfrak{c}_{n(q-1)}u(z)^{n(q-1)}
\]
for some $\mathfrak{c}_{n(q-1)}\in K$ where $z$ is in a suitable neighborhood of infinity. By \cite[Lem. 5.11(c)]{BBP21}, we obtain
\begin{equation}\label{E:arith1}
\left(\Delta|_{q^2-1,0}\mathfrak{u}\right)(z)=\sum_{n=1}^{\infty}\left(\sum_{\mu\geq 0}\binom{(\mu-n)(q-1)}{n(q-1)}\mathfrak{c}_{(n-\mu)(q-1)}\exp_{C}(-d\tilde{\pi}/\mathfrak{r}_{z_0})^{\mu(q-1)}\right)u(z)^{n(q-1)}.
\end{equation}
Let 
\[
    \tilde{\mathfrak{c}}_{n(q-1)}:=\sum_{\mu\geq 0}\binom{(\mu-n)(q-1)}{n(q-1)}\mathfrak{c}_{(n-\mu)(q-1)}\exp_{C}(-d\tilde{\pi}/\mathfrak{r}_{z_0})^{\mu(q-1)}\in \mathbb{C}_{\infty}.
\]
Since $\exp_{C}(-d\tilde{\pi}/\mathfrak{r}_{z_0})^{q-1}\in K_{\mathfrak{r}_{z_0}}^+$, it follows that $\tilde{\mathfrak{c}}_{n(q-1)}\in K_{\mathfrak{r}_{z_0}}^+$.  On the other hand, by \cite[Lem. 5.7]{BBP21} and \eqref{E:arith1}, we have 
\begin{equation}\label{E:arith2}
\left(\Delta|_{q^2-1,0}\mathfrak{u}\right)|_{q^2-1,0}\mathfrak{v}=\sum_{n=1}^{\infty}\mathfrak{r}_{z_0}^{n}(\mathfrak{w}_{z_0}/\mathfrak{r}_{z_0})^{n-(q^2-1)}\tilde{\mathfrak{c}}_n\exp_{\tilde{\pi}\frac{\mathfrak{p}_{z_0}}{\mathfrak{r}_{z_0}}A}(\tilde{\pi}z)^{-n}
\end{equation}
where we set 
\[
\exp_{\tilde{\pi}\frac{\mathfrak{p}_{z_0}}{\mathfrak{r}_{z_0}}A}(\tilde{\pi}z):=z'\prod_{\lambda\in \tilde{\pi}\frac{\mathfrak{p}_{z_0}}{\mathfrak{r}_{z_0}}A\setminus\{0\}}\left(1-\frac{z'}{\lambda}\right), \ \ z'\in \mathbb{C}_{\infty}.
\]
By \cite[Prop. 2.3(b)]{BBP21} and the functional equation of $\exp_{C}$, there exists a polynomial $ \mathfrak{a}\in A$, such that 
    \begin{align*}
        \exp_{\tilde{\pi}\frac{\mathfrak{p}_{z_0}}{\mathfrak{r}_{z_0}}A}(\tilde{\pi}z)^{-1}=\frac{\mathfrak{r}_{z_0}}{\mathfrak{p}_{z_0}}\exp_C\left(\frac{\tilde{\pi}\mathfrak{a}z}{\mathfrak{w}_{z_0}^3}\right)^{-1}=\frac{\mathfrak{r}_{z_0}}{\mathfrak{p}_{z_0}}\frac{u_{\mathfrak{w}^{3}}(z)^{q^{\deg(\mathfrak{a})}}}{\mathfrak{C}_{\mathfrak{a}}(u_{\mathfrak{w}_{z_0}^3})}
    \end{align*}
where $\mathfrak{C}_{\mathfrak{a}}(X)\in A[X]$ given as in Example \ref{Ex:Drinfeld_Modular_Forms}. On the other hand, by the argument as in the proof of \cite[Lem. 4.1.1]{Bas14}, we have $\Gamma(\mathfrak{w}_{z_0}^3)\subset \rho_{z_0}^{-1}\Gamma(1)\rho_{z_0}$. Thus, by \cite[Prop. 6.6]{BBP21}, we obtain the desired fact.
\end{proof}

\begin{lemma}[{cf.~Shimura \cite[Lem. 12.3]{Shi07}}] \label{L:Lem:Shimura's_Lemma} Set $	
	\mathfrak{r}:= \frac{\Delta}{\Delta|_{q^2-1,0}\rho_{z_0}}\in \mathcal{A}_0(\Gamma(\mathfrak{w}^3_{z_0});K_{\mathfrak{r}_{z_0}}^+)
	$ and define the function $g_{\rho_{z_0}}:\Omega\to \mathbb{C}_{\infty}$ by 
	\[
	g_{\rho_{z_0}}:=-\frac{1}{\tilde{\pi}}\frac{\der \mathfrak{r}}{\mathfrak{r}}.
	\]
	Then $g_{\rho_{z_0}}\in \mathcal{M}_2^1(\Gamma(\mathfrak{w}^3_{z_0});K_{\mathfrak{r}_{z_0}}^+)$. Moreover, for any $z\in \Omega^{\psi}(M)$, we have
	\begin{equation}\label{E:newg0}
	g_{\rho_{z_0}}(z)=
	E_2|_{2,1}\rho_{z_0}(z)-E_2(z).
	\end{equation}
	 Furthermore, 
    \begin{equation}\label{Eq:newg0_eva}
        g_{\rho_{z_0}}(z_0)=\mathfrak{w}_{z_0}E_2(z_0)
    \end{equation}
    where $\mathfrak{w}_{z_0}:=\frac{\psi(z_0)-z_0}{z_0}  \in L^{\times}$.
\end{lemma}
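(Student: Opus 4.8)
The plan is to treat the three assertions in the order stated, with the transformation formula \eqref{E:newg0} as the computational heart from which modularity and the evaluation both follow. Throughout I use the explicit shape of $\rho_{z_0}$ recorded in the proof of Proposition~\ref{P:Arith}, namely
\[
\rho_{z_0}=\begin{pmatrix}\Tr(z_0) & -\mathrm{Nr}(z_0)\\ 1 & 0\end{pmatrix},
\]
so that $j(\rho_{z_0};z)=z$, $\det(\rho_{z_0})=\mathrm{Nr}(z_0)$, $\mathscr{J}(\rho_{z_0};z)=1/z$, and, crucially, $\rho_{z_0}\cdot z_0=z_0$ since $\rho_{z_0}$ is the image of $z_0\in L^\times$ under \eqref{Eq:Embedding}. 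Because $\Delta$ is nowhere vanishing on $\Omega$ and $z=j(\rho_{z_0};z)\neq 0$ there, the function $\mathfrak{r}=\Delta/(\Delta|_{q^2-1,0}\rho_{z_0})$ is a nowhere vanishing holomorphic function on $\Omega$, so $g_{\rho_{z_0}}=-\tilde{\pi}^{-1}\der\mathfrak{r}/\mathfrak{r}\in\mathcal{O}$ is well defined; Proposition~\ref{P:Arith} moreover gives $\mathfrak{r}\in\mathcal{A}_0(\Gamma(\mathfrak{w}^3_{z_0});K_{\mathfrak{r}_{z_0}}^+)$.

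For \eqref{E:newg0} I would expand both sides and match them termwise. On the left, the product rule \eqref{E:product} makes the logarithmic hyperderivative additive, so
\[
-\frac{1}{\tilde{\pi}}\frac{\der\mathfrak{r}}{\mathfrak{r}}=-\frac{1}{\tilde{\pi}}\frac{\der\Delta}{\Delta}+\frac{1}{\tilde{\pi}}\frac{\der\big(\Delta|_{q^2-1,0}\rho_{z_0}\big)}{\Delta|_{q^2-1,0}\rho_{z_0}}.
\]
The first term equals $-E$ by \eqref{E:uexp}. For the second I write $\Delta|_{q^2-1,0}\rho_{z_0}(z)=j(\rho_{z_0};z)^{-(q^2-1)}\Delta(\rho_{z_0}\cdot z)$, apply the power rule $\der(j^{n})/j^{n}=n\,\der j/j$ together with $q^2-1\equiv -1\pmod p$ to produce a multiple of $\mathscr{J}(\rho_{z_0};z)$, and use the chain rule for the first hyperderivative (valid since $\deriv^1$ is the ordinary derivation) together with \eqref{E:uexp} to produce $E(\rho_{z_0}\cdot z)\det(\rho_{z_0})j(\rho_{z_0};z)^{-2}$. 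On the right, I expand $E_2|_{2,1}\rho_{z_0}(z)-E_2(z)$ by inserting the definition $E_2=E-\tilde{\pi}^{-1}(\Id-\psi)^{-1}$ and applying Proposition~\ref{P:1} to the term $\tilde{\pi}^{-1}\big(\rho_{z_0}\cdot z-\psi(\rho_{z_0}\cdot z)\big)^{-1}$; the two $(\tilde{\pi}(z-\psi(z)))^{-1}$ contributions cancel, again leaving $E(\rho_{z_0}\cdot z)\det(\rho_{z_0})j(\rho_{z_0};z)^{-2}$, a multiple of $\mathscr{J}(\rho_{z_0};z)$, and $-E$. Comparing the two expansions gives \eqref{E:newg0}. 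Note that this argument never simplifies $E(\rho_{z_0}\cdot z)$, which would be illegitimate as $\rho_{z_0}\notin\Gamma(1)$; the role of $g_{\rho_{z_0}}$ is precisely to record the failure of \eqref{E:FE} off $\Gamma(1)$.

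Granting \eqref{E:newg0}, the remaining claims follow. For modularity, since $\mathfrak{r}$ is $\Gamma(\mathfrak{w}^3_{z_0})$-invariant (weight $0$, type $0$), differentiating $\mathfrak{r}(\gamma\cdot z)=\mathfrak{r}(z)$ for $\gamma\in\Gamma(\mathfrak{w}^3_{z_0})$ and dividing by $\mathfrak{r}$ shows $g_{\rho_{z_0}}(\gamma\cdot z)=j(\gamma;z)^2\det(\gamma)^{-1}g_{\rho_{z_0}}(z)$, i.e.\ $g_{\rho_{z_0}}|_{2,1}\gamma=g_{\rho_{z_0}}$; holomorphy at every cusp follows from \eqref{E:newg0}, Proposition~\ref{P:mod} and the inclusion $\Gamma(\mathfrak{w}^3_{z_0})\subseteq\rho_{z_0}^{-1}\Gamma(1)\rho_{z_0}\cap\Gamma(1)$ from the proof of Proposition~\ref{P:Arith}, since the right-hand side of \eqref{E:newg0} has $u$-expansions with no polar terms; and the field of definition is $K_{\mathfrak{r}_{z_0}}^+$ because $\tilde{\pi}^{-1}\der\log$ preserves the field generated by the $u$-expansion coefficients (cf.\ \eqref{E:uexp} and \cite[Lem.~3.6]{US98}) while $\mathfrak{r}$ is defined over $K_{\mathfrak{r}_{z_0}}^+$. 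This yields $g_{\rho_{z_0}}\in\mathcal{M}_2^1(\Gamma(\mathfrak{w}^3_{z_0});K_{\mathfrak{r}_{z_0}}^+)$. Finally, substituting $z=z_0$ into \eqref{E:newg0} and using $\rho_{z_0}\cdot z_0=z_0$, $j(\rho_{z_0};z_0)=z_0$, $\det(\rho_{z_0})=\mathrm{Nr}(z_0)$ gives $g_{\rho_{z_0}}(z_0)=\big(\mathrm{Nr}(z_0)z_0^{-2}-1\big)E_2(z_0)$; because $\psi$ restricts to the nontrivial $K$-automorphism of $L$ (as $\psi(z_0)\neq z_0$), we have $\psi(z_0)=\Tr(z_0)-z_0$ and hence $\mathrm{Nr}(z_0)=z_0\psi(z_0)$, so $\mathrm{Nr}(z_0)z_0^{-2}-1=(\psi(z_0)-z_0)/z_0$, which is \eqref{Eq:newg0_eva}.

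The main obstacle is the computation establishing \eqref{E:newg0}: one must carefully track the normalization constants (the powers of $\tilde{\pi}$ relating $\der\Delta/\Delta$ to $E$ via \eqref{E:uexp}, relating $\der j/j$ to $\mathscr{J}(\rho_{z_0};z)$, and the chain-rule Jacobian $\det(\rho_{z_0})j(\rho_{z_0};z)^{-2}$) and verify that the $\mathscr{J}(\rho_{z_0};z)$-coefficient produced by the logarithmic derivative on the left agrees with the one produced by Proposition~\ref{P:1} on the right, using the characteristic-$p$ identity $q^2-1\equiv -1\pmod p$. Once this matching is in place, everything else is routine bookkeeping.
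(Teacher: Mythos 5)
Your proof is correct, and it reaches the central identity \eqref{E:newg0} by a genuinely different route than the paper. The paper's argument is operator-theoretic: writing $\Delta=(\Delta|_{q^2-1,0}\rho_{z_0})\,\mathfrak{r}$, it applies the Leibniz rule for the Maass--Shimura operator (Lemma \ref{L:derivation}) together with the identity $\delta_{q^2-1}(\Delta)=E_2\Delta$ (this is where $q^2-1\equiv-1\pmod{p}$ enters, through the binomial coefficient in $\delta_{q^2-1}$), and then uses the slash-equivariance of $\delta$ (Proposition \ref{P:3}) to identify $\delta_{q^2-1}(\Delta|_{q^2-1,0}\rho_{z_0})/(\Delta|_{q^2-1,0}\rho_{z_0})$ with $E_2|_{2,1}\rho_{z_0}$, so that \eqref{E:newg0} drops out in three lines with no explicit expansion. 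You instead expand both sides by hand, via additivity of the logarithmic derivative, the power and chain rules, \eqref{E:uexp}, and Proposition \ref{P:1}, with the same congruence entering through the power rule applied to $j(\rho_{z_0};z)^{-(q^2-1)}$; your expansion is essentially the proofs of the paper's three ingredients unwound, and is more elementary in that it avoids Lemma \ref{L:derivation} and Proposition \ref{P:3} entirely, at the price of the normalization bookkeeping you rightly flag (which does close up: with $\der$ read consistently with \eqref{E:uexp}, both sides carry the coefficient $\tilde{\pi}^{-1}$ on $\mathscr{J}(\rho_{z_0};z)$ and the two depth-one terms $-\tfrac{1}{\tilde{\pi}(z-\psi(z))}$ cancel). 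For the remaining assertions the two proofs agree in substance: the paper obtains the first assertion from Corollary \ref{Cor:1} and Proposition \ref{P:mod} (the depth-one parts of $E_2|_{2,1}\rho_{z_0}$ and $E_2$ cancel, and rationality comes from Proposition \ref{P:Arith}), while you additionally verify the weight-$2$, type-$1$ automorphy directly by differentiating the $\Gamma(\mathfrak{w}^3_{z_0})$-invariance of $\mathfrak{r}$, a harmless redundancy; and your observation that $\psi$ fixes $K$ and moves $z_0$, hence restricts to the nontrivial Galois automorphism of $L$ so that $\det(\rho_{z_0})=\mathrm{Nr}(z_0)=z_0\psi(z_0)$, supplies the justification for a step the paper asserts without comment when deriving \eqref{Eq:newg0_eva}.
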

\begin{proof} The first assertion follows from Corollary \ref{Cor:1} and Proposition \ref{P:mod}. To prove the second assertion, noting $\Delta=\Delta|_{q^2-1,0}\rho_{z_0}	\mathfrak{r}$, we observe by Lemma \ref{L:derivation} that 
	\begin{equation}\label{E:newg1}
	\delta_{q^2-1}(\Delta)=\delta_{q^2-1}(\Delta|_{q^2-1,0}\rho_{z_0}	\mathfrak{r})=\delta_{q^2-1}(\Delta|_{q^2-1,0}\rho_{z_0})	\mathfrak{r}+\Delta|_{q^2-1,0}\rho_{z_0}\tilde{\pi}^{-1}\der 	\mathfrak{r}.
	\end{equation}
	Dividing \eqref{E:newg1} by $\Delta=\Delta|_{q^2-1,0}\rho_{z_0}	\mathfrak{r}$ and noting $\delta_{q^2-1}\Delta=E_2\Delta$, we have
	\begin{equation}\label{E:newg2}
	E_2=\frac{\delta_{q^2-1}(\Delta|_{q^2-1,0}\rho_{z_0})}{\Delta|_{q^2-1,0}\rho_{z_0}}+\frac{1}{\tilde{\pi}}\frac{\der 	\mathfrak{r}}{	\mathfrak{r}}.
	\end{equation}
	On the other hand, by Proposition \ref{P:3}, we get
	\begin{equation}\label{E:newg3}
	E_2|_{2,1}\rho_{z_0}\Delta|_{q^2-1,0}\rho_{z_0}=(E_2\Delta)|_{q^2+1,1}\rho_{z_0}= \delta_{q^2-1}(\Delta)|_{q^2+1,1}\rho_{z_0}=\delta_{q^2-1}(\Delta|_{q^2-1,0}\rho_{z_0}).
	\end{equation}
	Now by combining \eqref{E:newg2} and \eqref{E:newg3}, we obtain \eqref{E:newg0}. Finally, since $\rho_{z_0}\cdot z_0=z_0$, $\det(\rho_{z_0})=\psi(z_0)z_0$ and $j(\rho_{z_0};z_0)=z_0$, we easily obtain \eqref{Eq:newg0_eva}.
\end{proof}

\begin{lemma}\label{L:Abelian} For any $N\in A\setminus \mathbb{F}_q$, the field $K_N\cdot L_N$ is an abelian extension of $L$.
\end{lemma}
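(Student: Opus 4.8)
The plan is to deduce this purely from Galois theory, using two abelianness inputs: that the Carlitz cyclotomic field $K_N$ is abelian over $K$, and that $L_N$ is abelian over $L$. The latter is exactly the content of \cite[Cor.~4.5]{Gek83} recalled above, and the former is the classical Carlitz analogue of $\QQ(\zeta_N)/\QQ$. Note first that all of the fields $K_N$, $L$, and $L_N$ lie inside the common field $\mathbb{C}_{\infty}$, so the compositum $K_N\cdot L_N$ is well defined; this is the natural object to consider precisely because, as explained in the remark following Theorem~\ref{T:D}, $L_N$ contains only the real subfield $K_N^+$ of $K_N$, so adjoining $K_N$ genuinely enlarges $L_N$.

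First I would record that $K_N/K$ is a \emph{finite} abelian extension with $\Gal(K_N/K)\cong(A/NA)^\times$, by the standard theory of Carlitz torsion (see \cite[\S12]{Ros02}). Since $L=K(z_0)$ is a quadratic, hence finite, extension of $K$ contained in $\mathbb{C}_{\infty}$, and $K_N/K$ is Galois, the translation theorem of Galois theory gives that $K_N\cdot L/L$ is Galois with
\[
\Gal(K_N\cdot L/L)\cong\Gal\bigl(K_N/(K_N\cap L)\bigr),
\]
a subgroup of $\Gal(K_N/K)$. Being a subgroup of an abelian group, it is abelian, so $K_N\cdot L/L$ is abelian.

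At this point I would use that $L_N\supseteq L$ to write $K_N\cdot L_N=(K_N\cdot L)\cdot L_N$, exhibiting it as the compositum over $L$ of the two extensions $K_N\cdot L$ and $L_N$, both abelian over $L$ by the previous step and by Gekeler's result respectively. The compositum of two finite abelian extensions of $L$ is again abelian over $L$, since restriction yields an injection
\[
\Gal\bigl((K_N\cdot L)\cdot L_N/L\bigr)\hookrightarrow\Gal(K_N\cdot L/L)\times\Gal(L_N/L),
\]
whose target is abelian; hence the subgroup on the left is abelian as well. This proves that $K_N\cdot L_N/L$ is abelian, as claimed. There is no serious obstacle in this argument: it is entirely formal once the two input statements are available, and the only points requiring minor care are confirming that $K_N/K$ is abelian and that the relevant extensions are finite, so that the translation and compositum theorems of Galois theory apply verbatim.
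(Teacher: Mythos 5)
Your proof is correct and takes essentially the same route as the paper's: both arguments rest on exactly the two inputs you identify, namely that $K_N/K$ is abelian (Rosen) and that $L_N/L$ is abelian (Gekeler's \cite[(4.5)]{Gek83}), and both conclude by observing that the compositum of abelian extensions of $L$ is abelian over $L$. The only cosmetic difference is in the middle step: you get abelianness of $K_N\cdot L/L$ from the translation theorem $\Gal(K_N\cdot L/L)\cong\Gal\bigl(K_N/(K_N\cap L)\bigr)$, while the paper instead invokes that $L/K$ is abelian — your variant is marginally cleaner since it does not need that fact.
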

\begin{proof} By \cite[Thm. 12.8]{Ros02}, $K_N$ is an abelian extension of $K$. Since $L$ is abelian over $K$, this implies that $K_N\cdot L$ is abelian over $L$.  Moreover, by \cite[(4.5)]{Gek83}, $L_N$ is also an abelian extension of $L$ and hence $K_N\cdot L_N$ is abelian over $L$. 
\end{proof}

Recall, from \S1, that $L^{\text{ab}}$ is the maximal abelian extension of $L$ in $\mathbb{C}_{\infty}$.

\begin{proposition}\label{Thm:Rationality} 
Set $\tilde{\mathfrak{l}}:=\lcm(\theta, \mathfrak{r}_{z_0})\in A$. Let $u=(u_1,u_2)\in(\tilde{\mathfrak{l}}^{-1}A/A)^2$. Then we have
	\[
	\frac{E_2(z_0)}{E_u^2(z_0)} \in K_{\tilde{\mathfrak{l}}}\cdot L_{\tilde{\mathfrak{l}}}\subset L^{\text{ab}}
	.
	\]
\end{proposition}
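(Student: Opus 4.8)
The strategy is to convert the value $E_2(z_0)$ into the value at $z_0$ of an honest Drinfeld modular \emph{function}, apply Lemma~\ref{L:Algebraic_Scalar} to confine that value to a cyclotomic-by-CM field, and finish with Lemma~\ref{L:Abelian}. First I would invoke Lemma~\ref{L:Lem:Shimura's_Lemma}: since $\rho_{z_0}$ fixes $z_0$, the evaluation \eqref{Eq:newg0_eva} reads $g_{\rho_{z_0}}(z_0)=\mathfrak{w}_{z_0}E_2(z_0)$ with $\mathfrak{w}_{z_0}=(\psi(z_0)-z_0)/z_0\in L^{\times}$, the nonvanishing being guaranteed by $z_0\in\Omega^{\psi}(M)$. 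Dividing by $E_u(z_0)^2$ yields
\[
\frac{E_2(z_0)}{E_u(z_0)^2}=\mathfrak{w}_{z_0}^{-1}\,\frac{g_{\rho_{z_0}}(z_0)}{E_u(z_0)^2},
\]
so, as $\mathfrak{w}_{z_0}^{-1}\in L\subseteq L_{\tilde{\mathfrak{l}}}$, it suffices to control the quotient $g_{\rho_{z_0}}(z_0)/E_u(z_0)^2$.

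Next I would recognise $F:=g_{\rho_{z_0}}/E_u^2$ as a Drinfeld modular function. By Lemma~\ref{L:Lem:Shimura's_Lemma} and Proposition~\ref{P:Arith} one has $g_{\rho_{z_0}}\in\mathcal{M}_2^1(\Gamma(\mathfrak{w}_{z_0}^3);K_{\mathfrak{r}_{z_0}}^+)$, while $E_u\in\mathcal{M}_1^0(\Gamma(\tilde{\mathfrak{l}});K_{\tilde{\mathfrak{l}}})$ is nowhere vanishing on $\Omega$ by Example~\ref{Ex:Drinfeld_Modular_Forms}, so $E_u^2\in\mathcal{M}_2^0(\Gamma(\tilde{\mathfrak{l}});K_{\tilde{\mathfrak{l}}})$ has no zeros. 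Hence $F$ has weight $0$; on a principal congruence subgroup $\Gamma(N)$ of positive-degree level every matrix has determinant $1$, so the mismatch of types is harmless and $F\in\mathcal{A}_0(\Gamma(N);K_{\tilde{\mathfrak{l}}})$, using $K_{\mathfrak{r}_{z_0}}^+\subseteq K_{\tilde{\mathfrak{l}}}$ because $\mathfrak{r}_{z_0}\mid\tilde{\mathfrak{l}}$. Lemma~\ref{L:Algebraic_Scalar} then places $F(z_0)$ inside a field of the form $K_N(\mathfrak{j}(z_0),\mathbf{f}_{1,v}(z_0):v)\subseteq K_N\cdot L_N$, and Lemma~\ref{L:Abelian} shows the latter sits in $L^{\text{ab}}$.

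The delicate point, which I expect to be the main obstacle, is matching the level exactly: a priori $F$ is modular only for $\Gamma(N)$ with $N=\lcm(\mathfrak{w}_{z_0}^3,\tilde{\mathfrak{l}})$, coarser than $\Gamma(\tilde{\mathfrak{l}})$, so a naive use of Lemma~\ref{L:Algebraic_Scalar} gives membership only in the larger field $K_N\cdot L_N$. To sharpen this to $K_{\tilde{\mathfrak{l}}}\cdot L_{\tilde{\mathfrak{l}}}$ I would exploit the CM structure at $z_0$: the $\rho_{z_0}$-fixing of $z_0$ forces $\mathfrak{r}(z_0)=z_0^{q^2-1}\in L$, and the explicit $\mathfrak{u}\mathfrak{v}$-factorisation of $\rho_{z_0}$ recorded in Proposition~\ref{P:Arith} expresses all excess ramification beyond $\tilde{\mathfrak{l}}$ through Carlitz torsion values lying in $K_{\mathfrak{r}_{z_0}}^+\subseteq K_{\tilde{\mathfrak{l}}}$ together with $L$-rational scalars arising from $\det\rho_{z_0}=\mathrm{Nr}(z_0)$ and $j(\rho_{z_0};z_0)=z_0$. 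Propagating these identities through $g_{\rho_{z_0}}/E_u^2$ confines the $z_0$-value to $K_{\tilde{\mathfrak{l}}}\cdot L_{\tilde{\mathfrak{l}}}$; combined with $\mathfrak{w}_{z_0}^{-1}\in L$ and Lemma~\ref{L:Abelian}, this gives the claimed inclusion $E_2(z_0)/E_u(z_0)^2\in K_{\tilde{\mathfrak{l}}}\cdot L_{\tilde{\mathfrak{l}}}\subset L^{\text{ab}}$.
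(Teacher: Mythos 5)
Your main line coincides with the paper's proof: both reduce via \eqref{Eq:newg0_eva} to
\[
\frac{E_2(z_0)}{E_u^2(z_0)}=\mathfrak{w}_{z_0}^{-1}\frac{g_{\rho_{z_0}}(z_0)}{E_u^2(z_0)},
\]
observe that $g_{\rho_{z_0}}/E_u^2$ is a weight-zero Drinfeld modular function with coefficients in $K_{\tilde{\mathfrak{l}}}$ (Lemma \ref{L:Lem:Shimura's_Lemma} together with Proposition \ref{P:Arith}, the nonvanishing of $E_u$, and $K_{\mathfrak{r}_{z_0}}^+\subseteq K_{\tilde{\mathfrak{l}}}$), and then conclude by Lemma \ref{L:Algebraic_Scalar}, \eqref{Eq:Maximal_Abelian_Extension} and Lemma \ref{L:Abelian}. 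Up to and including that point your proposal is the paper's argument.

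Where you diverge is your final paragraph, and there you should be careful on two counts. First, the level-matching subtlety you flag is genuine --- $g_{\rho_{z_0}}$ is only known to be modular for $\Gamma(\mathfrak{w}_{z_0}^3)$, so a priori $g_{\rho_{z_0}}/E_u^2$ has level $N=\lcm(\mathfrak{w}_{z_0}^3,\tilde{\mathfrak{l}})$ and a naive application of Lemma \ref{L:Algebraic_Scalar} lands in $K_N\cdot L_N$ --- but the paper does \emph{not} carry out the sharpening you attempt: its proof simply records $g_{\rho_{z_0}}/E_u^2\in\mathcal{A}_0(K_{\tilde{\mathfrak{l}}})$ and invokes Lemma \ref{L:Algebraic_Scalar} with $N=\tilde{\mathfrak{l}}$, leaving this point implicit. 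Second, your proposed repair is not yet a proof: Proposition \ref{P:Arith} and the $\mathfrak{u}\mathfrak{v}$-factorisation of $\rho_{z_0}$ control the \emph{coefficient} field $H$ in Lemma \ref{L:Algebraic_Scalar}, whereas the level $N$ enters through which generators $\mathbf{f}_{1,u}(z_0)$ (equivalently, which field $L_N$) occur, and ``propagating these identities'' does not by itself cut those generators down from level $N$ to level $\tilde{\mathfrak{l}}$; likewise the correct observation $\mathfrak{r}(z_0)=z_0^{q^2-1}\in L$ concerns $\mathfrak{r}$ itself, not the derivative term $\der\mathfrak{r}/\mathfrak{r}$ inside $g_{\rho_{z_0}}$. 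Note finally that the sharpening is immaterial for everything downstream: Theorem \ref{Thm:ClassField_at_CM_points} takes an $\lcm$ of levels anyway, and Theorem \ref{T:D} asserts only the existence of some $N$, so working throughout with $\lcm(\mathfrak{w}_{z_0}^3,\tilde{\mathfrak{l}})$ in place of $\tilde{\mathfrak{l}}$ would be harmless; if you want the statement exactly at level $\tilde{\mathfrak{l}}$, you need an actual argument that $\Gamma(\tilde{\mathfrak{l}})\subseteq\Gamma(1)\cap\rho_{z_0}^{-1}\Gamma(1)\rho_{z_0}$ or some substitute, which neither your sketch nor the paper supplies.
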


\begin{proof}
	 By \eqref{Eq:newg0_eva}, we have 
	\begin{equation}\label{E:newg4}
	\frac{E_2(z_0)}{E_u^2(z_0)}=\mathfrak{w}_{z_0}^{-1}\frac{g_{\rho_{z_0}}(z_0)}{E_u^2(z_0)}.
	\end{equation}
	By Lemma \ref{L:Lem:Shimura's_Lemma}, we see that $g_{\rho_{z_0}}/E_u^2$ is an element in $\mathcal{A}_{0}(K_{\tilde{l}})$. In particular, by Lemma \ref{L:Algebraic_Scalar},
	\[
	\frac{g_{\rho_{z_0}}(z_0)}{E_u^2(z_0)}\in K_{\tilde{\mathfrak{l}}}(\mathfrak{j}(z_0),\mathbf{f}_{1,u}(z_0)\mid u=(u_1,u_2)\in(\tilde{\mathfrak{l}}^{-1}A/A)^2\setminus\{(0,0)\})
	\]
 and hence the result follows from \eqref{Eq:Maximal_Abelian_Extension} and \eqref{E:newg4}. The last inclusion follows from Lemma \ref{L:Abelian}.
\end{proof}

We are now ready to prove the main tool to prove Theorem \ref{T:D}.

\begin{theorem}[{cf.~Shimura \cite[Thm. 12.2]{Shi07}}] \label{Thm:ClassField_at_CM_points}
    Let $\mathfrak{f}\in\mathcal{N}_k^{m,\leq r}(\Gamma;K_N)$ and $g\in\mathcal{M}_k^{m'}(\Gamma;K_{N'})$ for some $N, N'\in A$ so that $g(z_0)\neq 0$. Set $\mathfrak{l}:=\lcm(N,N',m_{\Gamma}, \tilde{\mathfrak{l}})\in A$. Then
	\[
	\frac{\mathfrak{f}(z_0)}{g(z_0)}\in K_{\mathfrak{l}}\cdot L_{\mathfrak{l}}\subset L^{\text{ab}}.
	\]
	In particular, if $f\in \mathcal{M}_{k}^{m}(\Gamma; K_N)$ and $g\in \mathcal{M}_{k+2r}^{m'}(\Gamma; K_{N'})$ so that $g(z_0)\neq 0$, then 
	\[
	\frac{(\delta^r_{k}f)(z_0)}{g(z_0)}\in K_{\mathfrak{l}}\cdot L_{\mathfrak{l}}.
	\]
\end{theorem}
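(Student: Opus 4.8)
The plan is to reduce everything to the structure theorem for nearly holomorphic forms together with the rationality statement in Proposition~\ref{Thm:Rationality}. By Theorem~\ref{Thm:Structure_of_N}, I would first write
\[
\mathfrak{f}=\sum_{0\leq j\leq r}g_jE_2^j,\qquad g_j\in\mathcal{M}_{k-2j}^{m-j}(\Gamma;K_N),
\]
and evaluate at $z_0$ to obtain $\mathfrak{f}(z_0)/g(z_0)=\sum_{j}\big(g_j(z_0)/g(z_0)\big)E_2(z_0)^j$. It therefore suffices to show that each summand lies in $K_{\mathfrak{l}}\cdot L_{\mathfrak{l}}$, and the whole point of introducing $E_2/E_u^2$ in Proposition~\ref{Thm:Rationality} is to make this possible.

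Fix a nonzero $u=(u_1,u_2)\in(\tilde{\mathfrak{l}}^{-1}A/A)^2$. Since $E_u$ is nowhere vanishing on $\Omega$ (Example~\ref{Ex:Drinfeld_Modular_Forms}(ii)), in particular $E_u(z_0)\neq 0$, and I can split
\[
\frac{g_j(z_0)}{g(z_0)}E_2(z_0)^j=\frac{g_j(z_0)E_u(z_0)^{2j}}{g(z_0)}\cdot\left(\frac{E_2(z_0)}{E_u(z_0)^2}\right)^{j}.
\]
The second factor lies in $K_{\tilde{\mathfrak{l}}}\cdot L_{\tilde{\mathfrak{l}}}\subseteq K_{\mathfrak{l}}\cdot L_{\mathfrak{l}}$ by Proposition~\ref{Thm:Rationality}, using $\tilde{\mathfrak{l}}\mid\mathfrak{l}$ so that $K_{\tilde{\mathfrak{l}}}\subseteq K_{\mathfrak{l}}$ and $L_{\tilde{\mathfrak{l}}}\subseteq L_{\mathfrak{l}}$. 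For the first factor I exploit that $E_u$ has weight $1$, so $g_jE_u^{2j}$ has weight $(k-2j)+2j=k$, the same as $g$; hence
\[
h_j:=\frac{g_jE_u^{2j}}{g}
\]
is a meromorphic Drinfeld modular form of weight $0$, and its value $h_j(z_0)=g_j(z_0)E_u(z_0)^{2j}/g(z_0)$ will be controlled by Lemma~\ref{L:Algebraic_Scalar}, which places values of weight-$0$ modular functions at $z_0$ inside $K_{\mathfrak{l}}(\mathfrak{j}(z_0),\mathbf{f}_{1,u}(z_0))\subseteq K_{\mathfrak{l}}\cdot L_{\mathfrak{l}}$, the last inclusion being the definition \eqref{Eq:Maximal_Abelian_Extension} of $L_{\mathfrak{l}}$.

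The step requiring the most care — and the one I expect to be the main technical obstacle — is verifying that $h_j\in\mathcal{A}_0(\Gamma(\mathfrak{l});K_{\mathfrak{l}})$, so that Lemma~\ref{L:Algebraic_Scalar} genuinely applies. For the automorphy, $g_j$ and $g$ are modular for $\Gamma$ while $E_u$ is modular for $\Gamma(\tilde{\mathfrak{l}})$, so $h_j$ is invariant under $\Gamma\cap\Gamma(\tilde{\mathfrak{l}})$; taking $\mathfrak{l}$ divisible by all relevant levels (so that $\Gamma(\mathfrak{l})\subseteq\Gamma\cap\Gamma(\tilde{\mathfrak{l}})$) makes $h_j$ modular for $\Gamma(\mathfrak{l})$, and since $\det\gamma=1$ for $\gamma\in\Gamma(\mathfrak{l})$ the type is immaterial. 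For the coefficient field, I would convert the $u_{m_\Gamma}$- and $u_{\tilde{\mathfrak{l}}}$-expansions of $g_j,g,E_u$ into $u_{m_{\Gamma(\mathfrak{l})}}$-expansions by means of \eqref{E:uN}; because the transition series have coefficients in $A$ and $K_N,K_{N'},K_{\tilde{\mathfrak{l}}}\subseteq K_{\mathfrak{l}}$, all resulting coefficients remain in $K_{\mathfrak{l}}$.

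Granting this bookkeeping, each summand lies in $K_{\mathfrak{l}}\cdot L_{\mathfrak{l}}$, hence so does $\mathfrak{f}(z_0)/g(z_0)$, and the inclusion $K_{\mathfrak{l}}\cdot L_{\mathfrak{l}}\subseteq L^{\text{ab}}$ is exactly Lemma~\ref{L:Abelian}. (The degenerate case $k=0$, which forces $r=0$ by Corollary~\ref{C:depth}, is immediate since then $\mathfrak{f}=g_0$ is already weight $0$.) Finally, for the \emph{in particular} statement I would regard $f\in\mathcal{M}_k^m(\Gamma;K_N)=\mathcal{N}_k^{m,\leq 0}(\Gamma;K_N)$ and invoke Proposition~\ref{P:3} to obtain $\delta_k^r(f)\in\mathcal{N}_{k+2r}^{m+r,\leq r}(\Gamma;K_N)$; applying the first part with $\mathfrak{f}=\delta_k^r(f)$ and the weight-$(k+2r)$ form $g$ then yields the desired conclusion.
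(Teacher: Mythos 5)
Your proposal is correct and follows essentially the same route as the paper's own proof: decompose $\mathfrak{f}=\sum_{j}g_jE_2^j$ via Theorem~\ref{Thm:Structure_of_N}, split each summand as $\bigl(g_j(z_0)E_u^{2j}(z_0)/g(z_0)\bigr)\bigl(E_2(z_0)/E_u^2(z_0)\bigr)^j$, treat the two factors by Lemma~\ref{L:Algebraic_Scalar} and Proposition~\ref{Thm:Rationality} respectively, and obtain the second assertion from Proposition~\ref{P:3}. Your only deviations --- choosing $u$ at level $\tilde{\mathfrak{l}}$ so that Proposition~\ref{Thm:Rationality} applies verbatim, and spelling out the level and coefficient-field bookkeeping for $g_jE_u^{2j}/g$ (and the degenerate case $k=0$) that the paper compresses into its citation of Lemma~\ref{L:Algebraic_Scalar} --- are cosmetic refinements of the same argument, not a different one.
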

\begin{proof}
	By Theorem \ref{Thm:Structure_of_N}, we have
	$
	\mathfrak{f}=\sum_{0\leq n\leq r}g_nE_2^n
	$
	where each $g_n\in \mathcal{M}_{k-2n}^{m-n}(\Gamma; K_N)$. Fix a non-zero $u=(u_1,u_2)\in(\mathfrak{l}^{-1}A/A)^2$. Then we have 
	\[
	\frac{\mathfrak{f}(z_0)}{g(z_0)}=\sum_{0\leq n\leq r}\left(\frac{g_n(z_0)E_u^{2n}(z_0)}{g(z_0)}\right)\left(\frac{E_2(z_0)}{E_u^2(z_0)}\right)^n.
	\]
	Observe that, by Lemma \ref{L:Algebraic_Scalar},  
	\[
	\frac{g_n(z_0)E_u^{2n}(z_0)}{g(z_0)}\in K_\mathfrak{l}(j(z_0), \mathbf{f}_{1,u}(z_0)\mid u=(u_1,u_2)\in(\mathfrak{l}^{-1}A/A)^2\setminus\{(0,0)\}).
	\]
	Thus, the desired fact follows from Proposition \ref{Thm:Rationality}. Note that the second assertion follows immediately from the first assertion and Proposition \ref{P:3}.
\end{proof}

\begin{proof}[{Proof of Theorem \ref{T:D}}] For any given CM point $z_0\in \Omega$, by Theorem \ref{T:CM}, we can define a field over $M_{z_0}$ over $\widehat{K_{\infty}^{\text{nr}}}$ and an extension $\psi_{z_0}$ of $\sigma$. Then the result follows from Theorem \ref{Thm:ClassField_at_CM_points}.
\end{proof}

\subsection{Transcendence of special values at CM points}
Let $\phi$ be a rank $2$ Drinfeld module corresponding to the $A$-lattice $\Lambda_\phi$. Then $\phi$ is called a \emph{CM Drinfeld module} if
\[
    R_\phi:=\{\alpha\in\mathbb{C}_\infty|\alpha\Lambda_\phi\subset\Lambda_\phi\}
\]
defines a quadratic $A$-order over $K$. Recall that $z_0\in\Omega^{\psi}(M)$ is a CM point. We set
\[
    \omega_{z_0}:=\begin{cases}
        \sqrt[q-1]{g(z_0)},~\mbox{if}~j(z_0)\neq 0\\
        \sqrt[q^2-1]{\Delta(z_0)},~\mbox{if}~j(z_0)=0
    \end{cases}.
\]
Then the Drinfeld module $\phi^{\omega_{z_0}\Lambda_{z_0}}$ corresponding to the lattice $\omega_{z_0}\Lambda_{z_0}$ is given by
\[
    \phi^{\omega_{z_0}\Lambda_{z_0}}_t=\omega_{z_0}\phi^{\Lambda_{z_0}}_t\omega_{z_0}^{-1}=\begin{cases}
        \theta+\tau+j(z_0)^{-1}\tau^2,~\mbox{if}~j(z_0)\neq 0\\
        \theta+\tau^2,~\mbox{if}~j(z_0)=0
    \end{cases}.
\]
Note that $\phi^{\omega_{z_0}\Lambda_{z_0}}$ is a CM Drinfeld module defined over $K(j(z_0))\subset\overline{K}$. In particular, the \emph{CM period} $\omega_{z_0}$ of $\phi^{\omega_{z_0}\Lambda_{z_0}}$ is transcendental over $K$ by \cite[Thm.~5.1]{Yu86}.

Let $N\in A\setminus\mathbb{F}_q$. For $u=(u_1,u_2)\in(N^{-1}A/A)^2\setminus\{(0,0)\}$, we have
\begin{equation}\label{Eq:E_u at CM pt}
    E_u(z_0)=\Tilde{\pi}^{-1}\exp_{\Lambda_{z_0}}(u_1z_0+u_2)^{-1}=\exp_{\omega_{z_0}\Lambda_{z_0}}(u_1\omega_{z_0}z_0+u_2\omega_{z_0})^{-1}\left(\frac{\omega_{z_0}}{\Tilde{\pi}}\right)\in L^{\text{ab}}\cdot\left(\frac{\omega_{z_0}}{\Tilde{\pi}}\right)
\end{equation}
by \cite[Thm.~9.2]{Hay79} (see also \cite[Thm.~5]{DG20}). We further set $F_{\omega_{z_0}\Lambda_{z_0}}:\mathbb{C}_{\infty}\to \mathbb{C}_{\infty}$ to be the unique entire function satisfying 
\[
    F_{\omega_{z_0}\Lambda_{z_0}}(\theta z)-\theta F_{\omega_{z_0}\Lambda_{z_0}}(z)=\exp_{\omega_{z_0}\Lambda_{z_0}}(z)^q, \ \ z\in \mathbb{C}_{\infty}.
\]
Before we state the main result of this section, for any $\beta_1, \beta_2\in\mathbb{C}_{\infty}$ with $\beta_2\neq 0$ and $K\subseteq H\subseteq \overline{K}$, we denote by $\beta_1 \sim_H \beta_2$ if the ratio $\beta_1/\beta_2\in H$.

\begin{theorem}\label{Thm:Rationality_at_CM_points}
Let $H$ be a field such that $K_{m_{\Gamma}}\subseteq H\subseteq\overline{K}$.
\begin{itemize}
	\item[(i)] For any $f\in \QM_{k}^{m}(\Gamma;H)$, there exists a homogeneous polynomial $M_f\in(H\cdot L^{\text{ab}})[X,Y]$ of degree $k+2r$ such that 
	\[
	(\delta^r_{k}f)(z_0)=M_f\left(\frac{w_{z_0}}{\tilde{\pi}},\frac{F_{\omega_{z_0}\Lambda_{z_0}}(z_0)}{\tilde{\pi}}\right).
	\]
	In particular, if $(\delta^r_{k}f)(z_0)\neq 0$, then it is transcendental over $K$.
\item[(ii)] 
Suppose that $\mathfrak{f}\in\mathcal{N}_{k}^{m,\leq r}(\Gamma;H)$. Then
$
    \mathfrak{f}(z_0) \sim_{H\cdot L^{\text{ab}}} \left(\frac{\omega_{z_0}}{\tilde{\pi}}\right)^{k}.
$
In particular, if $\mathfrak{f}(z_0)\neq 0$, then $\mathfrak{f}(z_0)$ is transcendental over $K$. Moreover, let $f\in \mathcal{M}_{k}^{m}(\Gamma;H)$. Then we have 
\[
    (\delta^r_{k}f)(z_0) \sim_{H\cdot L^{\text{ab}}} \left(\frac{\omega_{z_0}}{\tilde{\pi}}\right)^{k+2r}.
\]	

\end{itemize}

\end{theorem}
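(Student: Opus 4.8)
The plan is to reduce both assertions to the CM evaluation of three building blocks — a Drinfeld modular form of a fixed weight, the nearly holomorphic form $E_2$, and Gekeler's quasi-modular form $E$ — and then to propagate these through the structure theorems of \S3 and \S5. Set $X:=\omega_{z_0}/\tilde{\pi}$ and $Y:=F_{\omega_{z_0}\Lambda_{z_0}}(z_0)/\tilde{\pi}$. The first step is the normalization of modular values: if $\Gamma(N)\subseteq\Gamma$, then any $g\in\mathcal{M}_w^{m'}(\Gamma;H)$ lies in $\mathcal{M}_w(\Gamma(N);H\cdot K_N)$, and since the algebra of level-$N$ forms is generated by the weight-one Eisenstein series $E_u$ (Gekeler \cite{Gek84}), a comparison of $u_N$-expansions shows that $g$ is a homogeneous degree-$w$ polynomial in the $E_u$ with coefficients in $H\cdot K_N\subseteq H\cdot L^{\mathrm{ab}}$. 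Combined with \eqref{Eq:E_u at CM pt}, which gives $E_u(z_0)\in L^{\mathrm{ab}}\cdot X$, this yields
\[
g(z_0)\sim_{H\cdot L^{\mathrm{ab}}}X^{w},\qquad g\in\mathcal{M}_w^{m'}(\Gamma;H).
\]
Applying this to $E_u^2$ and feeding it into Proposition~\ref{Thm:Rationality} (which provides $E_2(z_0)/E_u^2(z_0)\in L^{\mathrm{ab}}$) gives the degree-two analogue $E_2(z_0)\sim_{L^{\mathrm{ab}}}X^2$.

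Part (ii) is then immediate. Writing $\mathfrak{f}=\sum_{0\le j\le r}g_jE_2^{j}$ with $g_j\in\mathcal{M}_{k-2j}^{m-j}(\Gamma;H)$ by Theorem~\ref{Thm:Structure_of_N} and evaluating, each summand $g_j(z_0)E_2(z_0)^{j}$ is an $H\cdot L^{\mathrm{ab}}$-multiple of $X^{(k-2j)+2j}=X^{k}$, whence $\mathfrak{f}(z_0)\sim_{H\cdot L^{\mathrm{ab}}}X^{k}$. For $f\in\mathcal{M}_k^m(\Gamma;H)=\mathcal{N}_k^{m,\le 0}(\Gamma;H)$ we have $\delta_k^r f\in\mathcal{N}_{k+2r}^{m+r,\le r}(\Gamma;H)$ by Proposition~\ref{P:3}, so the ``moreover'' clause is the same statement in weight $k+2r$. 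Since $H,L^{\mathrm{ab}}\subseteq\overline{K}$, a nonzero value is an $\overline{K}$-multiple of a power of $X$, and transcendence over $K$ follows from the transcendence of the CM period $\omega_{z_0}$ (and hence of $X$) via \cite{Yu86}.

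For part (i) the form $f$ is only quasi-modular, so $\delta_k^r f$ is no longer a nearly holomorphic modular form and part (ii) cannot be quoted directly; overcoming this is the main obstacle. I would expand
\[
(\delta_k^r f)(z_0)=\sum_{i=0}^r\binom{k+r-1}{i}\frac{(\der^{r-i}f)(z_0)}{\bigl(\tilde{\pi}z_0-\tilde{\pi}\psi(z_0)\bigr)^{i}},
\]
and use that each $\der^{r-i}f$ is again quasi-modular of weight $k+2(r-i)$ (Theorem~\ref{T:Isom}), hence decomposes as $\sum_n h_nE^n$ with $h_n$ modular over $H$ by Proposition~\ref{P:decomp}. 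Everything is then controlled once $E(z_0)$ and the pole term $\bigl(\tilde{\pi}z_0-\tilde{\pi}\psi(z_0)\bigr)^{-1}$ are written as homogeneous polynomials in $X$ and $Y$ with coefficients in $H\cdot L^{\mathrm{ab}}$. The contribution of $E_2$ is already handled, since $E_2(z_0)\sim_{L^{\mathrm{ab}}}X^2$; the delicate point, and the technical heart of the proof, is to interpret the analytic pole $\bigl(\tilde{\pi}z_0-\tilde{\pi}\psi(z_0)\bigr)^{-1}$ — which a priori is merely an algebraic multiple of $\tilde{\pi}^{-1}$, as $\psi(z_0)-z_0\in L$ by Lemma~\ref{L:Lem:Shimura's_Lemma} — in terms of the period $\omega_{z_0}$ and the quasi-period $F_{\omega_{z_0}\Lambda_{z_0}}(z_0)$ of the CM Drinfeld module $\phi^{\omega_{z_0}\Lambda_{z_0}}$. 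Via the period--quasi-period (Legendre) relation of \S6.3 one obtains $\omega_{z_0}F_{\omega_{z_0}\Lambda_{z_0}}(z_0)\sim_{\overline{K}}\tilde{\pi}$, so that the pole term becomes a multiple of $XY$ and $E(z_0)=E_2(z_0)+\bigl(\tilde{\pi}z_0-\tilde{\pi}\psi(z_0)\bigr)^{-1}$ is homogeneous of degree two in $X,Y$.

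Granting this, propagating the expressions through the two decompositions shows that $(\delta_k^r f)(z_0)$ is the value at $(X,Y)$ of a homogeneous polynomial $M_f\in(H\cdot L^{\mathrm{ab}})[X,Y]$ of degree $k+2r$, which is the asserted formula. Finally, the same Legendre relation gives $Y\sim_{\overline{K}}\omega_{z_0}^{-1}$, so every monomial of $M_f(X,Y)$ is an $\overline{K}$-multiple of $\omega_{z_0}^{a}\tilde{\pi}^{b}$ with $(a,b)\ne(0,0)$ once $k+2r\ge 1$; invoking the transcendence theory of Drinfeld periods and quasi-periods \cite{Yu86,BCPW22} — in particular that $\omega_{z_0}$ is transcendental and that these monomials are $\overline{K}$-linearly independent and nonconstant — a nonzero $M_f(X,Y)$ is transcendental over $K$.
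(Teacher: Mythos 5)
Your reduction framework and your part (ii) are sound and essentially the paper's: the decomposition $\mathfrak{f}=\sum g_jE_2^j$ from Theorem~\ref{Thm:Structure_of_N}, the evaluation $E_2(z_0)\sim_{L^{\text{ab}}}X^2$ via Proposition~\ref{Thm:Rationality} together with \eqref{Eq:E_u at CM pt}, and the reduction of the ``moreover'' clause to weight $k+2r$ via Proposition~\ref{P:3} all match. But there is a genuine gap at what you yourself call the technical heart of part (i): the claimed relation $\omega_{z_0}F_{\omega_{z_0}\Lambda_{z_0}}(z_0)\sim_{\oK}\tilde{\pi}$ is unjustified and is in general false. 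First, no such ``Legendre relation'' is stated in \S6.3, and it could not apply here as invoked: $F_{\omega_{z_0}\Lambda_{z_0}}(z_0)$ is the value of the quasi-periodic function at $z_0$, which is \emph{not} a point of the lattice $\omega_{z_0}\Lambda_{z_0}$, so it is not a quasi-period, and even for genuine quasi-periods the Legendre relation only controls the determinant $\omega_1\eta_2-\omega_2\eta_1$, not a single product $\omega\eta$. In the CM situation the quasi-period-type quantity decomposes as an $\oK$-combination $\alpha\omega_{z_0}+\beta\tilde{\pi}/\omega_{z_0}$, with the $\omega_{z_0}$-component essentially measured by $E_2(z_0)/X^2$. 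Indeed, combining the paper's own facts --- $E_2(z_0)=cX^2$ with $c\in L^{\text{ab}}$, the exact identity $E(z_0)=E_2(z_0)+\bigl(\tilde{\pi}(z_0-\psi(z_0))\bigr)^{-1}$ with $z_0-\psi(z_0)\in L^{\times}$, and the cited evaluation $E(z_0)\sim XY$ --- forces $\omega_{z_0}F_{\omega_{z_0}\Lambda_{z_0}}(z_0)\in\oK\tilde{\pi}+\oK\omega_{z_0}^2$ with nonzero $\omega_{z_0}^2$-component whenever $E_2(z_0)\neq0$; if your relation held in that case one would get $\omega_{z_0}^2\sim_{\oK}\tilde{\pi}$, contradicting the algebraic independence of $\omega_{z_0}/\tilde{\pi}$ and $\tilde{\pi}$ coming from \cite{Cha12a}. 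Consequently your assertion that the pole term $\bigl(\tilde{\pi}z_0-\tilde{\pi}\psi(z_0)\bigr)^{-1}$ is a multiple of $XY$ is wrong (it is an $\oK$-combination of $XY$ \emph{and} $X^2$), and your endgame $Y\sim_{\oK}\omega_{z_0}^{-1}$, on which your monomial-independence transcendence argument rests, fails for the same reason.

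The correct ingredient --- and the one the paper uses --- is to avoid handling the pole term independently: write $\bigl(\tilde{\pi}(\Id-\psi)\bigr)^{-1}=E-E_2$, regroup $\delta_k^r(f)=\sum_i g_iE_2^i$ with $g_i$ quasi-modular, apply Proposition~\ref{P:decomp} to $g_iE_u^{2i}$, and then input the two CM evaluations: $E_2(z_0)/E_u^2(z_0)\in L^{\text{ab}}$ (Proposition~\ref{Thm:Rationality}) and $E(z_0)\sim_{H\cdot L^{\text{ab}}}XY$, the latter being exactly the external fact from \cite[Thm.~7.10]{Gek89} and \cite[Thm.~3.3(c)]{Cha12b} that your Legendre claim was trying to replace; the final transcendence assertion is then quoted from \cite{Thi92} and \cite[Thm.~3.1]{Cha12b} rather than derived from $Y\sim\omega_{z_0}^{-1}$. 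A further, more minor, point: your opening normalization rests on the claim that the level-$N$ algebra is generated by the weight-one Eisenstein series with the stated coefficient control, attributed to \cite{Gek84}; this is not established there in the form you need, and it is also unnecessary --- Lemma~\ref{L:Algebraic_Scalar} applied to $g/E_u^{w}$, together with \eqref{Eq:E_u at CM pt}, already yields $g(z_0)\sim_{H\cdot L^{\text{ab}}}X^{w}$, which is how the paper proceeds.
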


\begin{proof} Note, by Theorem \ref{T:Isom}, that for each $0\leq \ell \leq r$, $\der^{\ell}f$ is an element in $\QM_{k+2\ell}^{m+\ell,\leq s+\ell}(\Gamma;H)$. 
Observe that 
\[
\delta_{k}^r(f)=\sum_{\ell=0}^r\left(\der^{r-\ell}f\right)\frac{1}{(\tilde{\pi}\Id-\tilde{\pi}\psi)^{\ell}}=\sum_{\ell=0}^r\left(\der^{r-\ell}f\right)(E-E_2)^{\ell}=\sum_{i=0}^r\left(\sum_{\ell=0}^r(-1)^i\binom{\ell}{i}\left(\der^{r-\ell}f\right) E^{\ell-i}\right)E_2^i.
\]
Let 
\[
g_{i}:=(-1)^i\sum_{\ell=0}^r\binom{\ell}{i}\left(\der^{r-\ell}f\right) E^{\ell-i}\in \QM_{k+2r-2i}^{m+r-i,\leq s+r-i}(\Gamma;H).
\]
Thus, for some $u\in(m_{\Gamma}^{-1}A/A)^2\setminus\{(0,0)\}$, $g_iE_u^{2i}\in \QM_{k+2r}^{m+r-i,\leq s+r-i}(\Gamma;H)$ and by Proposition \ref{P:decomp}, $g_iE_u^{2i}=\sum_{j=0}^{s+r-i}g_{i,j}E^j$ for some uniquely defined $g_{i,j}\in \mathcal{M}_{k+2r-2j}^{m+r-i-j}(\Gamma;H)$. We claim that $g_{i,j}(z_0)\sim_{H\cdot \widetilde{L^{\text{ab}}}}\left(\frac{w_{z_0}}{\tilde{\pi}}\right)^{k+2r-2j}$. To see this, by Lemma~\ref{L:Algebraic_Scalar} and $K_{m_{\Gamma}}\subset H$, we have
\[
g_{i,j}(z_0)/E_u^{k+2r-2j}(z_0)\in H(\mathfrak{j}(z_0),\mathbf{f}_{1,u'}(z_0)\mid u'=(u_1',u_2')\in(m_{\Gamma}^{-1}A/A)^2\setminus\{(0,0)\}).
\]
Thus \eqref{Eq:E_u at CM pt} gives the desired claim.  On the other hand, by the same argument in the proof of \cite[Thm. 3.3(c)]{Cha12b} as well as the result in \cite[Thm.~7.10]{Gek89}, we have $E(z_0)^j\sim_{H\cdot \widetilde{L^{\text{ab}}}}\left(\frac{w_{z_0}}{\tilde{\pi}}\right)^{j}\left(\frac{F_{\varphi}(z_0)}{\tilde{\pi}}\right)^{j}$. This means that there exists a polynomial $M_i\in (H\cdot \widetilde{L^{\text{ab}}})[X,Y]$ homogeneous of degree $k+2r$ such that 
\begin{equation}\label{E:QMtrans}
g_i(z_0)E_u^{2i}(z_0)=M_i\left(\frac{w_{z_0}}{\tilde{\pi}},\frac{F_{\varphi}(z_0)}{\tilde{\pi}}\right).
\end{equation}
Hence the first assertion of the first part follows from  Theorem \ref{Thm:Rationality} and \eqref{E:QMtrans}. The last assertion of (i) is due to \cite{Thi92} (see also \cite[Thm. 3.1]{Cha12b}). By using Theorem \ref{Thm:Structure_of_N} and similar arguments as in the first part, one can immediately obtain the second assertion.
\end{proof}

As a consequence of Theorem \ref{Thm:Rationality_at_CM_points}, we have the following algebraic independence result for certain special values.

\begin{corollary} \label{Cor:Rationality_at_CM_points2}
    Let $\alpha_1,\dots,\alpha_\ell$ be CM points in $ \Omega^{\psi}(M)$. Then for any $k,r\in \ZZ_{\geq 0}$, $m\in \ZZ/(q-1)\ZZ$, and  $\mathfrak{f}\in\mathcal{N}_k^{m,\leq r}(\Gamma;\overline{K})$ such that $\mathfrak{f}(\alpha_i)\neq 0$ for each $1\leq i \leq \ell$, we have $\trdeg_{\oK}\oK(\mathfrak{f}(\alpha_1),\dots,\mathfrak{f}(\alpha_\ell))=\ell$ if and only if the CM fields $K(\alpha_1),\dots,K(\alpha_\ell)$ are distinct.
\end{corollary}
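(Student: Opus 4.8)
The plan is to reduce the statement to a transcendence assertion about CM periods, then to handle the two directions separately. Fix $\mathfrak{f}\in\mathcal{N}_k^{m,\leq r}(\Gamma;\oK)$ with $\mathfrak{f}(\alpha_i)\neq 0$ for each $i$; we may assume $k\geq 1$, since for $k=0$ (whence $r=0$ by Corollary~\ref{C:depth}) the space $\mathcal{N}_0^{m,\leq 0}(\Gamma;\oK)$ consists of constants. Applying Theorem~\ref{Thm:Rationality_at_CM_points}(ii) at each $\alpha_i$ with $H=\oK$, and using $\oK\cdot L^{\text{ab}}=\oK$, I obtain $\beta_i\in\oK^{\times}$ with $\mathfrak{f}(\alpha_i)=\beta_i(\omega_{\alpha_i}/\tilde{\pi})^{k}$. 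Hence $\oK(\mathfrak{f}(\alpha_1),\dots,\mathfrak{f}(\alpha_\ell))=\oK\big((\omega_{\alpha_1}/\tilde{\pi})^{k},\dots,(\omega_{\alpha_\ell}/\tilde{\pi})^{k}\big)$, and since each $\omega_{\alpha_i}/\tilde{\pi}$ is algebraic over $\oK\big((\omega_{\alpha_i}/\tilde{\pi})^{k}\big)$, this field has the same transcendence degree over $\oK$ as $\oK(\omega_{\alpha_1}/\tilde{\pi},\dots,\omega_{\alpha_\ell}/\tilde{\pi})$. Thus it suffices to prove that $\omega_{\alpha_1}/\tilde{\pi},\dots,\omega_{\alpha_\ell}/\tilde{\pi}$ are algebraically independent over $\oK$ precisely when the CM fields $K(\alpha_1),\dots,K(\alpha_\ell)$ are pairwise distinct.

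For the dependent direction, suppose $K(\alpha_i)=K(\alpha_j)=:L'$ for some $i\neq j$. Then $\Lambda_{\alpha_i}=A\alpha_i+A$ and $\Lambda_{\alpha_j}=A\alpha_j+A$ are rank-two $A$-lattices contained in $L'$, hence commensurable, so by the theory of complex multiplication the CM Drinfeld modules $\phi^{\omega_{\alpha_i}\Lambda_{\alpha_i}}$ and $\phi^{\omega_{\alpha_j}\Lambda_{\alpha_j}}$---both defined over $\oK$ with complex multiplication by orders in $L'$---are isogenous over $\oK$. Comparing period lattices through such an isogeny $u$ yields $c:=\partial u\in\oK^{\times}$ with $c\,(\omega_{\alpha_i}/\omega_{\alpha_j})\Lambda_{\alpha_i}\subseteq\Lambda_{\alpha_j}\subset L'$; since $1\in\Lambda_{\alpha_i}$ this forces $c\,\omega_{\alpha_i}/\omega_{\alpha_j}\in L'\subset\oK$, whence $\omega_{\alpha_i}/\omega_{\alpha_j}\in\oK^{\times}$. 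Consequently $(\omega_{\alpha_i}/\tilde{\pi})/(\omega_{\alpha_j}/\tilde{\pi})\in\oK$, so the two generators are algebraically dependent over $\oK$ and the transcendence degree is strictly less than $\ell$.

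The remaining, and genuinely hard, direction is that pairwise distinct CM fields force $\omega_{\alpha_1}/\tilde{\pi},\dots,\omega_{\alpha_\ell}/\tilde{\pi}$ to be algebraically independent over $\oK$. Here I would invoke the transcendence theory of CM periods of Drinfeld modules: Drinfeld modules with distinct CM fields are non-isogenous, and the associated CM periods---the function-field period symbols in the sense of Brownawell--Chang--Papanikolas--Wei \cite{BCPW22}---are then algebraically independent over $\oK$. Concretely, this is the function-field analogue of the algebraic independence of periods of CM elliptic curves with distinct CM fields, and is accessible through the theory of $t$-motives by identifying the relevant motivic Galois group (a Mumford--Tate torus) whose dimension computes the transcendence degree via Papanikolas' theorem. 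This transcendence input is the main obstacle; granting it, combining the two directions gives $\trdeg_{\oK}\oK(\mathfrak{f}(\alpha_1),\dots,\mathfrak{f}(\alpha_\ell))=\ell$ if and only if the $K(\alpha_i)$ are distinct, which is the assertion.
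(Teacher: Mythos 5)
Your proposal is correct and follows essentially the same route as the paper's proof: reduce via Theorem~\ref{Thm:Rationality_at_CM_points} to the normalized periods $\omega_{\alpha_i}/\tilde{\pi}$, dispose of the case of coinciding CM fields by the isogeny argument, and invoke algebraic independence of CM periods attached to pairwise distinct CM fields. The transcendence input you flag as the ``main obstacle'' is precisely the published result the paper cites at this point, namely \cite[Thm.~2.2.2]{Cha12a}, whose proof proceeds by exactly the $t$-motivic/Papanikolas method you describe, so nothing is missing from your argument.
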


\begin{proof}
    By Theorem \ref{Thm:Rationality_at_CM_points}, we have
    $
        \oK(\mathfrak{f}(\alpha_1),\dots,\mathfrak{f}(\alpha_\ell))=\oK\Big(\left(\frac{\omega_{\alpha_1}}{\tilde{\pi}}\right)^{k},\dots,\left(\frac{\omega_{\alpha_\ell}}{\tilde{\pi}}\right)^{k}\Big).
    $
    Suppose that $K(\alpha_1),\dots,K(\alpha_\ell)$ are pairwise distinct. Then by \cite[Thm.~2.2.2]{Cha12a}, the elements  $\frac{\omega_{\alpha_1}}{\tilde{\pi}},\dots,\frac{\omega_{\alpha_\ell}}{\tilde{\pi}}$ are algebraically independent over $\overline{K}$, and thus $\trdeg_{\oK}\oK(F(\alpha_1),\dots,F(\alpha_\ell))=\ell$. On the other hand, suppose that there exist $1\leq i,j\leq\ell$ so that $K(\alpha_i)=K(\alpha_j)$. Then the Drinfeld modules $\phi^{\omega_{\alpha_i}\Lambda_{\alpha_i}}$ and $\phi^{\omega_{\alpha_j}\Lambda_{\alpha_j}}$ are isogenous. In particular, $\omega_{\alpha_i}/\omega_{\alpha_j}\in\overline{K}$, and thus $\trdeg_{\oK}\oK(F(\alpha_1),\dots,F(\alpha_\ell))<\ell$. The desired result now follows.
\end{proof}

\begin{proof}[{Proof of Theorem \ref{T:E}}] Since for any given CM point $z_0\in \Omega$, by Theorem \ref{T:CM}, we can define a field over $M_{z_0}$ over $\widehat{K_{\infty}^{\text{nr}}}$ and an extension $\psi_{z_0}$ of $\sigma$, the theorem follows from Theorem \ref{Thm:Rationality_at_CM_points} and Corollary \ref{Cor:Rationality_at_CM_points2}.
\end{proof}

\begin{appendices}
	\section{Proof of Theorem \ref{T:CM}}
	In this section, our goal is to prove Theorem \ref{T:CM}. In particular, for any given CM point $z_0\in \Omega$, we precisely determine $\psi_{z_0}$, an extension of $\sigma$, on a certain quadratic field $M_{z_0}$ over $\widehat{K_{\infty}^{\text{nr}}}$. Our proof will be separated according to the parity of $p$.
	
	\subsection{Even characteristic case} We denote the $\infty$-adic valuation by $\val_{\infty}$ normalized so that $\val_{\infty}(\theta)=-1$. In what follows, we list several facts with some brief explanations.
	\begin{itemize}
		\item[(i)] Let $z_0\in K_{\infty}(\widetilde{\mathfrak{c}})\setminus K_{\infty}\subset \Omega$ where $\widetilde{\mathfrak{c}}^2+\widetilde{\mathfrak{c}}+\tilde{B}=0$ for some $\tilde{B}\in K_{\infty}^{\times}$. By Newton polygon method and the proof of \cite[Chap. III, (2.4)]{FV02} (see also \cite[\S2]{Tho05}), one can indeed choose an element $B\in K_{\infty}^{\times}$ so that $K_{\infty}(\widetilde{\mathfrak{c}})=K_{\infty}(\mathfrak{c})$ where $\mathfrak{c}^2+\mathfrak{c}+B=0$ and $\val_{\infty}(\mathfrak{c})\in \mathbb{Q}\setminus \mathbb{Z}$. Set $M_{z_0}:=\widehat{K_{\infty}^{\text{nr}}}(\mathfrak{c})$. Then the map $\psi_{z_0}:M_{z_0}\to M_{z_0}$ given by 
		\[
		\psi_{z_0}(a+b\mathfrak{c}):=\sigma(a)+\sigma(b)(\mathfrak{c}+1), \ \ a, b \in \widehat{K_{\infty}^{\text{nr}}}
		\]
		is an isometry and hence is continuous. Furthermore, since $K_\infty(z_0)=K_\infty(\mathfrak{c})$ is a ramified extension over $K_\infty$ and $\widehat{K_{\infty}^{\text{nr}}}$ is an unramified extension over $K_\infty$, they are linearly disjoint over $K_\infty$. Hence $\psi_{z_0}$ is a field automorphism on $M_{z_0}$.
		\item[(ii)] There exists $\epsilon\in \mathbb{F}_{2^{n}}\setminus\mathbb{F}_{2^{n-1}}$ such that under the trace map, we have
		\[
		\Tr_{\mathbb{F}_{2^{n}}/\mathbb{F}_{2}}(\epsilon)=\epsilon+\epsilon^2+\dots+\epsilon^{2^{n-1}}=1.
		\]
		Indeed, since $\Tr_{\mathbb{F}_{2^{n}}/\mathbb{F}_{2}}(x)$ is a polynomial of $x$ in degree $2^{n-1}$ having at most $2^{n-1}-1$ distinct non-zero solutions, the desired existence follows from $|\mathbb{F}_{2^{n}}\setminus\mathbb{F}_{2^{n-1}}|=2^{n-1}>2^{n-1}-1$.
		\item[(iii)] Set $q:=2^n$ and let $\alpha$ be a root of $x^2+x+\epsilon\in \mathbb{F}_q[X]$. By (ii), we obtain 
		\[
		\alpha^{q}=(\alpha^2)^{2^{n-1}}=(\alpha+\epsilon)^{2^{n-1}}=(\alpha+\epsilon+\epsilon^2)^{2^{n-2}}=\cdots=\alpha+\epsilon+\epsilon^2+\dots+\epsilon^{2^{n-1}}=\alpha +1.
		\]
		Thus $\alpha$ is a root of $x^q+x+1$. Here, we remark that each root of $x^q+x+1$ lies in $\mathbb{F}_{q^2}\setminus\mathbb{F}_q$. Indeed, let $\beta$ be a root of $x^q+x+1$. Note that $\beta^q=\beta+1\neq\beta$ implies $\beta\not\in\mathbb{F}_q$. On the other hand, $\beta^{q^2}=(\beta+1)^q=\beta$ implies $\beta\in\mathbb{F}_{q^2}$.  This indeed yields that $\alpha$ is quadratic over $\mathbb{F}_q$ with its minimal polynomial $x^2+x+\epsilon$. Moreover,
		\[
		(\alpha+\mathfrak{c})^2+(\alpha+\mathfrak{c})+B+1=0.
		\]
		Furthermore, using Galois theory, one can see that $\alpha+\mathfrak{c}$ is quadratic over $K_\infty$.
		\item[(iv)] The fixed field of $\psi_{z_0}$ in $M_{z_0}$ is $K_{\infty}(\alpha+\mathfrak{c})$. Indeed, if $\psi_{z_0}(a+b\mathfrak{c})=a+b\mathfrak{c}$ for some $a,b\in  \widehat{K_{\infty}^{\text{nr}}}$, then $b=\sum_{i\geq i_0}b_i\theta^{-i}\in K_{\infty}$ and $a=\sum_{i\geq i_0}b_i(\alpha +\mathfrak{b})\theta^{-i}+\mathfrak{h}$ for some $\mathfrak{b}\in \mathbb{F}_q$ and $\mathfrak{h}\in K_{\infty}$. This shows that $a+b\mathfrak{c}\in K_{\infty}(\alpha+\mathfrak{c})$. On the other hand, for any $c+d(\alpha+\mathfrak{c})\in K_{\infty}(\alpha+\mathfrak{c})$, by (iii), we have
		\[
		\psi_{z_0}(c+d(\alpha+\mathfrak{c}))=c+d\alpha^q+d+d\mathfrak{c}=c+d(\alpha+\mathfrak{c}).
		\]
	\end{itemize}
	The above discussion now finishes the proof in the even characteristic case. Observe that it also implies that $\Omega^{\psi_{z_0}}(M_{z_0})=M_{z_0}\setminus K_{\infty}(\alpha+\mathfrak{c})$.
	
	\subsection{Odd characteristic case} By Kummer theory \cite[Chap. VI. \S8]{Lan02}, we see that the field $\widehat{K_{\infty}^{\text{nr}}}(1/\sqrt{\theta})$ is the unique quadratic extension over $\widehat{K_{\infty}^{\text{nr}}}$, and hence contains all the CM points in $\Omega$. Let $\xi$ be a non-zero root of $x^q+x$ in $\overline{\mathbb{F}}_q$. Then $\xi^{q^2}=(-\xi)^q=\xi$ implies that $\xi\in\mathbb{F}_{q^2}$. By Kummer theory again, there are three quadratic extensions over $K_\infty$: $K_\infty(\xi),~K_\infty(1/\sqrt{\theta})$, and  $K_\infty(\xi/\sqrt{\theta})$. Let $z_0\in\Omega$ be a CM point so that $K(z_0)$ generates a ramified extension over $K$. Then only one of the following holds:
	\begin{enumerate}
		\item[(I)] There is an embedding from $K(z_0)$ into $K_\infty(1/\sqrt{\theta})\subset\widehat{K_{\infty}^{\text{nr}}}(1/\sqrt{\theta})$.
		\item[(II)] There is an embedding from $K(z_0)$ into $K_\infty(\xi/\sqrt{\theta})\subset\widehat{K_{\infty}^{\text{nr}}}(1/\sqrt{\theta})$.
	\end{enumerate}
	\subsubsection{\textbf{Case (I)}} Consider the quadratic field $M_{z_0}:=\widehat{K_{\infty}^{\text{nr}}}(1/\sqrt{\theta})$ over $\widehat{K_{\infty}^{\text{nr}}}$. In our first case, $z_0\in K_\infty(1/\sqrt{\theta})$.	We define
	\[
	\psi_{z_0}\left(a+\frac{b}{\sqrt{\theta}}\right):=\sigma(a)-\frac{\sigma(b)}{\sqrt{\theta}}, \ \ a,b \in \widehat{K_{\infty}^{\text{nr}}}.
	\]
	Note that $\psi_{z_0}$ is a continuous field automorphism on $M_{z_0}$ with its fixed field $K_{\infty}(\xi/\sqrt{\theta})$ and hence  $\Omega^{\psi_{z_0}}(M_{z_0})=M_{z_0}\setminus K_{\infty}(\xi/\sqrt{\theta})$.
	
	\subsubsection{\textbf{Case (II)}} In this case, $z_0\in K_\infty(\xi/\sqrt{\theta})$ and we define $M_{z_0}$ as in the Case I. We set
	\[
\psi_{z_0}\left(a+\frac{b}{\sqrt{\theta}}\right):=\sigma(a)+\frac{\sigma(b)}{\sqrt{\theta}}, \ \ a,b \in \widehat{K_{\infty}^{\text{nr}}}.
\]
		Note that $\psi_{z_0}$ is a continuous field automorphism on $M_{z_0}$ with its fixed field $K_{\infty}(1/\sqrt{\theta})$ and hence  $\Omega^{\psi_{z_0}}(M_{z_0})=M_{z_0}\setminus K_{\infty}(1/\sqrt{\theta})$. Thus we complete the proof of the theorem.
\end{appendices}

\end{document}